\renewcommand{\a}{\alpha}
\newcommand{\e}{\varepsilon}
\newcommand{\normeq}{\trianglelefteqslant}
\newcommand{\la}{\langle}
\newcommand{\ra}{\rangle}
\renewcommand{\to}{\rightarrow}
\newcommand{\leqs}{\leqslant}
\newcommand{\geqs}{\geqslant}
\newcommand{\vs}{\vspace{2mm}}
\newcommand{\fpr}{\mbox{{\rm fpr}}}
\newcommand{\Aut}{\mathrm{Aut}}
\newcommand{\Inn}{\mathrm{Inn}}
\newcommand{\Out}{\mathrm{Out}}
\newcommand{\GL}{\mathrm{GL}}
\newcommand{\PGL}{\mathrm{PGL}}
\newcommand{\SL}{\mathrm{SL}}
\newcommand{\PGammaL}{\mathrm{P\Gamma L}}
\newcommand{\PSp}{\mathrm{PSp}}
\newcommand{\Sp}{\mathrm{Sp}}
\newcommand{\POmega}{\mathrm{P\Omega}}
\newcommand{\LL}{\mathrm{L}}
\newcommand{\UU}{\mathrm{U}}
\newcommand{\PGU}{\mathrm{PGU}}
\newcommand{\GU}{\mathrm{GU}}
\newcommand{\fix}{\mathrm{fix}}
\newcommand{\Hol}{\mathrm{Hol}}
\newcommand{\imod}[1]{\allowbreak\mkern4mu({\operator@font mod}\,\,#1)}
\newtheorem{theorem}{Theorem} 
\newtheorem*{conj*}{Conjecture}
\newtheorem{corol}[theorem]{Corollary}
\newtheorem{thm}{Theorem}[section] 
\newtheorem{prop}[thm]{Proposition} 
\newtheorem{lem}[thm]{Lemma}
\newtheorem{cor}[thm]{Corollary}
\newtheorem*{prob*}{Problem}
\theoremstyle{definition}
\newtheorem{rem}[thm]{Remark}
\begin{document}

\title[Base sizes of primitive groups of diagonal type]{Base sizes of primitive groups of diagonal type} 
\author{Hong Yi Huang}
\address{H.Y. Huang, School of Mathematics, University of Bristol, Bristol BS8 1UG, UK}
\email{hy.huang@bristol.ac.uk}
\date{\today}

\maketitle

\begin{abstract}
	Let $G$ be a permutation group on a finite set $\Omega$. The base size of $G$ is the minimal size of a subset of $\Omega$ with trivial pointwise stabiliser in $G$. In this paper, we extend earlier work of Fawcett by determining the precise base size of every finite primitive permutation group of diagonal type. In particular, this is the first family of primitive groups arising in the O'Nan-Scott theorem for which the exact base size has been computed in all cases. Our methods also allow us to determine all the primitive groups of diagonal type with a unique regular suborbit.
\end{abstract}

\section{Introduction}

\label{s:intro}

Let $G\leqs\mathrm{Sym}(\Omega)$ be a permutation group on a finite set $\Omega$ of size $n$. A subset of $\Omega$ is called a \textit{base} for $G$ if its pointwise stabiliser in $G$ is trivial. The minimal size of a base, denoted $b(G)$, is called the \textit{base size} of $G$. Equivalently, if $G$ is transitive with point stabiliser $H$, then $b(G)$ is the smallest number $b$ such that the intersection of some $b$ conjugates of $H$ in $G$ is trivial. This classical concept has been studied since the early years of permutation group theory in the nineteenth century, finding natural connections to other areas of algebra and combinatorics. For example, see \cite{BC_base} for details of the relationship between the metric dimension of a finite graph and the base size of its automorphism group, and \cite[Section 4]{S_algorithms} for an account of the key role played by bases in the computational study of finite groups. We refer the reader to survey articles \cite[Section 5]{Bur181} and \cite{LSh3} for further connections.

In general, determining $b(G)$ is a difficult problem and there are no efficient algorithms for computing $b(G)$, or constructing a base of minimal size. Blaha \cite{B_NP} proves that determining whether $G$ has a base of size a given constant is an NP-complete problem. Historically, there has been an intense focus on studying the base sizes of finite primitive groups (recall that a transitive permutation group is \textit{primitive} if its point stabiliser is a maximal subgroup). A trivial lower bound is $b(G)\geqs \log_n|G|$ and it turns out that all primitive groups admit small bases in the sense that there is an absolute constant $c$ such that $b(G)\leqs c\log_n|G|$ for every primitive group $G$. This was originally conjectured by Pyber \cite{P_Pyber} in the 1990s and the proof was completed by Duyan et al. in \cite{DHM_Pyber}. It was subsequently extended by Halasi et al. \cite{HLM_Pyber}, who show that
\begin{equation*}
b(G)\leqs 2\log_n|G|+24
\end{equation*}
and the multiplicative constant $2$ is best possible. In fact, one can prove stronger bounds in special cases. For example, Seress \cite{S_sol} proves that $b(G)\leqslant 4$ if $G$ is soluble, and this result was recently extended by Burness \cite{B_sol} who shows that $b(G)\leqslant 5$ if $G$ has a soluble point stabiliser (both bounds in \cite{B_sol} and \cite{S_sol} are best possible).

The O'Nan-Scott theorem divides the finite primitive groups into several families that are defined in terms of the structure and action of the socle of the group (recall that the \textit{socle} of a group is the product of its minimal normal subgroups). Following \cite{LPS}, these families are: affine, almost simple, diagonal type, product type, and twisted wreath products. There are partial results on base sizes when $G$ is affine, product type or a twisted wreath product. For example, if $G = VH\leqs \mathrm{AGL}(V)$ is affine, then Halasi and Podoski \cite{HP_coprime} show that $b(G)\leqs 3$ if $(|V|,|H|) = 1$, and we refer the reader to \cite{BH_prod,F_tw} for some results on base sizes of product type groups and twisted wreath products. In recent years, base sizes of almost simple primitive groups have been intensively studied (recall that $G$ is called \textit{almost simple} if there exists a non-abelian simple group $G_0$ such that $G_0\normeq G\leqs\Aut(G_0)$). Roughly speaking, such a group is said to be \textit{standard} if $G_0 = A_m$ and $\Omega$ is a set of subsets or partitions of $\{1,\dots,m\}$, or $G_0$ is a classical group and $\Omega$ is a set of subspaces of the natural module for $G_0$, otherwise $G$ is \textit{non-standard} (see \cite[Definition 1]{B_base_classical} for the formal definition). A conjecture of Cameron \cite[p. 122]{C_perm} asserts that $b(G)\leqs 7$ if $G$ is non-standard, with equality if and only if $G = \mathrm{M}_{24}$ in its natural action of degree $24$. This conjecture was proved in a sequence of papers by Burness et al. \cite{B_base_classical,BGS_base_sym,BLS_Cameron,BOW_base_sporadic}. In addition, the precise base sizes of all non-standard groups with alternating or sporadic socle are computed in \cite{BGS_base_sym} and \cite{BOW_base_sporadic,NNOW_B}, respectively.

In this paper, we focus on bases for primitive diagonal type groups. Here $G \leqs \mathrm{Sym}(\Omega)$ has socle $T^k$, where $T$ is a non-abelian simple group and $k \geqs 2$ is an integer. More precisely, we have $|\Omega| = |T|^{k-1}$ and
\begin{equation*}
T^k\normeq G\leqslant T^k.(\Out(T)\times S_k).
\end{equation*}
The primitivity of $G$ implies that the subgroup $P\leqs S_k$ induced by the conjugation action of $G$ on the set of factors of $T^k$ is either primitive, or $k = 2$ and $P = A_2 = 1$. The group $P$ is called the \textit{top group} of $G$ and we note that
\begin{equation}\label{e:diag}
T^k\normeq G \leqs T^k.(\Out(T)\times P).
\end{equation}

The first systematic study of bases for diagonal type groups was initiated by Fawcett in \cite{F_diag}. Here she shows that $b(G) = 2$ if $P\notin\{A_k,S_k\}$, and in the general setting she determines the exact base size of $G$ up to one of two possibilities (see Theorem \ref{t:F}). One of the key ingredients in \cite{F_diag} is a theorem of Seress \cite{S_dist}, which asserts that if $k > 32$ and $P\notin\{A_k,S_k\}$, then there exists a subset of $\{1,\dots,k\}$ with trivial setwise stabiliser in $P$. However, this does not hold if $P \in\{A_k,S_k\}$, and hence a different approach is required. In this paper, we extend Fawcett's work by determining the exact base size in all cases (see Theorem \ref{thm:main} below).

In recent years, there has been significant interest in studying the base-two primitive groups (we say $G$ is \textit{base-two} if $b(G) = 2$). Indeed, a project with the ambitious aim of classifying these groups was initiated by Jan Saxl in the 1990s and it continues to be actively pursued, with many interesting applications and open problems. For example, Burness and Giudici \cite{BG_Saxl} define the \emph{Saxl graph} of a base-two group $G \leqs \mathrm{Sym}(\Omega)$ to be the graph with vertex set $\Omega$, with two vertices adjacent if they form a base for $G$. It is easy to see that the Saxl graph of a base-two primitive group is connected and an intriguing conjecture asserts that its diameter is at most $2$ (see \cite[Conjecture 4.5]{BG_Saxl}). This has been verified in several special cases (for example, see \cite{BH_prod,BH_Saxl,CD_Saxl,LP_Saxl}), but it remains an open problem.

Returning to a diagonal type group $G$ as in \eqref{e:diag}, recall that Fawcett \cite{F_diag} has proved that $b(G) = 2$ if $P \notin\{A_k,S_k\}$. Our first result resolves the base-two problem for diagonal type groups in full generality.

\begin{theorem}
	\label{thm:b(G)=2}
	Let $G$ be a diagonal type primitive group with socle $T^k$ and top group $P\leqs S_k$. Then $b(G) = 2$ if and only if one of the following holds:
	\begin{enumerate}\addtolength{\itemsep}{0.2\baselineskip}
		\item[{\rm (i)}] $P\notin \{A_k,S_k\}$.
		\item[{\rm (ii)}] $3\leqs k\leqs |T|-3$.
		\item[{\rm (iii)}] $k\in\{|T|-2,|T|-1\}$ and $G$ does not contain $S_k$.
	\end{enumerate}
\end{theorem}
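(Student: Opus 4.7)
The argument splits into forward and backward directions. By Fawcett's theorem (Theorem~\ref{t:F}), case~(i) of the backward direction is already settled, and for $P\in\{A_k,S_k\}$ one knows $b(G)\in\{2,3\}$; hence the remaining task is to decide which of these values occurs.

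My first step is the standard reformulation. Identify $\Omega$ with right cosets of $H=G_\alpha$, which contains the diagonal $D\cong T$ of $T^k$. A pair of points is a base precisely when $H\cap H^g=1$ for a suitable $g\in G$, and writing $g=(t_1,\dots,t_k)\cdot(\phi,\sigma)\in T^k\cdot(\Aut(T)\times P)$ and normalising so that $t_1=1$, this reduces to the condition that for every non-trivial $(\sigma,\phi)$ appearing in $G$, the system $t_{\sigma^{-1}(i)}^\phi=s\cdot t_i$ has no solution $s\in T$. Setting $i=1$ forces $s=t_{\sigma^{-1}(1)}^\phi$, so this becomes the requirement that the $k$-subset $S=\{t_1,\dots,t_k\}\subseteq T$ admit no non-trivial symmetry of the form $\lambda_s\circ\phi$, where $\lambda_s$ denotes left-translation by $s$ in $T$.

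For the forward direction I treat the configurations with $P\in\{A_k,S_k\}$ excluded from (ii), (iii). If $k=2$, then $P=S_2$, and a direct check using the coordinate swap together with an involution of $T$ produces a non-trivial stabiliser element for every choice of pair. If $k\geqs|T|+1$, pigeonhole forces two coordinates to coincide, and the transposition on those coordinates (or, for $P=A_k$, a product of two such transpositions, available since $k\geqs 4$) fixes the coset. If $k=|T|$, the set $S$ is all of $T$, every left-translation $\lambda_s$ lies in the stabiliser, and since $T$ embeds into $A_{|T|}$ under its left regular action (the Sylow $2$-subgroups of non-abelian simple groups being non-cyclic), the stabiliser meets $P$ non-trivially irrespective of $P\in\{A_k,S_k\}$. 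Finally, if $k\in\{|T|-2,|T|-1\}$ and $S_k\leqs G$, then for each $\phi$ arising from the $\Out(T)$-part of $G$ one finds $s\in T$ with $\lambda_s\circ\phi$ preserving~$S$, and the resulting permutation of $\{1,\dots,k\}$ lies in $S_k\leqs G$, yielding a non-trivial stabiliser element.

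For the backward direction in cases (ii) and (iii), the plan is to construct $(1,t_2,\dots,t_k)$ with pairwise distinct entries chosen from at least two distinct $\Aut(T)$-orbits on $T$, arranging that no non-trivial $\lambda_s\circ\phi$ preserves~$S$ (among those realised by $G$). The slack $|T|-k\geqs 3$ in case~(ii), and the hypothesis $P\neq S_k$ in case~(iii), provide exactly the flexibility needed to avoid the obstructions identified above. The main obstacle will be executing this construction uniformly across all non-abelian simple $T$ and all $k\in[3,|T|-3]$; I expect to use an orbit-counting argument over $k$-subsets of $T$ for the generic range, together with direct (possibly computer-assisted) verification for a finite list of small simple groups where the asymptotic estimates are insufficient.
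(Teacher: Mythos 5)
Your reduction of the base-two question to the triviality of the setwise stabiliser $\Hol(T,S)$ of a $k$-subset of $T$ (with one coordinate normalised to $1$) is exactly the paper's key observation (Lemma \ref{l:Sk_eq_Hol}), and your plan for the backward direction --- orbit counting over $k$-subsets for generic $k$, direct construction or computer verification for the remaining small cases --- matches the architecture of Sections 3 and 4. So the overall route is the same; the issues are in the execution.

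The forward direction for $P = A_k$ and $k > |T|$ has a genuine gap. Pigeonhole yields one coinciding pair $t_i = t_j$, and for $P = S_k$ the transposition $(i,j)$ settles it, but for $P = A_k$ and $k \in \{|T|+1, |T|+2\}$ there may be exactly one coincidence (e.g.\ $t_1 = t_2 = 1$ with $t_3,\dots,t_k$ enumerating $T^\#$), so neither a product of two disjoint transpositions nor a $3$-cycle is available; the phrase ``available since $k\geqs 4$'' confuses the size of $k$ with the existence of a second disjoint repeated pair. One must instead invoke Fawcett's lower bound $b(G)\geqs \ell+1$ when $|T|^{\ell-1} < k$ (Theorem \ref{t:F}(iii) and Corollary \ref{c:b(G)=2}), which comes from a subtler argument than pigeonhole. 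Your argument for $k = |T|$ via the left regular representation landing in $A_{|T|}$ is a nice elementary observation, but even there one must verify that the relevant permutation $(1,\dots,1)\pi_s$ actually lies in $G$ rather than just in $W$ (this is fine because $A_k$ is simple for $k\geqs 5$, so any twisting $P\to\Out(T)$ kills $A_k$, but it needs to be said). You also misread condition (iii): it is ``$G$ does not contain $S_k$'', not ``$P\ne S_k$'' --- one can have $P = S_k$ with $S_k\not\leqs G$ when the top group is twisted into $\Out(T)$ --- and the $k=2$, $P=A_2=1$ case has no coordinate swap at all.

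The backward direction is only a plan, and the plan underestimates what is required. The counting argument of Section \ref{s:prob} hinges on computing the maximal fixed-point set $h(T)=\max\{|C_T(x)|:1\ne x\in\Aut(T)\}$ for every non-abelian finite simple group (Theorem \ref{t:fixity_Hol}, a substantial case analysis) and on the complementation $\Hol(T,S)=\Hol(T,T\setminus S)$ that reduces to $k\leqs |T|/2$. Crucially, these counting estimates fail entirely for $k\in\{3,4\}$, where the paper instead uses invariable generation by a pair of regular semisimple elements of distinct orders (Theorem \ref{t:inv_gen_reg}) together with Gow's theorem on products of regular semisimple classes. Your proposal treats the whole of $3\leqs k\leqs |T|-3$ with one tool, which would break at the small end of the range.
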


Note that $b(G)\leqs 2$ if and only if $G$ has a regular suborbit, and there is a natural interest in studying the finite primitive groups with a unique regular suborbit. For example, notice that $G$ has a unique regular suborbit if and only if the Saxl graph of $G$ is $G$-arc-transitive. In this direction, we refer the reader to \cite[Theorem 1.6]{BH_Saxl} for a classification of the relevant almost simple primitive groups with soluble point stabilisers, and \cite[Corollary 5]{BH_prod} for partial results on product type groups. Here we resolve this problem for diagonal type groups.

\begin{theorem}
	\label{thm:r=1}
	Let $G$ be a diagonal type primitive group with socle $T^k$. Then $G$ has a unique regular suborbit if and only if $T = A_5$, $k\in\{3,57\}$  and $G = T^k.(\Out(T)\times S_k)$.
\end{theorem}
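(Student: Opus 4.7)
My plan is to establish an explicit criterion for when a point lies in a regular suborbit, to reduce the problem to maximal $b = 2$ diagonal groups via a monotonicity principle, and then to handle the remaining candidates by counting arguments with direct verification of the two exceptional cases.

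Identify $\Omega$ with $T^{k-1}$ via the normalisation $t_1 = 1$, so that a point $\omega$ is represented by a tuple $(1, t_2, \ldots, t_k)$. Building on the analysis in \cite{F_diag}, one shows that the intersection of $H = G_{\omega_0}$ (with $\omega_0 = [1,\ldots,1]$) and the stabiliser of $\omega = [1, t_2, \ldots, t_k]$ is trivial if and only if, for every non-identity pair $(\alpha, \sigma)$ in the outer part $\mathrm{Aut}(T) \times P$ of $H$, the system
\[
\alpha(t_j) = t_{\sigma(1)}^{-1} t_{\sigma(j)}, \qquad j = 1, \ldots, k,
\]
admits no solution. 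The number of regular suborbits is therefore $r(G) = N/|H|$, where $N$ counts tuples satisfying this condition, and Theorem \ref{thm:r=1} amounts to determining when $N = |H|$.

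The key reduction is a monotonicity principle: if $G_2 \subsetneq G_1$ are diagonal type primitive groups with the same socle acting on the same $\Omega$ and $b(G_1) = 2$, then each regular $H_1$-orbit refines under $H_2 = G_2 \cap H_1$ into exactly $[G_1 : G_2]$ regular $H_2$-orbits, so $r(G_2) \geqs [G_1 : G_2]\cdot r(G_1) \geqs 2 r(G_1) \geqs 2$. Consequently any $G$ with $r(G) = 1$ must be maximal among $b = 2$ diagonal groups with socle $T^k$. Invoking Theorem \ref{thm:b(G)=2}, the candidates reduce to three families: (a) $T^k.(\mathrm{Out}(T) \times S_k)$ when $3 \leqs k \leqs |T|-3$; (b) $T^k.(\mathrm{Out}(T) \times A_k)$ when $k \in \{|T|-2, |T|-1\}$; (c) $T^k.(\mathrm{Out}(T) \times P)$ for certain primitive $P \leqs S_k$ with $A_k \not\leqs P$.

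For candidates of type (c) the top group $P$ is relatively small, and I would adapt Fawcett's constructions from \cite{F_diag} to exhibit two $H$-inequivalent regular tuples, giving $r(G) \geqs 2$. For candidates of type (b), direct construction works: since $k$ is within $2$ of $|T|$, any regular tuple uses nearly every element of $T$, and permuting the few missing values yields distinct regular orbits. The main case is (a), where I would stratify the counting by the cycle structure of $\sigma \in S_k$ and apply inclusion--exclusion over $(\alpha, \sigma) \in \mathrm{Aut}(T) \times S_k$ to estimate $N$. The principal obstacle is obtaining bounds sharp enough to show $N \geqs 2|H|$ unless $(T, k) \in \{(A_5, 3), (A_5, 57)\}$; the extremes $k = 3$ and $k = |T|-3$ are the most delicate, while for intermediate $k$ there should be ample flexibility for the count to exceed $2|H|$. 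The two exceptional cases are verified directly: for $(A_5, 3)$ there are only $|\Omega|/|H| = 5$ suborbits in total, making a manual count feasible, while for $(A_5, 57)$ a tailored combinatorial argument (or computer assistance) is needed to confirm $N = |H|$ exactly.
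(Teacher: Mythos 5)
Your monotonicity principle is correct and is indeed used by the paper at the final step (the proof of Proposition 4.5 ends with exactly this observation), and your case split (a)/(b)/(c) is a reasonable organizing scheme. However, there are two substantial gaps.

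The first and most serious gap is that you have no viable way to handle the exceptional case $(T,k) = (A_5,57)$. You write that ``a tailored combinatorial argument (or computer assistance) is needed to confirm $N = |H|$ exactly,'' but $|\Omega| = 60^{56}$, so no direct computation is feasible. The paper handles this via a crucial reduction that is absent from your proposal: Lemmas \ref{l:Sk_eq_Hol} and \ref{l:Sk_eq_Hol_r} show that for $W = T^k.(\Out(T)\times S_k)$, the number of regular suborbits $r(W)$ equals the number of regular orbits of the (much smaller) group $\Hol(T)$ on the set $\mathscr{P}_k$ of $k$-subsets of $T$. Since $\Hol(T,S) = \Hol(T,T\setminus S)$, this gives the complementation symmetry $r(W)|_{k} = r(W)|_{|T|-k}$, so the case $k = 57 = |A_5| - 3$ is \emph{identical} to $k = 3$ rather than being a separate ``delicate'' case. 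This symmetry also lets the paper restrict attention to $k \leq |T|/2$ throughout. Without the $\Hol(T)$ reformulation, your inclusion--exclusion plan over $(\alpha,\sigma) \in \Aut(T)\times S_k$ — a set of size $|\Aut(T)|\cdot k!$ — is drastically less tractable than working with $\Hol(T)$ on $\mathscr{P}_k$, and there is no indication of how you would obtain bounds sharp enough to isolate the two exceptional cases. The paper's actual work here requires computing $h(T) = \max\{|C_T(x)| : 1 \neq x \in \Aut(T)\}$ for all simple groups $T$ (Theorem \ref{t:fixity_Hol}) to control fixed points of $\Hol(T)$ on $\mathscr{P}_k$, plus a separate constructive argument for $k \in \{3,4\}$ built on invariable generation and Gow's theorem on products of regular semisimple classes (Theorem \ref{t:inv_gen_reg}, Lemmas \ref{l:k=3_r(G)}--\ref{l:k=4_r(G)>1}) where the probabilistic estimates fail.

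The second, smaller gap is in your reduction to ``maximal $b = 2$ diagonal groups.'' For $k \in \{|T|-2, |T|-1\}$ your candidate list contains only $T^k.(\Out(T)\times A_k)$, but a diagonal type group $G$ with $G/T^k$ a \emph{subdirect} product inside $\Out(T)\times S_k$ can have top group $P_G = S_k$ yet fail to contain the factor $S_k$; such a $G$ has $b(G) = 2$ by Theorem \ref{thm:b(G)=2}(iii) but is not contained in $T^k.(\Out(T)\times A_k)$, so it is a maximal $b=2$ group that your list omits. The paper's Proposition \ref{p:k=|T|-1,|T|-2}(ii) handles all such $G$ uniformly. This is fixable within your framework, but as written your reduction is incomplete.
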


We now present our main result, which determines the precise base size of every primitive group of diagonal type. This is the first family of primitive groups arising in the O'Nan-Scott theorem for which the exact base sizes are known.

\begin{theorem}
	\label{thm:main}
	Let $G$ be a diagonal type primitive group with socle $T^k$ and top group $P\leqs S_k$.
	\begin{enumerate}\addtolength{\itemsep}{0.2\baselineskip}
		\item[{\rm (i)}] If $P\notin\{A_k,S_k\}$, then $b(G) = 2$.
		\item[{\rm (ii)}] If $k = 2$, then $b(G)\in\{3,4\}$, with $b(G) = 4$ if and only if $T\in\{A_5,A_6\}$ and $G = T^2.(\Out(T)\times S_2)$.
		\item[{\rm (iii)}] If $k\geqs 3$, $P\in\{A_k,S_k\}$ and $|T|^{\ell-1} < k\leqs |T|^{\ell}$ with $\ell\geqs 1$, then $b(G)\in\{\ell+1,\ell+2\}$. Moreover, $b(G) = \ell+2$ if and only if one of the following holds:
		\vspace{1mm}
		\begin{enumerate}\addtolength{\itemsep}{0.2\baselineskip}
			\item[{\rm (a)}] $k = |T|$.
			\item[{\rm (b)}] $k\in\{|T|-2,|T|^\ell-1,|T|^\ell\}$ and $S_k\leqs G$.
			\item[{\rm (c)}] $k = |T|^2-2$, $T\in\{A_5,A_6\}$ and $G = T^k.(\Out(T)\times S_k)$.
		\end{enumerate}
	\end{enumerate}
\end{theorem}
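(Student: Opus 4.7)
My plan splits according to the three parts of the theorem. Part (i) is immediate from \cite{F_diag}. For parts (ii) and (iii) I will use a uniform setup: parametrise $\Omega$ as cosets $[t_1,\ldots,t_k]$ in $T^k$ modulo the right-diagonal action of $T$, take $\omega_1 = [1,\ldots,1]$ with stabiliser $H = D(\Out(T)\times P)$ (where $D$ is the diagonal copy of $T$), and normalise each other base point to $\omega_j = [1, t_{2,j},\ldots,t_{k,j}]$. A $b$-subset containing $\omega_1$ is then equivalent to a colouring $c\colon \{1,\ldots,k\}\to T^{b-1}$, $i\mapsto(t_{i,2},\ldots,t_{i,b})$, with $c(1) = (1,\ldots,1)$. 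The set is a base for $G$ iff no non-identity $(d,\tau,\sigma) \in T\times\Out(T)\times P$ is a symmetry of $c$ under the combined action ($d$ acting by left multiplication on label entries, $\tau$ coordinatewise, and $\sigma$ by permutation of the positions). Baseness thus decomposes into three constraints: no non-trivial $\sigma\in P$ preserves the fibres of $c$ up to a diagonal/outer twist; no non-trivial outer automorphism fixes every label; and the $T$-entries of the labels together have trivial centraliser in $T$.

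For part (ii), with $k=2$, the colouring has only two positions and the top group is trivial or $S_2$, so a base of size $b$ reduces to choosing $b-1$ elements of $T$ satisfying the generation, outer-non-fixing, and (if $P = S_2$) swap-breaking conditions. From Fawcett's work \cite{F_diag} one extracts $b(G) \in \{3,4\}$, so it remains to identify the $b(G) = 4$ exceptions. I verify by direct case analysis that for $T \in \{A_5,A_6\}$ with $G$ the full diagonal group, every candidate third point is jointly fixed (up to the $S_2$ swap) by some non-trivial outer automorphism, forcing a fourth base point; for all other $(T,G)$ the richer supply of generating pairs of $T$ avoiding automorphism fixed points produces a base of size $3$.

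For part (iii), the lower bound $b(G) \geqs \ell+1$ follows from pigeonhole: if $b\leqs\ell$ then $|T|^{b-1} \leqs |T|^{\ell-1} < k$, so two coordinates of $c$ share a colour, producing a stabilising transposition in $S_k$. For $P = A_k$, either further coincidences (two pairs or a triple fibre) give an even stabilising permutation, or in the borderline case $k = |T|^{b-1}+1$ the generation and outer constraints supply the missing obstruction. The upper bound $b(G)\leqs\ell+2$ is by explicit construction: since $k\leqs|T|^\ell$ I can inject $\{1,\ldots,k\}$ into $T^\ell$ to make all labels distinct, and then use the $(\ell+1)$-st coordinate to install a pair of generators of $T$ jointly avoiding every non-trivial outer-automorphism fixed point. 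This simultaneously meets all three constraints and yields a base of size $\ell+2$.

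The central difficulty is the exact dichotomy $b(G) \in \{\ell+1,\ell+2\}$, i.e., the classification of cases (a)--(c). These are precisely the configurations in which any colouring of size $\ell+1$ is forced to be bijective or nearly so onto $T^\ell$, so that the natural symmetries of $T^\ell$ --- left multiplication by elements of $T$, the diagonal $\Out(T)$-action --- are inevitably inherited and lie inside $G$. In (a) $k = |T|$ the unique option is a bijection $\{1,\ldots,|T|\}\to T$, and left multiplication by any non-identity $t\in T$ induces a non-trivial element of $S_{|T|} = S_k \leqs G$ fixing the colouring. Case (b) extends this to $k\in\{|T|^\ell,|T|^\ell-1,|T|-2\}$ with $S_k\leqs G$. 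The sporadic case (c), with $T\in\{A_5,A_6\}$ and $k = |T|^2-2$, is the most delicate; I expect to handle it by direct analysis of the explicit outer automorphism structure of these groups, possibly with computer assistance to rule out every size-$(\ell+1)$ base. For all other configurations I will construct a base of size $\ell+1$ by perturbing the rigid colouring to break each inherited symmetry.
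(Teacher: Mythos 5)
The proposal correctly identifies the global shape of the argument: part (i) is Fawcett's, parts (ii)--(iii) are resolved by a lower bound, an upper bound, and a dichotomy analysis, and the dichotomy cases (a)--(c) are the rigid configurations where the colouring inherits symmetries of $T^{\ell}$. But the proposal has a genuine gap at its heart, and two smaller ones.

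The central missing ingredient is what the paper isolates as Theorem \ref{thm:Hol}: for every non-abelian finite simple $T$ and every $3 \leqs m \leqs |T| - 3$, there exists a subset $S \subseteq T$ with $|S| = m$ and $\Hol(T,S) = 1$. All of the paper's constructions of size-$(\ell+1)$ bases in the generic range $|T|^{\ell-1} < k \leqs |T|^{\ell}-3$ with $\ell \geqs 2$ go through Lemma \ref{l:cal_P_exists}, which uses Theorem \ref{thm:Hol} to manufacture a partition $\mathcal{P} = \{\mathcal{P}_t : t \in T\}$ of $[k]$ whose ``level sets by block-size'' have trivial $\Hol(T)$-stabiliser; this is exactly what defeats the inherited left-translation and $\Aut(T)$ symmetries. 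Your phrase ``perturbing the rigid colouring to break each inherited symmetry'' describes the desired outcome but not a mechanism; without Theorem \ref{thm:Hol} there is no obvious way to kill the $\Hol(T)$-action on the label multiset, and Theorem \ref{thm:Hol} itself is not elementary --- it requires the probabilistic machinery of Sections \ref{s:prob}--\ref{s:proof_b(G)=2} (the explicit computation of $h(T) = \max\{|C_T(x)| : 1 \ne x \in \Aut(T)\}$ in Theorem \ref{t:fixity_Hol}, the fixed-point summation bounds, and for $k\in\{3,4\}$ invariable generation by regular semisimple elements together with Gow's theorem on products of regular semisimple classes). That entire body of work is invisible in the proposal.

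Two smaller issues. In part (ii), reducing the $b(G)=4$ exceptions to ``direct case analysis'' does not work for general $T$; the paper's argument hinges on Lemma \ref{l:k=2_equiv} together with the Leemans--Liebeck theorem (Theorem \ref{t:LL_t:1.1}) on generating pairs $(s,t)$ with no $\alpha \in \Aut(T)$ inverting both, and then separate arguments for the excluded families $A_7$, $\LL_2(q)$, $\LL_3^\varepsilon(q)$ using base-two actions of $\PGL_2(q)$ and $\Aut(T)$. Your case (a) argument is also imprecise: left multiplication by $t \ne 1$ produces a permutation of $[|T|]$, but this permutation need not lie in $G$ when $P = A_k$ or $S_k \not\leqs G$, so the argument as stated only covers $S_k \leqs G$; for $k = |T|$ the conclusion $b(G) = \ell+2$ holds unconditionally and is already in Fawcett's Theorem \ref{t:F}(iii), so you should cite it rather than re-derive it. (There is also a minor set-up imprecision: the ``twist'' parameter in your symmetry triple should be a tuple $(g_2,\dots,g_b)\in T^{b-1}$, one left-translation per base coordinate, not a single element of $T$, since the element $g_j$ in the relation $t_{i,j}^{\alpha} = g_j\, t_{i^{\pi},j}$ depends on $j$.)
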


Let us briefly discuss the methods we will use to establish our main theorems. Focussing first on Theorem \ref{thm:b(G)=2}, recall that the \emph{holomorph} of a non-abelian finite simple group $T$ is the group
\begin{equation*}
\Hol(T) = T{:}\Aut(T) = T^2.\Out(T),
\end{equation*}
which can be viewed as a primitive diagonal type group (with $k = 2$ and top group $P = 1$) in terms of its natural action on $T$. We write $\Hol(T,S)$ for the setwise stabiliser of $S\subseteq T$ in $\Hol(T)$. A key observation is Lemma \ref{l:Sk_eq_Hol}, which implies that
\begin{equation*}
\mbox{$b(G) = 2$ if there exists $S\subseteq T$ such that $|S| = k$ and $\Hol(T,S) = 1$}.
\end{equation*}
This essentially reduces the proof of Theorem \ref{thm:b(G)=2} to the cases where $3\leqs k\leqs |T|/2$. However, it is rather difficult to directly construct an appropriate subset $S$ of $T$ such that $\Hol(T,S) = 1$.

To overcome this difficulty, we adopt a probabilistic approach for $k\geqs 5$ in the proof of Theorem \ref{thm:b(G)=2} (see Section \ref{s:prob} for more details). More specifically, we estimate the probability that a random $k$-subset $S$ of $T$ satisfies $\Hol(T,S) = 1$, and we also use fixed point ratios to study the probability that a random pair in $\Omega$ is a base for $G$. The former is a new idea, which involves computing
\begin{equation*}
\max\{|C_T(x)|:1\ne x\in\Aut(T)\}
\end{equation*}
in Theorem \ref{t:fixity_Hol}, while the latter is a widely used technique in the study of base sizes introduced by Liebeck and Shalev \cite{LS_prob}. The cases where $k = 3$ or $4$ will be treated separately in Section \ref{ss:34}. Here we use the fact that $T$ is invariably generated by two elements (which is proved in \cite{GM_inv_gen} and \cite{KLS_inv_gen}, independently), and a theorem of Gow \cite{G_reg_semi} on the products of regular semisimple classes in groups of Lie type. We will use a very similar approach to establish Theorem \ref{thm:r=1}.

The proof of Theorem \ref{thm:main} will be completed in Section \ref{s:proof_main}, and the main step involves constructing a base of size $\ell + 1$ when $|T|^{\ell-1} < k \leqs  |T|^\ell-3$ for some $\ell\geqs 2$. Once again, our construction requires the existence of a suitable subset $S$ of $T$ such that $\Hol(T,S) = 1$. We will treat the case where $k = 2$ separately, working with a theorem of Leemans and Liebeck \cite{LL_gen} on the existence of a generating pair of $T$ with a certain property (see Theorem \ref{t:LL_t:1.1}).

As described above, a key ingredient in our study of bases for diagonal type groups is the following result, which may be of independent interest.

\begin{theorem}
	\label{thm:Hol}
	Let $T$ be a non-abelian finite simple group and suppose $3\leqs m\leqs |T|-3$. Then there exists $S\subseteq T$ such that $|S| = m$ and $\Hol(T,S) = 1$.
\end{theorem}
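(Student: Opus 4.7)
Write $n=|T|$. Since $\Hol(T,S)=\Hol(T,T\setminus S)$ for every $S\subseteq T$, one may assume throughout that $3\leqs m\leqs n/2$. My plan is to combine a probabilistic argument for $m\geqs 5$ with an explicit construction for $m\in\{3,4\}$, mirroring the strategy sketched in the introduction for Theorem \ref{thm:b(G)=2}.

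For the probabilistic step, the goal is to show
\begin{equation*}
\sum_{1\ne g\in\Hol(T)} |\Fix_m(g)| < \binom{n}{m},
\end{equation*}
where $\Fix_m(g)$ denotes the set of $g$-invariant $m$-subsets of $T$; the existence of $S$ with $\Hol(T,S)=1$ follows at once. The key observation is that every non-identity element of $\Hol(T)$ has fixed point set on $T$ equal to (a coset of) $C_T(\alpha)$, where $\alpha$ denotes the projection to $\Aut(T)$; consequently $|\Fix(g)|\leqs c_T:=\max\{|C_T(x)|:1\ne x\in\Aut(T)\}$, and Theorem \ref{t:fixity_Hol} provides a sharp bound on $c_T$ showing that $c_T$ is negligible compared with $n$. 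For an element $g$ of prime order $p$ with $f=|\Fix(g)|\leqs c_T$, the cycle structure on $T$ is $1^f p^{(n-f)/p}$ and so
\begin{equation*}
|\Fix_m(g)| = \sum_{pb\leqs m}\binom{f}{m-pb}\binom{(n-f)/p}{b}.
\end{equation*}
Standard binomial estimates then give that $|\Fix_m(g)|/\binom{n}{m}$ is exponentially small in the range $5\leqs m\leqs n/2$ whenever $f\leqs c_T$. Elements of composite order are handled by passing to a prime-order power, using $\Fix_m(g)\subseteq\Fix_m(g^j)$. Summing over all $g\ne 1$ and using the crude estimate $|\Hol(T)|\leqs n\cdot|\Aut(T)|$, the inequality above follows for all $n$ beyond a computable threshold.

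For the remaining small cases $m\in\{3,4\}$, I would argue directly. By the theorem of Guralnick--Malle and Kantor--Lubotzky--Shalev, $T$ admits an invariably generating pair $\{a,b\}$; that is, for every pair of $T$-conjugates $a',b'$ of $a,b$ we have $\langle a',b'\rangle=T$. Taking $S=\{1,a,b\}$ for $m=3$, and $S=\{1,a,b,ab\}$ for $m=4$, any $(t,\a)$ fixing $S$ setwise forces $1\cdot(t,\a)\in S$, so $t$ lies in a prescribed small set, and the invariable-generation property combined with the constraint that $\a$ permutes the remaining elements of $S$ up to translation by $t$ rules out all nontrivial $(t,\a)$. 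In cases where such an invariable pair is awkward to exploit (for example, certain groups of Lie type with large outer automorphism group), one can instead use Gow's theorem on products of regular semisimple classes \cite{G_reg_semi} to produce $a,b,ab$ with trivial joint centraliser in $\Aut(T)$, again yielding $\Hol(T,S)=1$ with $S=\{1,a,b\}$ or $\{a,b,ab\}$.

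The main obstacle will be the transitional regime of the probabilistic argument: for $m$ only slightly above $5$ relative to $n$, the ratio $|\Fix_m(g)|/\binom{n}{m}$ is not yet sufficiently small to absorb the factor of $|\Hol(T)|$ coming from summation, so one must invoke the sharp form of Theorem \ref{t:fixity_Hol} and carry out the estimates case-by-case according to the isomorphism type of $T$ (alternating, classical, exceptional, sporadic). A finite list of small simple groups, where neither the probabilistic bound nor the invariable-generation argument is clean, will need to be verified by direct computation in \textsc{Magma} or \textsc{Gap}; the $m=3,4$ analysis must also be done carefully for these sporadic exceptions.
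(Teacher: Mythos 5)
Your overall strategy matches the paper's: reduce to $3\leqs m\leqs |T|/2$ via $\Hol(T,S)=\Hol(T,T\setminus S)$, use a fixed-point count over $\Hol(T)$ together with the bound $h(T)=\max\{|C_T(x)|:1\ne x\in\Aut(T)\}$ from Theorem~\ref{t:fixity_Hol} to settle $m\geqs 5$, and reserve explicit constructions based on invariable generation and Gow's theorem for $m\in\{3,4\}$, with \textsc{Magma} filling in small exceptions. The probabilistic half is essentially Lemmas~\ref{l:fix}, \ref{l:mu}, Corollary~\ref{c:fix} and Proposition~\ref{p:log} restated; that part is sound in outline.

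The gap is in the $m\in\{3,4\}$ constructions, which as stated do not work. For $m=3$ with $S=\{1,a,b\}$, suppose $g\alpha\in\Hol(T,S)$. Lemma~\ref{l:translation} gives $g\in S$, but you still must exclude $g=a$ and $g=b$. Taking $g=a$, the condition is $a^{-1}S=S^{\alpha^{-1}}$, i.e.\ $\{a^{-1},1,a^{-1}b\}=\{1,a,b\}^{\alpha^{-1}}$, so you need to rule out an automorphism $\alpha$ sending $\{a,b\}$ to $\{a^{-1},a^{-1}b\}$. Invariable generation of $T$ by $\{a,b\}$ says nothing about this: such an $\alpha$ could easily exist (e.g.\ an involutory automorphism inverting $a$ and sending $b$ to $a^{-1}b$). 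The effective criterion is Lemma~\ref{l:Hol(T,S)=1_cond}(ii), which amounts to $|a^{-1}b|\notin\{|a|,|b|\}$ once $|a|\ne|b|$, and that has to be arranged. This is precisely why the paper's Lemma~\ref{l:k=3_r(G)} does not take $S=\{1,a,b\}$ directly but instead picks regular semisimple $x,y$ of distinct orders that invariably generate $T$ (Theorem~\ref{t:inv_gen_reg}) and then uses Gow's theorem to realise a prescribed semisimple class $z^T$ as $z=xy^h$, with $z$ lying \emph{outside} the $\Aut(T)$-classes of $x^{\pm1},y^{\pm1}$; it is this class-disjointness that kills the translation elements, not invariable generation alone. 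Your appeal to Gow's theorem to get ``trivial joint centraliser in $\Aut(T)$'' is also not the right target; what the argument actually needs is control of the class multiset after each translation. Similarly, for $m=4$ the set $\{1,a,b,ab\}$ invites exactly the same problem (translate by $a^{-1}$), and the paper uses a genuinely different set $\{1,x,y,z\}$ where $z$ is a carefully chosen regular semisimple element subject to several class-disjointness constraints (see the $z_0$ and $z$ conditions in Lemma~\ref{l:k=4_r(G)>1}). Finally, the Lie-type argument needs Theorem~\ref{t:inv_gen_reg} in its stronger form (the invariable generating pair must be regular semisimple of distinct orders), and alternating and sporadic groups require their own constructions (Lemma~\ref{l:sporadic_k=3,4} and the $A_n$ argument in Proposition~\ref{p:k=3,4}) — it is not just a finite tail to be swept up by machine.
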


Similarly, let $\Aut(T,S)$ be the setwise stabiliser of $S\subseteq T^\#$ in $\Aut(T)$, where $T^\# = T\setminus \{1\}$. Note that $\Aut(T,S) = \Aut(T,T^\#\setminus S)$. By Theorem \ref{thm:Hol} and the transitivity of $\Hol(T)$, if $3\leqs m\leqs |T|-3$, then there exists $S\subseteq T$ containing $1$ such that $|S| = m$ and $\Hol(T,S) = 1$. This implies that $\Aut(T,S\setminus\{1\}) = 1$ and we have the following corollary.

\begin{corol}
	\label{cor:Aut}
	Let $T$ be a non-abelian finite simple group and suppose $2\leqs m\leqs |T|-3$. Then there exists $S\subseteq T^\#$ such that $|S| = m$ and $\Aut(T,S) = 1$.
\end{corol}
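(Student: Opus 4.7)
The plan is to deduce Corollary \ref{cor:Aut} from Theorem \ref{thm:Hol} essentially by passing between $\Aut(T)$ and $\Hol(T) = T{:}\Aut(T)$, exploiting the fact that $\Aut(T)$ is precisely the stabiliser of the identity $1\in T$ inside $\Hol(T)$ (with respect to the natural action of $\Hol(T)$ on $T$).

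First, given $m$ with $2\leqs m\leqs |T|-4$, I would apply Theorem \ref{thm:Hol} with $m+1$ in place of $m$, noting that $3\leqs m+1\leqs |T|-3$, to obtain a subset $S_{0}\subseteq T$ with $|S_{0}| = m+1$ and $\Hol(T,S_{0}) = 1$. Since $\Hol(T)$ contains $T$ acting regularly on itself, it is transitive on $T$, so after replacing $S_{0}$ by a suitable $\Hol(T)$-translate I may assume $1\in S_{0}$. Set $S = S_{0}\setminus\{1\}\subseteq T^{\#}$, which has size $m$. An element $g\in \Aut(T)$ always fixes $1$, so $g$ preserves $S$ setwise if and only if it preserves $S_{0}$ setwise; hence
\begin{equation*}
\Aut(T,S) \;=\; \Aut(T)\cap \Hol(T,S_{0}) \;=\; 1.
\end{equation*}

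For the remaining value $m = |T|-3$, I would invoke the symmetry $\Aut(T,S) = \Aut(T,T^{\#}\setminus S)$ noted in the excerpt: if $|S| = |T|-3$ then $|T^{\#}\setminus S| = 2$, which was treated in the previous step. This uses only that $|T|\geqs 6$, which is automatic since every non-abelian finite simple group has order at least $|A_{5}| = 60$, so the range $2\leqs m\leqs |T|-4$ is nonempty and the two cases together cover $2\leqs m\leqs |T|-3$.

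I do not foresee any genuine obstacle: the whole substance of the corollary is contained in Theorem \ref{thm:Hol}, and what remains is the short transitivity-plus-complementation argument above. The only minor thing to verify is that the transitivity of $\Hol(T)$ on $T$ really does reduce the problem to the case $1\in S_{0}$, which is immediate from the fact that $T\leqs \Hol(T)$ acts regularly on itself by right multiplication.
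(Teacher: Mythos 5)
Your proof is correct and follows essentially the same route as the paper: apply Theorem \ref{thm:Hol}, translate so that $1\in S_0$, delete $1$ to pass from $\Hol(T,S_0)=1$ to $\Aut(T,S_0\setminus\{1\})=1$, and use the complementation identity $\Aut(T,S)=\Aut(T,T^\#\setminus S)$ to reach $m=|T|-3$. (The only cosmetic slip is that the paper has $T$ acting by left translation rather than right, but transitivity of $\Hol(T)$ is all that is needed.)
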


To conclude this section, we highlight a connection to some interesting problems in algebraic combinatorics. A digraph $\Gamma$ is said to be a \textit{digraphical regular representation (DRR)} of a group $X$ if $\Aut(\Gamma)\cong X$ acts regularly on the vertex set of $\Gamma$. In particular, if $\Gamma$ is a DRR of $X$, then $\Gamma$ is isomorphic to a Cayley digraph $\mathrm{Cay}(X,S)$ for some $S\subseteq X^\#$ with $\Aut(X,S) = 1$. A classical result of Babai \cite{B_DRR} shows that a finite group $X$ admits a DRR if and only if $X$ is not a quaternion group nor one of four elementary abelian groups. Moreover, it was conjectured by Babai and Godsil \cite{BG_DRR,G_DRR} that if $X$ is a group of order $n$, then the proportion of subsets $S\subseteq X^\#$ such that $\mathrm{Cay}(X,S)$ is a DRR tends to $1$ as $n\to\infty$. This conjecture has been proved recently by Morris and Spiga \cite{MS_DRR}.

Given a finite group $X$, it is natural to consider the existence of a DRR with a prescribed valency, noting that the valency of $\mathrm{Cay}(X,S)$ is $|S|$. Recently, there are some results concerning this problem in relation to finite simple groups (for example, see \cite{VX_DRR,XZZ_DRR} for the existence of some families of DRRs with a fixed valency $k\leqs 3$, and \cite{X_DRR} for $k\geqs 5$). However, there appear to be no asymptotic results in the literature concerning the proportion of DRRs of a fixed valency of a given finite group. With this problem in mind, let $\mathbb{Q}_k(X)$ be the probability that a random $k$-subset of $X^\#$ has a non-trivial setwise stabiliser in $\Aut(X)$. That is,
\begin{equation*}
\mathbb{Q}_k(X) = \frac{|\{R\in\mathscr{S}_k:\Aut(X,R)\ne 1\}|}{|\mathscr{S}_k|},
\end{equation*}
where $\mathscr{S}_k$ is the set of $k$-subsets of $X^\#$. In Section \ref{s:GRR}, we will prove the following results.

\begin{theorem}
	\label{thm:Aut(T,S)_to_1}
	Let $k\geqs 4$ be an integer and let $(T_n)$ be a sequence of non-abelian finite simple groups such that $|T_n|\to\infty $ as $n\to\infty$. Then $\mathbb{Q}_k(T_n)\to 0$ as $n\to\infty$.
\end{theorem}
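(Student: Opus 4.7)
The plan is to argue by a first-moment (union) bound. Let $R$ be a uniformly random element of $\mathscr{S}_k$ and let $N(\sigma,k)$ denote the number of $\sigma$-invariant $k$-subsets of $T^\#$. Markov's inequality applied to the non-negative integer $|\Aut(T,R)| - 1$ gives
\begin{equation*}
\mathbb{Q}_k(T)\ \leqs\ \mathbb{E}\bigl[|\Aut(T,R)|\bigr] - 1\ =\ \sum_{1 \ne \sigma \in \Aut(T)} \frac{N(\sigma,k)}{\binom{|T|-1}{k}},
\end{equation*}
so it suffices to show that this sum vanishes along the sequence $(T_n)$.

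For fixed $\sigma \neq 1$, every $\sigma$-invariant $k$-subset of $T^\#$ is the disjoint union of $i$ fixed points of $\sigma$ together with a family of non-trivial $\sigma$-orbits of total length $k-i$, for some $0 \leqs i \leqs k$. Since the number of $\sigma$-orbits of length $j \geqs 2$ is at most $(|T|-|C_T(\sigma)|)/j$, the number of ways to realise a given total size $s$ from such orbits is $O(|T|^{\lfloor s/2\rfloor})$, so a routine combinatorial computation yields
\begin{equation*}
\frac{N(\sigma,k)}{\binom{|T|-1}{k}}\ \leqs\ C_k\!\left(\!\left(\frac{|C_T(\sigma)|}{|T|}\right)^{\!k}+\frac{1}{|T|^{k/2}}\right)
\end{equation*}
for some constant $C_k$ depending only on $k$. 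Summing the $|T|^{-k/2}$ term over $\Aut(T)\setminus\{1\}$ and invoking the classification-dependent bound $|\Aut(T)| = O(|T|\log|T|)$ (equivalently $|\Out(T)| = O(\log|T|)$) yields a contribution of order $|T|^{1-k/2}\log|T|$, which tends to $0$ as $|T|\to\infty$ because $k \geqs 4$. It therefore remains to bound $\Sigma(T) := \sum_{1\ne\sigma\in\Aut(T)}(|C_T(\sigma)|/|T|)^k$.

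Splitting $\Sigma(T)$ into inner and outer contributions, the inner part equals $\sum_{C\ne\{1\}}|C|^{1-k}$, where $C$ ranges over non-trivial $T$-conjugacy classes. I would handle this family-by-family along the classification. For $T = A_n$, every $\sigma \in S_n = \Aut(T)$ of support size $s \geqs 2$ lies in a class of size at least $\binom{n}{s}$, while the number of classes of support $s$ is bounded by a partition number; the sum is thus dominated by the $s=2$ (transposition) term and is $O(n^{2(1-k)})$. For simple groups of Lie type, the analogous estimate follows from standard polynomial lower bounds on non-trivial centraliser indices combined with polynomial bounds on the total number of conjugacy classes. Sporadic groups contribute only finitely often and do not affect the limit. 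For the outer contribution, Theorem \ref{t:fixity_Hol} provides a uniform upper bound on $|C_T(\sigma)|/|T|$ for $\sigma \in \Aut(T) \setminus \Inn(T)$, and combining with $|\Out(T)| = O(\log|T|)$ gives the required $o(1)$ estimate.

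The principal obstacle is the case-by-case verification of $\Sigma(T) \to 0$. The crude bound using only the minimum class size is insufficient (for instance, $p(n)\binom{n}{2}^{1-k}$ does \emph{not} tend to $0$ for $A_n$ since the partition function grows super-polynomially), so the support-size refinement (and its Lie-type analogue via centraliser dimension) is essential. Controlling outer automorphisms of classical groups of unbounded rank is technically the most delicate piece, but is ultimately controlled by Theorem \ref{t:fixity_Hol} together with the Schreier bound on $|\Out(T)|$.
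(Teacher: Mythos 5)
Your proof takes a genuinely different route from the paper's. The paper's own argument is extremely short: Lemma~\ref{l:PQ} shows $\mathbb{Q}_k(T)\leqs 1-\mathbb{P}_{k+1}(T)$, where $\mathbb{P}_{k+1}(T)$ is the probability that a random point of $\Omega$ lies in a regular $D$-orbit for the diagonal type group $T^{k+1}.(\Out(T)\times S_{k+1})$, and then it invokes Fawcett's theorem (\cite[Theorem 1.5]{F_diag}, recorded here as Theorem~\ref{t:F_t:1.5}) that $\mathbb{P}_{k+1}(T_n)\to 1$. That reduction rests only on the observation from Lemma~\ref{l:Sk_eq_Hol} that if $\{D,D(\varphi_{t_1},\dots,\varphi_{t_k},1)\}$ is a base then $\Hol(T,\{1,t_1,\dots,t_k\})=1$, which forces $\Aut(T,\{t_1,\dots,t_k\})=1$. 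The paper thereby delegates the entire family-by-family analysis to Fawcett's earlier fixed-point-ratio estimates. You instead apply a first-moment bound directly to the action of $\Aut(T)$ on $k$-subsets of $T^\#$, in effect re-deriving by hand, at the level of $\Aut(T)$, the asymptotic information Fawcett already established for $D$ acting on $\Omega$. Your route is self-contained and more elementary, but it demands exactly the substantial case-by-case verification that you only sketch and that the paper's reduction bypasses.

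There is also a concrete flaw in your sketched treatment of the outer contribution. You claim that Theorem~\ref{t:fixity_Hol} provides a uniform upper bound on $|C_T(\sigma)|/|T|$, and that combining this with $|\Out(T)|=O(\log|T|)$ gives the required $o(1)$ estimate. But the outer sum ranges over $|T|\cdot(|\Out(T)|-1)$ elements, not $O(\log|T|)$ of them, so multiplying a uniform bound on $(|C_T(\sigma)|/|T|)^k$ by $O(\log|T|)$ undercounts by a factor of roughly $|T|$. Concretely, for $T=A_n$ and $\sigma$ ranging over the odd permutations in $S_n$, the crude estimate $|S_n\setminus A_n|\cdot(h(T)/|T|)^k=\tfrac{1}{2}n!\cdot\bigl(2/(n(n-1))\bigr)^k$ blows up as $n\to\infty$. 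As with your inner sum, one must group the outer automorphisms by $\Inn(T)$-conjugacy class, replacing each class of size $|T|/|C_T(\sigma)|$ by the weight $(|C_T(\sigma)|/|T|)^{k-1}$, and then bound the number of such classes; this parallels the proof of Lemma~\ref{l:r2} but is absent from your sketch.
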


\begin{theorem}\label{thm:prob_Aut}
	Let $T$ be a non-abelian finite simple group and let $k$ be an integer such that $5\log_2|T| < k < |T|-5\log_2|T|$. Then $\mathbb{Q}_k(T)<1/|T|$.
\end{theorem}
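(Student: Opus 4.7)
The plan is to combine a union bound over $\Aut(T)\setminus\{1\}$ with a generating-function estimate for the number of $\phi$-invariant $k$-subsets of $T^\#$. Write $n = |T|-1$. Since complementation $R\mapsto T^\#\setminus R$ commutes with the action of $\Aut(T)$, we have $\mathbb{Q}_k(T)=\mathbb{Q}_{n-k}(T)$ and may assume $k \leqs n/2$. The union bound then gives
\[
\mathbb{Q}_k(T) \leqs \sum_{1\ne\phi\in\Aut(T)} \frac{N(\phi,k)}{\binom{n}{k}},
\]
where $N(\phi,k)$ denotes the number of $\phi$-invariant $k$-subsets of $T^\#$. Combined with the CFSG bound $|\Aut(T)|\leqs |T|^{1+o(1)}$, it then suffices to show that the inner ratio is at most $|T|^{-2-o(1)}$ for every nontrivial $\phi$.

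Fix $\phi\in\Aut(T)\setminus\{1\}$ and set $f=|C_T(\phi)|-1$. Apart from these $f$ fixed points, every $\phi$-orbit on $T^\#$ has size at least $2$, so the size generating function for $\phi$-invariant subsets of $T^\#$ equals $(1+x)^f\prod_{|O|\geqs 2}(1+x^{|O|})$. A comparison of logarithmic derivatives yields the elementary inequality $1+x^d\leqs(1+x^2)^{d/2}$ for $d\geqs 2$ and $x\in[0,1]$, and pairing this with the standard coefficient extraction $[y^k]P(y)\leqs P(x)/x^k$ (valid for $x\in(0,1]$ and polynomials $P$ with non-negative coefficients) yields
\[
N(\phi,k) \leqs \min_{0<x\leqs 1}\frac{(1+x)^f(1+x^2)^{(n-f)/2}}{x^k}.
\]
Theorem~\ref{t:fixity_Hol} then supplies a sublinear bound on $f$, which will be essential for controlling the factor $(1+x)^f$.

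To finish, I split the range into two regimes using a fixed $\alpha\in(0.11,1/e)$. For $\alpha n\leqs k\leqs n/2$, setting $x=1$ gives $N(\phi,k)\leqs 2^{(n+f)/2}$, and the entropy estimate $\binom{n}{k}\geqs 2^{nh(k/n)}/(n+1)$ (with $h$ the binary entropy function) combined with $h(k/n)\geqs h(\alpha)>1/2$ produces an exponentially small ratio. For $5\log_2|T|<k<\alpha n$, the near-optimal choice $x=\sqrt{k/n}$, together with the elementary bound $\binom{n}{k}\geqs (n/k)^k$, yields a ratio bounded by $e^{f\sqrt{k/n}}(ek/n)^{k/2}$; since $k/n<\alpha<1/e$ forces $ek/n<1$, this is exponentially small in $k$, and the hypothesis $k>5\log_2|T|$ makes the decay comfortably beat $|T|^{2+o(1)}$ after absorbing the subpolynomial correction $e^{f\sqrt{k/n}}$ controlled by Theorem~\ref{t:fixity_Hol}.

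The main obstacle is the lower endpoint $k\approx 5\log_2|T|$, where $\binom{n}{k}$ is only $2^{\Theta((\log_2|T|)^2)}$ and the fixed-point contribution must be tracked carefully: Theorem~\ref{t:fixity_Hol} is indispensable for keeping $e^{f\sqrt{k/n}}$ subpolynomial in $|T|$, and the constant $5$ in the hypothesis provides just enough slack so that the dominant term $(k/2)\log_2(n/(ek))$ exceeds $2\log_2|T|$ by the margin required to beat $|\Aut(T)|\leqs |T|^{1+o(1)}$ and leave the $1/|T|$ margin in the conclusion.
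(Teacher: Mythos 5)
Your high-level strategy (union bound over non-trivial automorphisms, generating-function count of invariant subsets, then a Chernoff-style coefficient bound) is a reasonable alternative to the paper's approach, which instead runs the union bound over prime-order elements of $\Hol(T)$ acting on $\mathscr{P}_k(T)$ and controls each term with a Vandermonde-type binomial bound $\binom{|T|/2 + h(T)}{k}$, yielding a uniform ratio of at most $(3/5)^k$. However, your execution has a genuine gap at the key step: the choice $x=\sqrt{k/n}$ is \emph{not} near-optimal once $f$ is large, and your claim that Theorem~\ref{t:fixity_Hol} keeps $e^{f\sqrt{k/n}}$ ``subpolynomial'' is simply false. The theorem (via Corollary~\ref{c:h(T)_le_|T|/10}) only gives $f\leqs h(T)-1 \leqs |T|/10$, and for, say, $T=A_n$ one has $h(T)=(n-2)!\sim |T|^{1-o(1)}$. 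Taking $k$ near its lower endpoint $5\log_2|T|$, the exponent becomes
\[
f\sqrt{k/n} \;\gtrsim\; \frac{|T|}{n^2}\sqrt{\frac{5\log_2|T|}{|T|}} \;=\; \Theta\!\left(\frac{\sqrt{|T|\log|T|}}{n^2}\right),
\]
which vastly exceeds the competing decay $(k/2)\ln(n/(ek))=\Theta\bigl((\log|T|)^2\bigr)$ as soon as $n\geqs 12$. Concretely, for $T=A_{20}$ and $\phi$ a transposition one has $f\sqrt{k/n}\approx 10^8$ while $(k/2)\ln(n/(ek))\approx 5\times 10^3$, so your bound $e^{f\sqrt{k/n}}(ek/n)^{k/2}$ exceeds $e^{10^8}$, not $|T|^{-2-o(1)}$.

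The underlying issue is that the optimizer of $(1+x)^f(1+x^2)^{(n-f)/2}x^{-k}$ depends on $f$: when $f\gg\sqrt{kn}$ the correct scale is $x\approx k/f$, not $\sqrt{k/n}$ (with that choice one gets roughly $(2ef/n)^k$, which with $f\leqs |T|/10$ is $\leqs (0.544)^k$ and does the job). Alternatively, one can forgo the Chernoff step and use a Vandermonde-style count as the paper does, which gives a bound independent of the particular $\phi$. Either repair is essential before the argument closes; as written, the claim of subpolynomial correction is where the proof fails. A secondary, more minor issue: in your $\alpha n\leqs k\leqs n/2$ regime, $h(\alpha)>1/2$ alone is not enough to kill the numerator $2^{(n+f)/2}$; since $f/n$ can be as large as $1/10$, you actually need $h(\alpha)>0.55$, so $\alpha\in(0.11,1/e)$ is slightly too permissive at the lower end. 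Finally, to get the explicit conclusion $\mathbb{Q}_k(T)<1/|T|$ (rather than $\mathbb{Q}_k(T)\to 0$) you will need to replace the $|\Aut(T)|\leqs|T|^{1+o(1)}$ shorthand with the paper's concrete bound $|\Out(T)|<|T|^{1/3}$ (Lemma~\ref{l:|Out(T)|}) and track constants.
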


We anticipate that these two results will be useful in studying the abundance of fixed-valent DRRs of non-abelian finite simple groups.

\subsection*{Notation}

Let $G\leqs\mathrm{Sym}(\Omega)$ be a permutation group and $\Delta\subseteq\Omega$. Then the pointwise and setwise stabilisers of $\Delta$ in $G$ are sometimes denoted $G_{(\Delta)}$ and $G_{\{\Delta\}}$, respectively. We adopt the standard notation for simple groups of Lie type from \cite{KL_classical}. All logarithms, if not specified, are in base $2$. Finally, if $k$ is a positive integer, then we write $[k]$ for the set $\{1,\dots,k\}$.

\section{Preliminaries}

\label{s:pre}

\subsection{Diagonal type groups}

Here we adopt the notation in \cite{F_diag}. Let $k\geqs 2$ be an integer and let $T$ be a non-abelian finite simple group. Define
\begin{equation*}
\begin{aligned}
&W(k,T):=\{(\alpha_1,\dots,\alpha_k)\pi\in \Aut(T)\wr_kS_k:\alpha_1 \Inn(T) = \alpha_i\Inn(T)\mbox{ for all }i\},\\
&D(k,T):=\{(\alpha,\dots,\alpha)\pi\in \Aut(T)\wr_kS_k\},\\
&\Omega(k,T):=[W(k,T):D(k,T)].
\end{aligned}
\end{equation*}
Then $|\Omega(k,T)| = |T|^{k-1}$ and $W(k,T) = T^k.(\Out(T)\times S_k)$ acts faithfully on $\Omega(k,T)$. We say that a group $G\leqs\mathrm{Sym}(\Omega)$ with $\Omega = \Omega(k,T)$ is of \textit{diagonal type} if
\begin{equation*}
T^k\normeq G\leqs T^k.(\Out(T)\times S_k).
\end{equation*}
Let $P_G$ denote the subgroup of $S_k$ induced by the conjugation action of $G$ on the set of factors of $T^k$. That is,
\begin{equation*}
P_G = \{\pi\in S_k:(\alpha_1,\dots,\alpha_k)\pi\in G\mbox{ for some }\alpha_1,\dots,\a_k\in\Aut(T)\}.
\end{equation*}
Then naturally we have $G\leqs T^k.(\Out(T)\times P_G)$ as in \eqref{e:diag}. Moreover, $G$ is primitive if and only if either $P_G$ is primitive on $[k] = \{1,\dots,k\}$, or $k = 2$ and $P_G = 1$. From now on, if $G$ is clear from the context, we denote $P = P_G$ and
\begin{equation*}
\begin{aligned}
&W:=T^k.(\Out(T)\times P),\\
&D:=\{(\alpha,\dots,\alpha)\pi:\alpha\in\Aut(T),\pi\in P\},\\
&\Omega:=\Omega(k,T) = [W:D].
\end{aligned}
\end{equation*}
We write $\varphi_t\in\Inn(T)$ for the inner automorphism such that $x^{\varphi_t} = t^{-1}xt$ for any $x\in T$. Thus,
\begin{equation*}
\Omega = \{D(\varphi_{t_1},\dots,\varphi_{t_k}):t_1,\dots,t_k\in T\}.
\end{equation*}
The action of $G$ on $\Omega$ is given by
\begin{equation*}
D(\varphi_{t_1},\dots,\varphi_{t_k})^{(\alpha_1,\dots,\a_k)\pi} = D(\varphi_{t_{1^{\pi^{-1}}}}\a_{1^{\pi^{-1}}},\dots,\varphi_{t_{k^{\pi^{-1}}}}\a_{k^{\pi^{-1}}}),
\end{equation*}
and the stabiliser of $D\in \Omega$ in $W$ is $D$ itself. In particular, for any element $(\a,\dots,\a)\pi\in D$, we have
\begin{equation*}
D(\varphi_{t_1},\dots,\varphi_{t_k})^{(\alpha,\dots,\a)\pi} = D(\varphi_{t_{1^{\pi^{-1}}}^{\a}},\dots,\varphi_{t_{k^{\pi^{-1}}}^{\a}}),
\end{equation*}
noting that $ \a^{-1}\varphi_t\a = \varphi_{t^\a}$ for all $t\in T$.

We begin by recording some preliminary results on bases for diagonal type groups from \cite{F_diag}. We start with \cite[Lemma 3.4]{F_diag}.

\begin{lem}
	\label{l:l:3.4_diag}
	Let $t_1,\dots,t_k$ be elements of $T$ such that the following two properties are satisfied:
	\begin{enumerate}\addtolength{\itemsep}{0.2\baselineskip}
		\item[{\rm (i)}] At least two of the $t_i$ are trivial and at least one is non-trivial.
		\item[{\rm (ii)}] If $t_i$ and $t_j$ are non-trivial and $i\ne j$, then $t_i\ne t_j$.
	\end{enumerate}
	Then $(\alpha,\dots,\alpha)\pi\in G$ fixes $D(\varphi_{t_1},\dots,\varphi_{t_k})$ only if $t_i^\alpha = t_{i^\pi}$ for all $i$.
\end{lem}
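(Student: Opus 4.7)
The plan is to translate the fixation hypothesis into an explicit equation inside $T$, and then eliminate a stray factor by invoking hypotheses (i) and (ii).

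First, I would apply the formula for the $W$-action recalled in the excerpt to rewrite the hypothesis that $(\alpha,\dots,\alpha)\pi$ fixes $D(\varphi_{t_1},\dots,\varphi_{t_k})$ as the coset equality
\[
D(\varphi_{t_{1^{\pi^{-1}}}^\alpha},\dots,\varphi_{t_{k^{\pi^{-1}}}^\alpha}) = D(\varphi_{t_1},\dots,\varphi_{t_k}).
\]
Both representatives here have trivial permutation component, so any witness in $D$ must itself be of the purely diagonal form $(\beta,\dots,\beta)$ with $\beta\in\Aut(T)$. Since $T$ has trivial centre, the map $\varphi\colon T\to\Inn(T)$ sending $t\mapsto \varphi_t$ is injective, and a routine computation then shows that
$D(\varphi_{s_1},\dots,\varphi_{s_k}) = D(\varphi_{u_1},\dots,\varphi_{u_k})$ if and only if there exists $r\in T$ with $s_i = r u_i$ for every $i$. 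Applying this to the displayed equality yields some $r\in T$ such that $t_{i^{\pi^{-1}}}^\alpha = r\,t_i$ for all $i$, and reparametrising with $j = i^{\pi^{-1}}$ gives $t_j^\alpha = r\,t_{j^\pi}$ for every $j$.

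The second step is to show $r = 1$. By hypothesis (i), choose distinct indices $i_1\ne i_2$ with $t_{i_1} = t_{i_2} = 1$. Substituting $j = i_1$ and $j = i_2$ into the relation above yields $t_{i_1^\pi} = r^{-1} = t_{i_2^\pi}$. If $r\ne 1$, then both entries are non-trivial and equal, yet their indices $i_1^\pi$ and $i_2^\pi$ are distinct, contradicting (ii). Hence $r = 1$, and the desired identity $t_j^\alpha = t_{j^\pi}$ follows for every $j$.

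I do not anticipate any serious obstacle: the whole argument is a careful unpacking of the coset action. The one place worth being careful about is the characterisation of coset equality, which relies both on the diagonal form of $D$ and on the triviality of $Z(T)$; once that is in hand, (i) and (ii) combine to force the ambiguous element $r$ to be trivial.
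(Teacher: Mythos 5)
Your proof is correct and follows exactly the argument the paper uses (implicitly, via the citation to Fawcett, and explicitly in the proof of the extension Lemma \ref{l:l:3.4_diag_ext}): one unpacks the right-coset action to extract a translation factor $g\in T$ with $t_i^\alpha = g\,t_{i^\pi}$, then uses the two repeated trivial entries together with hypothesis (ii) to force $g=1$. The only (harmless) deviation is that your argument never actually invokes the ``at least one non-trivial'' part of (i), which is present for use elsewhere in Fawcett's development.
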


For any $\mathbf{x} = (\varphi_{t_1},\dots,\varphi_{t_k})\in \Inn(T)^k$, we define an associated partition $\mathcal{P}_{\mathbf{x}} = \{\mathcal{P}_t:t\in T\}$ of $[k]$ such that $i\in\mathcal{P}_t$ if $t_i = t$. Note that some parts $\mathcal{P}_t$ in $\mathcal{P}_{\mathbf{x}}$ might be empty. The following lemma is an extension of Lemma \ref{l:l:3.4_diag}, which will be useful later in Section \ref{s:proof_main}. Recall that $P_{\{\mathcal{P}_{\mathbf{x}}\}}$ is the setwise stabiliser of the partition $\mathcal{P}_{\mathbf{x}}$ in $P$. In particular, if $t_{i^\pi} = t_{j^\pi}$ whenever $t_i = t_j$, then we have $\pi\in P_{\{\mathcal{P}_{\mathbf{x}}\}}$.

\begin{lem}
	\label{l:l:3.4_diag_ext}
	Let $\mathbf{x} = (\varphi_{t_1},\dots,\varphi_{t_k})\in \Inn(T)^k$, $\omega = D\mathbf{x}\in\Omega$ and let $\mathcal{P}_{\mathbf{x}} = \{\mathcal{P}_t:t\in T\}$ be the associated partition of $[k]$ as above. Suppose $(\a,\dots,\a)\pi\in G_\omega$. Then
	\begin{enumerate}\addtolength{\itemsep}{0.2\baselineskip}
		\item[{\rm (i)}] $\pi\in P_{\{\mathcal{P}_{\mathbf{x}}\}}$; and
		\item[{\rm (ii)}] If $0<|\mathcal{P}_1|\ne |\mathcal{P}_t|$ for all $t\ne 1$, then $t_i^\alpha = t_{i^\pi}$ for all $i$.
	\end{enumerate}
\end{lem}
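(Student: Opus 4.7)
The plan is to reduce both parts to a single explicit relation that follows from $(\alpha,\dots,\alpha)\pi$ fixing $\omega = D\mathbf{x}$, and then read each assertion off from it. Applying the action formula recalled in the paragraph preceding the lemma,
\[
D(\varphi_{t_1},\dots,\varphi_{t_k})^{(\alpha,\dots,\alpha)\pi} = D(\varphi_{t_{1^{\pi^{-1}}}^{\alpha}},\dots,\varphi_{t_{k^{\pi^{-1}}}^{\alpha}}),
\]
and demanding equality with $D\mathbf{x}$, the coset identity is equivalent to the tuple $(\varphi_{t_{i^{\pi^{-1}}}^\alpha t_i^{-1}})_{i=1}^k \in \Aut(T)^k$ being a constant element of $\Aut(T)^k$ (the only way for a tuple in the base group to lie in $D$). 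Since $Z(T) = 1$ makes $t\mapsto\varphi_t$ an isomorphism $T\to\Inn(T)$, this amounts to the existence of a single $c\in T$ with $t_{i^{\pi^{-1}}}^\alpha = c\cdot t_i$ for every $i$; after the substitution $j = i^{\pi^{-1}}$ it reads
\[
t_i^\alpha = c\cdot t_{i^\pi}\qquad\text{for every }i\in[k],
\]
which I will call $(\star)$.

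With $(\star)$ in hand, part (i) is immediate: if $t_i = t_j$ then the left-hand sides agree, hence so do the right-hand sides, giving $t_{i^\pi} = t_{j^\pi}$; so $\pi$ carries each part of $\mathcal{P}_{\mathbf{x}}$ into a single part, which is precisely the condition $\pi\in P_{\{\mathcal{P}_{\mathbf{x}}\}}$ flagged in the paragraph preceding the lemma. For part (ii) the extra size hypothesis is used to pin down $c$: by (i) the permutation $\pi$ permutes the non-empty parts of $\mathcal{P}_{\mathbf{x}}$, and since $|\mathcal{P}_1|$ is strictly positive and distinct from every other $|\mathcal{P}_t|$, the part $\mathcal{P}_1$ is the unique non-empty part of its cardinality and must be fixed setwise by $\pi$. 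Choosing any $i\in\mathcal{P}_1$ gives $t_i = t_{i^\pi} = 1$, so $(\star)$ collapses to $c = 1$; substituting back yields $t_i^\alpha = t_{i^\pi}$ for every $i$, as required.

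There is no substantial obstacle here: the lemma is a direct refinement of Lemma \ref{l:l:3.4_diag} and shares the same backbone $(\star)$, the difference being only which hypothesis is used to evaluate $c$ (two trivial $t_i$ and distinct non-trivial ones in Lemma \ref{l:l:3.4_diag}; a uniquely sized non-empty $\mathcal{P}_1$ here). The one point requiring care is the wreath-product bookkeeping — in particular tracking where $\pi^{-1}$ appears in the action formula — together with the invocation of $Z(T) = 1$ in order to promote equality of inner automorphisms back to equality in $T$.
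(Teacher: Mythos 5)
Your proposal is correct and follows essentially the same route as the paper: both rest on the relation $t_i^\alpha = c\,t_{i^\pi}$ (the paper writes $g$ for your $c$), derive part (i) by comparing indices with equal $t$-values, and establish part (ii) by showing the size hypothesis forces $c=1$. The only cosmetic difference is in part (ii): you first invoke the uniqueness of $|\mathcal{P}_1|$ to conclude $\mathcal{P}_1^\pi = \mathcal{P}_1$ and then evaluate $(\star)$ at an index in $\mathcal{P}_1$, whereas the paper observes directly from the relation that $\mathcal{P}_1^\pi = \mathcal{P}_{g^{-1}}$ and then applies the size hypothesis — two equivalent ways of extracting the same consequence.
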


\begin{proof}
	As $(\a,\dots,\a)\pi$ fixes $\omega = D(\varphi_{t_1},\dots,\varphi_{t_k})$, there exists a unique $g\in T$ such that $t_i^\alpha = gt_{i^\pi}$ for all $i\in\{1,\dots,k\}$. Suppose $t_i = t_j$ for some $i\ne j$ (so $i$ and $j$ are in the same part of $\mathcal{P}_{\mathbf{x}}$). Then $t_{i^\pi} = g^{-1}t_i^\a = g^{-1}t_j^\a = t_{j^\pi}$. This gives part (i).
	
	For part (ii), it suffices to show that $g = 1$. If $t_i = 1$, then $t_{i^\pi} = g^{-1}$, and we get $t_{j^\pi} = g^{-1}t_j^\alpha \ne g^{-1}$ if $t_j\ne 1$. This implies that $|\mathcal{P}_{g^{-1}}| = |\mathcal{P}_1|$, so $g = 1$ by our assumption.
\end{proof}

Now we combine Fawcett's main results on base sizes of diagonal type groups from \cite{F_diag}.

\begin{thm}
	\label{t:F}
	Let $G$ be a diagonal type primitive group with socle $T^k$ and top group $P\leqs S_k$.
	\begin{enumerate}\addtolength{\itemsep}{0.2\baselineskip}
		\item[{\rm (i)}] If $P\notin\{A_k,S_k\}$, then $b(G) = 2$.
		\item[{\rm (ii)}] If $k = 2$, then $b(G) = 3$ if $P = 1$, and $b(G)\in\{3,4\}$ if $P = S_2$.
		\item[{\rm (iii)}] If $k\geqs 3$, $P\in\{A_k,S_k\}$ and $|T|^{\ell-1}<k\leqs |T|^\ell$ with $\ell\geqs 1$, then $b(G)\in\{\ell+1,\ell+2\}$. Moreover, if either $k = |T|$, or $k\in\{|T|^{\ell}-1,|T|^\ell\}$ and $S_k\leqs G$, then $b(G) = \ell+2$.
	\end{enumerate}
\end{thm}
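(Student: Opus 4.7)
My plan is to exploit the explicit $G$-action on $\Omega$ recalled in Section~\ref{s:pre}. By transitivity we may take $\omega_1 = D$; a further point $\omega_j = D(\varphi_{s_{1,j}},\dots,\varphi_{s_{k,j}})$ is then fixed by $(\alpha,\dots,\alpha)\pi$ if and only if there exists $g_j \in T$ with $s_{i,j}^\alpha = g_j s_{i^\pi, j}$ for all $i$, and Lemma~\ref{l:l:3.4_diag} removes the translation $g_j$ whenever the tuple $(s_{i,j})_i$ contains at least two $1$'s and at least one non-identity entry. Constructing a base of size $b$ therefore reduces to producing $b - 1$ functions $\sigma_2,\dots,\sigma_b \colon [k]\to T$ whose combined labelling $\sigma\colon [k]\to T^{b-1}$ admits no non-trivial $(\alpha,\pi)\in\Aut(T)\times P$ with $\sigma(i)^\alpha = \sigma(i^\pi)$ entrywise, up to the coordinate-wise $T$-translation.

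For \textbf{part (i)}, I would invoke Seress's theorem: when $k > 32$ and $P\notin\{A_k,S_k\}$, some $\Delta\subseteq[k]$ has trivial setwise stabiliser in $P$. Picking a regular element $t\in T$ (one with $C_{\Aut(T)}(t) = 1$, available for every non-abelian finite simple group by inspection of the classification) and defining $\sigma_2(i) = t$ for $i\in\Delta$ and $\sigma_2(i) = 1$ otherwise (with $1\leqs|\Delta|\leqs k - 2$), Lemma~\ref{l:l:3.4_diag} forces $\pi\in P_{\{\Delta\}} = 1$ and then $\alpha\in C_{\Aut(T)}(t) = 1$, so $\{D,\omega_2\}$ is a base. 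The residual cases $k\leqs 32$ with $P\notin\{A_k,S_k\}$ are settled by a finite case analysis using the classification of small primitive permutation groups.

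For \textbf{part (iii)}, the lower bound $b(G)\geqs \ell + 1$ follows from a pigeonhole argument: if $b = \ell$, the combined labelling is $[k]\to T^{\ell - 1}$, and since $k > |T|^{\ell - 1}$ two distinct indices $i,i'$ must receive identical labels; then either the transposition $(i\;i')\in S_k$ (if $P = S_k$), or a suitable double transposition $(i\;i')(i''\;i''')\in A_k$ (if $P = A_k$), paired with $\alpha = 1$, stabilises the candidate base. For the upper bound $b(G)\leqs\ell+2$, I would fix an injection $[k]\hookrightarrow T^\ell$ (available since $k\leqs|T|^\ell$), let its coordinate functions define $\omega_2,\dots,\omega_{\ell+1}$ (arranged so each satisfies the hypothesis of Lemma~\ref{l:l:3.4_diag}), and adjoin one further point encoding a regular element as in part~(i) to eliminate any surviving $\alpha\in\Aut(T)$. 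The sharper $b(G) = \ell + 2$ claims in the \emph{moreover} clause arise from unavoidable symmetries: when $k = |T|$, any $\ell = 1$ label yields either coincidences or a bijection $[k]\to T$ permuted non-trivially by left multiplication by some $g\in T^\#$; analogous pigeonhole/translation obstructions apply for $k\in\{|T|^\ell - 1, |T|^\ell\}$ under the hypothesis $S_k\leqs G$.

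\textbf{Part (ii)} treats $k = 2$. The lower bound $b(G)\geqs 3$ follows because $C_{\Aut(T)}(t)\supseteq\langle t\rangle\ne 1$ for every $t\in T^\#$, so a single extra point over $\omega_1 = D$ never trivialises the stabiliser. When $P = 1$, invariable $2$-generation of $T$ \cite{GM_inv_gen,KLS_inv_gen} produces $s,t\in T$ with $C_{\Aut(T)}(\langle s,t\rangle) = 1$, giving $b(G) = 3$; when $P = S_2$ the swap may demand one additional point to break, yielding $b(G)\in\{3,4\}$. \textbf{The main obstacle} throughout is establishing the sharp value of $b(G)$ in part~(iii) for the exceptional $k$: one must rule out \emph{every} configuration of $\ell + 1$ candidate base points rather than just one, which requires a careful joint analysis of the $\Aut(T)$- and $P$-actions on functions $[k]\to T^\ell$ near the saturated threshold $k = |T|^\ell$.
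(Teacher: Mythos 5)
Your sketch has a fatal internal contradiction at its heart. In part (i) you claim there exists $t\in T$ with $C_{\Aut(T)}(t)=1$, ``available for every non-abelian finite simple group by inspection of the classification,'' and the construction in part (i) hinges on this to force $\alpha = 1$. But this is false for every non-abelian simple group: since $Z(T)=1$, the inner automorphism $\varphi_t$ is a non-identity element of $C_{\Aut(T)}(t)$ for every $t\in T^\#$. Indeed you yourself invoke precisely this fact two paragraphs later to justify the lower bound $b(G)\geqs 3$ in part (ii). The resolution in Fawcett's argument (reproduced in this paper as the proof of Proposition \ref{p:r(G)>1_k>32}) is to label by \emph{two} non-identity elements $x,y$ with $\langle x,y\rangle = T$, so that the residual constraint $\alpha\in C_{\Aut(T)}(x)\cap C_{\Aut(T)}(y)$ forces $\alpha=1$ by faithfulness; and correspondingly one needs a \emph{three}-part partition of $[k]$ with parts of distinct sizes (Seress's theorem provides this), both to accommodate the two non-identity labels and to rule out $\pi$ interchanging a part with its complement. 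Your single-subset, single-element construction addresses neither issue.

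Two further gaps worth flagging. First, in the lower bound $b(G)\geqs \ell+1$ of part (iii) for $P=A_k$: you invoke ``a suitable double transposition'' when the combined labelling $[k]\to T^{\ell-1}$ collides, but when $k = |T|^{\ell-1}+1$ the pigeonhole argument guarantees only a \emph{single} repeated label, so neither a double transposition nor a $3$-cycle preserving fibers need exist; extra work (exploiting the translation freedom $g_j$, or a subtler counting argument) is required, and you don't supply it. Second, the upper bound $b(G)\leqs\ell+2$ again adjoins ``one further point encoding a regular element as in part (i),'' which inherits the flaw above: a single extra point labelled by one non-identity $t$ only constrains $\alpha$ to lie in $C_{\Aut(T)}(t)$, which is never trivial. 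Note, finally, that the paper does not prove Theorem \ref{t:F} at all — it is recorded verbatim as a citation to Fawcett's prior work \cite{F_diag} — so the ``paper's own proof'' is external, but your reconstruction would need to be repaired along the lines above before it could be judged correct.
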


\begin{cor}
	\label{c:b(G)=2}
	If $P\in\{A_k,S_k\}$ and $b(G) = 2$, then $2<k<|T|$.
\end{cor}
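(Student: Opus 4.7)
The plan is to derive this corollary as an immediate consequence of Theorem \ref{t:F}, since both inequalities fall out of the explicit lower bounds on $b(G)$ already recorded there; no new ideas are required.

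First, to show that $k > 2$, suppose for contradiction that $k = 2$ while $P \in \{A_k, S_k\}$. Since $A_2 = 1$, this means $P \in \{1, S_2\}$. Part (ii) of Theorem \ref{t:F} gives $b(G) = 3$ when $P = 1$ and $b(G) \in \{3,4\}$ when $P = S_2$. In either case $b(G) \geqs 3$, which contradicts the assumption $b(G) = 2$.

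Next, to show that $k < |T|$, assume $k \geqs 3$ and choose the unique integer $\ell \geqs 1$ with $|T|^{\ell-1} < k \leqs |T|^\ell$. Part (iii) of Theorem \ref{t:F} yields $b(G) \in \{\ell+1, \ell+2\}$, so the hypothesis $b(G) = 2$ forces $\ell = 1$; in particular, $k \leqs |T|$. To rule out equality, we invoke the ``moreover'' clause of part (iii): if $k = |T|$, then $b(G) = \ell + 2 = 3$, contradicting $b(G) = 2$. Hence $k < |T|$, as required.

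There is no real obstacle here: the corollary is simply a packaging of the numerical content already encoded in Fawcett's theorem. The substantive work of the paper lies in upgrading these bounds to pin down exactly when $b(G) = \ell+1$ versus $\ell+2$ (Theorem \ref{thm:main}), but for the present statement the known lower bounds, together with the explicit equality cases in (ii) and (iii), suffice.
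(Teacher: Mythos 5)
Your proof is correct and takes the only natural route: the corollary is stated immediately after Theorem \ref{t:F} with no separate proof, precisely because it is read off from parts (ii) and (iii) exactly as you do. Both the contrapositive argument for $k>2$ via part (ii) and the use of the ``moreover'' clause to exclude $k=|T|$ match what the paper intends.
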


The following is \cite[Lemma 3.11]{F_diag}.

\begin{lem}
	\label{l:l:3.11_F}
	Suppose $P\in\{A_k,S_k\}$ and there exists an odd integer $3\leqs s\leqs k$ that is relatively prime to the order of every element of $\Out(T)$. Then $G$ contains $A_k$.
\end{lem}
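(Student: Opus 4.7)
The plan is to work in the quotient $H := G/T^k$. The map $W \to \Out(T) \times P$, $(\alpha_1,\dots,\alpha_k)\pi \mapsto (\alpha_1 \Inn(T),\pi)$, is readily verified to be a homomorphism with kernel $T^k$, so it identifies $H$ with a subgroup of the \emph{direct} product $\Out(T) \times P$. Let $\pi_1$ and $\pi_2$ denote the two coordinate projections, and set $K := \ker\pi_1 = H \cap (1 \times P)$, noting that $\pi_2$ is surjective by definition of $P = P_G$. The first step is to observe that $A_k \leqs G$ is equivalent to $\pi_2(K) \supseteq A_k$: if $\sigma \in \pi_2(K)$, then $(1,\sigma) \in H$ lifts to some $g = (\varphi_{t_1},\dots,\varphi_{t_k})\sigma \in G$, and since $T^k \leqs G$, left-multiplication of $g$ by $(\varphi_{t_1}^{-1},\dots,\varphi_{t_k}^{-1})\in T^k$ yields $(1,\dots,1)\sigma \in G$.

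Next, since $K$ is the kernel of a homomorphism it is normal in $H$, and applying $\pi_2$ gives $\pi_2(K) \normeq P$. Any normal subgroup of $P \in \{A_k,S_k\}$ that contains an $s$-cycle must contain $A_k$: for $k \geqs 5$ this is immediate from the simplicity of $A_k$; for $k = 3$ a $3$-cycle already generates $A_3$; and for $k = 4$ the Klein four subgroup $V_4 \normeq A_4$ contains no $3$-cycle, so the normal closure of a $3$-cycle in $A_4$ or $S_4$ must contain $A_4$. Hence it suffices to produce a single non-trivial $s$-cycle inside $\pi_2(K)$.

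To do this, fix any $s$-cycle $\sigma \in A_k \leqs P$ and, using surjectivity of $\pi_2$, choose a lift $(\gamma, \sigma) \in H$. Let $a$ be the order of $\gamma$ in $\Out(T)$; by hypothesis $\gcd(a,s) = 1$. Since $H$ sits inside the direct product $\Out(T) \times P$, raising to the $a$-th power acts componentwise, giving $(\gamma,\sigma)^a = (1,\sigma^a) \in K$; coprimality of $a$ and $s$ ensures that $\sigma^a$ generates $\langle \sigma \rangle$ and is therefore a non-trivial $s$-cycle. Thus $\sigma^a \in \pi_2(K)$, and the argument is complete.

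The proof is essentially a standard coprime-power manoeuvre inside the quotient, and the one ingredient warranting genuine care is the identification of $W/T^k$ as a \emph{direct} (rather than merely semidirect) product of $\Out(T)$ and $P$, since this is precisely what underwrites the componentwise power formula $(\gamma,\sigma)^a = (\gamma^a,\sigma^a)$. This is where I expect to spend the most attention, though it falls out quickly from the explicit description of $W$ recalled at the start of Section \ref{s:pre}.
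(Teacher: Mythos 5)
Your argument is correct. Note that the paper itself does not prove this lemma -- it is stated as a direct citation of \cite[Lemma 3.11]{F_diag} -- so there is no in-paper proof to compare against, but your route (identify $W/T^k$ with the direct product $\Out(T)\times P$ via $(\alpha_1,\dots,\alpha_k)\pi\mapsto(\alpha_1\Inn(T),\pi)$, lift an $s$-cycle, kill the $\Out(T)$-component by raising to the power $|\gamma|$ using $\gcd(|\gamma|,s)=1$, then use normality of $\pi_2(K)$ in $P$ to force $A_k\leqs\pi_2(K)$) is the standard and essentially unavoidable one, and all the details check out, including the separate treatment of $k\in\{3,4\}$ and the observation that an odd $s$-cycle lies in $A_k$.
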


\begin{cor}
	\label{c:Ak<G}
	If $P\in\{A_k,S_k\}$ and $k\geqs |T|-3$, then $G$ contains $A_k$.
\end{cor}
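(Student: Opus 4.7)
The plan is to apply Lemma \ref{l:l:3.11_F}, which reduces matters to exhibiting an odd integer $s$ with $3\leqs s\leqs k$ coprime to the exponent of $\Out(T)$. In fact it suffices to arrange that $\gcd(s,|\Out(T)|)=1$. Since $|T|\geqs 60$, the hypothesis $k\geqs |T|-3$ gives $k\geqs 57$, so the range $[3,k]$ is comfortably large.

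First, I would dispose of the easy cases. If $T$ is an alternating or sporadic group, then $|\Out(T)|$ divides $4$, hence is a power of $2$, so $s=3$ is coprime to $|\Out(T)|$ and Lemma \ref{l:l:3.11_F} applies.

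For $T$ a simple group of Lie type, I would invoke the standard description of $\Out(T)$ as an extension built from diagonal, field, and graph automorphisms. For a classical group of rank $n$ over $\mathbb{F}_{q}$ with $q=p^f$, the odd prime divisors of $|\Out(T)|$ come from $\gcd(n,q\mp 1)$, from the integer $f$, and (only for $D_4$) possibly from the triality factor of order $3$; an analogous small bound holds for the exceptional families. In every case these odd primes are of size $O(\log|T|)$, while $|T|-3$ is vastly larger. Hence there is always an odd prime $s\in[3,|T|-3]$ which does not divide $|\Out(T)|$, and this $s$ fulfils the hypothesis of Lemma \ref{l:l:3.11_F}.

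The main obstacle is the uniform control of the odd prime divisors of $|\Out(T)|$ across the families of Lie-type groups. Once this bound is in hand, the existence of a suitable $s$ follows either by an explicit construction (take $s=3$ when $3\nmid|\Out(T)|$, and otherwise the smallest odd integer avoiding the small set of odd primes dividing $|\Out(T)|$) or, more liberally, by an appeal to Bertrand's postulate applied to a suitable subinterval of $[3,|T|-3]$.
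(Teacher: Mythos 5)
Your proposal is correct and uses the right key lemma (Lemma \ref{l:l:3.11_F}), but it reaches the conclusion by a more roundabout route than the paper. The paper's proof is a two-liner: it quotes Lemma \ref{l:|Out(T)|}, which gives the uniform bound $|\Out(T)| < |T|^{1/3}$ (hence $|\Out(T)| < |T|/3$), and then makes a single appeal to Bertrand's postulate to produce a prime $s$ with $|\Out(T)| < s < 2|\Out(T)| < |T|-3 \leqs k$. Such an $s$ is automatically odd once $|\Out(T)| \geqs 2$ (and $s = 3$ works when $\Out(T) = 1$), and $s > |\Out(T)|$ forces $s \nmid |\Out(T)|$, which gives the coprimality. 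In contrast, you re-derive the smallness of $|\Out(T)|$ by a case split over the families of simple groups (alternating, sporadic, classical Lie type, exceptional Lie type), describing the structure of $\Out(T)$ each time. That works and is instructive, but it is strictly more work than necessary: the content you need is already packaged into Lemma \ref{l:|Out(T)|}, and once you have $|\Out(T)| < |T|/3$, the existence of a prime strictly between $|\Out(T)|$ and $k$ is immediate from Bertrand --- no inspection of $\Out(T)$'s prime factorisation required. I would also caution that the phrase ``the smallest odd integer avoiding the small set of odd primes dividing $|\Out(T)|$'' is slightly loose (an odd integer avoiding those primes as values could still share a prime factor with $|\Out(T)|$); you really want the smallest odd prime not dividing $|\Out(T)|$, or simply a prime exceeding $|\Out(T)|$ as in the paper, and your fallback appeal to Bertrand handles this cleanly.
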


\begin{proof}
	We have $|\Out(T)|<|T|^{1/3}$ by Lemma \ref{l:|Out(T)|} below. In particular, $|\Out(T)|<|T|/3$, so there exists a prime $s$ such that $|\Out(T)|<s<k$ (Bertrand's postulate). Now apply Lemma \ref{l:l:3.11_F}.
\end{proof}

The following extends \cite[Proposition 3.3]{F_diag}, which asserts that $b(G) = 2$ if $k  > 32$ and $P\notin\{A_k,S_k\}$. Here $r(G)$ is the number of regular suborbits of $G$, noting that $r(G)\geqs 1$ if and only if $b(G)\leqs 2$.

\begin{prop}
	\label{p:r(G)>1_k>32}
	If $k > 32$ and $P\notin\{A_k,S_k\}$, then $r(G)\geqs 2$.
\end{prop}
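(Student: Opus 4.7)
The plan is to refine Fawcett's proof of \cite[Proposition~3.3]{F_diag} -- which establishes $r(G)\geqs 1$ under the same hypotheses -- by constructing two points of $\Omega$ in distinct regular $G_\omega$-orbits. The ingredients match hers: Seress's theorem \cite{S_dist} yields $S\subseteq [k]$ with $P_{\{S\}}=1$, and by \cite{GM_inv_gen,KLS_inv_gen} there is a pair of invariable generators $t,t'$ of $T$, so $\langle t,t'\rangle = T$ and $1\ne t\ne t'\ne 1$. I additionally fix $t''\in T$ with $t,t',t'',1$ pairwise distinct (which exists as $|T|\geqs 60$). Replacing $S$ by its complement if necessary, a short case check on $s:=|S|\in[2,k-2]$ lets me assume $2\leqs s\leqs k-4$ and $k\notin\{2s+1,2s+2\}$; these constraints guarantee that the partition cardinalities appearing below are pairwise distinct and positive.

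Fix distinct $i_0,i_1\in[k]\setminus S$ and define
\begin{align*}
\omega_1 &= D(\varphi_{t_1},\dots,\varphi_{t_k}),\quad t_i = t\ (i\in S),\ t_{i_0}=t',\ t_i=1\text{ otherwise,}\\
\omega_2 &= D(\varphi_{u_1},\dots,\varphi_{u_k}),\quad u_i = t\ (i\in S),\ u_{i_0}=t',\ u_{i_1}=t'',\ u_i=1\text{ otherwise,}
\end{align*}
so that the associated partitions of $[k]$ have parts of sizes $(s,1,k-s-1)$ and $(s,1,1,k-s-2)$, respectively.

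Next I verify $G_{\omega_j}=1$ for $j=1,2$. Given $(\alpha,\dots,\alpha)\pi\in G_{\omega_j}$, Lemma~\ref{l:l:3.4_diag_ext}(i) forces $\pi$ to preserve the partition; since the unique part of size $s$ is $S$, this means $\pi$ stabilises $S$ setwise, and so $\pi=1$ by Seress. Lemma~\ref{l:l:3.4_diag_ext}(ii) yields $t_i^\alpha=t_i$ (respectively $u_i^\alpha=u_i$) for all $i$, so $\alpha$ centralises both $t$ and $t'$, hence centralises $\langle t,t'\rangle=T$; as $\Aut(T)$ acts faithfully on $T$, this forces $\alpha=1$.

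Finally, $\omega_1$ and $\omega_2$ lie in distinct $G_\omega$-orbits. If $\omega_2=\omega_1^x$ for some $x=(\alpha,\dots,\alpha)\pi\in G_\omega$, unwinding the action formula gives $u_i=g\cdot t_{i^{\pi^{-1}}}^\alpha$ for some $g\in T$ and all $i$. As multisets in $T$, the left-hand sequence $(u_i)_{i\in[k]}$ takes four pairwise distinct values -- namely $s$ copies of $t$, one of $t'$, one of $t''$, and $k-s-2$ copies of $1$ -- while the right-hand sequence takes at most three values: $gt^\alpha$ ($s$ times), $g(t')^\alpha$ (once), and $g$ ($k-s-1$ times). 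Any coincidence among these three would force $t=1$, $t'=1$, or $t=t'$, all excluded, so no multiset equality is possible. This contradiction produces two distinct regular $G_\omega$-orbits, and hence $r(G)\geqs 2$. The only genuinely delicate point throughout is the opening choice of $s$ satisfying the listed inequalities, which amounts to a short case analysis that always succeeds for $k>32$.
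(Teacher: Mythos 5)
Your proof is correct and takes a genuinely different route from the paper's for the key ``distinct orbits'' step.

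The paper fixes a single three-part partition $\{\Pi_1,\Pi_2,\Pi_3\}$ of $[k]$ with pairwise distinct part sizes and $\bigcap_m P_{\{\Pi_m\}}=1$, and then varies the \emph{group elements}: using $3/2$-generation \cite{GK_3/2} it picks two generating pairs $(x_1,y_1)$, $(x_2,y_2)$ with $|x_1|\ne |x_2|$, places both through the same partition, and distinguishes the resulting points by the order discrepancy $|x_1|\ne|x_2|$. You instead fix a single generating pair $t,t'$ together with a spare element $t''$, and vary the \emph{partition structure}: $\omega_1$ has a three-block associated partition while $\omega_2$ has a four-block one, so the multisets of entries have $3$ versus $4$ distinct values, which is preserved under the transformations $u\mapsto gu^\alpha$ and hence forces the two points into distinct $G_D$-orbits. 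Your version is slightly more elementary, avoiding the distinct-orders strengthening of $3/2$-generation.

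Two remarks on rigour. First, you quote Seress as producing a subset $S$ with $P_{\{S\}}=1$, whereas the paper states the result as a three-part partition with trivial intersection of block stabilisers; these are easily seen to be equivalent (take $\Pi_1=S$ and split the complement into two parts of fresh distinct sizes, or conversely observe that $P_{\{\Pi_1\}}$ already lies in the triple intersection when the sizes are distinct), so this is fine, but it is worth noting you are not literally using the form cited in the paper. Second, and more substantively, the opening ``short case check'' that one can always arrange $2\leqs s\leqs k-4$ with $k\notin\{2s+1,2s+2\}$ is real work that you should spell out. It does always succeed for $k>32$: if the Seress subset $S_0$ has size $s_0$ or $k-s_0$ already in the required range one is done; if $s_0\in\{1,k-1\}$ then $P$ is regular of prime order $k$, in which case every proper nonempty subset has trivial stabiliser and one may take $s=2$; if $s_0\in\{k-3,k-2\}$ take the complement of size $2$ or $3$; and if $k\in\{2s_0+1,2s_0+2\}$ take the complement of size $s_0+1$ or $s_0+2$, which lands safely because $s_0\geqs (k-2)/2\geqs 16$. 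Since the argument hinges on these constraints (they guarantee $S$ is the unique block of its size and that Lemma~\ref{l:l:3.4_diag_ext}(ii) applies), they should not be waved away.
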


\begin{proof}
	We use the same construction as in the proof of \cite[Proposition 3.3]{F_diag}. By \cite[Theorem 1]{S_dist}, there exists a partition $\mathcal{P}=\{\Pi_1,\Pi_2,\Pi_3\}$ of $[k]$ such that each $\Pi_i$ is non-empty, $|\Pi_1|$, $|\Pi_2|$ and $|\Pi_3|$ are distinct, and
	\begin{equation}\label{e:partition}
	\bigcap_{m = 1}^3 P_{\{\Pi_m\}} = 1.
	\end{equation}
	Let $x_1,x_2\in T$ be non-trivial elements of distinct orders. By the main theorem of \cite{GK_3/2}, there exist $y_1,y_2\in T$ such that $\la x_i,y_i\ra = T$. Let $\Delta_i = \{D,D(\varphi_{t_{i,1}},\dots,\varphi_{t_{i,k}})\}$ for $i\in\{1,2\}$, where $t_{i,j} = 1$ if $j\in\Pi_1$, $t_{i,j} = x_i$ if $j\in\Pi_2$, and $t_{i,j} = y_i$ if $j\in\Pi_3$. As explained in the proof of \cite[Proposition 3.3]{F_diag}, both $\Delta_1$ and $\Delta_2$ are bases for $G$.
	
	Suppose $\Delta_1^{(\a,\dots,\a)\pi} = \Delta_2$. Then there exists $g\in T$ such that $t_{1,j}^\a = gt_{2,j^\pi}$ for all $j\in [k]$. If $t_{1,j} = t_{1,j'}$ for some $j'\in [k]$, then $t_{2,j} = t_{2,j'}$ and
	\begin{equation*}
	t_{2,j^\pi} = g^{-1}t_{1,j}^\a = g^{-1}t_{1,j'}^\a = t_{2,(j')^\pi}.
	\end{equation*}
	Hence, $\pi\in P_{\{\mathcal{P}\}}$, and so $\pi\in P_{\{\Pi_m\}}$ for each $m\in\{1,2,3\}$ as $|\Pi_1|$, $|\Pi_2|$ and $|\Pi_3|$ are distinct. This implies that $\pi = 1$ by \eqref{e:partition}, and so $g = 1$. However, it follows that $x_1^\a = x_2$, which is incompatible with $|x_1|\ne |x_2|$. We conclude that $\Delta_1$ and $\Delta_2$ are in distinct $G_D$-orbits, and thus $r(G)\geqs 2$.
\end{proof}

\begin{rem}
	\label{r:32}
	In fact, as we will show in Section \ref{s:proof_b(G)=2}, we have $r(G)\geqs 1$ whenever $3\leqs k\leqs |T|-3$, with equality if and only if $T = A_5$, $k \in\{3,57\}$ and $G = T^k.(\Out(T)\times S_k)$. In particular, it follows that $r(G)\geqs 2$ if $k\leqs 32$ and $P\notin\{A_k,S_k\}$.
\end{rem}

\subsection{Simple groups}

In this section, we record some properties of finite simple groups that will be used to prove our main results. In the whole paper, $T$ is a non-abelian finite simple group. We start with \cite[Lemma 4.8]{F_diag}.

\begin{lem}
	\label{l:|Out(T)|}
	We have $|\Out(T)| < |T|^{1/3}$.
\end{lem}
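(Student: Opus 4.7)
The plan is to invoke the classification of finite simple groups and verify the bound separately for each of the standard families; this is essentially the approach used in \cite[Lemma 4.8]{F_diag}, which one could simply cite, but I will outline the argument in case a self-contained proof is wanted.

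First I would dispose of the alternating groups. For $T = A_n$ with $n \geqs 5$, one has $|\Out(T)| = 2$ unless $n = 6$, in which case $|\Out(A_6)| = 4$. The inequality $|\Out(T)| < |T|^{1/3}$ is then a direct numerical check for $n \in \{5,6,7\}$ (using $|A_5| = 60$, $|A_6| = 360$, $|A_7| = 2520$), and it is transparent for $n \geqs 8$ since $|A_n|^{1/3}$ already exceeds $20$ while $|\Out(A_n)| = 2$.

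Next I would dispose of the 26 sporadic simple groups together with the Tits group. In every one of these cases $|\Out(T)| \leqs 2$, while the smallest order among them is $|\mathrm{M}_{11}| = 7920$, whose cube root is much larger than $2$. So the bound is settled by inspection of the \textsc{Atlas}.

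The main step is the case of groups of Lie type, which I would treat using the explicit formulas for $|T|$ and $|\Out(T)|$ from Tables 5.1.A and 5.1.B of \cite{KL_classical}. Writing $|\Out(T)| = d f g$ with $d$ the order of the diagonal outer automorphism group, $f$ the order of the field automorphism (so that $q = p^f$ for $T$ defined over $\F$), and $g$ the order of the graph part (with $g \leqs 2$ in general and $g \leqs 6$ for $T = \POmega^+_8(q)$), I would use that $d \leqs n$ for a classical group of rank depending on $n$, $f \leqs \log_2 q$, and that $|T|$ grows like $q^N$ for an exponent $N$ equal to the dimension of the underlying algebraic group (e.g.\ $N = n^2 - 1$ for $\PSL_n(q)$, and the analogous formulas for the other families). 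A crude estimate then shows that $|\Out(T)|^3/|T|$ is bounded above by a quantity of the form $n^3 (\log_2 q)^3 \cdot q^{3-N}$ times a small constant, which is strictly less than $1$ for all admissible $(n,q)$. The same kind of estimate handles the exceptional groups, where $|\Out(T)|$ is even smaller relative to $|T|$.

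The only real work is a finite list of small-rank, small-$q$ cases (for instance $\PSL_2(q)$ for small $q$, $\PSL_3(4)$, $\POmega^+_8(2)$, and a handful of others) where the polynomial lower bound on $|T|$ is not immediately strong enough; these are checked by direct computation from the order formulas. I would expect this bookkeeping to be the only mildly tedious step, but no single family presents a genuine obstacle, and the argument is uniform in spirit across all cases.
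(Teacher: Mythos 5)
Your proposal is correct and matches the paper's treatment: the paper simply cites \cite[Lemma 4.8]{F_diag} for this statement, and the CFSG-based case analysis you outline (alternating, sporadic, and Lie type via the order formulas in \cite{KL_classical}) is precisely the argument underlying Fawcett's lemma. Nothing further is needed.
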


Let $T$ be a finite simple group of Lie type defined over $\mathbb{F}_q$, where $q = p^f$ and $p$ is a prime. Then we may write $T = O^{p'}(Y_\sigma)$, where $Y$ is the ambient simple algebraic group over the algebraic closure $K$ of $\mathbb{F}_q$ and $\sigma$ is an appropriate Steinberg endomorphism. Note that $Y_\sigma = \mathrm{Inndiag}(T)$ is the group of inner-diagonal automorphisms of $T$.

\begin{lem}
	\label{l:Inndiag}
	Let $d = \frac{1}{2}\cdot \dim Y$ if $T\in\{{^2}B_2(q),{^2}G_2(q)',{^2}F_4(q)'\}$ and $d = \dim Y$ otherwise. Then $\frac{1}{2}q^d<|\mathrm{Inndiag}(T)|<q^d$.
\end{lem}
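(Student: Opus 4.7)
The plan is to invoke the explicit order formulas for $|Y_\sigma| = |\mathrm{Inndiag}(T)|$ (as collected, for instance, in \cite[Table 5.1.B]{KL_classical}) and verify both bounds by a direct case check. For every Lie type except Suzuki and Ree, $\sigma$ is a standard Frobenius composed with a (possibly trivial) diagram automorphism, and the order formula takes the form
\begin{equation*}
|Y_\sigma| = q^N \prod_{i=1}^r \bigl(q^{d_i}-\epsilon_i\bigr),
\end{equation*}
where $r$ is the Lie rank of $Y$, $N$ is the number of positive roots, the $d_i$ are the degrees of the fundamental Weyl-group invariants, and each $\epsilon_i\in\{+1,-1\}$ (all equal to $+1$ in the untwisted case). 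Since $\sum_{i=1}^{r} d_i = N+r$ and $\dim Y = 2N+r$, we have $q^{\dim Y} = q^N\prod_{i=1}^{r}q^{d_i}$, so the inequalities reduce to
\begin{equation*}
\tfrac{1}{2} < \prod_{i=1}^{r}\bigl(1 - \epsilon_i q^{-d_i}\bigr) < 1.
\end{equation*}

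The upper bound is immediate in the untwisted case, since each factor lies in $(0,1)$. In the twisted families ${}^2A_n$, ${}^2D_n$, ${}^3D_4$, ${}^2E_6$ some factors exceed $1$, and I would expand the product type-by-type to confirm that the negative contribution coming from the smallest degree $d_1 = 2$ (which always carries $\epsilon_1 = +1$) outweighs the positive contributions. For the lower bound, the potentially worst case is the untwisted groups at $q = 2$, since twisted types only supply extra factors $(1+q^{-d_i})>1$. Using $d_i\geqs 2$ and the sparse integer sequence of degrees for each Lie type, this gives $\prod (1-2^{-d_i}) \geqs \prod_{k\geqs 2}(1-2^{-k})\approx 0.577>\tfrac{1}{2}$, with the handful of remaining small-rank configurations (e.g. $A_1$, $G_2$) settled by direct substitution.

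The Suzuki and Ree groups are treated separately. Here $\sigma^2$ is a standard Frobenius, so $|Y_\sigma|$ is a polynomial in $q$ of degree $\dim Y/2$, and each of the three families is handled from its explicit order formula. For example, $|{}^2B_2(q)| = q^2(q-1)(q^2+1) = q^5(1-q^{-1})(1+q^{-2})$ with $\dim Y = 10$, which clearly lies in $(\tfrac{1}{2}q^5, q^5)$ for $q \geqs 8$; the cases ${}^2G_2(q)'$ and ${}^2F_4(q)'$ are analogous.

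The only genuinely delicate step is the upper bound in the twisted classical families, where individual factors can exceed $q^{d_i}$; there one must verify by expansion that the negative contributions from the $\epsilon_i = +1$ factors at small degrees are enough to keep the total product below $1$.
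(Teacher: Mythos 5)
Your proposal is correct, and it takes a genuinely different (more self-contained) route from the paper. The paper simply cites \cite[Proposition 3.9(i)]{B_fpr2} for the classical groups and declares the exceptional cases ``clear,'' whereas you develop a uniform direct verification from the Steinberg order formula, reducing both bounds to $\tfrac12 < \prod_{i}(1-\epsilon_i q^{-d_i}) < 1$ via the identities $\sum_i d_i = N+r$ and $\dim Y = 2N+r$. This framework is cleaner in that it handles classical and exceptional groups on the same footing and makes the worst cases (untwisted at $q=2$ for the lower bound; small positive-sign degrees in twisted types for the upper bound) transparent. One small imprecision: you assert $\epsilon_i\in\{+1,-1\}$ for all non-Suzuki-Ree types, but for ${}^3D_4$ the twist acts by triality, so the two degree-$4$ invariants carry conjugate primitive cube roots of unity as $\epsilon_i$; their combined factor is $q^8+q^4+1 = q^8(1+q^{-4}+q^{-8}) > q^8$, which is still absorbed by the $(1-q^{-2})$ factor exactly as in your plan, but the statement about $\epsilon_i$ should be phrased as ``roots of unity whose conjugate pairs combine to give the real cyclotomic factors.'' Also, your lower-bound estimate $\prod(1-2^{-d_i}) \geqs \prod_{k\geqs 2}(1-2^{-k})$ is not literally valid when degrees repeat (e.g.\ $D_4$ with degrees $2,4,4,6$), but the handful of repeated-degree configurations are easily checked by hand, so this does not affect the conclusion.
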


\begin{proof}
	This is \cite[Proposition 3.9(i)]{B_fpr2} when $T$ is a classical group, and the bounds for exceptional groups are clear.
\end{proof}

Recall that a semisimple element $x\in T$ is \textit{regular} if the connected component of $C_Y(x)$ is a maximal torus of $Y$. Equivalently, $x\in T$ is regular semisimple if and only if $|C_T(x)|$ is indivisible by $p$. In particular, if $T$ is a classical group with natural module $V$, then a semisimple element $x\in T$ is regular if a pre-image $\widehat{x}\in \GL(\overline{V})$ has distinct eigenvalues on $\overline{V} = V\otimes K$. And if $T$ is an orthogonal group, then $x$ is also regular if $\widehat{x}$ has a $2$-dimensional $(\pm 1)$-eigenspace and all the other eigenvalues are distinct.

We say that a subset $\{t_1,\dots,t_m\}$ of $T$ is an \textit{invariable} generating set if $\la t_1^{g_1},\dots,t_m^{g_m}\ra = T$ for any $g_1,\dots,g_m\in T$. It has been proved in \cite{GM_inv_gen} and \cite{KLS_inv_gen}, independently, that every non-abelian finite simple group is invariably generated by $2$ elements.

\begin{thm}
	\label{t:inv_gen_reg}
	Suppose $T\notin\{\LL_2(5),\LL_2(7),\Omega_8^+(2),\POmega_8^+(3)\}$ is a finite simple group of Lie type. Then there exist regular semisimple elements $x$ and $y$ of distinct orders such that $T$ is invariably generated by $\{x,y\}$.
\end{thm}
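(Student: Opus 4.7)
The plan is to upgrade the $2$-element invariable generation theorem of \cite{GM_inv_gen,KLS_inv_gen} by insisting that both generators be regular semisimple of distinct orders. Recall that $\{x,y\}$ invariably generates $T$ if and only if no maximal subgroup of $T$ intersects both of the conjugacy classes $x^T$ and $y^T$. Hence it suffices to exhibit two regular semisimple $T$-classes of distinct orders with no common maximal overgroup.

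For the generic argument I would exploit Zsigmondy primes. Pick two integers $e_1 \ne e_2$ tailored to the Lie type of $T$ (for example, the Coxeter number of $Y$ and a second ``Singer-type'' index dividing $|Y_\sigma|$) and let $r_i$ be a primitive prime divisor of $q^{e_i}-1$. Let $S_i \leqs Y_\sigma$ be a maximal torus whose order is divisible by $r_i$, and choose $x \in S_1 \cap T$, $y \in S_2 \cap T$ of orders $r_1$ and $r_2$ respectively. Because $r_i \ne p$ is a primitive divisor, the only closed connected reductive subgroup of $Y$ containing $\langle x \rangle$ (resp.\ $\langle y \rangle$) is the torus itself, so $C_Y(x)^\circ = S_1$ and $C_Y(y)^\circ = S_2$. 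Thus $x$ and $y$ are regular semisimple, and their orders $r_1,r_2$ are coprime, hence distinct.

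To verify invariable generation I would appeal to the classification of subgroups of $T$ containing an element of order a primitive prime divisor of $q^e - 1$: for classical $T$ this is the work of Guralnick--Penttila--Praeger--Saxl and the Aschbacher-class analysis of \cite{KL_classical}; for exceptional $T$ analogous lists due to Liebeck--Seitz are available. With a careful choice of the pair $(e_1,e_2)$ the candidate maximal overgroups of $x$ (typically a field-extension subgroup of one degree, or the normaliser of $S_1$) are disjoint from those of $y$, so no proper subgroup of $T$ meets both classes, giving the required invariable generation.

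The main obstacle lies in the low-rank / small-$q$ regime, where Zsigmondy primes fail to exist, where too few maximal torus types are available, or where the triality automorphism in type $D_4$ fuses relevant classes. These are precisely the circumstances that throw up the excluded groups $\LL_2(5)$, $\LL_2(7)$, $\Omega_8^+(2)$ and $\POmega_8^+(3)$, along with a short finite list of further borderline groups (low-rank groups over very small fields). Each of these residual cases would be handled by direct inspection via character tables (for instance using the \textsf{GAP} character-table library), either to exhibit an explicit pair of regular semisimple elements of distinct orders that invariably generates $T$, or to confirm membership in the excluded list.
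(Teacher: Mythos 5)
Your proposal takes a genuinely different route from the paper. The paper does not build new elements: it recycles the explicit invariable generating pairs $\{t_1,t_2\}$ already constructed in \cite{KLS_inv_gen} (Tables 1 and 2 of that paper), citing their theorem for the invariable generation, and then only has to check that those specific elements are regular semisimple of distinct orders. For the exceptional groups this is immediate because $t_1$, $t_2$ are generators of full maximal tori; for the classical groups it reduces to an explicit eigenvalue computation together with the arithmetic condition \eqref{e:order} on orders. Your approach instead proposes to manufacture ppd elements from scratch and re-establish invariable generation by classifying overgroups of ppd elements, which is essentially redoing the internal machinery of the KLS proof with added constraints; it should work, but it duplicates a large amount of work that the paper is content to cite.

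There is also a genuine gap at the heart of your construction. You assert that if $r_i$ is a primitive prime divisor of $q^{e_i}-1$ and $x\in S_i\cap T$ has order $r_i$, then ``the only closed connected reductive subgroup of $Y$ containing $\langle x\rangle$ is the torus itself, so $C_Y(x)^\circ = S_i$.'' This is false in general: a \emph{prime}-order ppd element typically has a large fixed space. For example, in $\SL_n(q)$ an element of order a ppd of $q^{e}-1$ acting as an irreducible $e$-block plus the identity on the $(n-e)$-dimensional complement has $C_Y(x)^\circ$ of dimension roughly $e + (n-e)^2$, which exceeds $n-1$ whenever $n-e\geqs 2$, so $x$ is not regular. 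Regularity forces $e$ to be close to the rank (in type $A_{n-1}$ one essentially needs $e\in\{n-1,n\}$, with analogous tight constraints in the other types and extra care about $(\pm 1)$-eigenspaces in the orthogonal case). The paper avoids this pitfall precisely by not working with prime-order ppd elements: the $t_1,t_2$ it uses are generators of maximal tori, or block-diagonal elements of large composite order, and the paper then \emph{verifies} regularity by computing eigenvalues rather than asserting it. To repair your sketch you would have to pin down the choices of $e_1,e_2$ and prove the centraliser claim for those choices, not assume it follows from primitivity alone.
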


\begin{proof}
	If $T$ is an exceptional group, then we take $x$ and $y$ to be $t_1$ and $t_2$ in \cite[Table 2]{KLS_inv_gen}, respectively, noting that $t_1$ is a generator of the maximal torus $T_1$ in that table. It is evident that $|t_1|\ne |t_2|$ in each case, and $\{t_1,t_2\}$ invariably generates $T$ by \cite{KLS_inv_gen} (see \cite[p. 312]{KLS_inv_gen}). Moreover, we observe that $\la t_1\ra$ and $\la t_2\ra$ are both maximal tori, which implies that each $t_i$ is regular semisimple.
	
	To complete the proof, we may assume $T$ is a classical group. Here we will work with the corresponding quasisimple group $Q\in\{\SL_n^\e(q),\Sp_n(q),\Omega_n^\e(q)\}$, noting that if $Q$ is invariably generated by $\{t_1,t_2\}$, with $t_1$ and $t_2$ regular semisimple , then $T$ is invariably generated by $\{x,y\}$, where $x$ and $y$ are the images of $t_1$ and $t_2$ in $T$, respectively (so $x$ and $y$ are also regular semisimple). Moreover, $|x| = |t_1|/a$ and $|y| = |t_2|/b$ for some integers $a,b$ dividing $|Q|/|T|$, so $|x|\ne |y|$ if
	\begin{equation}
	\label{e:order}
	\mbox{$|t_1|$ is indivisible by $|t_2||Q|/|T|$ and $|t_2|$ is indivisible by $|t_1||Q|/|T|$.}
	\end{equation}
	
	First assume $Q\notin\{\SL_2(q),\Omega_8^+(q)\}$. Here we use the same $t_1$ and $t_2$ as presented in \cite[Table 1]{KLS_inv_gen}. In each case, it is clear that $t_1$ and $t_2$ are semisimple elements satisfying \eqref{e:order}, and $\{t_1,t_2\}$ invariably generates $Q$ by \cite[Lemma 5.3]{KLS_inv_gen}. Thus, it suffices to show that $t_1$ and $t_2$ are regular in every case, which is a straightforward exercise (for instance, we can work with the criterion for regularity in terms of the eigenvalues on $\overline{V}$ discussed as above). For example, consider the element $t_2\in Q = \Omega_{4m}^+(q)$. Here, $t_2$ lifts to an element $\widehat{t_2}\in \GL(V)$ of the form
	\begin{equation*}
	\widehat{t_2} =
	\begin{pmatrix}
	A&\\
	&B
	\end{pmatrix}^\delta
	\end{equation*}
	with respect to a standard basis (see \cite[Proposition 2.5.3]{KL_classical}),
	where $\delta\in\{1,2\}$, $A\in\mathrm{SO}_{4m-4}^-(q)$ has order $q^{2m-2}+1$ and $B\in\mathrm{SO}_4^-(q)$ has order $q^{2}+1$. We only deal with the case where $\delta = 1$ since a similar argument holds for $\delta = 2$. Then the eigenvalues of $A$ over the algebraic closure $K$ of $\mathbb{F}_q$ are
	\begin{equation*}
	\lambda,\lambda^q,\dots,\lambda^{q^{4m-3}}
	\end{equation*}
	for some $\lambda\in K$ of order $q^{2m-2}+1$. Similarly, the set of eigenvalues of $B$ over $K$ is $\{\mu,\mu^q,\mu^{q^2},\mu^{q^3}\}$ for some $\mu\in K$ of order $q^2+1$. If $\mu = \lambda^{q^i}$ for some $i\in\{0,\dots,4m-3\}$, then $\lambda^{q^i(q^2+1)} = 1$ and so $q^{2m-2}+1$ divides $q^i(q^2+1)$, which implies that $q^{2m-2}+1$ divides $q^2+1$ since $(q^{2m-2}+1,q^i) = 1$. However, since $m\geqs 3$, this is impossible. It follows that the eigenvalues of $\widehat{t_2}$ over $K$ are distinct, and so $t_2$ is a regular semisimple element.
	
	Finally, let us handle the two excluded cases above. If $Q = \SL_2(q)$ with $q\notin\{4,5,7,9\}$, then we take the same $t_1$ and $t_2$ as indicated in the proof of \cite[Lemma 5.3]{KLS_inv_gen}. The group $\LL_2(4)$ is invariably generated by an element of order $3$ and an element of order $5$, and if $q = 9$ then we take $x$ and $y$ to be of order $4$ and $5$, respectively.  If $Q = \Omega_8^+(q)$ with $q\notin\{2,3\}$, then we take $t_1$ as in \cite[Table 1]{KLS_inv_gen}, and $t_2$ an element of order $(q^3-1)/(2,q-1)$ as described in the proof of \cite[Lemma 5.4]{KLS_inv_gen}, where it is denoted $t_3$.
\end{proof}

It is worth noting that the excluded groups $\LL_2(5)$, $\LL_2(7)$, $\Omega_8^+(2)$ and $\POmega_8^+(3)$ in Theorem \ref{t:inv_gen_reg} are not invariably generated by any pair of regular semisimple elements of distinct orders. This is can be checked using {\sc Magma} V2.26-11 \cite{Magma}. More specifically, we find the set of maximal overgroups of an element $x\in T$ up to $T$-conjugacy using the method as in \cite[Section 1.2]{BHa_code}, noting that $x$ and $y$ do not invariably generate $T$ if they have a common maximal overgroup in $T$ up to $T$-conjugacy. 

From now on, we will assume $n\geqs 3$ if $T = \UU_n(q)$, $n\geqs 4$ is even if $T = \PSp_n(q)$, and $n\geqs 7$ if $T = \POmega_n^\e(q)$. We will also exclude the groups
\begin{equation}\label{e:iso_simple}
\LL_2(4),\LL_2(5),\LL_2(9),\LL_3(2),\LL_4(2),\UU_4(2),\Sp_4(2)',G_2(2)',{^2}G_2(3)'
\end{equation}
as each of them is isomorphic to one of the following groups:
\begin{equation*}
A_5,A_6,A_8,\LL_2(7),\LL_2(8),\UU_3(3),\PSp_4(3).
\end{equation*}

As mentioned in Section \ref{s:intro}, one of our probabilistic approaches in Section \ref{s:prob} relies on computing
\begin{equation*}
h(T):=\max\{|C_T(x)|:1\ne x\in\Aut(T)\}
\end{equation*}
for every non-abelian finite simple group $T$.

\begin{thm}
	\label{t:fixity_Hol}
	Let $T$ be a non-abelian finite simple group. Then $h(T)$ is listed in Table \ref{tab:fix(Hol(T))}.
\end{thm}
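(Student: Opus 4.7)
The plan is to prove Theorem~\ref{t:fixity_Hol} by a case division over the three families of non-abelian finite simple groups: alternating, sporadic, and Lie type. For alternating $T = A_n$ we have $\Aut(T) = S_n$ (up to the well-known extra factor for $n = 6$), so I would enumerate $S_n$-conjugacy classes and intersect centralisers with $A_n$. Standard cycle-type formulae give $|C_{A_n}((1\,2))| = (n-2)!$ and $|C_{A_n}((1\,2\,3))| = 3(n-3)!$, so for $n \geqs 5$ the maximum is attained by a transposition and $h(A_n) = (n-2)!$. The small cases ($n \leqs 8$, together with the graph automorphism of $A_6$) are verified directly in \textsc{Magma} or from the ATLAS. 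For sporadic $T$ one simply reads off the maximal centraliser order of a non-identity element of $\Aut(T)$ from the ATLAS character tables.

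The substantive case is $T = O^{p'}(Y_\sigma)$ of Lie type over $\mathbb{F}_q$. Up to $T$-conjugacy, every non-trivial $x \in \Aut(T)$ has the form $x = ig$ with $i \in \mathrm{Inndiag}(T)$ and $g$ either trivial, a field, a graph, or a graph-field automorphism, so I would split into two subcases. If $g = 1$ then $x$ is inner-diagonal and one applies the Jordan decomposition: $|C_T(x)|$ is maximised either by a long-root unipotent element or by a semisimple element whose connected centraliser is a subsystem subgroup of maximal rank, and the precise orders can be extracted from \cite{KL_classical} and \cite{B_fpr2}. If $g$ is outer then Lang--Steinberg identifies $C_{Y_\sigma}(ig)$ as a smaller fixed-point Lie-type group: for a field automorphism of prime order $r$ we obtain $|C_T(x)| \leqs c\cdot |T|^{1/r}$ for a small constant $c$ (using Lemma~\ref{l:Inndiag}), and for a graph or graph-field automorphism the centraliser is an explicit classical or twisted group whose order is well documented.

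Comparing the two subcases across each infinite family, the generic winner is the inner long-root centraliser, and this is what would appear in Table~\ref{tab:fix(Hol(T))}. The main obstacle will be the low-rank, small-$q$ regime --- groups such as $\LL_2(q)$, $\LL_3(q)$, $\UU_3(q)$, $\PSp_4(q)$, $G_2(q)$ and ${^2}F_4(q)'$ for small $q$ --- where the crude bounds on outer centralisers given by Lang--Steinberg are too weak to decisively rule them out against the inner candidates, and where diagonal inner elements (absent in higher rank) can compete with unipotent ones. Each such finite collection I would settle by explicit computation in \textsc{Magma}, iterating over the conjugacy classes of $\Aut(T)$ and recording the largest value of $|C_T(x)|$, thereby confirming the remaining entries of the table.
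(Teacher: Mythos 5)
Your high-level decomposition (alternating / sporadic / Lie type; inner vs.\ outer automorphisms; Jordan decomposition for inner elements) matches the paper's strategy, and the alternating and sporadic cases are handled correctly. However, there is a substantive gap in your handling of the Lie type case.

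Your framing assumes the inner long-root unipotent is the ``generic winner'', with deviations confined to a low-rank, small-$q$ regime that can be cleaned up by \textsc{Magma}. This is not what happens. Inspecting the paper's Table \ref{tab:fix(Hol(T))}, the long-root element loses across \emph{entire infinite families}: for every $\LL_4^\e(q)$ the maximiser is a graph automorphism $\gamma_1$; for every $\UU_n(q)$ with $n\geqs 6$ even it is the semisimple reflection $[\omega I_1,I_{n-1}]$; for every $\PSp_4(q)$ with $q$ odd it is the semisimple involution $t_1$; for every $\POmega_n^\e(q)$ the maximiser is never a long-root element (it is $t_1'$, $b_1$, or the graph automorphism $\gamma_1$ depending on the parity of $n$ and $q$); and for $\LL_2(q)$ and $\LL_3^\e(q)$ over fields of square order the maximiser is an involutory field or graph-field automorphism, again for all such $q$. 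None of these is a finite list that \textsc{Magma} can settle. Relatedly, the Lang--Steinberg bound $|C_T(x)|\leqs c|T|^{1/r}$ that you propose for outer elements is far too coarse to decide the comparison when $r=2$: for $\LL_4^\e(q)$ you must compare $|\mathrm{PGSp}_4(q)|\sim q^{10}$ against the long-root centraliser $\sim q^{2n-3}|\GL_{n-2}^\e(q)|\sim q^{9}$, a distinction that only emerges from the exact centraliser orders. The paper therefore does not use the crude Lang--Steinberg estimate; it draws the precise centraliser data from \cite[Ch.~3]{BG_classical} and compares class sizes via the $\nu(x)$-machinery of \cite[Sect.~3]{B_fpr2}. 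Finally, for exceptional groups the paper invokes a specific prior result, \cite[Prop.~2.11]{BT_extremely}, which asserts outright that the long-root element has the largest centraliser among \emph{all} prime-order elements of $\Aut(T)$; without that input your comparison of inner vs.\ outer automorphisms in the exceptional case would have to be carried out from scratch. In short, the skeleton of your argument is right but the ``default answer plus small patches'' expectation is wrong, and the quantitative tools you propose are too blunt to correct it.
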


{\small
	\begin{table}
		\[
		\begin{array}{llll} \hline
		T&h(T)&x&\mbox{Conditions}\\\hline
		A_n&(n-2)!&(1,2)&\\
		\mathrm{M}_{11}&48&\texttt{2A}\\
		\mathrm{M}_{12}&240&\texttt{2A}\\
		\mathrm{M}_{22}&1344&\texttt{2B}\\
		\mathrm{M}_{23}&2688&\texttt{2A}\\
		\mathrm{M}_{24}&21504&\texttt{2A}\\
		\mathrm{J}_1&120&\texttt{2A}\\
		\mathrm{J}_2&1920&\texttt{2A}\\
		\mathrm{J}_3&2448&\texttt{2B}\\
		\mathrm{J}_4&21799895040&\texttt{2A}\\
		\mathrm{HS}&40320&\texttt{2C}\\
		\mathrm{McL}&40320&\texttt{2A}\\
		\mathrm{Suz}&9797760&\texttt{3A}\\
		\mathrm{He}&161280&\texttt{2A}\\
		\mathrm{HN}&177408000&\texttt{2A}\\
		\mathrm{Ru}&245760&\texttt{2A}\\
		\mathrm{Ly}&2694384000&\texttt{3A}\\
		\mathrm{Co}_1&1345036492800&\texttt{3A}\\
		\mathrm{Co}_2&743178240&\texttt{2A}\\
		\mathrm{Co}_3&2903040&\texttt{2A}\\
		\mathrm{Th}&92897280&\texttt{2A}\\
		\mathrm{O'N}&175560&\texttt{2B}\\
		\mathrm{Fi}_{22}&18393661440&\texttt{2A}\\
		\mathrm{Fi}_{23}&129123503308800&\texttt{2A}\\
		\mathrm{Fi}_{24}'&4089470473293004800&\texttt{2C}\\
		\mathbb{B}&306129918735099415756800&\texttt{2A}\\
		\mathbb{M}&8309562962452852382355161088000000&\texttt{2A}\\
		E_8(q)&q^{57}|E_7(q)|(2,q-1)&u_\a\\
		E_7(q)&q^{33}|\mathrm{SO}_{12}^+(q)|/(2,q)&u_\a\\
		E_6^\e(q)&q^{21}|\SL_6^\e(q)|/(3,q-\e)&u_\a\\
		F_4(q)&q^{15}|\Sp_6(q)|&u_\a\\
		G_2(q)&q^5|\SL_2(q)|&u_\a\\
		{^3}D_4(q)&q^{12}(q^6-1)&u_\a\\
		{^2}F_4(q)&q^{10}|{^2}B_2(q)|&u_\a&q>2\\
		{^2}F_4(2)'&10240&u_\a\\
		{^2}G_2(q)&q^3&u_\a\\
		{^2}B_2(q)&q^2&u_\a\\
		\LL_n^\e(q)&|\PGL_2(q^{1/2})|&\phi^{f/2}&\mbox{$n=2$, $f$ is even}\\
		&q+1&s&\mbox{$n=2$, $f$ is odd}\\
		&|\PGL_3(q^{1/2})|&\phi^{f/2}&\mbox{$n=3$, $\e=+$, $f$ is even, $3\mid q^{1/2}+1$}\\
		&|\PGU_3(q^{1/2})|&\phi^{f/2}\gamma&\mbox{$n=3$, $\e=+$, $f$ is even, $3\nmid q^{1/2}+1$}\\
		&(2,q-\e)|\mathrm{PGSp}_4(q)|/(4,q-\e)&\gamma_1&n=4\\
		&|\GU_{n-1}(q)|/(n,q+1)&[\omega I_1,I_{n-1}]&\mbox{$n\geqs 6$ is even, $\e = -$}\\
		&q^{2n-3}|\GL_{n-2}^\e(q)|/(n,q-\e)&u_\a&\mbox{otherwise}\\
		\PSp_n(q)&|\Sp_2(q^2)|&t_1&\mbox{$n=4$, $q$ is odd}\\
		&q^{n-1}|\Sp_{n-2}(q)|&u_\a&\mbox{otherwise}\\
		\POmega_n^\e(q)&|\mathrm{SO}_{n-1}^-(q)|&t_1'&\mbox{$n$ is odd}\\
		&|\Sp_{n-2}(q)|&b_1&\mbox{$q$ is even}\\
		&|\Omega_{n-1}(q)|&\gamma_1&\mbox{$n$ is even, $q$ is odd}\\
		\hline
		\end{array}
		\]
		\caption{$h(T)$ in Theorem \ref{t:fixity_Hol}}
		\label{tab:fix(Hol(T))}
\end{table}}

\begin{rem}
	Let us briefly comment on the notation we adopt in the third column of Table \ref{tab:fix(Hol(T))}, where we record an element $x\in \Aut(T)$ with $|C_T(x)| = h(T)$.
	\begin{enumerate}\addtolength{\itemsep}{0.2\baselineskip}
		\item[{\rm (i)}] We adopt the notation in \cite{W_WebAt} for labelling conjugacy classes when $T$ is a sporadic group. If $T$ is Lie type, then we write $u_\a$ for a long root element.
		\item[{\rm (ii)}] When $T = \LL_n(q)$, we write $\phi$ for a field automorphism of order $f = \log_pq$, where $p$ is the characteristic of the field $\mathbb{F}_q$.
		\item[{\rm (iii)}] If $T = \LL_2(q)$, then let $H$ be the normaliser in $\PGL_2(q)$ of a non-split maximal torus of $T$, so $H\cong D_{2(q+1)}$. We then define $s\in H$ to be the central involution if $q$ is odd, and an arbitrary element of odd prime order if $q$ is even.
		\item[{\rm (iv)}] We adopt the notation in \cite[Chapter 3]{BG_classical} for elements of classical groups. For example, if $n$ is even, $q$ is odd and $T = \POmega_n^\e(q)$, then a pre-image in $\mathrm{O}_n^\e(q)$ of an element of type $\gamma_1$ is an involution of the form $[-I_1,I_{n-1}]$ (see \cite[Section 3.5.2.14]{BG_classical}).
	\end{enumerate}
\end{rem}

\begin{proof}[Proof of Theorem \ref{t:fixity_Hol}.]
	First observe that we only need to consider prime order elements in $\Aut(T)$, since $C_T(x)\leqs C_T(x^m)$ for any integer $m$ and $x\in\Aut(T)$.
	
	Assume $T = A_n$ is an alternating group. If $n = 5$ or $6$, then the result can be checked using {\sc Magma}. Now assume $n\geqs 7$, so $\Aut(T) = S_n$. It is easy to see that $|C_T(x)|$ is maximal when $x$ is a transposition, in which case $C_{S_n}(x) \cong S_2\times S_{n-2}$ and thus $|C_T(x)| = (n-2)!$. Hence, $h(T) = (n-2)!$. If $T$ is a sporadic group, then $|C_T(x)|$ can be read off from the character table of $T$, which can be accessed computationally via the \textsf{GAP} Character Table Library \cite{B_GAPCTL}.
	
	For the remainder, we may assume $T$ is a simple group of Lie type over $\mathbb{F}_q$, where $q = p^f$ with $p$ a prime. Assume $x\in\Aut(T)$ is of prime order $r$. If $x\in\mathrm{Inndiag}(T)$, then $x$ is semisimple if $p\ne r$, otherwise $x$ is unipotent. And if $x\notin\mathrm{Inndiag}(T)$, then $x$ is a field, graph or graph-field automorphism. Here if $x$ is a graph or graph-field automorphism, then $r\in\{2,3\}$.
	
	Assume $T$ is an exceptional group. Here we assume $T\ne {^2}G_2(3)'\cong \LL_2(8)$ and $T\ne G_2(2)'\cong \UU_3(3)$ as noted in \eqref{e:iso_simple}. By \cite[Proposition 2.11]{BT_extremely}, $|C_T(x)|$ is maximal when $x\in T$ is a long root element. Now assume $x\in T$ is a long root element. If $T$ is not ${^3}D_4(q)$ or ${^2}B_2(q)$, then $|C_T(x)|$ can be read off from the tables in \cite[Chapter 22]{LS_uni}, noting that $x^{\mathrm{Inndiag}(T)} = x^T$ by \cite[Corollary 17.10]{LS_uni}. If $T = {^3}D_4(q)$ or ${^2}B_2(q)$ then we can find $|C_T(x)|$ in \cite[p. 677]{S_3D4} and \cite{S_Suzuki}, respectively.
	
	For the remainder of the proof, we assume $T$ is a classical group defined over $\mathbb{F}_q$. Let $V$ be the natural module of $T$ and write $\overline{V} = V\otimes K$, where $K$ is the algebraic closure of $\mathbb{F}_q$. For $x\in\PGL(V)$, let $\widehat{x}$ be a pre-image of $x$ in $\GL(V)$. Following \cite[Definition 3.16]{B_fpr2}, we define
	\begin{equation*}
	\nu(x) = \min\{\dim [\overline{V},\lambda \widehat{x}]:\lambda\in K^*\},
	\end{equation*}
	where $[\overline{V},\lambda\widehat{x}] = \{v-\lambda\widehat{x}v:v\in \overline{V}\}$. That is, $\nu(x)$ is the codimension of the largest eigenspace of $\widehat{x}$ on $\overline{V}$, noting that $\nu(x)$ is independent of the choice of the pre-image $\widehat{x}$. Upper and lower bounds on $|x^T|$ in terms of $n$, $q$ and $\nu(x)$ are given in \cite[Section 3]{B_fpr2}. Similarly, if $x$ is a field, graph or graph-field automorphism, then lower bounds for $|x^T|$ can be read off from \cite[Table 3.11]{B_fpr2}. In addition, $|C_{\mathrm{Inndiag}(T)}(x)|$, and a description of the splitting of $x^{\mathrm{Inndiag}(T)}$ into distinct $T$-classes, can be found in \cite[Chapter 3]{BG_classical}. In particular, note that if $x\in\mathrm{Inndiag}(T)$ is a semisimple element of prime order, then $x^{\mathrm{Inndiag}(T)} = x^T$ (see \cite[Theorem 4.2.2(j)]{GLS_CFSG3}, also recorded as \cite[Theorem 3.1.12]{BG_classical}).
	
	We start with the case where $T = \LL_2(q)$. Let $H$ be the normaliser in $\PGL_2(q)$ of a non-split maximal torus of $T$, so $H\cong D_{2(q+1)}$. If $q$ is odd, then we let $x$ be the central involution in $H$, and if $q$ is even, let $x\in H$ be an element of odd prime order. Then $|C_T(x)| = q+1$, so $h(T)\geqs q+1$. Let $y\in\Aut(T)$ be an element of prime order. Note that if $y$ is unipotent then $|C_T(y)| = q$, and $|C_T(y)|$ divides $q+1$ or $q-1$ if $y$ is semisimple. Thus, we only need to consider field automorphisms, noting that $|C_{\PGL_2(q)}(y)| = |\PGL_2(q^{1/r})|$ if $y$ is a field automorphism of prime order $r$. It follows that $|C_{\PGL_2(q)}(y)| > q+1$ only if $r = 2$ (so $f$ is even). Indeed,
	\begin{equation*}
	|C_T(y)| = |C_{\PGL_2(q)}(y)| = |\PGL_2(q^{1/2})| >q+1
	\end{equation*}
	if $y$ is an involutory field automorphism, and so we conclude that $h(T) = |\PGL_2(q^{1/2})|$ if $f$ is even, and $h(T) = q+1$ if $f$ is odd.
	
	To complete the proof for linear and unitary groups, we assume $T = \LL_n^\e(q)$ with $n\geqs 3$. Let $x\in T$ be a unipotent element with Jordan form $[J_2,J_1^{n-2}]$ on the natural module, noting that $x$ is a long root element. Then $|C_{\PGL_n^\e(q)}(x)|$ can be read off from \cite[Tables B.3 and B.4]{BG_classical}, and we have $x^{\PGL_n^\e(q)} = x^T$ by \cite[Propositions 3.2.7 and 3.3.10]{BG_classical}. More specifically,
	\begin{equation*}
	|C_T(x)| = (n,q-\e)^{-1} q^{2n-3}|\GL_{n-2}^\e(q)|
	\end{equation*}
	and
	\begin{equation*}
	|x^T| = |x^{\PGL_n^\e(q)}|= \frac{|\PGL_n^\e(q)|}{q^{2n-3}|\GL_{n-2}^\e(q)|} < \frac{2q^{2n-1}}{q-1}.
	\end{equation*}
	The cases where $n\in\{3,4\}$ require special attention, which will be treated separately.
	
	Assume $T = \LL_3^\e(q)$, so $|C_T(x)| = (3,q-\e)^{-1}q^3(q-\e)$, and let $y$ be an element in $\Aut(T)$ of prime order that is not of Jordan form $[J_2,J_1]$. If $y$ is unipotent or semisimple and $\nu(y) = 2$, then either $y$ has Jordan form $[J_3]$ or $|y|$ is odd, so by \cite[Propositions 3.22 and 3.36]{B_fpr2},
	\begin{equation*}
	|y^T| > \frac{1}{2(3,q-\e)}\left(\frac{q}{q+1}\right)q^6 > (q^2-1)(q^2+\e q+1) = |x^T|.
	\end{equation*}
	If $\nu(y) = 1$ and $y$ is semisimple, then a pre-image $\widehat{y}$ of $y$ in $\GL(V)$ is $[\omega I_1, I_{2}]$, so
	\begin{equation*}
	|C_T(y)| = (3,q-\e)^{-1}|\GL_{2}^\e(q)|
	\end{equation*}
	It is easy to see that $|C_T(y)| < |C_T(x)|$. If $y$ is a graph automorphism, then $|C_{\PGL_3^\e(q)}(y)| = |\SL_2(q)|$, so $|C_T(y)|<|C_T(x)|$ evidently. If $y$ is a field automorphism of odd prime order $r$, then by \cite[Propositions 3.2.9 and 3.3.12]{BG_classical},
	\begin{equation*}
	|C_{\PGL_3^\e(q)}(y)| =|\PGL_3^\e(q^{1/r})| \leqs  q(q^{2/3}-1)(q-\e),
	\end{equation*}
	so $|C_T(y)|\leqs |C_{\PGL_3^\e(q)}(y)| < |C_T(x)|$. Thus, we only need to consider involutory field or graph-field automorphisms, so we can assume $\e = +$ and $f$ is even. Let $y_1$ be an involutory field automorphism. Then by \cite[Proposition 3.2.9]{BG_classical},
	\begin{equation*}
	|C_T(y_1)| = \frac{(3,q^{1/2}+1)}{(3,q-1)}|\PGL_3(q^{1/2})|.
	\end{equation*}
	Similarly, if $y_2$ is a graph-field automorphism, then
	\begin{equation*}
	|C_T(y_2)| = \frac{(3,q^{1/2}-1)}{(3,q-1)}|\PGU_3(q^{1/2})|
	\end{equation*}
	by \cite[Proposition 3.2.15]{BG_classical}. Note that
	\begin{equation*}
	|\PGL_3(q^{1/2})| < q^3(q-1) < |\PGU_3(q^{1/2})| < 3|\PGL_3(q^{1/2})|.
	\end{equation*}
	Therefore, $h(T) = |C_T(x)|$ if $f$ is odd or $\e = -$, $h(T) = |C_T(y_1)|$ if $\e = +$, $f$ is even and $3\mid q^{1/2}+1$, otherwise $h(T) = |C_T(y_2)|$.
	
	Next, assume $T = \LL_4^\e(q)$ and let $z$ be a graph automorphism of type $\gamma_1$ (see \cite[Sections 3.2.5 and 3.3.5]{BG_classical}), so by \cite[Propositions 3.2.14 and 3.3.17]{BG_classical}, we have
	\begin{equation*}
	|C_T(z)| = \frac{(2,q-\e)}{(4,q-\e)}|\mathrm{PGSp}_4(q)| > \frac{1}{(4,q-\e)}q^6(q^2-1)(q-\e) = |C_T(x)|
	\end{equation*}
	and we claim that $h(T) = |C_T(z)|$. Note that
	\begin{equation*}
	|z^T| = \frac{q^2(q^3-\e)}{(2,q-\e)}.
	\end{equation*}
	By \cite[Propositions 3.22, 3.36, 3.37 and 3.48]{B_fpr2}, we have
	\begin{equation*}
	|y^T| > \frac{1}{2}\left(\frac{q}{q+1}\right)q^6
	\end{equation*}
	for any unipotent, semisimple, field or graph-field element $y\in\Aut(T)$ of prime order. Hence, $|y^T| > |z^T|$ if $q\geqs 4$, and for $q\in\{2,3\}$ we can check that $|y^T| > |z^T|$ using {\sc Magma}. Similarly, if $y$ is a graph automorphism, then $|y^T|\geqs |z^T|$ by inspecting \cite[Tables B.3 and B.4]{BG_classical}.
	
	Finally, assume $T = \LL_n^\e(q)$ and $n\geqs 5$. Then by applying the bounds in \cite[Table 3.11]{B_fpr2} we see that
	\begin{equation*}
	|y^T| > \frac{1}{2}\left(\frac{q}{q+1}\right)^{\frac{1}{2}(1-\e)}q^{\frac{1}{2}(n^2-n-4)} > \frac{2q^{2n-1}}{q-1} > |x^T|
	\end{equation*}
	if $y$ is a field, graph or graph-field automorphism, unless $(n,q) = (5,2)$ or $(6,2)$, in which cases one can check that $|y^T|>|x^T|$ using {\sc Magma}. If $y$ is a unipotent or semisimple element with $\nu(y)\geqs 2$, then
	\begin{equation*}
	|y^T| > \frac{1}{2}\left(\frac{q}{q+1}\right)q^{4n-8} > \frac{2q^{2n-1}}{q-1} > |x^T|
	\end{equation*}
	by \cite[Proposition 3.36]{B_fpr2}.
	Thus, we only need to consider the cases where $\nu(y) = 1$ and $y$ is not $\Aut(T)$-conjugate to $x$. In this setting, $y$ is semisimple, and a pre-image $\widehat{y}$ of $y$ in $\GL(V)$ is $[\omega I_1,I_{n-1}]$, where $\omega$ is a non-trivial $r$-th root of unity in $\mathbb{F}_q$ if $\e = +$, or $\mathbb{F}_{q^2}$ if $\e = -$, for some prime $r$. It follows that
	\begin{equation*}
	|C_T(y)| = (n,q-\e)^{-1}|\GL_{n-1}^\e(q)|.
	\end{equation*}
	Note that $|C_T(y)| > |C_T(x)|$ if and only if $\e = -$ and $n$ is even. This implies that 
	\begin{equation*}
	h(T) = (n,q-\e)^{-1}|\GL_{n-1}^\e(q)|
	\end{equation*}
	if $\e = -$ and $n$ is even, otherwise $h(T) = |C_T(x)|$.

	This concludes the proof of Theorem \ref{t:fixity_Hol} for linear and unitary groups. We can use a very similar approach to handle the symplectic and orthogonal groups and we omit the details. But let us remark that if $T = \PSp_n(q)$ is a symplectic group, then $|C_T(x)|$ is maximal when $x$ is a long root element, unless $n = 4$ and $q$ is odd, where an involution of type $t_1$ gives the maximal centraliser. If $T = \POmega_n^\e(q)$, where $n$ is odd or $q$ is even, then $|C_T(x)|$ is maximal when $x$ is an involution of type $t_1'$ or $b_1$, respectively. Finally, if $T = \POmega_n^\e(q)$ with $n$ even and $q$ odd, then a graph automorphism of type $\gamma_1$ has the maximal centraliser. All the relevant information about these elements can be found in \cite[Chapter 3]{BG_classical}.
\end{proof}

An immediate corollary is the following, which will be useful in Section \ref{s:prob}.

\begin{cor}
	\label{c:h(T)_le_|T|/10}
	We have $h(T)\leqs |T|/10$ for any non-abelian finite simple group $T$.
\end{cor}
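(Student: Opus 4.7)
The proof is a direct case-by-case verification based on Table \ref{tab:fix(Hol(T))} together with the standard orders of the finite simple groups. The key reformulation is
\[
\frac{|T|}{h(T)} = |x^T|,
\]
where $x\in\Aut(T)$ is the distinguished element in Table \ref{tab:fix(Hol(T))} realising $h(T) = |C_T(x)|$. Hence it suffices to show that $|x^T|\geqs 10$ in every case.

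For $T = A_n$ with $n\geqs 5$, Table \ref{tab:fix(Hol(T))} gives $h(T) = (n-2)!$, whence $|T|/h(T) = n(n-1)/2\geqs 10$, with equality precisely when $n = 5$. For the $26$ sporadic simple groups the inequality is a finite arithmetic check against the values of $|T|$: for example $|\mathrm{M}_{11}|/h(\mathrm{M}_{11}) = 7920/48 = 165$, and the remaining entries behave similarly.

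For $T$ of Lie type over $\mathbb{F}_q$, combining Lemma \ref{l:Inndiag} with the centraliser recorded in Table \ref{tab:fix(Hol(T))} expresses $|T|/h(T)$ as a polynomial in $q$ of positive degree, which comfortably exceeds $10$ on its admissible range. For instance, for $G_2(q)$ one finds $|x^T| = q^6-1$, and for exceptional groups of higher rank the corresponding ratio is a polynomial of much larger degree; the Suzuki and Ree families are immediate from the explicit orders (using that $q\geqs 8$ for ${}^2B_2(q)$ and $q\geqs 27$ for ${}^2G_2(q)$ under our conventions), and the classical families are handled analogously using the centraliser formulas from \cite{BG_classical}, taking into account the isomorphisms in \eqref{e:iso_simple}.

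The only real work lies in the bookkeeping, and no conceptual obstacle arises: once Theorem \ref{t:fixity_Hol} is in hand, the corollary follows by a uniform application of the identity $|x^T|=|T|/|C_T(x)|$ together with routine estimation. The tightness of the bound at $T=A_5$ shows that the constant $10$ is best possible.
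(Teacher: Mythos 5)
Your proposal is correct and follows the same route the paper intends (the paper gives no explicit proof, calling the corollary "immediate" from Theorem \ref{t:fixity_Hol}; the intended argument is exactly the case-by-case check against Table \ref{tab:fix(Hol(T))} that you sketch). The reformulation $|T|/h(T) = |x^T| = |T:C_T(x)|$, the computation $n(n-1)/2$ for $A_n$ showing tightness at $A_5$, the sporadic arithmetic, and the observation that in the Lie-type cases the ratio is a polynomial in $q$ of positive degree (e.g.\ $q^6-1$ for $G_2(q)$) are all accurate, as are the lower bounds $q\geqs 8$ for ${}^2B_2(q)$ and $q\geqs 27$ for ${}^2G_2(q)$ after excluding the non-simple and isomorphic small cases.
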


\subsection{Holomorph of simple groups}

Recall that $\Hol(T) = T{:}\Aut(T)$ is the \textit{holomorph} of $T$, which acts faithfully and primitively on $T$ (in fact, $\Hol(T) = T^2.\Out(T)$ is a diagonal type primitive group). Note that every element in $\Hol(T)$ can be uniquely written as $g\a$, where $g\in T$ acts on $T$ by left translation and $\a\in\Aut(T)$ acts naturally on $T$. That is,
\begin{equation*}
t^{g\a} = (g^{-1}t)^\a
\end{equation*}
for every $t\in T$. Let $\Hol(T,S)$ be the setwise stabiliser of $S\subseteq T$ in $\Hol(T)$. Throughout this section, we assume $P = S_k$, so $W = T^k.(\Out(T)\times S_k)$. The following result is a key observation.

\begin{lem}
	\label{l:Sk_eq_Hol}
	The following statements are equivalent.
	\begin{enumerate}\addtolength{\itemsep}{0.2\baselineskip}
		\item[\rm (i)] $\{D,D(\varphi_{t_1},\dots,\varphi_{t_k})\}$ is a base for $W$;
		\item[\rm (ii)] $t_1,\dots,t_k$ are distinct and $\Hol(T,\{t_1,\dots,t_k\}) = 1$.
	\end{enumerate}
\end{lem}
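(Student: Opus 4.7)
The plan is to unpack both conditions as statements about the existence of a triple $(\alpha,\pi,g)\in\Aut(T)\times S_k\times T$ satisfying $t_i^\alpha = g\,t_{i^\pi}$ for all $i$, and then match them. First I would recall that $\{D,D\mathbf{x}\}$ is a base for $W$ precisely when the only element of $D$ fixing $D\mathbf{x}$ is the identity; as in the proof of Lemma~\ref{l:l:3.4_diag_ext}, $(\alpha,\dots,\alpha)\pi\in D$ fixes $D(\varphi_{t_1},\dots,\varphi_{t_k})$ if and only if there is a unique $g\in T$ satisfying this identity. Using the action $t^{g\alpha}=(g^{-1}t)^\alpha=(g^\alpha)^{-1}t^\alpha$ of $\Hol(T)$ on $T$, I would then observe that, when the $t_i$ are distinct, $g\alpha\in\Hol(T)$ stabilises $S=\{t_1,\dots,t_k\}$ precisely when there exists $\pi\in S_k$ with $t_i^\alpha = g^\alpha\,t_{i^\pi}$ for all $i$. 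Setting $g':=g^\alpha$ shows that the two conditions have exactly the same shape, and this is the bookkeeping bridge between (i) and (ii).

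For (i)$\Rightarrow$(ii) the argument splits in two. If the $t_i$ are not all distinct, say $t_i=t_j$ with $i\ne j$, then the triple $(\alpha,\pi,g)=(1,(i,j),1)$ satisfies the fundamental identity and gives a non-trivial element of $D$ fixing $D\mathbf{x}$, so (i) forces the $t_i$ to be distinct. Given distinctness, any $1\ne g\alpha\in\Hol(T,S)$ yields, via the matching above, a non-trivial element $(\alpha,\dots,\alpha)\pi\in D$ with $g'=g^\alpha$ fixing $D\mathbf{x}$, again contradicting (i); hence (i) also forces $\Hol(T,S)=1$.

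For (ii)$\Rightarrow$(i) I would take $(\alpha,\dots,\alpha)\pi\in D$ fixing $D\mathbf{x}$, let $g\in T$ be the corresponding element with $t_i^\alpha = g\,t_{i^\pi}$ for all $i$, and set $h:=g^{\alpha^{-1}}$. A direct calculation then gives
\begin{equation*}
t_i^{h\alpha} = (h^\alpha)^{-1}t_i^\alpha = g^{-1}(g\,t_{i^\pi}) = t_{i^\pi}
\end{equation*}
for all $i$, so $h\alpha\in\Hol(T,S)$, and hence $h\alpha=1$ by (ii). Thus $\alpha=1$ and $g=h^\alpha=1$, the identity reduces to $t_i=t_{i^\pi}$, and distinctness of the $t_i$ forces $\pi=1$; the original element is therefore trivial. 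The whole argument is essentially bookkeeping once the fundamental identity is in place, and the only subtlety I anticipate is keeping the convention straight for how $\alpha$ acts on $g$ when translating between the $D$-side and the $\Hol(T)$-side, which is pinned down by the one-line computation in the displayed equation.
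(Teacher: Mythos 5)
Your proof is correct and takes essentially the same route as the paper: in both directions the crux is the translation between a fixing element $(\alpha,\dots,\alpha)\pi\in D$ with $t_i^\alpha = g\,t_{i^\pi}$ and an element $h\alpha\in\Hol(T,\{t_1,\dots,t_k\})$ with $h = g^{\alpha^{-1}}$, together with the observation that a repeated $t_i$ produces a transposition fixing both points. You spell out the $g\leftrightarrow g^{\alpha}$ bookkeeping a bit more explicitly, but the content matches the paper's argument step for step.
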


\begin{proof}
	First assume (i) holds. If $t_i = t_j$ for some $i\ne j$, then $(i,j)\in W$ stabilises $D$ and $D(\varphi_{t_1},\dots,\varphi_{t_{k}})$, which is incompatible with (i). Thus, $t_1,\dots,t_k$ are distinct. Suppose $g\alpha\in \Hol(T,\{t_1,\dots,t_k\})$. Then for any $i$ we have
	\begin{equation}\label{e:Sk_eq_Hol}
	t_j = t_i^{g\alpha} = (g^{-1}t_i)^\alpha = (g^{-1})^\alpha t_i^\alpha
	\end{equation}
	for some $j$. That is, $g\alpha$ induces a permutation $\pi\in S_k$ by $(g^{-1})^\alpha t_i^\alpha = t_{i^\pi}$. Now it is easy to see that $(\alpha,\dots,\alpha)\pi$ fixes $D(\varphi_{t_1},\dots,\varphi_{t_k})$. Hence, $\a = 1$ and $\pi = 1$, which implies that $g = 1$ by \eqref{e:Sk_eq_Hol}, noting that $i = j$ since $\pi = 1$.
	
	Conversely, suppose (ii) holds and $(\alpha,\dots,\alpha)\pi$ fixes $D$ and $D(\varphi_{t_1},\dots,\varphi_{t_k})$. Then there exists $g\in T$ such that $t_{i^\pi}= g^{-1}t_i^\alpha$ for all $i$. It follows that $g^{\a^{-1}}\alpha\in\Hol(T,\{t_1,\dots,t_k\})$, which implies that $g=1$ and $\alpha = 1$. As $t_1,\dots,t_k$ are distinct, this gives $\pi = 1$ and so (i) holds.
\end{proof}

Let $\mathscr{P}_k(T)$ (or just $\mathscr{P}_k$ if $T$ is clear from the context) be the set of $k$-subsets of $T$. Recall that $r(G)$ is the number of regular suborbits of $G$.

\begin{lem}
	\label{l:Sk_eq_Hol_r}
	The number of regular orbits of $\Hol(T)$ on $\mathscr{P}_k$ or $\mathscr{P}_{|T|-k}$ is $r(W)$. In particular, $b(W) = 2$ if and only if $\Hol(T)$ has a regular orbit on $\mathscr{P}_k$ or $\mathscr{P}_{|T|-k}$.
\end{lem}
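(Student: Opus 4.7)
The plan is a double-counting argument that converts $r(W)$ into an orbit count for $\Hol(T)$ on $\mathscr{P}_k$. Since $W$ is transitive on $\Omega$ with point stabiliser $W_D = D$, the regular $W$-suborbits are exactly the regular $D$-orbits on $\Omega$, so
\[
r(W) \;=\; \frac{|\mathcal{X}|}{|D|}, \qquad \mathcal{X} := \{\omega \in \Omega : D_\omega = 1\},
\]
and under the standing assumption $P = S_k$ we have $D \cong \Aut(T)\times S_k$, so $|D| = |\Aut(T)|\cdot k!$.

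To evaluate $|\mathcal{X}|$, I would parametrise $\Omega$ by the surjection $\Phi : T^k \to \Omega$ sending $(t_1,\dots,t_k)$ to $D(\varphi_{t_1},\dots,\varphi_{t_k})$. A direct calculation using $\varphi_{s_i}\varphi_{t_i}^{-1} = \varphi_{t_i^{-1}s_i}$ together with the definition of $D$ shows that $\Phi(s_1,\dots,s_k) = \Phi(t_1,\dots,t_k)$ iff there is a common $g \in T$ with $s_i = t_i g$ for all $i$, so every fibre of $\Phi$ has size $|T|$ (consistent with $|\Omega| = |T|^{k-1}$). By Lemma \ref{l:Sk_eq_Hol}, $\Phi(t_1,\dots,t_k) \in \mathcal{X}$ precisely when the $t_i$'s are distinct and $\Hol(T,\{t_1,\dots,t_k\}) = 1$; since this characterisation is intrinsic to $\omega$, the condition is automatically preserved throughout each fibre (an apparent subtlety, since the right-translate $\{t_i\}g$ may be a different subset, that is resolved by appeal to Lemma \ref{l:Sk_eq_Hol}).

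Writing $N_k := |\{S \in \mathscr{P}_k : \Hol(T,S) = 1\}|$, each such $S$ contributes exactly $k!$ ordered tuples to $\Phi^{-1}(\mathcal{X})$, so $|\Phi^{-1}(\mathcal{X})| = k!\,N_k$ and therefore $|\mathcal{X}| = k!\,N_k / |T|$. Combining everything,
\[
r(W) \;=\; \frac{|\mathcal{X}|}{|D|} \;=\; \frac{k!\,N_k}{|T|\cdot|\Aut(T)|\cdot k!} \;=\; \frac{N_k}{|\Hol(T)|},
\]
which is precisely the number of regular $\Hol(T)$-orbits on $\mathscr{P}_k$ (each such orbit has size $|\Hol(T)|$ by orbit--stabiliser). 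The complement map $S \mapsto T \setminus S$ is a $\Hol(T)$-equivariant bijection $\mathscr{P}_k \to \mathscr{P}_{|T|-k}$ preserving stabilisers, so the regular orbit counts on the two sides agree. The ``in particular'' clause is then immediate from $b(W) = 2 \iff r(W) \geqs 1$.

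The main technical obstacle is pinning down the fibre description of $\Phi$ cleanly and confirming that the trivial-stabiliser condition is well-defined along each fibre; once Lemma \ref{l:Sk_eq_Hol} is in hand the remaining work is purely arithmetic.
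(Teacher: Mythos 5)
Your proof is correct and fills in, via a double-counting argument across the fibres of $\Phi : T^k \to \Omega$, what the paper asserts ``follows directly'' from Lemma \ref{l:Sk_eq_Hol} together with $\Hol(T,S)=\Hol(T,T\setminus S)$; the only slip is a harmless convention error ($s_i = t_i g$ should be $s_i = g t_i$ given $x^{\varphi_t} = t^{-1}xt$, so the fibres are left-translates), which does not affect the count since every fibre still has size $|T|$ and, as you note, membership in $\mathcal{X}$ is intrinsic to $\omega$.
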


\begin{proof}
	This follows directly from Lemma \ref{l:Sk_eq_Hol}, noting that $\Hol(T,S) = \Hol(T,T\setminus S)$.
\end{proof}

Given a subset $S\subseteq T$, it is difficult to determine $\Hol(T,S)$. In particular, it is difficult to construct a subset $S\subseteq T$ such that $\Hol(T,S) = 1$. By the transitivity of $\Hol(T)$ on $T$, we may assume $1\in S$.

\begin{lem}
	\label{l:translation}
	Let $S_1$ and $S_2$ be subsets of $T$ such that $1\in S_1\cap S_2$ and $S_1^{g\a} = S_2$. Then $g\in S_1$.
\end{lem}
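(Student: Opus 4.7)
The plan is to exploit the hypothesis $1 \in S_2$ directly. Since $S_1^{g\alpha} = S_2$ and $1 \in S_2$, there must exist some $t \in S_1$ with $t^{g\alpha} = 1$. Using the defining formula for the $\Hol(T)$-action, namely $t^{g\alpha} = (g^{-1}t)^\alpha$, this reads $(g^{-1}t)^\alpha = 1$. Since $\alpha \in \Aut(T)$ sends the identity only to the identity, we obtain $g^{-1}t = 1$, whence $t = g$. As $t$ was an element of $S_1$, this gives $g \in S_1$, which is exactly what we need.

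There is essentially no obstacle here: the lemma is a one-line consequence of the explicit description of the holomorph action, together with the assumption that the identity lies in $S_2$ (the hypothesis $1 \in S_1$ is not even used in this direction, but it normalises the setup so that the two subsets are compared from the same base point, which matters in the applications). The content of the lemma is that once we fix the convention $1 \in S$ for every representative of an $\Hol(T)$-orbit on $k$-subsets, the element $g$ in any mapping $g\alpha$ between two such representatives is automatically pinned down to lie in the source set, which is the key combinatorial fact that will be used later.
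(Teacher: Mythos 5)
Your proof is correct and is essentially the same as the paper's: the paper writes $g^{-1}S_1 = S_2^{\alpha^{-1}}$ and deduces $1 \in g^{-1}S_1$ from $1 \in S_2$, which is your argument phrased set-wise rather than element-wise. Your side remark that the hypothesis $1\in S_1$ is not used in the proof is also accurate.
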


\begin{proof}
	We have $g^{-1}S_1 = S_2^{\a^{-1}}$, so $1\in g^{-1}S_1$ and thus $g\in S_1$.
\end{proof}

Now we give some sufficient conditions that allow us to deduce that $\Hol(T,S) = 1$ for a subset $S\subseteq T$ containing $1$. Here we write $\Aut(T,R)$ for the setwise stabiliser of $R\subseteq T^\#$ in $\Aut(T)$.

\begin{lem}
	\label{l:Hol(T,S)=1_cond}
	Let $S = \{t_1,\dots,t_k\}\in\mathscr{P}_k$ with $t_1 = 1$. Then $\Hol(T,S) = 1$ if the following conditions are satisfied:
	\begin{enumerate}\addtolength{\itemsep}{0.2\baselineskip}
		\item[\rm (i)] $\Aut(T,\{t_2,\dots,t_k\}) = 1$; and
		\item[\rm (ii)] for all $2\leqs i\leqs k$, $\{|t_i^{-1}t_1|,\dots,|t_i^{-1}t_k|\}\ne\{1,|t_2|,\dots,|t_k|\}$.
	\end{enumerate}
\end{lem}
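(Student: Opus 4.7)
The plan is to suppose $g\alpha \in \Hol(T,S)$ for some $g\in T$ and $\alpha\in\Aut(T)$, and deduce $g = \alpha = 1$ from conditions (i) and (ii). Since $1\in S$ and $S^{g\alpha} = S$, Lemma \ref{l:translation} (applied with $S_1 = S_2 = S$) immediately gives $g\in S$. Hence $g = t_i$ for some $i\in\{1,\dots,k\}$, and I would split into two cases based on whether $i = 1$ or $i\geqs 2$, invoking (i) in the first case and (ii) in the second.

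For the case $g = 1$ (that is, $i = 1$), the condition $S^{g\alpha} = S$ reduces to $S^\alpha = S$. Since every automorphism of $T$ fixes the identity, $1\in S$ is fixed by $\alpha$, so $\alpha$ permutes the complement $\{t_2,\dots,t_k\}$ setwise. Thus $\alpha\in\Aut(T,\{t_2,\dots,t_k\})$, which is trivial by (i), so $\alpha = 1$ as required.

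For the case $i\geqs 2$, I would compute $S = S^{g\alpha} = \{(t_i^{-1}t_j)^\alpha : 1\leqs j\leqs k\}$ directly from the action formula $t^{g\alpha} = (g^{-1}t)^\alpha$. Because $\alpha$ is an automorphism, it preserves element orders, so the equality of these two $k$-subsets forces equality of the corresponding sets of orders:
\begin{equation*}
\{|t_i^{-1}t_1|,\dots,|t_i^{-1}t_k|\} = \{|s|:s\in S\} = \{1,|t_2|,\dots,|t_k|\}.
\end{equation*}
This directly contradicts condition (ii), ruling out this case and completing the proof.

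There is no real obstacle here: the argument is a short case analysis, with Lemma \ref{l:translation} doing the heavy lifting of pinning down $g$ as an element of $S$. The two hypotheses (i) and (ii) are then designed precisely to kill off the two surviving possibilities. The only mild subtlety is making sure one records that $\alpha$ fixes $1$, which ensures $\alpha$ restricts to a setwise stabiliser of $\{t_2,\dots,t_k\}$ rather than merely of $S$.
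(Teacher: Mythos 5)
Your argument is correct and matches the paper's proof essentially verbatim: both apply Lemma \ref{l:translation} to get $g\in S$, then split into the case $g=1$ (killed by (i)) and $g=t_i$ for $i\geqs 2$ (killed by the order-set comparison in (ii)). No meaningful differences.
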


\begin{proof}
	Suppose $g\alpha\in\Hol(T,S)$, where $g\in T$ and $\alpha\in\Aut(T)$. By Lemma \ref{l:translation}, we have $g\in S$. If $g = t_1 = 1$ then $\alpha\in\Aut(T,\{t_2,\dots,t_k\})$ and the condition (i) forces $\alpha = 1$. If $g = t_i$ for some $2\leqs i\leqs k$ then $t_i^{-1}S = S^{\alpha^{-1}}$, which implies that $\{|t_i^{-1}t_1|,\dots,|t_i^{-1}t_k|\}=\{1,|t_2|,\dots,|t_k|\}$, which is incompatible with the condition (ii).
\end{proof}

\begin{cor}
	\label{c:Hol(T,S)=1_cond}
	Let $S = \{t_1,\dots,t_k\}\in\mathscr{P}_k$ with $t_1 = 1$. If $\Out(T) = 1$ then $\Hol(T,S) = 1$ if all the following conditions are satisfied:
	\begin{enumerate}\addtolength{\itemsep}{0.2\baselineskip}
		\item[\rm (i)] $t_2,\dots,t_k$ have distinct orders;
		\item[\rm (ii)] $M = \la t_2,\dots,t_k\ra$ is a maximal subgroup of $T$ such that $Z(M) = 1$;
		\item[\rm (iii)] for all $2\leqs i\leqs k$, $\{|t_i^{-1}t_1|,\dots,|t_i^{-1}t_k|\}\ne\{1,|t_2|,\dots,|t_k|\}$.
	\end{enumerate}
\end{cor}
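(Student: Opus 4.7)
The plan is to derive the two hypotheses of Lemma \ref{l:Hol(T,S)=1_cond} from the three hypotheses of the corollary. Condition (iii) of the corollary is literally condition (ii) of the lemma, so the entire task reduces to verifying condition (i) of the lemma, namely $\Aut(T,\{t_2,\dots,t_k\}) = 1$.

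To this end, I would take $\a \in \Aut(T,\{t_2,\dots,t_k\})$ and aim to show $\a = 1$. Since $\Out(T) = 1$ and $Z(T) = 1$, we have $\Aut(T) = \Inn(T) \cong T$, so $\a = \varphi_h$ for a unique $h \in T$. Because conjugation preserves orders of elements, and condition (i) states that $t_2,\dots,t_k$ have pairwise distinct orders, $\a$ must fix each $t_i$ individually. Hence $h$ centralises every generator of $M = \la t_2,\dots,t_k\ra$, so $h \in C_T(M)$. It therefore suffices to prove $C_T(M) = 1$, which I expect to be the technical core of the argument.

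To establish $C_T(M)=1$, I would first observe that $M$ normalises $C_T(M)$ (a short direct verification), so $MC_T(M)$ is a subgroup of $T$ containing $M$. By the maximality of $M$ (condition (ii)), either $MC_T(M) = M$ or $MC_T(M) = T$. In the first case, $C_T(M) \subseteq M$, and hence $C_T(M) \subseteq M \cap C_T(M) = Z(M) = 1$ by condition (ii). In the second case, $C_T(M)$ is normalised by $M$ and trivially by itself, hence by $T = MC_T(M)$, so it is a normal subgroup of the simple group $T$; this forces $C_T(M) \in \{1,T\}$, and $C_T(M) = T$ is absurd as it would put $M$ inside $Z(T) = 1$. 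Either way $C_T(M) = 1$, giving $h = 1$ and $\a = 1$.

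The main (minor) obstacle is simply correctly splitting into the two maximality cases and noticing that the hypothesis $Z(M) = 1$ is precisely what is needed to close out the first case (the second being handled by simplicity alone). Once $\Aut(T,\{t_2,\dots,t_k\}) = 1$ is established, Lemma \ref{l:Hol(T,S)=1_cond} directly yields $\Hol(T,S) = 1$, completing the proof.
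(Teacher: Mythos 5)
Your proposal is correct and follows essentially the same line of argument as the paper: reduce to showing $\Aut(T,\{t_2,\dots,t_k\}) = 1$ via Lemma \ref{l:Hol(T,S)=1_cond}, use distinctness of orders to force any such automorphism to fix every $t_i$ and hence centralise $M$, then use maximality of $M$ together with $Z(M)=1$ to kill the centraliser. The only cosmetic difference is in the final step: the paper observes directly that $C_T(M)\leqs N_T(M)=M$ and hence $C_T(M)\leqs Z(M)=1$, whereas you split into cases via the subgroup $MC_T(M)$; both routes are valid and rely on the same facts.
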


\begin{proof}
	In view of Lemma \ref{l:Hol(T,S)=1_cond}, it suffices to show that the conditions (i) and (ii) imply that $\Aut(T,\{t_2,\dots,t_k\}) = 1$. Suppose $\alpha\in\Aut(T,\{t_2,\dots,t_k\})$. Then $\alpha\in C_{\Aut(T)}(t_i)$ for each $i$, as $t_2,\dots,t_k$ have distinct orders. It follows that $\alpha$ centralises $\la t_2,\dots,t_k\ra = M$ and so $\alpha\in C_{\Aut(T)}(M)$. Since $\Out(T) = 1$, this implies that $\alpha\in C_T(M)\leqs N_T(M) = M$ since $M$ is maximal, so $\alpha\in Z(M) = 1$. This completes the proof.
\end{proof}

\begin{lem}
	\label{l:r_ge_2_cond}
	Let $S_1 = \{t_1,\dots,t_k\}$ and $S_2 = \{s_1,\dots,s_k\}$ be elements in $\mathscr{P}_k$ such that $1\in S_1\cap S_2$ and $\Hol(T,S_j) = 1$ for each $j\in\{1,2\}$. Then $S_1$ and $S_2$ are in distinct $\Hol(T)$-orbits if
	\begin{equation*}
	\{|t_i^{-1}t_1|,\dots,|t_i^{-1}t_k|\}\ne \{|s_1|,\dots,|s_k|\}
	\end{equation*}
	for any $i\in[k]$.
\end{lem}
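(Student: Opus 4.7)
The plan is to argue by contradiction. Suppose $S_1$ and $S_2$ lie in the same $\Hol(T)$-orbit, so there exists $g\a\in\Hol(T)$ with $S_1^{g\a} = S_2$. Since $1\in S_1\cap S_2$, Lemma \ref{l:translation} applies and forces $g\in S_1$, so $g = t_i$ for some $i\in[k]$. The goal is then to show that this forces the multiset equality of orders that the hypothesis rules out.

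To do this, I unpack the action explicitly: using the formula $x^{g\a} = (g^{-1}x)^\a$ from the paragraph preceding Lemma \ref{l:Sk_eq_Hol}, we have
\begin{equation*}
S_2 = S_1^{g\a} = (g^{-1}S_1)^\a = \{t_i^{-1}t_1,\dots,t_i^{-1}t_k\}^\a.
\end{equation*}
Since $\a\in\Aut(T)$ preserves element orders, the multiset of orders of elements of $S_2$ coincides with the multiset of orders of elements of $\{t_i^{-1}t_1,\dots,t_i^{-1}t_k\}$; that is,
\begin{equation*}
\{|s_1|,\dots,|s_k|\} = \{|t_i^{-1}t_1|,\dots,|t_i^{-1}t_k|\}
\end{equation*}
(as multisets, hence also as sets), which directly contradicts the hypothesis of the lemma. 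Therefore no such $g\a$ exists and $S_1,S_2$ lie in distinct $\Hol(T)$-orbits.

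There is no real obstacle here: the proof is a short computation whose two inputs are (a) the reduction $g\in S_1$ supplied by Lemma \ref{l:translation}, which crucially uses $1\in S_1\cap S_2$, and (b) the order-preservation property of automorphisms. The hypothesis $\Hol(T,S_j) = 1$ is not invoked in the argument itself, but is natural context since the lemma will be applied in Section \ref{s:proof_b(G)=2} to produce multiple regular $\Hol(T)$-orbits on $\mathscr{P}_k$, which via Lemma \ref{l:Sk_eq_Hol_r} translates into the existence of several regular suborbits of $W$.
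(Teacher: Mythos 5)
Your proof is correct and matches the paper's approach: the paper's own proof is the one-line statement ``This follows immediately from Lemma \ref{l:translation},'' and your argument is precisely the unpacking of that remark, combining $g\in S_1$ from Lemma \ref{l:translation} with the order-preserving property of automorphisms. Your closing observation that the hypothesis $\Hol(T,S_j)=1$ is not used in the argument (but is the context in which the lemma is applied) is also accurate.
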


\begin{proof}
	This follows immediately from Lemma \ref{l:translation}.
\end{proof}

\begin{rem}
	\label{r:magma}
	Let us briefly discuss the main computational techniques we will use to prove $r(W)\geqs 2$ for some suitable $T$ and $k$.
	\begin{enumerate}\addtolength{\itemsep}{0.2\baselineskip}
		\item[\rm (i)] Let $S_1$ and $S_2$ be $k$-element subsets of $T$ containing $1$, and let $O_j = \{|t|:t\in S_j\}$. Assume that $|O_j| = k$, $\la S_j\ra = T$ and
		\begin{equation*}
		O_j\ne \{|x^{-1}t|:t\in S_j\}
		\end{equation*}
		for any $x\in S_j\setminus\{1\}$. Then $\Hol(T,S_j) = 1$ by Lemma \ref{l:Hol(T,S)=1_cond}, noting that the first two conditions imply that $\Aut(T,S_j\setminus\{1\}) = 1$. Combining Lemmas \ref{l:Sk_eq_Hol_r} and \ref{l:r_ge_2_cond}, we have $r(W)\geqs 2$ if
		\begin{equation*}
		O_2\ne \{|x^{-1}t|:t\in S_1\}
		\end{equation*}
		for any $x\in S_1$. For suitable $T$ and $k$, we can construct $T$ with an appropriate permutation representation in {\sc Magma}, and implement this approach to find $k$-subsets $S_1$ and $S_2$ of $T$ with these properties by random search. We will only need to use this method for $k\leqs 11$.
		\item[{\rm (ii)}]
		In some cases where $\Out(T) = 1$, we will work with a centreless maximal subgroup $M$ of $T$, rather than $T$ itself. More precisely, if $S_1$ and $S_2$ are $k$-element subsets of $M$ containing $1$ and $O_j = \{|t|:t\in S_j\}$, then by Corollary \ref{c:Hol(T,S)=1_cond}, we have $\Hol(T,S_j) = 1$ if $|O_j| = k$, $\la S_j\ra  = M$ and
		\begin{equation*}
		O_j \ne \{|x^{-1}t|:t\in S_j\}
		\end{equation*}
		for any $S_j\setminus\{1\}$. Again, by Lemmas \ref{l:Sk_eq_Hol_r} and \ref{l:r_ge_2_cond}, we have $r(W)\geqs 2$ if
		\begin{equation*}
		O_2\ne \{|x^{-1}t|:t\in S_1\}
		\end{equation*}
		for any $x\in S_1$.	For example, if $T = \mathbb{M}$ is the Monster sporadic group and $3\leqs k \leqs 5$, then we will work with a maximal subgroup $M$ of $T$ isomorphic to $\LL_2(71)$ (this case arises in the proofs of Lemma \ref{l:sporadic_k=3,4} and Proposition \ref{p:sporadic}).
	\end{enumerate}
\end{rem}

\section{Probabilistic methods}

\label{s:prob}

In this section, we assume $G = T^k.(\Out(T)\times S_k)$ with $2<k<|T|$. By Lemma \ref{l:Sk_eq_Hol_r}, we have $r(G) \geqs  2$ for $k=m$ if and only if $r(G) \geqs 2$ for $k = |T|-m$, so we will assume $5\leqs k\leqs |T|/2$ throughout this section (we will treat the cases where $k\in\{3,4\}$ separately in Section \ref{s:proof_b(G)=2}).

In Section \ref{ss:fix_subsets}, we will estimate the probability $\mathrm{Pr}_k(T)$ that a random $k$-subset of $T$ has non-trivial setwise stabiliser in $\Hol(T)$, noting that
\begin{equation}\label{e:prob_k}
\mathrm{Pr}_k(T) = \frac{|\{S\in\mathscr{P}_k:\Hol(T,S) \ne 1\}|}{{|T|\choose k}}.
\end{equation}
As noted above, we have $r(G)\geqs 2$ if and only if
\begin{equation}\label{e:prob_k_ineq}
\mathrm{Pr}_k(T) < 1-\frac{|\Hol(T)|}{{|T|\choose k}}.
\end{equation}
To establish this inequality, we will give upper bounds on $\mathrm{Pr}_k(T)$ in Section \ref{ss:fix_subsets}. In particular, we will show that $r(G)\geqs 2$ if $4\log|T| < k\leqs |T|/2$ (see Proposition \ref{p:log}).

Finally, to handle certain cases where $k$ is small, we will consider the probability that a random pair of elements in $\Omega$ is not a base for $G$ in Section \ref{ss:fpr}, which is a widely used method in the study of base sizes.

\subsection{Holomorph and subsets}

\label{ss:fix_subsets}

We first consider $\mathrm{Pr}_k(T)$ defined as in \eqref{e:prob_k}. Let $\mathcal{F} = \{S\in\mathscr{P}_k:\Hol(T,S)\ne 1\}$ and suppose $S\in\mathcal{F}$. Then there exists $\sigma\in\Hol(T,S)$ of prime order. In other words, $S\in\fix(\sigma,\mathscr{P}_k)$, where
\begin{equation*}
\fix(\sigma,\mathscr{P}_k) = \{S\in\mathscr{P}_k:\sigma\in\Hol(T,S)\}
\end{equation*}
is the set of fixed points of $\sigma$ on $\mathscr{P}_k$. It follows that
\begin{equation*}
|\mathcal{F}| = \left|\bigcup_{\sigma\in\mathcal{R}}\fix(\sigma,\mathscr{P}_k)\right|\leqs\sum_{\sigma\in\mathcal{R}}|\fix(\sigma,\mathscr{P}_k)|,
\end{equation*}
where $\mathcal{R}$ is the set of elements of prime order in $\Hol(T)$. As discussed above, we have $r(G)\geqs 2$ if and only if \eqref{e:prob_k_ineq} holds. Thus, $r(G)\geqs 2$ if
\begin{equation*}
\sum_{\sigma\in\mathcal{R}}|\fix(\sigma,\mathscr{P}_k)| < {|T|\choose k} - |\Hol(T)|.
\end{equation*}
Moreover, since $5\leqs k\leqs |T|/2$, we note that $|\Hol(T)| < \frac{1}{2}{|T|\choose k}$ by Lemma \ref{l:|Out(T)|}. This observation yields the following result.

\begin{lem}
	\label{l:prob_ori}
	We have $r(G)\geqs 2$, and hence $b(G) =2$, if
	\begin{equation}
	\label{e:prob_ori}
	{|T|\choose k} > 2\sum_{\sigma\in\mathcal{R}}|\fix(\sigma,\mathscr{P}_k)|.
	\end{equation}
\end{lem}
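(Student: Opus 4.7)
The plan is to formalize the chain of inequalities sketched in the paragraph immediately preceding the lemma, which reduces the proof to two simple estimates.

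First I would record the equivalence already noted in the text: by the definition \eqref{e:prob_k} of $\mathrm{Pr}_k(T)$ together with \eqref{e:prob_k_ineq}, the desired conclusion $r(G)\geqs 2$ is equivalent to
\[
|\mathcal{F}| < {\textstyle\binom{|T|}{k}} - |\Hol(T)|,
\]
where $\mathcal{F}=\{S\in\mathscr{P}_k:\Hol(T,S)\ne 1\}$. Hence it suffices to establish two claims:
\[
\text{(a)}\quad |\mathcal{F}|\leqs\sum_{\sigma\in\mathcal{R}}|\fix(\sigma,\mathscr{P}_k)|,\qquad \text{(b)}\quad |\Hol(T)|<\tfrac{1}{2}{\textstyle\binom{|T|}{k}}.
\]
Given (a) and (b), the hypothesis \eqref{e:prob_ori} forces $|\mathcal{F}|<\tfrac{1}{2}\binom{|T|}{k}$ via (a), and then (b) immediately yields $|\mathcal{F}|<\binom{|T|}{k}-|\Hol(T)|$.

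Claim (a) is a union bound: if $S\in\mathcal{F}$ then the non-trivial group $\Hol(T,S)$ contains some element $\sigma$ of prime order, so by definition $S\in\fix(\sigma,\mathscr{P}_k)$, whence $\mathcal{F}\subseteq\bigcup_{\sigma\in\mathcal{R}}\fix(\sigma,\mathscr{P}_k)$.

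For claim (b), Lemma \ref{l:|Out(T)|} gives $|\Hol(T)|=|T|^2|\Out(T)|<|T|^{7/3}$. Since $\binom{|T|}{k}$ is non-decreasing in $k$ on $[0,|T|/2]$ and by assumption $5\leqs k\leqs |T|/2$, it is enough to verify $\binom{|T|}{5}>2|T|^{7/3}$, which is a short calculation using $|T|\geqs 60$ for every non-abelian finite simple group. Finally, $r(G)\geqs 2$ implies $b(G)=2$ since the point stabiliser in a diagonal type primitive group is non-trivial (indeed $|G|>|\Omega|$). I do not foresee any substantive obstacle here: all of the conceptual content is packaged in the preceding paragraph, and the only numerical input is the elementary comparison between $|\Hol(T)|$ and $\binom{|T|}{5}$.
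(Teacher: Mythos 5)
Your proposal is correct and reproduces essentially the same argument the paper makes in the paragraph preceding the lemma: the union bound over prime-order elements giving $|\mathcal{F}|\leqs\sum_{\sigma\in\mathcal{R}}|\fix(\sigma,\mathscr{P}_k)|$, combined with the estimate $|\Hol(T)|<\tfrac{1}{2}\binom{|T|}{k}$ (which the paper asserts via Lemma~\ref{l:|Out(T)|} and the standing assumption $5\leqs k\leqs |T|/2$, and which you usefully make explicit by reducing to the case $k=5$ via monotonicity of $\binom{|T|}{k}$). No gaps.
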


In order to apply Lemma \ref{l:prob_ori}, we need to derive a suitable upper bound for the summation appearing on the right-hand side of \eqref{e:prob_ori}.

\begin{lem}
	\label{l:fix}
	Let $\sigma\in\Hol(T)$ be of prime order $r$ with cycle shape $[r^m,1^{|T|-mr}]$. Then
	\begin{equation*}
	|\fix(\sigma,\mathscr{P}_k)| = \sum_{u=0}^{\lfloor k/r\rfloor}{m\choose u}{|T|-mr\choose k-ru}.
	\end{equation*}
\end{lem}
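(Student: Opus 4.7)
The plan is to carry out a direct counting argument based on the cycle decomposition of $\sigma$ acting on $T$. The key observation is that since $\Hol(T) \leqs \mathrm{Sym}(T)$, a subset $S \subseteq T$ satisfies $\sigma \in \Hol(T, S)$ if and only if $S$ is a union of $\langle \sigma \rangle$-orbits on $T$. Since $\sigma$ has prime order $r$, each $\langle \sigma \rangle$-orbit has length $1$ or $r$, and the hypothesis on the cycle shape of $\sigma$ says exactly that there are $m$ orbits of length $r$ and $|T| - mr$ orbits of length $1$.

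Given this, I would parametrise each $\sigma$-invariant $k$-subset $S$ by the number $u$ of length-$r$ orbits of $\sigma$ contained in $S$. For a fixed value of $u$, the set $S$ is determined by first choosing which $u$ of the $m$ length-$r$ orbits to include (in $\binom{m}{u}$ ways), contributing $ru$ elements, and then choosing which $k - ru$ of the $|T| - mr$ fixed points of $\sigma$ to include (in $\binom{|T| - mr}{k - ru}$ ways). The constraint that $k - ru \geqs 0$ gives $u \leqs \lfloor k/r \rfloor$, and for $u > \lfloor k/r \rfloor$ the relevant binomial coefficient vanishes anyway. Summing over all admissible $u$ yields the claimed formula. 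The argument is entirely routine and I do not anticipate any serious obstacle; the only point worth noting is that the primality of $r$ is what forbids intermediate orbit lengths, making the two-term parametrisation above exhaustive.
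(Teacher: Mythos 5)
Your argument is correct and is exactly the paper's approach: the paper's one-line proof observes that any $\sigma$-fixed subset is a union of cycles of $\sigma$, and your write-up simply spells out the resulting count by the number $u$ of $r$-cycles included.
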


\begin{proof}
	This follows by noting that any subset fixed by $\sigma$ is a union of some cycles comprising $\sigma$.
\end{proof}

If $\sigma\in\Hol(T)$ is an element as described in Lemma \ref{l:fix}, then $|T|-mr$ is the number of elements in $T$ fixed under $\sigma$. It follows that $|T|-mr\leqs \fix(\Hol(T))$, where $\fix(\Hol(T))$ is the fixity of $\Hol(T)$ (the \textit{fixity} of a permutation group is the maximum number of elements fixed by a non-identity permutation). Recall that 
\begin{equation*}
h(T) = \max\{|C_T(x)|:1\ne x\in\Aut(T)\},
\end{equation*}
which has been determined in Theorem \ref{t:fixity_Hol}.

\begin{lem}
	\label{l:mu}
	We have $\fix(\Hol(T)) = h(T)$.
\end{lem}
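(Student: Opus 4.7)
The plan is to analyse directly the fixed point set of an arbitrary non-identity element of $\Hol(T)$ acting on $T$, and show that its size is either $0$ or $|C_T(\alpha)|$ for the $\Aut(T)$-component $\alpha$. Recall that every element of $\Hol(T)$ is uniquely $g\alpha$ with $g\in T$ and $\alpha\in\Aut(T)$, and $t^{g\alpha}=(g^{-1}t)^\alpha$, so $t$ is fixed by $g\alpha$ if and only if $(g^{-1}t)^\alpha=t$.

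First I would dispose of the case $\alpha=1$: then $g\alpha$ acts by left translation by $g$, which is fixed-point-free unless $g=1$. Hence any non-identity element of $\Hol(T)$ contributing to the fixity has $\alpha\ne 1$. For such an element, suppose $t_0$ is a fixed point of $g\alpha$ and write another candidate fixed point as $t_0u$ with $u\in T$. A short calculation gives $(g^{-1}t_0u)^\alpha=(g^{-1}t_0)^\alpha u^\alpha=t_0u^\alpha$, which equals $t_0u$ if and only if $u^\alpha=u$. Therefore the fixed point set of $g\alpha$ is either empty or a left coset of $\Fix(\alpha)=\{u\in T:u^\alpha=u\}$, which has size $|C_T(\alpha)|$ (viewing $\alpha$ inside $\Hol(T)=T{:}\Aut(T)$, the $T$-centraliser of $\alpha$ is precisely this fixed subgroup).

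This yields the upper bound $\fix(\Hol(T))\leqs h(T)$. For the matching lower bound, pick $1\ne\alpha\in\Aut(T)$ achieving $|C_T(\alpha)|=h(T)$ (such $\alpha$ exists by Theorem \ref{t:fixity_Hol}) and take $g=1$; then $g\alpha=\alpha\in\Hol(T)$ is a non-identity element whose fixed points on $T$ are exactly $C_T(\alpha)$, so $\fix(\Hol(T))\geqs h(T)$. Combining the two inequalities gives the claimed equality.

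There is no real obstacle here: the content is just the elementary observation that in a holomorph the point stabilisers along a regular orbit of $T$ are copies of $C_T(\alpha)$, and the left-translation component only shifts the fixed coset without changing its size. The only mild point to be careful about is matching the two notations for $C_T(\alpha)$, namely $\{u\in T:u^\alpha=u\}$ and the centraliser of $\alpha$ inside $\Hol(T)$ intersected with $T$; these agree because $[t,\alpha]=t^{-1}t^\alpha$ in $\Hol(T)$.
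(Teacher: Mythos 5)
Your proof is correct and reaches the same key observation as the paper: a non-identity element of $\Hol(T)$ with a fixed point has fixed-point set of size $|C_T(\alpha)|$ for its $\Aut(T)$-component $\alpha$, so $\fix(\Hol(T)) = h(T)$. The paper's argument is slightly more compact — it uses transitivity of $\Hol(T)$ on $T$ to conjugate $\sigma$ to an element fixing the identity (hence lying in $\Aut(T)$), whereas you compute the coset structure of the fixed-point set directly — but these are essentially the same reduction.
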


\begin{proof}
	Let $\sigma\in\Hol(T)$ be such that it fixes at least one element in $T$. We may assume $\sigma$ fixes $1\in T$ by the transitivity of $\Hol(T)$. Thus, $\sigma\in\Aut(T)$ and hence $C_T(\sigma)$ is the set of fixed points of $\sigma$, which completes the proof.
\end{proof}

\begin{cor}
	\label{c:fix}
	Let $\sigma\in\Hol(T)$ be of prime order $r$. Then
	\begin{equation*}
	|\fix(\sigma,\mathscr{P}_k)| \leqs  \sum_{u=0}^{\lfloor k/r\rfloor}{|T|/r\choose u}{h(T)\choose k-ru}.
	\end{equation*}
\end{cor}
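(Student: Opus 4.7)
The plan is to combine the exact count in Lemma \ref{l:fix} with the fixity bound in Lemma \ref{l:mu} via term-by-term monotonicity of binomial coefficients. Every element of prime order $r$ acting on a set of size $|T|$ has cycle shape $[r^m, 1^{|T|-mr}]$ for some integer $m\geqs 0$, so Lemma \ref{l:fix} applies directly to our $\sigma$ and yields
\begin{equation*}
|\fix(\sigma,\mathscr{P}_k)| = \sum_{u=0}^{\lfloor k/r\rfloor}\binom{m}{u}\binom{|T|-mr}{k-ru}.
\end{equation*}

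Next I would establish two elementary upper bounds on the two factors of each summand. First, since $mr\leqs |T|$ we have $m\leqs |T|/r$, and as the function $x\mapsto\binom{x}{u}=x(x-1)\cdots(x-u+1)/u!$ is non-decreasing for $x\geqs u-1$ (with the convention $\binom{m}{u}=0$ for $m<u$), we obtain $\binom{m}{u}\leqs \binom{|T|/r}{u}$. Second, the element $\sigma\in\Hol(T)$ fixes precisely $|T|-mr$ points of $T$, so by Lemma \ref{l:mu},
\begin{equation*}
|T|-mr \,\leqs\, \fix(\Hol(T)) \,=\, h(T).
\end{equation*}
Hence $\binom{|T|-mr}{k-ru}\leqs \binom{h(T)}{k-ru}$ by the same monotonicity.

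There is a small edge case when $k-ru$ exceeds $|T|-mr$, which renders the left binomial zero; the inequality is then trivial regardless of whether $\binom{h(T)}{k-ru}$ is zero or positive. Multiplying the two bounds and summing over $u$ then yields
\begin{equation*}
|\fix(\sigma,\mathscr{P}_k)| \,\leqs\, \sum_{u=0}^{\lfloor k/r\rfloor}\binom{|T|/r}{u}\binom{h(T)}{k-ru},
\end{equation*}
as required. There is no real obstacle here: the argument is a direct combination of Lemmas \ref{l:fix} and \ref{l:mu} with the monotonicity of $x\mapsto \binom{x}{u}$, so the corollary follows immediately once those ingredients are in place.
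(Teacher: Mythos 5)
Your proof is correct and matches the paper's intended argument: the corollary is presented as an immediate consequence of Lemma \ref{l:fix} (the exact count), the observation that $|T|-mr$ is the number of fixed points of $\sigma$ on $T$ so $|T|-mr\leqs h(T)$ via Lemma \ref{l:mu}, and the bound $m\leqs |T|/r$, combined exactly as you describe through term-by-term monotonicity of $x\mapsto\binom{x}{j}$.
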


The following bounds on binomial coefficients come from \cite[Theorem 2.6]{S_binomial}, where $e$ is the exponential constant.

\begin{lem}
	\label{l:bound_binomial_better}
	Let $\ell, m, n$ be positive integers with $n>m$. Then
	$$e^{-\frac{1}{8\ell}}a(\ell,m,n)<{n\ell\choose m\ell}<a(\ell,m,n),$$where
	\begin{equation*}
	a(\ell,m,n) = \frac{1}{\sqrt{2\pi}}\ell^{-\frac{1}{2}}\left(\frac{n}{(n-m)m}\right)^{\frac{1}{2}}\left(\frac{n^n}{(n-m)^{n-m}m^m}\right)^\ell.
	\end{equation*}
\end{lem}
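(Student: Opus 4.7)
The plan is to derive both bounds from a sharp version of Stirling's formula; specifically, I would use Robbins' two-sided refinement
\begin{equation*}
\sqrt{2\pi N}\,(N/e)^N \exp\!\left(\tfrac{1}{12N+1}\right) < N! < \sqrt{2\pi N}\,(N/e)^N \exp\!\left(\tfrac{1}{12N}\right)
\end{equation*}
valid for every positive integer $N$.

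Writing $\binom{n\ell}{m\ell} = (n\ell)!/\bigl((m\ell)!\,((n-m)\ell)!\bigr)$, I would apply the upper Robbins bound to the numerator and the lower Robbins bound to each factorial in the denominator (and vice versa for the reverse direction). The crucial book-keeping is that the factors of $e^{-N}$ in the three Stirling approximations cancel exactly, since $-n\ell + m\ell + (n-m)\ell = 0$. The remaining $\sqrt{2\pi\cdot}$ prefactors combine to give $\frac{1}{\sqrt{2\pi\ell}}\sqrt{n/(m(n-m))}$, and the power factors combine to give $\bigl(n^n/(m^m(n-m)^{n-m})\bigr)^\ell$, which together are exactly $a(\ell,m,n)$. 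So the whole question reduces to controlling the residual exponential factor.

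For the upper bound, what must be checked is that
\begin{equation*}
\frac{1}{12n\ell} - \frac{1}{12m\ell+1} - \frac{1}{12(n-m)\ell+1} \leqs 0.
\end{equation*}
This follows from the elementary harmonic inequality $\tfrac{1}{a}+\tfrac{1}{b}\geqs \tfrac{4}{a+b}$: taking $a = 12m\ell+1$ and $b = 12(n-m)\ell+1$ gives $a+b = 12n\ell+2$, and $\tfrac{4}{12n\ell+2} > \tfrac{1}{12n\ell}$ for all positive integers $n,\ell$. For the lower bound, after identifying the residual as $\exp\!\bigl(\tfrac{1}{12n\ell+1}-\tfrac{1}{12m\ell}-\tfrac{1}{12(n-m)\ell}\bigr)$, one must show
\begin{equation*}
\frac{1}{12m\ell} + \frac{1}{12(n-m)\ell} - \frac{1}{12n\ell+1} \leqs \frac{1}{8\ell},
\end{equation*}
and the left-hand side equals $\frac{n}{12m(n-m)\ell} - \frac{1}{12n\ell+1}$. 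Since $m(n-m)\geqs n-1$, for $n\geqs 3$ this is at most $\tfrac{n}{12(n-1)\ell} < \tfrac{1}{8\ell}$ and we are done.

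The main obstacle is the extremal case $n=2$, $m=1$, where $m(n-m) = n-1 = 1$: the naive one-term Robbins estimate gives only $\tfrac{1}{6\ell}$, which exceeds $\tfrac{1}{8\ell}$. Here one has to either sharpen Robbins by carrying the next term in the Stirling series, or handle $\binom{2\ell}{\ell}$ directly using $(2\ell)!/(\ell!)^2$ and the identity $-2\cdot\tfrac{1}{12\ell}+\tfrac{1}{24\ell} = -\tfrac{1}{8\ell}$ (which is precisely where the constant $1/8$ in the lemma comes from). Once this borderline case is done, a uniform bookkeeping with the improved Stirling remainder gives the claim in all cases; this is exactly the content of \cite[Theorem 2.6]{S_binomial}.
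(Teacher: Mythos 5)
The paper does not prove this lemma --- it is taken verbatim from Stănică's paper \cite[Theorem 2.6]{S_binomial}, and Huang simply cites it. So there is no ``paper's proof'' to compare with; what you have written is a genuine attempted derivation, and the comparison is between your sketch and Stănică's.

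Your upper-bound argument is complete and correct: the Robbins two-sided Stirling bound, the cancellation of the $e^{-N}$ factors (since $-n\ell+m\ell+(n-m)\ell=0$), the combination of the $\sqrt{2\pi N}$ prefactors and the $N^N$ powers into $a(\ell,m,n)$, and then the harmonic inequality $\tfrac{1}{a}+\tfrac{1}{b}\geqs\tfrac{4}{a+b}$ applied to $a=12m\ell+1$, $b=12(n-m)\ell+1$ giving $\tfrac{4}{12n\ell+2}>\tfrac{1}{12n\ell}$ are all sound. For the lower bound, your reduction and the observation $m(n-m)\geqs n-1$ are correct; for $n\geqs 4$ one has $\tfrac{n}{12(n-1)\ell}\leqs\tfrac{1}{9\ell}<\tfrac{1}{8\ell}$ outright, and for $n=3$ the $-\tfrac{1}{12n\ell+1}$ term rescues the equality $\tfrac{3}{24\ell}=\tfrac{1}{8\ell}$, so strict inequality holds.

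The genuine gap is exactly where you flag it: $n=2$, $m=1$. There the Robbins residual is $\tfrac{1}{24\ell}-\tfrac{2}{12\ell}=-\tfrac{1}{6\ell}<-\tfrac{1}{8\ell}$, so the one-term Robbins bound is simply too weak, and you do not carry out the repair. The ``identity'' $-2\cdot\tfrac{1}{12\ell}+\tfrac{1}{24\ell}=-\tfrac{1}{8\ell}$ is arithmetically true but irrelevant to Robbins, where the two coefficients in the denominator carry a $+1$; it is only a heuristic about the leading Stirling-series term, not a bound. To actually close the case one needs a two-term version of Stirling, e.g.
\begin{equation*}
\sqrt{2\pi N}\Bigl(\tfrac{N}{e}\Bigr)^{N}\exp\!\Bigl(\tfrac{1}{12N}-\tfrac{1}{360N^{3}}\Bigr)<N!<\sqrt{2\pi N}\Bigl(\tfrac{N}{e}\Bigr)^{N}\exp\!\Bigl(\tfrac{1}{12N}-\tfrac{1}{360N^{3}}+\tfrac{1}{1260N^{5}}\Bigr),
\end{equation*}
which for $\binom{2\ell}{\ell}$ gives residual $-\tfrac{1}{8\ell}+\tfrac{1}{192\ell^{3}}-\tfrac{1}{630\ell^{5}}>-\tfrac{1}{8\ell}$ for all $\ell\geqs 1$. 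Since you defer precisely this step back to \cite{S_binomial}, your text is an accurate sketch of the proof rather than a self-contained one; the statement itself is not in doubt, but as written your argument leaves the extremal (and, for this lemma, defining) case unproved.
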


\begin{cor}
	\label{c:binomial_bound}
	Suppose $n = tm$ for some integer $t\geqs 2$. Then
	\begin{equation}\label{e:bin_bound}
	e^{-\frac{1}{8}}\left(\frac{t^2}{(t-1)n}\right)^{\frac{1}{2}}\left(\frac{t^t}{(t-1)^{t-1}}\right)^{\frac{n}{t}}<\sqrt{2\pi}{n\choose m}<\left(\frac{t^2}{(t-1)n}\right)^{\frac{1}{2}}\left(\frac{t^t}{(t-1)^{t-1}}\right)^{\frac{n}{t}}.
	\end{equation}
\end{cor}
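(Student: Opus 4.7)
My plan is to obtain the corollary as a direct specialisation of Lemma \ref{l:bound_binomial_better}, with the only mild point being a clash of variable names between the two statements. I will rename the variables from the lemma for clarity: write the lemma as saying that, for positive integers $L, M, N$ with $N > M$, one has
\[
e^{-\frac{1}{8L}}\, a(L,M,N) < \binom{NL}{ML} < a(L,M,N),
\]
where
\[
a(L,M,N) = \frac{1}{\sqrt{2\pi}}\,L^{-\frac{1}{2}}\left(\frac{N}{(N-M)M}\right)^{\frac{1}{2}}\left(\frac{N^N}{(N-M)^{N-M}M^M}\right)^L.
\]

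The plan is then to apply this with $L = m$, $N = t$ and $M = 1$. Under this choice, $NL = tm = n$ and $ML = m$, so $\binom{NL}{ML} = \binom{n}{m}$. Substituting,
\[
a(m,1,t) = \frac{1}{\sqrt{2\pi}}\, m^{-\frac{1}{2}}\left(\frac{t}{t-1}\right)^{\frac{1}{2}}\left(\frac{t^t}{(t-1)^{t-1}}\right)^{m},
\]
and then the identity $m = n/t$ converts the factor $m^{-1/2}$ into $(t/n)^{1/2}$, which combines with $(t/(t-1))^{1/2}$ to give $(t^2/((t-1)n))^{1/2}$, and converts the exponent $m$ into $n/t$. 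This is precisely the quantity appearing on the right-hand side of \eqref{e:bin_bound} (after multiplying through by $\sqrt{2\pi}$), so the upper bound is immediate.

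For the lower bound, the lemma gives the constant $e^{-1/(8m)}$, whereas the corollary only claims $e^{-1/8}$. Since $m \geqs 1$, we have $1/(8m) \leqs 1/8$ and hence $e^{-1/(8m)} \geqs e^{-1/8}$, so the stated bound follows. The only step that is not entirely mechanical is the algebraic simplification of $a(m,1,t)$, but this is just a careful rewriting and carries no genuine mathematical obstacle; there is no hard part to flag here, as the corollary is essentially a convenient restatement of the lemma in the form needed for the applications in Section \ref{s:prob}.
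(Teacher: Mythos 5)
Your proof is correct. The paper's own proof also specialises Lemma \ref{l:bound_binomial_better}, but with a different choice of parameters: the paper puts $\ell = 1$, keeps $n$, and sets the lemma's $m$ to $n/t$, immediately obtaining the constant $e^{-1/8}$ in the lower bound. You instead write $n = tm$ as $\binom{tm}{1 \cdot m}$ and apply the lemma with $\ell = m$, $N = t$, $M = 1$. A short computation shows that both substitutions produce the identical quantity $a = \frac{1}{\sqrt{2\pi}} \bigl(\frac{t^2}{(t-1)n}\bigr)^{1/2}\bigl(\frac{t^t}{(t-1)^{t-1}}\bigr)^{n/t}$, so the upper bound is the same either way. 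The only genuine difference is in the lower-bound constant: your substitution yields $e^{-1/(8m)}$, which is strictly stronger than $e^{-1/8}$ whenever $m > 1$, and which you then (correctly) weaken to the stated $e^{-1/8}$ using $m \geqs 1$. Your route is therefore a slight improvement in passing, though for the purposes of the corollary as stated the two proofs are interchangeable, and the paper's choice of $\ell = 1$ is marginally more economical since it lands on $e^{-1/8}$ without the extra weakening step.
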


\begin{proof}
	Put $\ell = 1$ and $m = n/t$ in Lemma \ref{l:bound_binomial_better}.
\end{proof}

\begin{prop}
	\label{p:log}
	If $4\log |T| < k\leqs |T|/2$, then $r(G)\geqs 2$. In particular, $b(G) = 2$.
\end{prop}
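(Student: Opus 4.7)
The strategy is to invoke Lemma \ref{l:prob_ori}: writing $N=|T|$, it suffices to prove
\[
\sum_{\sigma\in\mathcal R}|\fix(\sigma,\mathscr P_k)|<\tfrac{1}{2}\binom{N}{k}.
\]
Since $|\Hol(T)|=N^2|\Out(T)|<N^{7/3}$ by Lemma \ref{l:|Out(T)|}, this reduces to the pointwise bound $|\fix(\sigma,\mathscr P_k)|<\binom{N}{k}/(2N^{7/3})$ for every prime-order $\sigma\in\Hol(T)$. Fix such a $\sigma$ of prime order $r$. Corollary \ref{c:fix} combined with the estimate $h(T)\leq N/10$ from Corollary \ref{c:h(T)_le_|T|/10} gives
\[
|\fix(\sigma,\mathscr P_k)|\leq [x^k](1+x)^{N/10}(1+x^r)^{N/r},
\]
and a short comparison shows that the right-hand side is maximised at $r=2$ (for $r\geq 3$ the evaluation at $x=1$ is only $2^{N/10+N/r}$, and the central coefficient is also smaller). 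So it suffices to bound $A(N,k):=[x^k](1+x)^{N/10}(1+x^2)^{N/2}$.

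The central tool is a Markov-style tilt at $t:=k/(N-k)\in(0,1]$: this yields $A(N,k)\leq t^{-k}(1+t)^{N/10}(1+t^2)^{N/2}$, while Stirling's formula (or Corollary \ref{c:binomial_bound}) gives the matching lower bound $\binom{N}{k}\geq C_1 k^{-1/2}t^{-k}(1+t)^N$ for an absolute constant $C_1>0$. Dividing yields
\[
\frac{A(N,k)}{\binom{N}{k}}\leq C_1^{-1}\sqrt{k}\cdot \phi(t)^{N/2}, \qquad \phi(t):=\frac{1+t^2}{(1+t)^{9/5}}.
\]
A direct calculation shows $\log\phi(t)=-9t/5+19t^2/10+O(t^3)$ near $t=0$, while $\phi(1)=2^{-4/5}<1$, so $\phi(t)<1$ on $(0,1]$ with quantifiable decay.

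To conclude, one separates two regimes. In the small-$t$ regime (say $t\leq 1/10$, i.e.\ $k\leq N/11$), the leading Taylor term together with $tN\geq k$ gives $\phi(t)^{N/2}\leq \exp(-9tN/10)\leq \exp(-9k/10)$, which for $k>4\log_2 N$ is at most $N^{-9/(5\ln 2)}=N^{-5.2}$; combined with the Stirling factor $\sqrt{k}\leq\sqrt{N/2}$, this beats $N^{-7/3}/(2C_1)$ by a wide margin. In the large-$t$ regime ($t>1/10$), a uniform lower bound on $-\log\phi(t)$ combined with $tN\geq N/11$ shows that $\phi(t)^{N/2}$ decays exponentially in $N$, which again beats $N^{-7/3}/(2C_1)$ once $N$ exceeds an explicit threshold. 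The finitely many non-abelian simple groups $T$ with $|T|$ below this threshold can be handled by a direct enumeration of orbits of $\Hol(T)$ on $\mathscr P_k$ using \textsc{Magma}.

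The main technical obstacle is the uniform analytic control of $\phi$: near $t=0$ one must extract the leading term $-9t/5$ to absorb both the $\sqrt{k}$ Stirling correction and the polynomial overhead $N^{7/3}$ from $|\mathcal R|$, while near $t=1$ one must combine the honest lower bound $-\log\phi(t)\geq (4/5)\log 2$ with $tN\geq N/11$ to obtain exponential-in-$N$ decay. Establishing both bounds simultaneously, and checking that the resulting threshold is small enough that the residual simple groups form a tractable computational finite list, is the delicate part of the argument.
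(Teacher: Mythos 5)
Your proposal is correct in outline and takes a genuinely different analytic route from the paper's. Both arguments open with Lemma \ref{l:prob_ori}, Corollary \ref{c:fix} and the bound $h(T)\leqs|T|/10$ (Corollary \ref{c:h(T)_le_|T|/10}), but the paper then splits the range at $k=|T|/4$: for $|T|/4\leqs k\leqs|T|/2$ it crudely bounds the inner sum by $2^{|T|/2}\binom{h(T)}{\lfloor h(T)/2\rfloor}$ and compares Stirling estimates, and for $4\log|T|<k<|T|/4$ it uses $ru\leqs k<|T|/4$ together with Vandermonde's identity to absorb the whole sum into $\binom{|T|/2+h(T)}{k}$. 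You instead encode the sum as a generating-function coefficient, tilt at $t=k/(N-k)$, and reduce the entire estimate to the single rational function $\phi(t)=(1+t^2)/(1+t)^{9/5}$ on $(0,1]$. This is cleaner and more uniform and makes the exponential-decay mechanism transparent; the cost is that the threshold on $|T|$ beyond which the analytic bound applies must now be tracked carefully (the paper's version only needs $|T|\geqs 168$, with $T=A_5$ handled computationally).

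Three points in the write-up need repair before the argument closes. First, the reduction to $r=2$ is not established by comparing $f_r(1)$ and central coefficients --- neither controls an arbitrary coefficient of $(1+x)^{N/10}(1+x^r)^{N/r}$. The cheap fix is to tilt each $r$ \emph{before} comparing: since $t=k/(N-k)\leqs 1$ and $r\geqs 2$, one has $(1+t^r)^{N/r}\leqs(1+t^2)^{N/2}$, so the tilted bound is automatically monotone in $r$ even though the raw coefficients need not be. Second, the inequality $\phi(t)^{N/2}\leqs\exp(-9tN/10)$ is false: the expansion $\log\phi(t)=-\frac{9}{5}t+\frac{19}{10}t^2+O(t^3)$ shows $\log\phi(t)>-\frac{9}{5}t$ for small $t>0$. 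What does hold for $t\leqs 1/10$ is $\log\phi(t)\leqs -1.6t$, which gives $\phi(t)^{N/2}\leqs e^{-0.8k}\leqs N^{-4.6}$, still beating $\sqrt{k}\,N^{-7/3}$ comfortably. Third, the uniform bound $-\log\phi(t)\geqs\frac{4}{5}\log 2$ on $(1/10,1]$ is wrong: $\phi$ attains its maximum on that interval at the left endpoint, where $-\log\phi(1/10)\approx 0.16$. With this honest constant the threshold lands near $|T|\approx 210$, so $\LL_2(7)$ would join $A_5$ as a residual case; and for $k$ near $84$ one cannot simply enumerate $k$-subsets of $\LL_2(7)$, so one must either check the inequality $\binom{N}{k}>2\sum_\sigma|\fix(\sigma,\mathscr{P}_k)|$ directly or shift the regime boundary (e.g.\ to $t=1/5$, where $-\log\phi\geqs 0.28$ and the threshold drops below $168$). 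None of this is structural, but the numerics as written do not yet yield a complete proof.
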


\begin{proof}
	First, if $T = A_5$, then we construct the permutation group $\Hol(T)$ on $T$ using the function \texttt{Holomorph} in {\sc Magma}. Then we find two random $k$-subsets of $T$ lying in distinct regular $\Hol(T)$-orbits by random search.
	
	Hence, we may assume $|T|\geqs 168$ and thus $4\log|T| < |T|/4$. First assume $|T|/4\leqs k\leqs |T|/2$. By Corollary \ref{c:fix}, we have
	\begin{equation*}
	|\fix(\sigma,\mathscr{P}_k)| \leqs \sum_{u=0}^{\lfloor k/r\rfloor}{|T|/r\choose u}{h(T)\choose \lfloor h(T)/2\rfloor}\leqs  2^{|T|/r}{h(T)\choose \lfloor h(T)/2\rfloor}\leqs  2^{|T|/2}{h(T)\choose \lfloor h(T)/2\rfloor}
	\end{equation*}
	for every element $\sigma\in\Hol(T)$ of prime order. Hence, \eqref{e:prob_ori} holds if
	\begin{equation}\label{e:|T|/4}
	{|T|\choose k}>|\Hol(T)|2^{|T|/2+1}{h(T)\choose \lfloor h(T)/2\rfloor},
	\end{equation}
	and it suffices to consider $k = |T|/4$. Now we apply \eqref{e:bin_bound}, which gives
	\begin{equation*}
	{|T|\choose |T|/4}>\frac{1}{\sqrt{2\pi}}e^{-\frac{1}{8}}\frac{4}{\sqrt{3|T|}}\left(\frac{4}{3^{3/4}}\right)^{|T|}
	\end{equation*}
	and
	\begin{equation*}
	{h(T)\choose \lfloor h(T)/2\rfloor} < \frac{1}{\sqrt{2\pi}}\cdot \sqrt{\frac{4}{ h(T)}}\cdot 2^{ h(T)} \leqs \frac{1}{\sqrt{2\pi}}\cdot \sqrt{\frac{40}{|T|}}\cdot 2^{|T|/10}
	\end{equation*}
	as $h(T)\leqs |T|/10$ by Corollary \ref{c:h(T)_le_|T|/10}.	Combining the inequalities above, we see that \eqref{e:|T|/4} holds for $k = |T|/4$ if
	\begin{equation*}
	\frac{1}{\sqrt{2\pi}}e^{-\frac{1}{8}}\frac{4}{\sqrt{3|T|}}\left(\frac{4}{3^{3/4}}\right)^{|T|}>|\Hol(T)|\cdot 2^{|T|/2+1}\cdot \frac{1}{\sqrt{2\pi}}\cdot \sqrt{\frac{40}{|T|}}\cdot 2^{|T|/10}.
	\end{equation*}
	Finally, since $|\Out(T)| < |T|^{1/3}$ by Lemma \ref{l:|Out(T)|}, it suffices to show that
	\begin{equation}\label{e:t_0}
	t_0^{|T|}>\sqrt{30}e^{\frac{1}{8}}|T|^{\frac{7}{3}},
	\end{equation}
	where 
	\begin{equation*}
	t_0 = 4\cdot 3^{-\frac{3}{4}}\cdot 2^{-\frac{1}{2}-\frac{1}{10}} = 1.1577....
	\end{equation*}
	and it is easy to check that the inequality in \eqref{e:t_0} holds for all $|T|\geqs 168$.
	
	Now assume $4\log |T| < k < |T|/4$ and let $\sigma\in\Hol(T)$ be of prime order $r$. Observe that we have $ru\leqs k< |T|/4$ for all $u\in\{0,\dots,\lfloor k/r\rfloor\}$, so
	\begin{equation*}
	\begin{aligned}
	\sum_{u=0}^{\lfloor k/r\rfloor}{|T|/r\choose u}{ h(T)\choose k-ru}&<\sum_{u=0}^{\lfloor k/r\rfloor}{|T|/2\choose u}{ h(T)\choose k-ru}\\&<\sum_{u=0}^{\lfloor k/r\rfloor}{|T|/2\choose ru}{ h(T)\choose k-ru} \\&< {|T|/2+ h(T)\choose k},
	\end{aligned}
	\end{equation*}
	noting that the third inequality follows from the Vandermonde's identity. Thus, \eqref{e:prob_ori} holds if
	\begin{equation}\label{e:log}
	{|T|\choose k}>2|\Hol(T)|{|T|/2+ h(T)\choose k}.
	\end{equation}
	It is easy to see that \eqref{e:log} is equivalent to
	\begin{equation*}
	\frac{|T|!}{(|T|-k)!}>2|\Hol(T)|\frac{(|T|/2+ h(T))!}{(|T|/2+ h(T)-k)!}.
	\end{equation*}
	Now
	\begin{equation*}
	\frac{|T|-m}{|T|/2+ h(T)-m}\geqs \frac{|T|}{|T|/2+ h(T)} =: t
	\end{equation*}
	for every $m\in\{0,\dots,k-1\}$ and thus \eqref{e:log} holds if $t^k>2|\Hol(T)|$. By Corollary \ref{c:h(T)_le_|T|/10}, we have $|T|/ h(T)\geqs 10$, and hence $t\geqs 5/3$. Therefore, \eqref{e:log} holds if $(5/3)^k > |T|^{8/3}$ (by applying Lemma \ref{l:|Out(T)|}), which implies the desired result.
\end{proof}

Now we turn to the cases where $5\leqs k\leqs 4\log|T|$. We will give some sufficient conditions for $r(G)\geqs 2$.

\begin{lem}
	\label{l:prob}
	Suppose $5\leqs k\leqs 4\log|T|$. Then $r(G)\geqs 2$, and hence $b(G) = 2$, if
	\begin{equation}\label{e:prob}
	{|T|\choose k}>2|\Hol(T)|\sum_{u=0}^{\lfloor k/2\rfloor}{|T|/2\choose u}{h(T)\choose k-2u}.
	\end{equation}
\end{lem}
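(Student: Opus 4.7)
The plan is to start from Lemma \ref{l:prob_ori}, which gives $r(G)\geqs 2$ whenever $\binom{|T|}{k} > 2\sum_{\sigma \in \mathcal{R}} |\fix(\sigma, \mathscr{P}_k)|$, and then to bound the right-hand side by $2|\Hol(T)|\sum_{u=0}^{\lfloor k/2\rfloor} \binom{|T|/2}{u}\binom{h(T)}{k-2u}$, which is exactly the quantity appearing in the hypothesis \eqref{e:prob}. Since the identity is not an element of prime order, we have $|\mathcal{R}|\leqs |\Hol(T)|$, so it will be enough to establish a uniform per-element estimate
\[
|\fix(\sigma,\mathscr{P}_k)| \leqs \sum_{u=0}^{\lfloor k/2\rfloor} \binom{|T|/2}{u}\binom{h(T)}{k-2u}
\]
valid for every $\sigma \in \mathcal{R}$, and then sum over $\mathcal{R}$.

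The key step will be to show that the bound from Corollary \ref{c:fix}, viewed as a function of the prime order $r$ of $\sigma$, is maximised at $r=2$. The case $r=2$ is immediate. For a prime $r\geqs 3$, I will use term-by-term comparison. The inequality $\binom{|T|/r}{u}\leqs \binom{|T|/2}{u}$ is trivial as $|T|/r\leqs |T|/2$, while $\binom{h(T)}{k-ru}\leqs \binom{h(T)}{k-2u}$ follows from monotonicity of $\binom{h(T)}{\cdot}$ on $[0,\lfloor h(T)/2\rfloor]$ together with $k-ru\leqs k-2u$. The monotonicity hypothesis requires $k-2u\leqs h(T)/2$; in the regime $k\leqs 4\log|T|$ this amounts to $h(T)\geqs 8\log|T|$, a condition that can be verified from the explicit description of $h(T)$ in Theorem \ref{t:fixity_Hol} (for instance, $h(T)$ is always considerably larger than $\log|T|$ for all but a small finite list of simple groups). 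The extra terms of the $r=2$ sum at indices $\lfloor k/r\rfloor < u\leqs \lfloor k/2\rfloor$ are non-negative and so do not disrupt the comparison.

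Assembling these estimates yields
\[
\sum_{\sigma\in\mathcal{R}}|\fix(\sigma,\mathscr{P}_k)|\leqs |\Hol(T)|\sum_{u=0}^{\lfloor k/2\rfloor}\binom{|T|/2}{u}\binom{h(T)}{k-2u},
\]
and the hypothesis \eqref{e:prob} then immediately implies the inequality \eqref{e:prob_ori}. Lemma \ref{l:prob_ori} therefore gives $r(G)\geqs 2$, and hence $b(G)=2$ as required.

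The principal technical obstacle is the term-wise domination for primes $r\geqs 3$, particularly in borderline regimes where $h(T)$ is comparable with $k$. In any such edge case I anticipate that the hypothesis \eqref{e:prob} itself fails (making the lemma vacuous there), but a rigorous treatment would either check this directly against Theorem \ref{t:fixity_Hol} or absorb the slack into a slightly cruder bound on $|\fix(\sigma,\mathscr{P}_k)|$.
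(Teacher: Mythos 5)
Your main argument matches the paper's exactly: reduce to the per-element estimate $|\fix(\sigma,\mathscr{P}_k)| \leqs \sum_{u=0}^{\lfloor k/2\rfloor}\binom{|T|/2}{u}\binom{h(T)}{k-2u}$, deduce it from Corollary \ref{c:fix} by term-by-term comparison (monotonicity of $\binom{h(T)}{\cdot}$ on $[0,h(T)/2]$ plus $\binom{|T|/r}{u}\leqs\binom{|T|/2}{u}$), and observe that $k\leqs 4\log|T|\leqs h(T)/2$ reduces the monotonicity requirement to $h(T)\geqs 8\log|T|$. Up to this point the approach is the same.

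The gap is in the treatment of the finitely many groups with $h(T)\leqs 8\log|T|$. Inspecting Table \ref{tab:fix(Hol(T))}, these are $\mathrm{M}_{11}$, $\mathrm{J}_1$, ${}^2B_2(8)$, $\LL_3(3)$, and $\LL_2(q)$ for $q\leqs 167$. Your proposal leaves these unresolved and hypothesises that the lemma is vacuous there because $\eqref{e:prob}$ should fail. That guess is incorrect: the lemma is applied non-vacuously to several of these groups later in the paper. For example, in the proof of Proposition \ref{p:sporadic}, $\eqref{e:prob}$ is verified for $\mathrm{M}_{11}$ and $\mathrm{J}_1$ for all $k$ in the range $[5,4\log|T|]$, and in the proof of Lemma \ref{l:PSL2}, $\eqref{e:prob}$ is checked and used for $\LL_2(q)$ with $q\leqs 733$. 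So the exceptional cases carry real content, and your "absorb the slack into a cruder bound" alternative is what has to be made precise. The paper does exactly that: for the excluded groups it shows the per-element bound $|\fix(\sigma,\mathscr{P}_k)| < \sum_u\binom{|T|/2}{u}\binom{h(T)}{k-2u}$ directly, by splitting into the case where $\sigma$ is fixed-point-free on $T$ (where $|\fix(\sigma,\mathscr{P}_k)|$ is $0$ or $\binom{|T|/r}{k/r}$, hence small) and the case where $\sigma$ has a fixed point (WLOG $\sigma\in\Aut(T)$, then a finite check via {\sc Magma} and Corollary \ref{c:fix}). Without some such verification, your argument is incomplete.
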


\begin{proof}
	If $8\log |T| < h(T)$, then $k < h(T)/2$ and \eqref{e:prob_ori} follows via \eqref{e:prob} and Corollary \ref{c:fix}. By inspecting Table \ref{tab:fix(Hol(T))}, we see that $8\log |T|\geqs h(T)$ only if $T$ is isomorphic to one of the following groups:
	\begin{equation}\label{e:k/r<k/2_rem}
	\mathrm{M}_{11},\ \mathrm{J}_1,\ {^2}B_2(8),\ \LL_3(3),\ \LL_2(q)\ (q\leqs 167).
	\end{equation}
	
	Assume $T$ is one of the groups in \eqref{e:k/r<k/2_rem} and suppose $\sigma\in\Hol(T)$ has prime order $r$. We claim that
	\begin{equation}
	\label{e:k/r<k/2}
	|\fix(\sigma,\mathscr{P}_k)| < \sum_{u=0}^{\lfloor k/2\rfloor}{|T|/2\choose u}{h(T)\choose k-2u}.
	\end{equation}
	To see this, first assume $\sigma$ is fixed-point-free on $T$. Here $|\fix(\sigma,\mathscr{P}_k)| = 0$ if $r\nmid k$, and
	\begin{equation*}
	|\fix(\sigma,\mathscr{P}_k)| = {|T|/r\choose k/r}
	\end{equation*}
	otherwise. In particular, the inequality in \eqref{e:k/r<k/2} holds. Now assume $\sigma$ has a fixed point on $T$. Since $\sigma$ is conjugate to an element fixing the identity element in $T$, we may assume $\sigma\in\Aut(T)$. Then with the aid of {\sc Magma} and Corollary \ref{c:fix}, it is easy to check that \eqref{e:k/r<k/2} holds when $T$ is one of the groups in \eqref{e:k/r<k/2_rem}.
	
	We conclude that the proof is complete by combining \eqref{e:prob} and \eqref{e:k/r<k/2} with Lemma \ref{l:prob_ori}.
\end{proof}

\begin{lem}
	\label{l:prob_u_weak}
	The inequality \eqref{e:prob} holds if
	\begin{equation}
	\label{e:prob_u_weak}
	2^uu^u|T|^{k-u} > 2|\Hol(T)|\lfloor k/2\rfloor k^{2u}e^{k+u} h(T)^{k-2u}
	\end{equation}
	for every $u\in\{0,\dots,\lfloor k/2\rfloor\}$, where we define $u^u = 1$ if $u=0$.
\end{lem}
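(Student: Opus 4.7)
The plan is to deduce \eqref{e:prob} from \eqref{e:prob_u_weak} via standard binomial estimates applied to each of the $\lfloor k/2\rfloor + 1$ summands on the right-hand side of \eqref{e:prob}, combined with a suitable lower bound on $\binom{|T|}{k}$.

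First, I would bound each summand from above. Using $\binom{n}{m}\leqs n^m/m!$ together with Stirling's lower bound $m!\geqs (m/e)^m$ (with the convention $0^0 = 1$), one obtains
\begin{equation*}
\binom{|T|/2}{u}\binom{h(T)}{k-2u} \leqs \frac{|T|^u h(T)^{k-2u} e^{k-u}}{2^u u^u (k-2u)^{k-2u}}.
\end{equation*}
For the denominator side, I would use the elementary inequality $\binom{|T|}{k} \geqs (|T|/k)^k = |T|^k/k^k$, which follows from writing $\binom{|T|}{k} = \prod_{i=0}^{k-1}(|T|-i)/(k-i)$ and observing that each factor is $\geqs |T|/k$ since $|T|\geqs k$. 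A slightly sharper Stirling-based estimate can be invoked if extra slack is required to absorb the factor $(\lfloor k/2\rfloor + 1)/\lfloor k/2\rfloor$ coming from the number of terms.

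The main algebraic step is handling the ratio $k^k/(k-2u)^{k-2u}$ that appears after substituting the two bounds above. I would rewrite this as $k^k/(k-2u)^{k-2u} = k^{2u}\cdot (k/(k-2u))^{k-2u}$ and then apply the standard estimate $(1 + x/n)^n \leqs e^x$ with $x = 2u$ and $n = k-2u$, which yields $(k/(k-2u))^{k-2u} \leqs e^{2u}$ and hence $k^k/(k-2u)^{k-2u} \leqs k^{2u} e^{2u}$. After substitution, the summand bound takes the form
\begin{equation*}
\binom{|T|/2}{u}\binom{h(T)}{k-2u} \leqs \frac{|T|^u h(T)^{k-2u} k^{2u} e^{k+u}}{2^u u^u k^k},
\end{equation*}
from which the factors $k^{2u}$ and $e^{k+u}$ appearing in \eqref{e:prob_u_weak} emerge naturally on the right-hand side.

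Combining these ingredients with \eqref{e:prob_u_weak} for each $u\in\{0,\dots,\lfloor k/2\rfloor\}$ shows that each summand on the right-hand side of \eqref{e:prob} is strictly bounded by a quantity of the form $\binom{|T|}{k}/(2|\Hol(T)|\lfloor k/2\rfloor)$, so that summing over the $\lfloor k/2\rfloor + 1$ values of $u$ yields \eqref{e:prob}. The main obstacle is careful bookkeeping: the hypothesis \eqref{e:prob_u_weak} has $\lfloor k/2\rfloor$ where the naive sum of individual bounds produces $\lfloor k/2\rfloor + 1$, so one must exploit either the strict inequality in \eqref{e:prob_u_weak} together with the assumption $k\geqs 5$, or a slightly sharper Stirling lower bound for $\binom{|T|}{k}$ carrying a factor of $e^{k-1}/\sqrt{k}$, to absorb this discrepancy.
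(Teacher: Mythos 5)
Your argument is essentially identical to the paper's: both reduce \eqref{e:prob} to the single-term inequality $\binom{|T|}{k}>2|\Hol(T)|\lfloor k/2\rfloor\binom{|T|/2}{u}\binom{h(T)}{k-2u}$ for each $u$, and then deduce that from \eqref{e:prob_u_weak} by combining the bounds $n^m/m^m < \binom{n}{m} < (en)^m/m^m$ with the estimate $\bigl(\tfrac{k}{k-2u}\bigr)^{k-2u}<e^{2u}$; the algebra you carry out is exactly the paper's.

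You are also correct to flag the bookkeeping subtlety at the end. The sum in \eqref{e:prob} has $\lfloor k/2\rfloor+1$ terms, so bounding each by $\binom{|T|}{k}/\bigl(2|\Hol(T)|\lfloor k/2\rfloor\bigr)$ only yields a bound of $\tfrac{\lfloor k/2\rfloor+1}{\lfloor k/2\rfloor}\cdot\binom{|T|}{k}/(2|\Hol(T)|)$, which overshoots by a factor of at most $3/2$ (attained at $k=5$). The paper asserts the reduction without comment, so this is a genuine imprecision that you caught. Of your two suggested repairs, note that strictness of the inequality in \eqref{e:prob_u_weak} by itself cannot supply a multiplicative $3/2$, so that route does not close the gap as stated; the correct fix is the second one you give, namely that $\binom{|T|}{k}$ exceeds the crude lower bound $|T|^k/k^k$ by a factor on the order of $e^k/\sqrt{2\pi k}$, which for $k\geqs 5$ is far larger than $\tfrac{\lfloor k/2\rfloor+1}{\lfloor k/2\rfloor}$.
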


\begin{proof}
	First observe that \eqref{e:prob} holds if
	\begin{equation}\label{e:prob_u}
	{|T|\choose k}>2|\Hol(T)|\lfloor k/2\rfloor{|T|/2\choose u}{ h(T)\choose k-2u}
	\end{equation}
	for every $u\in\{0,\dots,\lfloor k/2\rfloor\}$. Now
	\begin{equation*}
	\left(\frac{k}{k-2u}\right)^{k-2u} < e^{2u}
	\end{equation*}
	for all such $u$. Therefore, \eqref{e:prob_u} follows by combining \eqref{e:prob_u_weak} and the well-known bounds on binomial coefficients
	\begin{equation*}
	\frac{n^m}{m^m}<{n\choose m}<\frac{(en)^m}{m^m}
	\end{equation*}
	for any integers $n\geqs m\geqs 0$, where we define $m^m = 1$ if $m = 0$.
\end{proof}

We conclude this section by establishing two more technical lemmas, which will play a key role in Section \ref{s:proof_b(G)=2}.

\begin{lem}
	\label{l:u=k/2}
	Suppose $|T| > 4080$ and $5\leqs k\leqs 4\log|T|$. Then \eqref{e:prob} holds if there exists an integer $k_0$ such that $5\leqs k_0\leqs k$,
	\begin{equation}
	\label{e:prob_u=k/2_weak}
	|T|^{k_0} > |\Hol(T)|^2 k_0^{2+k_0}e^{3k_0}
	\end{equation}
	and
	\begin{equation}
	\label{e:prob_u=k/2_weak_cond}
	h(T)^2 < k_0|T|.
	\end{equation}
\end{lem}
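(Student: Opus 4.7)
The plan is to establish the per-$u$ inequality \eqref{e:prob_u_weak} for every $u\in\{0,\dots,\lfloor k/2\rfloor\}$ and then invoke Lemma \ref{l:prob_u_weak}. The key move is to use the hypothesis $h(T)^2<k_0|T|$ to bound $h(T)^{k-2u}<(k_0|T|)^{(k-2u)/2}$; substituting into \eqref{e:prob_u_weak} and cancelling $|T|^{(k-2u)/2}$ against part of $|T|^{k-u}$, the required inequality reduces to showing
\begin{equation*}
|T|^{k/2}>2|\Hol(T)|\lfloor k/2\rfloor\cdot N(u),\qquad N(u):=\frac{k^{2u}e^{k+u}k_0^{(k-2u)/2}}{2^u u^u},
\end{equation*}
with the convention $0^0=1$, for every such $u$.

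I would then show that $N$ is non-decreasing on $\{0,1,\dots,\lfloor k/2\rfloor\}$, so the worst $u$ is $u=\lfloor k/2\rfloor$. Computing the ratio $N(u+1)/N(u)=k^2 e\, u^u/(2k_0(u+1)^{u+1})$ and applying $(u+1)^{u+1}\leqs e(u+1)u^u$ (which follows from $(1+1/u)^u\leqs e$) gives $N(u+1)/N(u)\geqs k^2/(2k_0(u+1))\geqs k/k_0\geqs 1$ whenever $u+1\leqs k/2$, and a direct check handles $u=0$. Evaluating $N$ at $u=\lfloor k/2\rfloor$ yields $k^{k/2}e^{3k/2}$ exactly when $k$ is even; for odd $k$ the bound $(k/(k-1))^{(k-1)/2}\leqs\sqrt{e}$ together with $k_0\leqs k$ gives the same upper bound $k^{k/2}e^{3k/2}$. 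Plugging back, using $2\lfloor k/2\rfloor\leqs k$, and squaring, it suffices to prove
\begin{equation*}
|T|^k>|\Hol(T)|^2 k^{k+2}e^{3k}.
\end{equation*}

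The last and main obstacle is deducing this from the hypothesis, which is the same inequality with $k_0\leqs k$ in place of $k$. I would argue by monotonicity. Set
\begin{equation*}
\phi(x)=x\ln|T|-2\ln|\Hol(T)|-(x+2)\ln x-3x,
\end{equation*}
so the hypothesis reads $\phi(k_0)>0$ and the goal is $\phi(k)>0$. A direct computation gives $\phi'(x)=\ln(|T|/x)-4-2/x$; for $x\in[5,4\log|T|]$ we have $4+2/x\leqs 4.4$, so $\phi'(x)>0$ whenever $|T|/(4\log|T|)>e^{4.4}\approx 81.45$. A brief numerical check confirms this inequality holds whenever $|T|>4080$. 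Hence $\phi$ is non-decreasing on $[k_0,k]$, so $\phi(k_0)>0$ forces $\phi(k)>0$, completing the proof.
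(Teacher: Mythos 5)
Your proof is correct and follows essentially the same plan as the paper's. The paper organizes the argument as two separate inductions (a downward induction on $u$ for the fixed $k_0$ case, followed by an upward induction on $k$ to bootstrap from $k_0$), whereas you substitute $h(T)<(k_0|T|)^{1/2}$ once into \eqref{e:prob_u_weak}, collect the $u$-dependence into the function $N(u)$, and show $N$ is non-decreasing — which plays the same role as the paper's downward $u$-induction — and then replace the paper's discrete induction on $k$ by the continuous monotonicity of $\phi$; both reductions arrive at exactly the same target inequality $|T|^k>|\Hol(T)|^2 k^{k+2}e^{3k}$. The reorganization is cleaner but the core estimates (the $e$-bounds on $(1+1/u)^u$, the even/odd split at $u=\lfloor k/2\rfloor$, and the use of $|T|>4080$ with $k\leqs 4\log|T|$ in the final growth step) match the paper's, so this is the same argument in a slightly tidier package.
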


\begin{proof}
	We first prove that \eqref{e:prob} holds if $k = k_0$. In view of Lemma \ref{l:prob_u_weak}, it suffices to verify the inequality in \eqref{e:prob_u_weak} for all $u\in\{0,\dots,\lfloor k/2\rfloor\}$ and we will do this by induction. First assume $u = \lfloor k/2\rfloor$ and note that \eqref{e:prob_u=k/2_weak} is equivalent to \eqref{e:prob_u_weak} if $k$ is even. For $k$ odd we have $u = (k-1)/2$ and the inequality in \eqref{e:prob_u_weak} is
	\begin{equation}\label{e:prob_u=k/2_weak_odd}
	\left(\frac{|T|(k-1)}{k^2e^3}\right)^{k}|T| > \frac{k-1}{k^2e}\cdot 4|\Hol(T)|^2\left(\frac{k-1}{2}\right)^2h(T)^2.
	\end{equation}
	In view of \eqref{e:prob_u=k/2_weak_cond}, we see that \eqref{e:prob_u=k/2_weak_odd} holds if
	\begin{equation*}
	\left(\frac{|T|}{ke^3}\right)^{k}\left(\frac{k-1}{k}\right)^{k-1}e > k^2|\Hol(T)|^2,
	\end{equation*}
	which is implied by \eqref{e:prob_u=k/2_weak} since $(\frac{k-1}{k})^{k-1}>e^{-1}$. Therefore, \eqref{e:prob_u_weak} holds for $u = \lfloor k/2\rfloor$ and we have established the base case for the induction. Now suppose \eqref{e:prob_u_weak} holds for $u=u_0$, where $1\leqs u_0\leqs \lfloor k/2\rfloor$. It suffices to show that \eqref{e:prob_u_weak} holds for $u = u_0-1$. Here the desired inequality holds if
	\begin{equation*}
	2^{-1}|T|\cdot\frac{(u_0-1)^{u_0-1}}{u_0^{u_0}} > k^{-2}e^{-1}\cdot h(T)^2,
	\end{equation*}
	but this is implied by \eqref{e:prob_u=k/2_weak_cond}, noting that $(\frac{u_0-1}{u_0})^{u_0-1} > e^{-1}$ and $2u_0\leqs k$. In conclusion, if $k = k_0$ then \eqref{e:prob_u_weak} holds for all $u\in\{0,\dots,\lfloor k/2\rfloor\}$ and thus \eqref{e:prob} holds by Lemma \ref{l:prob_u_weak}.
	
	Finally, we need to show that \eqref{e:prob} holds when $k_0<k$. By \eqref{e:prob_u=k/2_weak_cond} we have $h(T)^2 < k_0|T| < k|T|$, and by arguing as above, it suffices to show that
	\begin{equation}\label{e:prob_u=k/2_weak_ind}
	|T|^k > |\Hol(T)|^2k^{2+k}e^{3k}.
	\end{equation}
	Since $|T| > 4080$ and $5\leqs k\leqs 4\log|T|$, we get
	\begin{equation*}
	|T| > 2e^4(4\log|T| + 1) \geqs 2e^4(k+1) > \left(\frac{k+1}{k}\right)^{k+2}e^3(k+1).
	\end{equation*}
	Therefore, \eqref{e:prob_u=k/2_weak_ind} holds for all $k_0 \leqs k\leqs 4\log|T|$ by induction on $k$, and the proof is complete.
\end{proof}

\begin{lem}
	\label{l:u=0}
	Suppose $5\leqs  k\leqs 4\log|T|$. Then \eqref{e:prob} holds if there exists an integer $k_0$ such that $5\leqs k_0\leqs k$,
	\begin{equation}
	\label{e:prob_u=0_weak}
	|T|^{k_0}>2|\Hol(T)|\lfloor k_0/2\rfloor e^{k_0} h(T)^{k_0}
	\end{equation}
	and
	\begin{equation}
	\label{e:prob_u=0_weak_cond}
	2h(T)^2 > (4\log|T|)^2e|T|.
	\end{equation}
\end{lem}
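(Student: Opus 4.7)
The plan is to mirror the structure of Lemma \ref{l:u=k/2}, but with the induction on $u$ running upward from $u=0$ rather than downward from $u=\lfloor k/2\rfloor$. By Lemma \ref{l:prob_u_weak}, it suffices to verify \eqref{e:prob_u_weak} for every $u\in\{0,\ldots,\lfloor k/2\rfloor\}$, and I would organise this as two nested inductions: an outer induction on $k$ (starting at $k_0$) to secure the $u=0$ slice of \eqref{e:prob_u_weak}, followed by an inner induction on $u$ at our given value of $k$.

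For the outer induction, the base case $k=k_0$ at $u=0$ is exactly the hypothesis \eqref{e:prob_u=0_weak}. Passing from $k$ to $k+1$ in the $u=0$ inequality, the right-hand side grows by a factor of at most $\frac{\lfloor(k+1)/2\rfloor}{\lfloor k/2\rfloor}\cdot e\cdot h(T) \leqs \tfrac{3}{2}e\, h(T)$ (using $k\geqs 5$), while the left-hand side gains the factor $|T|$. Since $h(T)\leqs |T|/10$ by Corollary \ref{c:h(T)_le_|T|/10}, this propagation factor is bounded by $3e|T|/20<|T|$, so the $u=0$ case of \eqref{e:prob_u_weak} propagates to every $k$ with $k_0\leqs k\leqs 4\log|T|$.

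With the $u=0$ case in hand at our given $k$, I would then run the inner induction on $u$. Comparing the two sides of \eqref{e:prob_u_weak} at consecutive values $u_0$ and $u_0+1$, the inductive step reduces to the requirement
\[
2(u_0+1)^{u_0+1} h(T)^2 \;\geqs\; k^2 e\, u_0^{u_0}|T|.
\]
The quantity $(u_0+1)^{u_0+1}/u_0^{u_0}$ is increasing in $u_0$ (with the convention $0^0=1$), so the most demanding case is $u_0=0$, for which the bound becomes $2h(T)^2 \geqs k^2 e |T|$. Since $k\leqs 4\log|T|$, this is implied by the hypothesis \eqref{e:prob_u=0_weak_cond}, and every subsequent step of the induction then follows a fortiori.

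The only real point of care is the bookkeeping between the two hypotheses and the two inductions: \eqref{e:prob_u=0_weak} seeds the outer induction on $k$ for the $u=0$ slice, while \eqref{e:prob_u=0_weak_cond} drives every step of the inner induction on $u$. It is essential that the latter is phrased in terms of $4\log|T|$ rather than $k$, since this makes the single hypothesis uniform across the whole range $k_0\leqs k\leqs 4\log|T|$ and independent of the particular $k$ used in the inner induction. Beyond this bookkeeping, the argument is essentially the reflection of the proof of Lemma \ref{l:u=k/2} and I do not expect any further obstacles.
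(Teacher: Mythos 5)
Your proof is correct and takes essentially the same approach as the paper: reduce to verifying \eqref{e:prob_u_weak} via Lemma \ref{l:prob_u_weak}, then use an upward induction on $u$ driven by \eqref{e:prob_u=0_weak_cond} (with the worst case at $u_0=0$) together with an induction on $k$ for the $u=0$ slice driven by $|T|>2eh(T)$. The only difference is the order in which the two inductions are presented, which is immaterial.
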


\begin{proof}
	This is similar to the proof of Lemma \ref{l:u=k/2}, working with Lemma \ref{l:prob_u_weak} to establish the inequality in \eqref{e:prob}. First assume $k = k_0$ and note that \eqref{e:prob_u=0_weak} is equivalent to \eqref{e:prob_u_weak} with $u = 0$. We now use induction to show that \eqref{e:prob_u_weak} holds for all $u\in\{0,\dots,\lfloor k/2\rfloor\}$. To do this, suppose \eqref{e:prob_u_weak} holds for $u = u_0$, where $0\leqs u_0\leqs \lfloor k/2\rfloor-1$. Then \eqref{e:prob_u=0_weak_cond} implies that 
	\begin{equation*}
	2|T|^{-1}\cdot  \frac{(u_0+1)^{u_0+1}}{u_0^{u_0}}>k^2e\cdot h(T)^{-2},
	\end{equation*}
	and thus \eqref{e:prob_u_weak} holds for $u = u_0+1$ and the result follows.
	
	Finally, let us assume $k_0<k$. It suffices to show that
	\begin{equation*}
	|T|^{k} > 2|\Hol(T)|\lfloor k/2\rfloor e^{k}h(T)^k
	\end{equation*}
	for all $k_0\leqs k\leqs 4\log|T|$. This is clear by induction on $k$, since we have
	\begin{equation*}
	|T| > 2eh(T)
	\end{equation*}
	for every $T$ by Corollary \ref{c:h(T)_le_|T|/10}.
\end{proof}

\subsection{Fixed point ratios}

\label{ss:fpr}

Now we turn to another powerful probabilistic approach to study $b(G)$, where $G = T^k.(\Out(T)\times S_k)$, which was initially introduced by Liebeck and Shalev \cite{LS_prob}. Here we will estimate the probability $\mathbb{P}_k(T)$ that a random element in $\Omega$ is in a regular orbit of $G_D = D$, noting that $b(G) = 2$ if and only if $\mathbb{P}_k(T)>0$. Equivalently,
\begin{equation*}
\mathbb{P}_k(T) = \frac{r(G)|G|}{|T|^{2k-2}}
\end{equation*}
is the probability that a random pair of elements in $\Omega$ is a base for $G$.

Clearly, $\{\omega_1,\omega_2\}\subseteq\Omega$ is not a base for $G$ if and only if there exists an element $x\in G_{\omega_1}\cap G_{\omega_2}$ of prime order. Now the probability that $x\in G$ fixes a random element in $\Omega$ is given by the \textit{fixed point ratio}
\begin{equation*}
\fpr(x) = \frac{|\fix(x,\Omega)|}{|\Omega|} = \frac{|x^G\cap D|}{|x^G|},
\end{equation*}
where $\fix(x,\Omega)$ is the set of fixed points of $x$ on $\Omega$. Hence, we have
\begin{equation*}
1-\mathbb{P}_k(T)\leqs \sum_{x\in R(G)}|x^G|\cdot\fpr(x)^2 = \sum_{x\in R(G)}\frac{|x^G\cap D|^2|C_G(x)|}{|G|},
\end{equation*}
where $R(G)$ is the set of representatives for the $G$-conjugacy classes of elements in the stabiliser $D$ in $G$ which have prime order. We adopt the notation from \cite{F_diag} and define
\begin{equation*}
\begin{aligned}
R_1(G) &:= \{(\a,\dots,\a)\pi\in R(G):\pi\mbox{ is fixed-point-free on }[k]\},\\
R_2(G) &:=\{(\a,\dots,\a)\pi\in R(G):\pi = 1\},\\
R_3(G) &:=\{(\a,\dots,\a)\pi\in R(G):\pi\ne 1\mbox { and }\pi \mbox{ has a fixed point on }[k]\},
\end{aligned}
\end{equation*}
and 
\begin{equation*}
r_i(G) :=\sum_{x\in R_i(G)}\frac{| x^G\cap D|^2|C_G(x)|}{|G|}.
\end{equation*}
It follows that
\begin{equation}\label{e:F_prob_r}
1-\frac{r(G)|G|}{|T|^{2k-2}} = 1-\mathbb{P}_k(T) \leqs r_1(G)+r_2(G) + r_3(G),
\end{equation}
which gives a lower bound on $r(G)$. In particular, $b(G) = 2$ if $r_1(G)+r_2(G)+r_3(G) < 1$. Thus, we need to bound each $r_i(G)$ above.

\begin{lem}
	\label{l:r1}
	We have $r_1(G) < (k!)^2|T|^{8/3-\lceil k/2\rceil}$.
\end{lem}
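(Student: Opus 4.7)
The plan is to combine a careful cycle analysis of fixed points with the elementary identity
\begin{equation*}
r_1(G) = \sum_{x\in R_1(G)} \frac{|x^G\cap D|^2 |C_G(x)|}{|G|} = \sum_{x\in R_1(G)} |x^G\cap D|\cdot \fpr(x).
\end{equation*}

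First I would pin down the structure of $x = (\a,\dots,\a)\pi \in R_1(G)$. Because $x$ has prime order $r$ and $x^r = (\a^r,\dots,\a^r)\pi^r$ must be trivial, both $\pi^r = 1$ and $\a^r = 1$. Since $\pi$ is fixed-point-free we have $\pi\ne 1$, so $\pi$ has order exactly $r$, all its cycles have length $r$, and $\pi$ has $m = k/r$ cycles. Note that $m\leqs k/2$ in general, and $m\leqs k/3$ when $k$ is odd (since then $r\geqs 3$).

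Next, I would estimate $|\fix(x,\Omega)|$. Using the action formula from Section~\ref{s:pre}, $\omega = D(\varphi_{t_1},\dots,\varphi_{t_k})$ is fixed by $x$ if and only if there exists $g\in T$ with $t_i^\a = g\, t_{i^\pi}$ for every $i$. On a cycle $(i_1,\dots,i_r)$ of $\pi$, this recursion determines $t_{i_2},\dots,t_{i_r}$ from $t_{i_1}$, $g$ and $\a$, and iterating once around the cycle yields a closure constraint. Since $\a^r = 1$, this collapses to the single condition $g^{\a^{r-1}}\cdots g^\a g = 1$ on $g$ alone, the same constraint for every cycle. Hence for each admissible $g$ (at most $|T|$ choices), the $m$ cycle starting points $t_{i_1}$ may be chosen arbitrarily in $T$, giving at most $|T|\cdot |T|^m$ admissible tuples; quotienting by the common $T$-translation that identifies tuples representing the same coset (which explains $|\Omega| = |T|^{k-1}$) yields $|\fix(x,\Omega)|\leqs |T|^m$, so that
\begin{equation*}
\fpr(x) \leqs |T|^{m-k+1} \leqs |T|^{1-\lceil k/2\rceil},
\end{equation*}
using $m\leqs k/2$ when $k$ is even and $m\leqs k/3$ when $k$ is odd.

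Finally, I would observe that
\begin{equation*}
\sum_{x\in R_1(G)} |x^G\cap D|
\end{equation*}
counts the total number of elements of $D$ of the form $(\a,\dots,\a)\pi$ with $\pi\in S_k$ fixed-point-free, which is at most $|\Aut(T)|\cdot k!$; by Lemma~\ref{l:|Out(T)|} this is in turn bounded by $|T|^{4/3}k!$. Putting everything together,
\begin{equation*}
r_1(G) \leqs \max_{x\in R_1(G)} \fpr(x) \cdot \sum_{x\in R_1(G)} |x^G\cap D| < |T|^{1-\lceil k/2\rceil}\cdot |T|^{4/3}\, k! = k!\, |T|^{7/3-\lceil k/2\rceil},
\end{equation*}
which is strictly stronger than the stated bound $(k!)^2 |T|^{8/3-\lceil k/2\rceil}$. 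The main obstacle is executing the cycle closure computation cleanly — in particular, verifying that after the quotient by common $T$-translation one genuinely gets $|\fix(x,\Omega)|\leqs |T|^m$ rather than a constant multiple larger, and handling the parity of $k$ so that the exponent $1-\lceil k/2\rceil$ is attained on the nose.
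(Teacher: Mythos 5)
Your proposal is correct, and it reproduces essentially the route of the cited source: the paper simply attributes Lemma~\ref{l:r1} to the proof of \cite[Theorem~1.5]{F_diag}, and Fawcett's argument there is a fixed-point-ratio estimate of the same shape, bounding $\fpr(x)$ cycle by cycle and summing over the contribution from $D$. The two worries you flag at the end both resolve cleanly. First, the $T$-translation action on fixed tuples is free: if $(ht_1,\dots,ht_k)=(t_1,\dots,t_k)$ then $h=1$, and for each $h$ the translated tuple is again fixed with parameter $g'=h^\alpha g h^{-1}$ (the closure constraint is conjugation-covariant since $\alpha^r=1$), so the set of fixed tuples is partitioned into orbits of size exactly $|T|$ and $|\fix(x,\Omega)|=|G_0|\,|T|^{m-1}\leqs |T|^m$, with no stray constants. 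Second, the parity case is fine: for $k$ odd the prime $r$ divides $k$, so $r\geqs 3$ and $m=k/r\leqs k/3$, and $k/3\leqs (k-1)/2$ holds for all $k\geqs 3$, which is what is needed to reach $\fpr(x)\leqs |T|^{1-\lceil k/2\rceil}$. With these two points in hand, your final estimate $r_1(G)< k!\,|T|^{7/3-\lceil k/2\rceil}$ is valid and is strictly sharper than the stated bound $(k!)^2|T|^{8/3-\lceil k/2\rceil}$; since the weaker form is what is used downstream in the definition of $Q_1(G)$, either constant suffices. One small point worth stating explicitly in a write-up: the identity
\begin{equation*}
\sum_{x\in R_1(G)}|x^G\cap D|\ =\ |\{(\alpha,\dots,\alpha)\pi\in D:\ |(\alpha,\dots,\alpha)\pi|\ \text{prime},\ \pi\ \text{f.p.f.}\}|\ \leqs\ |\Aut(T)|\cdot k!
\end{equation*}
uses that the f.p.f.\ property of the top component $\pi$ is preserved by $G$-conjugacy (since conjugation only replaces $\pi$ by a $P$-conjugate), so the classes parametrised by $R_1(G)$, $R_2(G)$, $R_3(G)$ genuinely partition the prime-order elements of $D$.
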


\begin{proof}
	This is established in the proof of Theorem 1.5 in \cite{F_diag}.
\end{proof}

\begin{lem}
	\label{l:r2}
	We have $r_2(G) < (|T|/h(T))^{4-k}$.
\end{lem}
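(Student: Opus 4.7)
The plan is to first compute the fixed point ratio $\fpr(x)$ for $x \in R_2(G)$ and then repackage $r_2(G)$ as a sum over prime-order elements of $\Aut(T)$. For $x = (\a,\dots,\a) \in R_2(G)$ with $\a \in \Aut(T)$ of prime order, I would use the explicit action formula for $G$ on $\Omega$: the point $D(\varphi_{t_1},\dots,\varphi_{t_k})$ is fixed by $x$ iff there exists $g \in T$ with $t_i^\a = gt_i$ for all $i$. Normalising each coset by the unique representative with $t_1 = 1$ would then force $g = 1$ and $t_i \in C_T(\a)$ for $i \geqs 2$, giving
\[
\fpr(x) = \left(\frac{|C_T(\a)|}{|T|}\right)^{k-1}.
\]

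Next I would use the identity $|x^G \cap D| = \fpr(x)|x^G|$ to rewrite $r_2(G) = \sum_{x \in R_2(G)} \fpr(x) |x^G \cap D|$. The key observation is that the sets $x^G \cap D$ (for $x \in R_2(G)$) partition the prime-order elements of $D$ with trivial top permutation, and these are in bijection with prime-order elements $\a \in \Aut(T)$. Since $\fpr$ is a class invariant, this collapses the sum to
\[
r_2(G) = \sum_{\substack{\a \in \Aut(T) \\ |\a|\text{ prime}}} \left(\frac{|C_T(\a)|}{|T|}\right)^{k-1}.
\]

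From here, for $k \geqs 4$, applying $|C_T(\a)| \leqs h(T)$ to $k-4$ of the factors would yield
\[
r_2(G) \leqs \left(\frac{h(T)}{|T|}\right)^{k-4} \sum_{\a} \left(\frac{|C_T(\a)|}{|T|}\right)^3,
\]
reducing the desired bound $(|T|/h(T))^{4-k}$ to the cubic moment inequality
\[
\sum_{|\a|\text{ prime}} |C_T(\a)|^3 < |T|^3.
\]

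This cubic bound is the main obstacle. By Burnside applied to $\Aut(T)$ acting on $T^3$, we have $\sum_{\a \in \Aut(T)} |C_T(\a)|^3 = |\Aut(T)|\cdot N_3$, where $N_3$ is the number of $\Aut(T)$-orbits on $T^3$. Subtracting the identity's contribution $|T|^3$ reduces the target to $|\Aut(T)|\cdot N_3 < 2|T|^3$; in effect, the average $\Aut(T)$-stabiliser of a random triple in $T^3$ must be less than $2$. I would prove this by exploiting the abundance of generating pairs in $T$---if $\la t_1,t_2\ra = T$ then $C_{\Aut(T)}(\{t_1,t_2\}) = 1$, since $Z(\Aut(T))$ is trivial---together with Liebeck--Shalev type bounds on the probability that two random elements of $T$ generate $T$. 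The final inequality would be verified family by family using Theorem \ref{t:fixity_Hol}, Lemma \ref{l:|Out(T)|}, and the standard bounds on the number of conjugacy classes of prime-order elements in each type of non-abelian finite simple group.
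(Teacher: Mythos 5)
Your first two steps are correct and in fact sharpen what the paper records: normalising by $t_1 = 1$ does give $\fpr((\a,\dots,\a)) = (|C_T(\a)|/|T|)^{k-1}$, and since conjugation in $G$ preserves the top permutation, the sets $x^G\cap D$ for $x\in R_2(G)$ really do partition the prime-order elements $(\a,\dots,\a)$ of $D$, yielding the exact identity $r_2(G) = \sum_{\a}(|C_T(\a)|/|T|)^{k-1}$ over prime-order $\a\in\Aut(T)$ (the paper, citing Fawcett, only records the weaker class-wise bound $r_2(G)\leqs|\Out(T)|f_p(\Aut(T))(h(T)/|T|)^{k-2}$). Peeling off $k-4$ factors then correctly reduces to the cubic-moment inequality $\sum_{\a}|C_T(\a)|^3 < |T|^3$, which is indeed implied by — and strictly weaker than — the paper's working condition $|\Out(T)|f_p(\Aut(T)) < (|T|/h(T))^2$ (bound $|C|\,|C_T(\a_C)|\leqs|\Aut(T)|$ and $|C_T(\a_C)|\leqs h(T)$ classwise and compare).

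The gap is in how you propose to prove the cubic-moment inequality. The Burnside reformulation $|\Aut(T)|N_3 < 2|T|^3$ is correct, but the suggested deduction via generating pairs does not go through: if $\epsilon_T = 1-\mathbb{P}(\la t_1,t_2\ra=T)$, then all you get from "generating pair $\Rightarrow$ trivial stabiliser" is that the average stabiliser is at most $1 + |\Aut(T)|\epsilon_T$, and forcing this below $2$ would need $\epsilon_T < 1/|\Aut(T)|$, which fails comprehensively (already for $T=\LL_2(q)$ one has $\epsilon_T = \Theta(q^{-1})$ against $|\Aut(T)|\sim q^3\log q$, and for $A_5$ the non-generation probability is $\approx 0.37$). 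To rescue the Burnside route you would need to stratify the non-generating triples by their maximal overgroups and bound the contribution of each, which is essentially the $\zeta_T(3)$-type analysis of Liebeck--Shalev and is considerably more involved than what you sketch. In the end you would be forced back, as you anticipate in your final sentence, onto the paper's own device: bound $f_p(\Aut(T))$ using Garzoni--Gill (for $f(T)<|T|/h(T)$) and Fulman--Guralnick (for $\LL_n(q)$), combined with $h(T)$ from Theorem \ref{t:fixity_Hol} and $|\Out(T)|<|T|^{1/3}$, verified family by family. So the decomposition you set up is valid and even a little cleaner, but the key quantitative lemma is asserted rather than proved, and the one concrete mechanism you offer for it (generation probability) is insufficient as stated.
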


\begin{proof}
	Let $f_p(\Aut(T))$ be the number of conjugacy classes of elements of prime order in $\Aut(T)$. It follows from the proof of \cite[Lemma 4.2]{F_diag} that
	\begin{equation*}
	r_2(G)\leqs |\Out(T)|f_p(\Aut(T))\left(\frac{ h(T)}{|T|}\right)^{k-2}.
	\end{equation*}
	Thus, it suffices to show that
	\begin{equation}\label{e:r2_f}
	|\Out(T)|f_p(\Aut(T))<\left(\frac{|T|}{ h(T)}\right)^2.
	\end{equation}
	
	First assume $T = A_n$ is an alternating group. Then as discussed in the proof of \cite[Lemma 4.2]{F_diag}, we have $f_p(\Aut(T)) < \frac{n^2}{2}$. This implies \eqref{e:r2_f} since $h(T) = (n-2)!$ by Theorem \ref{t:fixity_Hol}.
	
	Next, assume $T$ is a sporadic group. Then $f_p(\Aut(T))$ can be read off from the character table of $\Aut(T)$ and it is easy to check that \eqref{e:r2_f} holds in every case.
	
	Finally, assume $T$ is a simple group of Lie type over $\mathbb{F}_q$. Let $f(T)$ be the number of conjugacy classes in $T$. As noted in \cite{G_conju}, we have $f_p(\Aut(T))\leqs |\Out(T)|f(T)$. Thus, it suffices to show that
	\begin{equation}
	\label{e:f(T)}
	|\Out(T)|^2f(T) < \left(\frac{|T|}{ h(T)}\right)^2.
	\end{equation}
	We divide the proof into several cases. 
	
	\vs
	
	\noindent \emph{Case 1. $T\ne \LL_n^\e(q)$.}
	
	\vs
	
	In this setting, \cite[Theorem 1.2]{GG_conju} implies that $f(T)<|T|/h(T)$, so \eqref{e:f(T)} holds (and thus the lemma follows), if we can show that
	\begin{equation}
	\label{e:r2_out^2}
	h(T)|\Out(T)|^2<|T|.
	\end{equation}
	
	First, we assume $T\ne \POmega_{8}^+(q)$. Here $|\Out(T)|\leqs 8\log q$ and by inspecting Table \ref{tab:fix(Hol(T))}, one can see that $|T|/h(T)\geqs q^3/2$. It is straightforward to check that if $q\geqs 13$, then $128(\log q)^2 < q^3$, which implies that \eqref{e:r2_out^2} holds for $q\geqs 13$. Then there are only finitely many exceptional groups of Lie type to consider, and in each case we can use the precise value of $h(T)$ in Table \ref{tab:fix(Hol(T))} to verify \eqref{e:r2_out^2}. Hence, we may assume $q\leqs 11$ and $T$ is a classical group. By our assumption, $T = \PSp_n(q)$, $\Omega_n(q)$, $\POmega_n^-(q)$, or $\POmega_n^+(q)$ with $n\geqs 10$ in the latter case. In each case, we have $|T|/h(T) > q^{n-2}$ by inspecting Table \ref{tab:fix(Hol(T))}, so if $n\geqs 8$ we have
	\begin{equation*}
	|\Out(T)|^2\leqs 64(\log q)^2 \leqs q^6 \leqs q^{n-2} < |T|/h(T)
	\end{equation*}
	and thus \eqref{e:r2_out^2} holds. There are finitely many groups remaining and we can check that the inequality in \eqref{e:r2_out^2} holds in each case.
	
	Now assume $T = \POmega_{8}^+(q)$. Here $|T|/h(T)>  q^6$ and $|\Out(T)|\leqs 24f\leqs 24\log q$. This shows that \eqref{e:r2_out^2} holds for $q\geqs 4$ since we have $24^2(\log q)^2<q^6$. If $q = 2$, then $|\Out(T)|^2 =  36 < 120 = |T|/h(T)$, while if $q = 3$, then $|\Out(T)|^2 = 576 < 1080 = |T|/h(T)$.
	
	\vs
	
	\noindent \emph{Case 2. $T = \UU_n(q)$, $n\geqs 3$.}
	
	\vs
	
	In this case, \cite[Theorem 1.2]{GG_conju} implies that $f(T)<\frac{1}{2}|T|/h(T)$, except when $(n,q) = (3,3)$ or $(4,3)$. In the latter two cases, it is easy to check \eqref{e:f(T)}. In other cases, we have $|T|/h(T)>q^n$ by inspecting Table \ref{tab:fix(Hol(T))}, so \eqref{e:f(T)} holds if
	\begin{equation}
	\label{e:r2_out^2_u}
	|\Out(T)|^2<2q^n.
	\end{equation}
	Notice that $|\Out(T)| \leqs 2(q+1)\log q < q^2$ for $q\geqs 7$, and for $q \in\{3,5\}$ we still have the inequalities $|\Out(T)|\leqs 2(q+1)<q^2$. This implies that if $q\notin\{2,4\}$ and $n\geqs 4$, then we have
	\begin{equation*}
	|\Out(T)|^2<q^4\leqs q^n<2q^n
	\end{equation*}
	and so \eqref{e:r2_out^2_u} is satisfied. If $q = 2$ then $|\Out(T)| \leqs 6$, so \eqref{e:r2_out^2_u} holds if $n\geqs 5$; and if $q = 4$ then $|\Out(T)|\leqs 20$, and thus \eqref{e:r2_out^2_u} holds for $n\geqs 4$. It is straightforward to check \eqref{e:f(T)} when $T = \UU_4(2)$, where we have $f(T) = 20$.
	
	Finally, assume $n = 3$, so $|\Out(T)|\leqs 6\log q$. Here \eqref{e:r2_out^2_u} is satisfied for all $q>4$ since $(6\log q)^2<2q^3$. By our assumption, the only remaining cases are $T = \UU_3(3)$ with $f(T) = 14$ and $T=\UU_3(4)$ with $f(T) = 22$, so the inequality in \eqref{e:f(T)} holds.
	
	\vs
	
	\noindent \emph{Case 3. $T = \LL_n(q)$.}
	
	\vs
	
	Here we assume $(n,q)\ne (2,4), (2,5),(2,9),(3,2),(4,2)$ as noted in \eqref{e:iso_simple}. If $n = 2$ and $q\in \{7,11\}$, then an easy computation using {\sc Magma} shows that \eqref{e:r2_f} holds, and the result follows.
	
	In each of the remaining cases, we have $|T|/h(T) > q^{n-1}$ by inspecting Table \ref{tab:fix(Hol(T))}. Moreover, \cite[Corollary 1.2]{FG_conju} implies that $f_p(\Aut(T))<100|T|/h(T)$, so \eqref{e:r2_f} holds if
	\begin{equation}
	\label{e:r2_out^2_l}
	100|\Out(T)|<q^{n-1}.
	\end{equation}
	Since $|\Out(T)|\leqs 2(q-1)\log q<q^2$ for all $q$, \eqref{e:r2_out^2_l} holds if $n\geqs 10$. Moreover, if $n\geqs 4$ then \eqref{e:r2_out^2_l} holds if $q> 100$, while for $q<100$ it is easy to check that \eqref{e:r2_out^2_l} still holds in each case, unless $q=2$ and $n\leqs 8$, or $n\in\{5,6\}$ and $q\leqs 4$, or $n = 4$ and $q\leqs 9$. But in each of these cases, it is straightforward to check that \eqref{e:r2_f} is satisfied, so to complete the proof we may assume $n\in\{2,3\}$
	
	Suppose $n = 3$, so $|\Out(T)|\leqs 6\log q$, and \eqref{e:r2_out^2_l} holds if $600\log q <q^2$. The latter holds if $q>59$. In fact, by working with the precise value of $|\Out(T)|$ we see that \eqref{e:r2_out^2_l} holds if $q>25$. Finally, if $q\leqs 25$, then we can check \eqref{e:r2_f} using {\sc Magma}.
	
	To complete the proof, we may assume $T = \LL_2(q)$, so $|\Out(T)| \leqs 2\log q$ and $|T|/h(T)\geqs (q+1)q^{1/2}/2$. Thus, \eqref{e:r2_f} holds if
	\begin{equation*}
	800\log q <(q+1)^2
	\end{equation*}
	since we have $f_p(\Aut(T))<100q$ by \cite[Corollary 1.2]{FG_conju}. In this way, we deduce that \eqref{e:r2_f} holds if $q\geqs 71$. And for $q < 71$, we can check that \eqref{e:r2_f} holds with the aid of {\sc Magma}.
\end{proof}

\begin{lem}
	\label{l:r3}
	We have
	\begin{equation*}
	r_3(G) < {k\choose 2}\left(\frac{1}{|T|}+\frac{|\Out(T)| h(T)^{k-3}}{|T|^{k-3}}\right)+\frac{k!}{|T|^{\frac{4}{3}}}+|T|^{-\frac{1}{3}}\left(2{k\choose 3}+\frac{1}{2}{k\choose 2}{k-2\choose 2}\right).
	\end{equation*}
\end{lem}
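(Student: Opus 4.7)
The plan is to rewrite $r_3(G)$ via the identity
\begin{equation*}
r_3(G) = \sum_y \fpr(y),
\end{equation*}
where the sum runs over all prime-order elements $y = (\alpha,\dots,\alpha)\pi \in D$ with $\pi \neq 1$ and $\pi$ having at least one fixed point on $[k]$. This follows from the elementary identity $|x^G \cap D|^2 |C_G(x)|/|G| = |x^G \cap D| \cdot \fpr(x)$ together with the passage from class representatives to individual class members. A coset $\omega = D(\varphi_{t_1},\dots,\varphi_{t_k}) \in \Omega$ is fixed by such a $y$ iff some $g \in T$ satisfies $t_i^\alpha = g t_{i^\pi}$ for all $i$. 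Writing $F \subseteq [k]$ for the fixed points of $\pi$ with $f := |F| \geqs 1$ and $C_1,\dots,C_m$ for its $r$-cycles (where $r$ is the prime order of $\pi$ and $f + rm = k$), for each $g$ the equations on $F$ force each $t_i$ into a coset of $C_T(\alpha)$ of size at most $h(T)$ (if $\alpha \neq 1$) or into the singleton determined by $g$ (and $g = 1$ is forced if $\alpha = 1$, with the cycles then demanding all $t_i$ on a common cycle coincide). Summing over $g$ and dividing by $|T|$ to convert tuples to elements of $\Omega$ yields
\begin{equation*}
\fpr(y) \leqs h(T)^f |T|^{m+1-k}\ \ (\alpha \neq 1), \qquad \fpr(y) \leqs |T|^{f+m-k}\ \ (\alpha = 1).
\end{equation*}

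I would then partition the sum into five subcases according to the cycle type of $\pi$. First, when $\pi$ is a transposition ($r=2$, $m=1$, $f=k-2$) and $\alpha = 1$, the bound gives $\fpr(y) \leqs 1/|T|$, contributing at most ${k\choose 2}/|T|$. Second, when $\pi$ is a transposition and $\alpha \neq 1$, summing over the involutions $\alpha \in \Aut(T)$ and over ${k\choose 2}$ transpositions produces the term ${k\choose 2}|\Out(T)|h(T)^{k-3}/|T|^{k-3}$, where the factor $|\Out(T)|$ arises because the total mass $\sum_\alpha|\alpha^G \cap D|$ over involutions is bounded by $|\Out(T)| \cdot |T|$. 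Third and fourth, when $\pi$ is a 3-cycle or a double transposition, invoking Corollary \ref{c:h(T)_le_|T|/10} to replace a power of $h(T)/|T|$ by $1/10$ and Lemma \ref{l:|Out(T)|} to absorb $|\Out(T)|$ yields a saving of $|T|^{-1/3}$, giving the terms $2{k\choose 3}|T|^{-1/3}$ and $\tfrac{1}{2}{k\choose 2}{k-2\choose 2}|T|^{-1/3}$ respectively. Fifth, for all remaining $\pi$ (either $r \geqs 5$, or $r = 3$ with $m \geqs 2$, or $r = 2$ with $m \geqs 3$), the same counting combined with $h(T) \leqs |T|/10$ and $|\Out(T)| < |T|^{1/3}$ gives $\fpr(y) \leqs |T|^{-4/3}$ uniformly, and summing over the at most $k!$ such $\pi$ produces $k!/|T|^{4/3}$.

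The main obstacle is the transposition case with $\alpha \neq 1$: there the power $h(T)^{k-3}/|T|^{k-3}$ is essentially sharp whenever $h(T)$ is close to the generic upper bound $|T|/10$, so one cannot afford to lose more than $|\Out(T)|$ when summing over involutions. Controlling this coefficient requires Theorem \ref{t:fixity_Hol}, which identifies the $\Aut(T)$-classes of involutions with extremal centralisers and permits grouping them by outer coset. All remaining cases are routine bookkeeping, requiring only the generic estimates $h(T) \leqs |T|/10$ and $|\Out(T)| < |T|^{1/3}$ together with the crude enumeration of the cycle types of $\pi$.
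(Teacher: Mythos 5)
Your decomposition $r_3(G)=\sum_y\fpr(y)$ over prime-order $y=(\alpha,\dots,\alpha)\pi\in D$ with $\pi\neq 1$ having a fixed point, split by cycle type of $\pi$, is the same organising principle as the paper uses; the paper imports the required estimates from the proof of \cite[Theorem 1.5]{F_diag} rather than rederiving them, so a direct rederivation like yours is a legitimate alternative in spirit. However, your per-element fixed point ratio estimate for $\alpha\neq 1$ is too weak by a factor of essentially $h(T)$, and this breaks the argument in exactly the transposition subcase you identify as delicate, and also in the $3$-cycle and double-transposition subcases.

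The loss occurs in passing from ``for each $g$, the $f$ coordinates on $F$ lie in cosets of $C_T(\alpha)$'' to $\fpr(y)\leqs h(T)^{f}|T|^{m+1-k}$: you let $g$ range freely over $T$, but the equations $t_i^\alpha=gt_i$ on $F$ are solvable only when $g$ lies in the image of $t\mapsto t^\alpha t^{-1}$, a set of size $|T|/|C_T(\alpha)|$, and for such $g$ each coordinate ranges over a coset of size exactly $|C_T(\alpha)|$. The correct count gives $\fpr(y)\leqs|C_T(\alpha)|^{f-1}|T|^{m+1-k}$, which is smaller than your bound by at least $|C_T(\alpha)|$. Without that factor, the transposition, $\alpha\neq 1$ contribution becomes ${k\choose 2}\,I\cdot h(T)^{k-2}/|T|^{k-2}$ where $I$ is the number of involutions in $\Aut(T)$, and to reach the target ${k\choose 2}|\Out(T)|h(T)^{k-3}/|T|^{k-3}$ you would need $I\leqs|\Aut(T)|/h(T)$; this fails badly, e.g.\ for $T=A_n$ the right-hand side is only $n(n-1)$. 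With the sharper bound one gets ${k\choose 2}\sum_\alpha|C_T(\alpha)|^{k-3}/|T|^{k-2}\leqs{k\choose 2}|\Aut(T)|\,h(T)^{k-3}/|T|^{k-2}={k\choose 2}|\Out(T)|(h(T)/|T|)^{k-3}$, which is precisely the $r_4(G)$ estimate the paper quotes from Fawcett. The same defect infects your $3$-cycle and double-transposition terms: with $h(T)^f$ in place of $|C_T(\alpha)|^{f-1}$, the $\alpha\neq 1$ mass there scales like a positive power of $|T|$ and cannot be absorbed into $|T|^{-1/3}$. Appealing to Theorem \ref{t:fixity_Hol} does not repair this, since that theorem only supplies the value of $h(T)$, which you have already used, and no ``grouping by outer coset'' mechanism converts the missing $|C_T(\alpha)|$ into an $|\Out(T)|$.
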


\begin{proof}
	First, let
	\begin{equation*}
	\begin{aligned}
	R_4(G) &= \{(\a,\dots,\a)\pi\in R_3(G):\pi = (1,2)\},\\
	R_4(T) &= \{\a\in\Aut(T):(\a,\dots,\a)\pi\in R_4(G)\}
	\end{aligned}
	\end{equation*}
	as in the proof of \cite[Theorem 1.5]{F_diag}. Set $P = S_k$ and
	\begin{equation*}
	r_4(G):=|(1,2)^P|\sum_{\a\in R_4(T)}\frac{|\a^{\Aut(T)}|}{|T|}\left(\frac{|C_{\Inn(T)}(\a)|}{|T|}\right)^{k-3}.
	\end{equation*}
	Then we have
	\begin{equation}\label{e:r4}
	\begin{aligned}
	r_4(G) 
	&= {k\choose 2}\left(\frac{1}{|T|}+\sum_{\a\in R_4(T)\setminus\{1\}}\frac{|\a^{\Aut(T)}|}{|T|}\left(\frac{|C_{\Inn(T)}(\a)|}{|T|}\right)^{k-3}\right)\\
	&\leqs {k\choose 2}\left(\frac{1}{|T|}+|\Out(T)|\left(\frac{ h(T)}{|T|}\right)^{k-3}\right).
	\end{aligned}
	\end{equation}
	
	As noted in the proof of \cite[Theorem 1.5]{F_diag}, we have
	\begin{equation}
	\label{e:r3_F}
	r_3(G) \leqs  r_4(G)+\sum_{\pi\in R\setminus\{(1,2)\}}\frac{|\pi^P|}{|T|^{k-r_\pi-\frac{5}{3}}},
	\end{equation}
	where $R$ is a set of representatives for the conjugacy classes of elements of prime order in $P$ and $r_\pi$ is the number of $\la\pi\ra$-orbits in $[k]$. Without loss of generality, we may assume $(1,2)\in R$.
	
	Let $x,y\in R$ be the representatives the $P$-classes $(1,2,3)^P$ and $(1,2)(3,4)^P$, respectively. Note that $r_x = r_y = k-2$ and $r_z\leqs k-3$ for all $z\in R\setminus\{(1,2),x,y\}$. Then
	\begin{equation*}
	\begin{aligned}
	\sum_{\pi\in R\setminus\{(1,2)\}}\frac{|\pi^P|}{|T|^{k-r_\pi-\frac{5}{3}}} &= \sum_{\pi\in R\setminus\{(1,2),x,y\}}\frac{|\pi^P|}{|T|^{k-r_\pi-\frac{5}{3}}} + |T|^{-\frac{1}{3}}\left(2{k\choose 3}+\frac{1}{2}{k\choose 2}{k-2\choose 2}\right)\\
	&<\frac{k!}{|T|^{\frac{4}{3}}}+|T|^{-\frac{1}{3}}\left(2{k\choose 3}+\frac{1}{2}{k\choose 2}{k-2\choose 2}\right)
	\end{aligned}
	\end{equation*}
	and so the lemma follows by combining \eqref{e:r4} and \eqref{e:r3_F}.
\end{proof}

Now we define
\begin{equation}\label{e:Q1}
Q_1(G):= (k!)^2|T|^{\frac{8}{3}-\frac{k}{2}-\frac{1}{2}\delta_{5,k}} + \frac{k!}{|T|^{\frac{4}{3}}} + \frac{k^4}{2|T|^{\frac{1}{3}}},
\end{equation}
where $\delta_{5,k} = 1$ if $k = 5$ and $\delta_{5,k} = 0$ otherwise, and
\begin{equation}\label{e:Q2}
Q_2(G):=\left(\frac{|T|}{h(T)}\right)^{4-k} + {k\choose 2}|\Out(T)|\left(\frac{|T|}{h(T)}\right)^{3-k}.
\end{equation}
By Lemmas \ref{l:r1}, \ref{l:r2} and \ref{l:r3}, we have
\begin{equation}\label{e:r<Q}
r_1(G)+r_2(G)+r_3(G) < Q_1(G) + Q_2(G).
\end{equation}

\begin{lem}
	\label{l:r_prob_F}
	If $Q_1(G)+Q_2(G) < 1/2$ and $5\leqs k\leqs 4\log |T|$, then $r(G)\geqs 2$. In particular, $b(G) = 2$.
\end{lem}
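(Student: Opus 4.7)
My plan is to combine the hypothesis $Q_1(G)+Q_2(G)<1/2$ with the inequalities \eqref{e:F_prob_r} and \eqref{e:r<Q} that have just been established, in order to extract a concrete lower bound on $r(G)$, and then argue that this bound already exceeds $1$, whence the integrality of $r(G)$ forces $r(G)\geqs 2$. The ``in particular'' clause then follows immediately from the standard equivalence between $r(G)\geqs 1$ and $b(G)\leqs 2$.

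Concretely, I would chain the two displayed inequalities to get
\[
1-\frac{r(G)|G|}{|T|^{2k-2}}\leqs r_1(G)+r_2(G)+r_3(G)<Q_1(G)+Q_2(G)<\frac{1}{2},
\]
so that $r(G)|G|>|T|^{2k-2}/2$. Using $|G|\leqs |T|^k|\Out(T)|k!$ together with the bound $|\Out(T)|<|T|^{1/3}$ from Lemma \ref{l:|Out(T)|}, this rearranges to
\[
r(G)>\frac{|T|^{k-7/3}}{2k!}.
\]

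The main obstacle is then to verify that this right-hand side exceeds $1$: the hypothesis $k\leqs 4\log|T|$ allows $k$ to grow logarithmically with $|T|$, so in principle $k!$ could compete with $|T|^{k-7/3}$. However, the hypothesis is strong enough to defeat this. Isolating the first summand of $Q_1$ in \eqref{e:Q1}, the bound $Q_1(G)<1/2$ already forces
\[
(k!)^2|T|^{8/3-k/2-\delta_{5,k}/2}<\frac{1}{2},\qquad\text{i.e.,}\qquad |T|^{k/2+\delta_{5,k}/2-8/3}>2(k!)^2,
\]
and a direct check shows $k-7/3\geqs 2(k/2+\delta_{5,k}/2-8/3)$ for every integer $k\geqs 5$ (the difference being $3-\delta_{5,k}\in\{2,3\}$, and the exponent $k/2+\delta_{5,k}/2-8/3$ is positive). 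Squaring the displayed inequality therefore yields $|T|^{k-7/3}>4(k!)^4>2k!$, and hence $r(G)>1$. Since $r(G)$ is a non-negative integer, we conclude $r(G)\geqs 2$, and the assertion $b(G)=2$ follows.
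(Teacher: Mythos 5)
Your proof is correct and follows the paper's general outline: chain \eqref{e:F_prob_r} and \eqref{e:r<Q} with the hypothesis to obtain $r(G)>|T|^{2k-2}/(2|G|)$, then argue this exceeds $1$ so that integrality of $r(G)$ forces $r(G)\geqs 2$. The only genuine difference is how the final numerical inequality is handled. The paper reduces the problem to showing $2|\Out(T)|k!\leqs|T|^{k-2}$ and declares this ``clear since $k\leqs 4\log|T|$''; this is true, but it does require a short estimate combining $k\leqs 4\log|T|$ with $|T|\geqs 60$, because $k!$ can be as large as roughly $(4\log|T|)!$. Your route is self-contained and cleaner: isolating the first summand of $Q_1(G)$ from the hypothesis $Q_1(G)+Q_2(G)<1/2$ gives $|T|^{k/2+\delta_{5,k}/2-8/3}>2(k!)^2$, and the elementary exponent comparison $k-7/3\geqs 2(k/2+\delta_{5,k}/2-8/3)$ (the difference being $3-\delta_{5,k}\geqs 2$) then yields $|T|^{k-7/3}>4(k!)^4>2k!$, which is exactly what is needed. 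Remarkably, your argument never actually invokes the upper bound $k\leqs 4\log|T|$; only $k\geqs 5$ and $Q_1(G)+Q_2(G)<1/2$ are used. Both arguments are valid, but yours makes the last step entirely explicit by recycling the hypothesis rather than relying on an unchecked assertion about $k\leqs 4\log|T|$.
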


\begin{proof}
	By \eqref{e:F_prob_r} and \eqref{e:r<Q}, we have
	\begin{equation*}
	\frac{1}{2} > Q_1(G) + Q_2(G) > 1-\frac{r(G)|G|}{|T|^{2k-2}}= 1-\frac{r(G)|\Out(T)|\cdot k!}{|T|^{k-2}}.
	\end{equation*}
	It suffices to prove that
	\begin{equation*}
	2|\Out(T)|\cdot k!\leqs |T|^{k-2},
	\end{equation*}
	which is clear since $k\leqs 4\log |T|$.
\end{proof}

\section{Proofs of Theorems \ref{thm:b(G)=2}, \ref{thm:r=1} and \ref{thm:Hol}}

\label{s:proof_b(G)=2}

In this section, we will establish Theorems \ref{thm:b(G)=2}, \ref{thm:r=1} and \ref{thm:Hol}. We will consider the following cases in turn:

\begin{enumerate}\addtolength{\itemsep}{0.2\baselineskip}
	\item[{\rm (a)}] $P\in\{A_k,S_k\}$ and $k\in\{3,4,|T|-4,|T|-3\}$;
	\item[{\rm (b)}] $P\in\{A_k,S_k\}$ and $k\in\{|T|-2,|T|-1\}$;
	\item[{\rm (c)}] $P = S_k$, $5\leqs k\leqs |T|/2$ and $G = W$.
\end{enumerate}

More specifically, we will prove that $r(G)\geqs 2$ for every group in cases (a) and (c), with the exception of the two special cases arising in the statement of Theorem \ref{thm:r=1} (in both cases, $b(G) = 2$ and $r(G) = 1$). Then Lemma \ref{l:Sk_eq_Hol_r} shows that $b(G) = 2$ if $P\in\{A_k,S_k\}$ and $3\leqs k\leqs |T|-3$, as in part (ii) of Theorem \ref{thm:b(G)=2}, which also establishes Theorem \ref{thm:Hol}. In particular, we deduce that $r(G)\geqs 2$ if $P\notin\{A_k,S_k\}$ and $k\leqs 32$, as noted in Remark \ref{r:32}.

As explained in Section \ref{s:pre}, we will exclude the simple groups listed in \eqref{e:iso_simple}, due to the existence of isomorphisms. 

\subsection{The groups with $k\in\{3,4,|T|-4,|T|-3\}$}

\label{ss:34}

We start with case (a).

\begin{lem}
	\label{l:sporadic_k=3,4}
	Suppose $k\in\{3,4\}$, $P = S_k$ and $T$ is a sporadic simple group. Then $r(G)\geqs 2$.
\end{lem}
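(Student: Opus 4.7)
The goal is to verify $r(G)\geqs 2$ for each of the $26$ sporadic simple groups $T$ with $k\in\{3,4\}$. By Lemma \ref{l:Sk_eq_Hol_r}, this reduces to exhibiting two $k$-element subsets $S_1,S_2\subseteq T$ lying in distinct regular orbits of $\Hol(T)$ on $\mathscr{P}_k$. The plan is to carry this out using the computational recipe described in Remark \ref{r:magma}, with a case division based on whether $T$ is small enough to handle directly or not.

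For the vast majority of the sporadic groups (everything up through, say, $\mathrm{Th}$, $\mathrm{Co}_1$, $\mathrm{Fi}_{23}$), I would simply construct $T$ in a convenient faithful permutation representation in \textsc{Magma} and search for the required pair $(S_1,S_2)$ at random. Concretely, after translating so that $1\in S_j$, I would require that the multiset $O_j=\{|t|:t\in S_j\}$ consists of $k$ distinct values, that $\langle S_j\rangle = T$, and that $O_j\ne\{|x^{-1}t|:t\in S_j\}$ for every $x\in S_j\setminus\{1\}$; by Lemma \ref{l:Hol(T,S)=1_cond} this gives $\Hol(T,S_j)=1$. Then the inequivalence condition $O_2\ne\{|x^{-1}t|:t\in S_1\}$ for all $x\in S_1$ together with Lemma \ref{l:r_ge_2_cond} guarantees that $S_1$ and $S_2$ lie in distinct $\Hol(T)$-orbits. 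For such small (or moderately sized) sporadics, order statistics in $T$ are rich enough that a random search succeeds quickly, and $k\leqs 4$ keeps the subsets tiny.

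The genuine obstacle is the Baby Monster $\mathbb{B}$ and the Monster $\mathbb{M}$, where working inside $T$ directly is not feasible. Here I would follow the second device in Remark \ref{r:magma}(ii): pick a centreless maximal subgroup $M\leqs T$ with $\Out(T)=1$ and look for $S_j\subseteq M$ satisfying the stronger conditions of Corollary \ref{c:Hol(T,S)=1_cond}, namely that the non-identity elements of $S_j$ have pairwise distinct orders and generate $M$ itself (not merely sit inside it). For $\mathbb{M}$, the maximal subgroup $\LL_2(71)$ already suggested in the paper is a natural choice, and an analogous small almost simple maximal subgroup (e.g.\ $\LL_2(q)$ for a suitable prime power $q$) serves for $\mathbb{B}$. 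The orders of $\LL_2(q)$ are well understood, which makes verifying the order-multiset conditions a short computation, and $M$ itself is easily constructed in \textsc{Magma}.

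In each case the output of the search is a pair $(S_1,S_2)$ with the four properties above, so by Lemmas \ref{l:Sk_eq_Hol_r} and \ref{l:r_ge_2_cond} together with Lemma \ref{l:Hol(T,S)=1_cond} or Corollary \ref{c:Hol(T,S)=1_cond} we conclude $r(G)\geqs 2$. I would present the resulting $S_1,S_2$ (or the seeds used to generate them) in a brief table, since the only non-mechanical step is the choice of the ambient subgroup $M$ for $\mathbb{B}$ and $\mathbb{M}$; everything else is verification.
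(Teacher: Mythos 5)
Your approach is essentially the paper's: reduce via Lemma \ref{l:Sk_eq_Hol_r} to exhibiting two $k$-subsets in distinct regular $\Hol(T)$-orbits, and do this computationally using Remark \ref{r:magma}(i) where a convenient permutation representation of $T$ is available, falling back on Corollary \ref{c:Hol(T,S)=1_cond} with a centreless maximal subgroup $M$ when it is not. Two small points worth noting against the paper's actual proof. First, the paper routes through a maximal subgroup not only for $\mathbb{B}$ and $\mathbb{M}$ but also for $\mathrm{Ly}$, $\mathrm{Th}$ and $\mathrm{J}_4$; your implicit claim that everything up through $\mathrm{Th}$ can be handled directly is optimistic, since those three lack small enough faithful permutation representations for a naive \textsc{Magma} random search, and they happen to satisfy $\Out(T)=1$ so the subgroup device applies. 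Second, for $\mathbb{B}$ the paper uses $M=\mathrm{Fi}_{23}$ rather than an $\LL_2(q)$-type subgroup as you suggest; either works in principle (the Baby Monster does contain maximal $\LL_2(q)$-subgroups), but one should verify in advance that the chosen $M$ admits $k$-subsets with $k-1$ distinct nontrivial element orders generating all of $M$ and satisfying the order-multiset inequalities in Corollary \ref{c:Hol(T,S)=1_cond}, which is easier to guarantee for a larger subgroup like $\mathrm{Fi}_{23}$. With those adjustments your argument matches the paper's.
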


\begin{proof}
	If $T\notin\{\mathrm{Ly},\mathrm{Th},\mathrm{J}_4,\mathbb{B},\mathbb{M}\}$ then we can construct $T$ as a permutation group in {\sc Magma} using the function \texttt{AutomorphismGroupSimpleGroup}. Then the result follows by random search (see Remark \ref{r:magma}(i)). If $T\in \{\mathrm{Ly},\mathrm{Th},\mathrm{J}_4,\mathbb{B},\mathbb{M}\}$, then $|\Out(T)| = 1$. Let $M$ be a maximal subgroup of $T$ with
	\begin{equation}\label{e:sporadic_K}
	(T,M) \in \{ (\mathrm{Ly},G_2(5)),\ (\mathrm{Th},\mathrm{AGL}_2(5)),\ (\mathrm{J}_4,\mathrm{M}_{22}.2),\ (\mathbb{B},\mathrm{Fi}_{23}),\ (\mathbb{M},\LL_2(71))\}.
	\end{equation}
	In view of Corollary \ref{c:Hol(T,S)=1_cond}, the result follows by random search as in Remark \ref{r:magma}(ii).
\end{proof}

We define the following set of finite simple groups of Lie type:
\begin{equation*}
\begin{aligned}
\mathcal{C}:=\{&{^2}B_2(8),{^2}B_2(32),G_2(3),G_2(4),{^2}F_4(2)',{^3}D_4(2),F_4(2),\LL_2(7),\LL_2(8),\\&\LL_2(11),\LL_2(13),\LL_2(16),\LL_2(27),\LL_2(32),\LL_3^\e(3),\LL_3^\e(4),\UU_3(5),\UU_3(8),\LL_4^\e(3),\\&\PSp_4(3),\Sp_4(4),\LL_5^\e(2),\UU_6(2),\Sp_6(2),\PSp_4(3),\Sp_8(2),\Omega_8^\e(2),\POmega_8^+(3)\}.
\end{aligned}
\end{equation*}
Recall that an element $x$ of a simple group of Lie type $T$ defined over a field of characteristic $p$ is regular semisimple if and only if $|C_T(x)|$ is indivisible by $p$.

\begin{lem}
	\label{l:reg_semi_8}
	Suppose $T\notin\mathcal{C}$ is a finite simple group of Lie type. Then $T$ has at least $8$ regular semisimple $\Aut(T)$-classes.
\end{lem}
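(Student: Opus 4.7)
The plan is to exhibit in each admissible $T$ at least eight regular semisimple elements lying in pairwise distinct $\Aut(T)$-classes. The workhorse is the following observation: in any finite simple group of Lie type $T = O^{p'}(Y_\sigma)$ there is a cyclic maximal torus $S \leqs T$ whose elements, apart from those contained in proper subtori, are automatically regular semisimple. If $S$ is such a torus, then the number of distinct $T$-classes of regular elements of $S$ is at least $(|S|-c)/|N_T(S)/S|$, where $c$ counts the non-regular elements of $S$ and $|N_T(S)/S|$ is the Weyl group of $S$ (a small cyclic group in the cases we choose). Dividing by $|\Out(T)|$ then gives a lower bound on the number of regular semisimple $\Aut(T)$-classes in $T$.

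First I would reduce the statement to the inequality
\[
\frac{|S|-c}{|N_T(S)/S|\cdot |\Out(T)|} \;\geqs\; 8
\]
for an appropriate torus $S$ of $T$. Then I would go through the Lie types one at a time, in each case choosing $S$ of largest convenient prime-to-$p$ order: a Coxeter torus for the exceptional types, and Singer-type tori of order $(q^n-\e^n)/((n,q-\e)(q-\e))$ for $\LL_n^\e(q)$, of order $(q^{n/2}+1)/(2,q-1)$ for $\PSp_n(q)$, and analogous twisted tori for the orthogonal groups. In each choice $|N_T(S)/S|$ is bounded by a small constant depending on the type (typically at most $n$ or $2n$), while $|\Out(T)|$ satisfies the logarithmic bound of Lemma \ref{l:|Out(T)|}, so using also Lemma \ref{l:Inndiag} to control $|S|$, the displayed inequality holds as soon as $q$ or the rank exceeds an explicit threshold.

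The remaining task is the finite list of small-parameter $T$ falling below the threshold but not in the exception list $\mathcal{C}$. For each such $T$ I would verify the claim by a direct computation in \textsc{Magma}: enumerate the conjugacy classes of $T$, retain those with $|C_T(x)|$ coprime to $p$, fuse the survivors under the action of $\Out(T)$, and check that at least eight $\Aut(T)$-classes remain. The exception set $\mathcal{C}$ has been assembled precisely to collect the small Lie-type groups for which this count drops below $8$.

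The main obstacle is fine-tuning Step 2 so that the generic inequality matches $\mathcal{C}$ on the nose. For low-rank types (particularly $A_1$, $A_2$, $B_2$, $G_2$, ${}^2B_2$ and ${}^2F_4$, together with low-characteristic instances in moderate rank) several candidate tori may need to be tried to obtain a sharp enough bound, and one must be careful at the short list of $q$ for which $|\Out(T)|$ becomes comparable to $|S|$. After that the bookkeeping in Step 3 is routine but lengthy, as it must sweep through every Lie-type group just outside $\mathcal{C}$ to confirm the threshold chosen in Step 2 is tight.
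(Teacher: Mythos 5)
Your plan attacks the problem from a genuinely different angle than the paper. The paper's proof works with a quasisimple covering group $Q$ of $T$ and pulls the \emph{total} count $m$ of regular semisimple $Q$-classes directly from the literature: L\"ubeck's tables for simply connected exceptional groups, and Fulman--Guralnick \cite{FG_reg_semi} for $\SL_n^\e(q)$, $\Sp_n(q)$ and $\mathrm{SO}_n^\e(q)$ with $q$ even. The only place the paper resorts to a single torus is the odd-$q$ orthogonal case, where (for the stated formulae to apply) it constructs an explicit power $y = x^2$ of a Singer-type element and counts those powers $y^i$ with $(i,(q^d-1)/2)=1$, arguing via eigenvalue sets. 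Having the full count $m$ makes the sufficient inequality $m|T|\geqs 8|\Out(T)||Q|$ generous enough that the residual list of small groups to check by computer is short.

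Your uniform torus strategy is sound in outline: regular elements of a cyclic maximal torus $S$ fall into at least $|S^{\mathrm{reg}}|/|N_T(S)/S|$ distinct $T$-classes, and dividing by $|\Out(T)|$ bounds the $\Aut(T)$-classes from below. But this buys you only a single torus's worth of classes, so the lower bound is noticeably weaker than the paper's, and you would have to sweep a substantially larger finite list of small $(T,q)$ by machine. Two further points need care before this becomes a proof. First, the number of non-regular elements $c$ of $S$ is not automatically small: you must justify, type by type, that elements of $S$ outside the kernels of the roots (a union of proper subgroups of the cyclic group $S$) suffice, which is routine for a Coxeter torus but requires a statement. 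Second, your proposed torus orders for $\LL_n^\e(q)$ are not uniformly correct; the structure of the maximal cyclic torus in $\PSU_n(q)$ depends on the parity of $n$, and similar parity issues arise for $\POmega_n^\e(q)$. None of these are fatal, but as written the argument is a plan rather than a proof, and the threshold you derive will not match $\mathcal{C}$ on the nose without either combining contributions from two tori (as you can do in $\LL_2(q)$, using both the split and non-split torus) or enlarging the computational tail.
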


\begin{proof}
	Suppose $T$ is a Lie type group defined over $\mathbb{F}_q$, where $q = p^f$ for some prime $p$. We will work with a quasisimple group $Q$ with $Q/Z(Q) = T$. Let $m$ be the number of regular semisimple conjugacy classes in $Q$. Then $T$ has at least $m|T|/|Q|$ regular semisimple $T$-classes, and thus $T$ has at least $8$ regular semisimple $\Aut(T)$-classes if
	\begin{equation}\label{e:reg_m}
	m|T|\geqs 8|\Out(T)||Q|.
	\end{equation}
	
	First assume $Q$ is a simply connected quasisimple exceptional group. Then $m$ has been computed by L\"{u}beck \cite{L_web}, and one can see that \eqref{e:reg_m} holds for every $T\notin\mathcal{C}$ by inspecting \cite{L_web}.
	
	Next, assume  $Q\in\{\SL_n^\e(q),\Sp_n(q)\}$, so $m$ is given in \cite{FG_reg_semi}. The result now follows by inspecting \cite{FG_reg_semi}. For example, if $Q = \SL_2(q)$ then $|Q|/|T| = (2,q-1)$, $|\Out(T)| = (2,q-1)f$ and
	\begin{equation*}
	m = q-3+(2,q)
	\end{equation*}
	by \cite[Theorem 2.4]{FG_reg_semi}. Thus, \eqref{e:reg_m} is valid if
	\begin{equation*}
	q-3+(2,q)\geqs 8(2,q-1)^2f,
	\end{equation*}
	which holds for all $q > 81$. For the cases where $q\leqs 81$ and $T\notin \mathcal{C}$, one can check using {\sc Magma} that there are at least $8$ regular semisimple $\Aut(T)$-classes. We use an entirely similar argument to treat all the other cases and we omit the details.
	
	To complete the proof, we assume $Q = \Omega_n^\e(q)$, so $Q$ has index $2$ in $\mathrm{SO}_n^\e(q)$. First assume $q$ is even. Here $Q = T$ and every semisimple element in $\mathrm{SO}_n^\e(q)$ has odd order, and so lies in $Q$. This implies that $m$ is at least the number of regular semisimple $\mathrm{SO}_n^\e(q)$-classes in $\mathrm{SO}_n^\e(q)$, which is computed in \cite[Theorem 5.12]{FG_reg_semi}, and the result follows by arguing as above.
	
	Finally, assume $Q = \Omega_n^\e(q)$ and $q$ is odd. Write $d=\lceil n/2\rceil-1$. Let $A\in\GL_d(q)$ be of order $q^d-1$ and let
	\begin{equation*}
	x = 
	\begin{pmatrix}
	A&&\\
	&(A^{-1})^T&\\
	&&I_{n-2d}
	\end{pmatrix}
	\end{equation*}
	with respect to a standard basis (see \cite[Proposition 2.5.3]{KL_classical}). Then $x\in \mathrm{SO}_n^\e(q)$, so $y:=x^2\in\Omega_n^\e(q)$, noting that 
	\begin{equation*}
	y = 
	\begin{pmatrix}
	B&&\\
	&(B^{-1})^T&\\
	&&I_{n-2d}
	\end{pmatrix},
	\end{equation*}
	where $B = A^2$. Let $\mu$ be an eigenvalue of $B$ of order $(q^d-1)/2$ in the algebraic closure $K$ of $\mathbb{F}_q$. Then it is easy to show that $\mu\ne \mu^{\pm q^t}$ for any $1\leqs t\leqs d-1$, and the set of eigenvalues of $y$ is
	\begin{equation*}
	\{\mu,\mu^q,\dots,\mu^{q^{d-1}},\mu^{-1},\mu^{-q},\dots,\mu^{-q^{d-1}},1\},
	\end{equation*}
	where $1$ has multiplicity $n-2d\in\{1,2\}$ and any other eigenvalue has multiplicity $1$. It follows that $y^i$ is regular semisimple if $(i,(q^d-1)/2) = 1$. This gives at least
	\begin{equation*}
	\frac{\phi\left((q^d-1)/2\right)}{2d}
	\end{equation*}
	regular semisimple $\mathrm{GO}_n^\e(q)$-classes in $Q$, where $\phi$ is the Euler’s totient function (note that two elements are not conjugate in $\GL_n(q)$ if they have distinct sets of eigenvalues). By arguing as above, $T$ has at least $8$ regular semisimple $\Aut(T)$-classes if
	\begin{equation}
	\label{e:reg_d_o}
	\phi\left((q^d-1)/2\right)\geqs 32d\cdot |\Aut(T):\mathrm{PGO}_n^\e(q)|,
	\end{equation}
	noting that $|\Aut(T):\mathrm{PGO}_n^\e(q)|\leqs f\leqs \log q$ if $d\ne 3$, while $|\Aut(T):\mathrm{PGO}_n^\e(q)|\leqs 3f\leqs 3\log q$ if $d = 3$. It is easy to check that \eqref{e:reg_d_o} holds unless
	\begin{equation*}
	(d,q)\in\{(6,3),(5,3),(4,3),(4,5),(4,7),(3,3),(3,5),(3,7)\}.
	\end{equation*}
	For these remaining cases, one can use {\sc Magma} to obtain $m$ and so \eqref{e:reg_m} holds unless
	\begin{equation*}
	Q\in\{\Omega_{10}^-(3),\Omega_8^+(5),\Omega_8^\e(3),\Omega_7(3)\},
	\end{equation*}
	where we can directly check that there are at least $8$ regular semisimple $\Aut(T)$-classes in $T$ with the aid of {\sc Magma}.
\end{proof}

We remark that $\POmega_8^+(3)$ has exactly $8$ regular semisimple $\Aut(T)$-classes in $T$. If $T\ne\POmega_8^+(3)$ and $T\in\mathcal{C}$, then the number of $\Aut(T)$-classes of regular semisimple elements in $T$ is strictly less than $8$, which can be checked using {\sc Magma}. We include $\POmega_8^+(3)$ in $\mathcal{C}$ in view of Theorem \ref{t:inv_gen_reg}, so if $T\notin\mathcal{C}$ then $T$ is invariably generated by a pair of regular semisimple elements of distinct orders.


\begin{lem}
	\label{l:k=3_r(G)}
	Suppose $k = 3$, $P = S_k$ and $T\notin\mathcal{C}$ is a simple group of Lie type. Then $r(G)\geqs 2$.
\end{lem}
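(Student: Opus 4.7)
The goal is to produce two $3$-subsets $S_1, S_2 \subseteq T$, each containing the identity, with $\Hol(T, S_j) = 1$ for $j \in \{1,2\}$ and such that $S_1, S_2$ lie in distinct $\Hol(T)$-orbits; Lemma \ref{l:Sk_eq_Hol_r} will then give $r(G) \geqs 2$. The exclusion $T \notin \mathcal{C}$ is exactly what permits us to invoke Theorem \ref{t:inv_gen_reg} and Lemma \ref{l:reg_semi_8}.

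I would begin by applying Theorem \ref{t:inv_gen_reg} to obtain regular semisimple elements $x, y \in T$ of distinct orders $a = |x|$ and $b = |y|$ that invariably generate $T$. Invariable generation is preserved under individual conjugation of the generators, so every pair $(x, y^h)$ with $h \in T$ still invariably generates $T$. Using Gow's theorem \cite{G_reg_semi} on products of regular semisimple classes, combined with the at least eight regular semisimple $\Aut(T)$-classes supplied by Lemma \ref{l:reg_semi_8}, I would select $h_1, h_2 \in T$ so that the two orders $c_j := |x^{-1} y^{h_j}|$ are distinct and both lie outside $\{a, b\}$. Setting $S_j = \{1, x, y^{h_j}\}$, Lemma \ref{l:Hol(T,S)=1_cond} yields $\Hol(T, S_j) = 1$: condition (i) follows because the distinct orders $a \ne b$ force any $\alpha \in \Aut(T, \{x, y^{h_j}\})$ to fix each of $x$ and $y^{h_j}$ individually, so $\alpha$ centralises $\la x, y^{h_j}\ra = T$ and is therefore trivial; condition (ii) is immediate from $c_j \notin \{a, b\}$.

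To show $S_1$ and $S_2$ lie in distinct $\Hol(T)$-orbits I would apply Lemma \ref{l:translation} to a putative equality $S_1^{g\alpha} = S_2$, reducing to the three cases $g \in S_1 = \{1, x, y^{h_1}\}$. In the case $g = 1$, one has $\alpha(x) = x$ (as $x$ is the unique element of order $a$) and $\alpha(y^{h_1}) = y^{h_2}$, so $|x^{-1} y^{h_1}| = |x^{-1} y^{h_2}|$, that is, $c_1 = c_2$, contradicting our choice. The cases $g = x$ and $g = y^{h_1}$ are dispatched by comparing the element-order multisets of the translates $x^{-1} S_1$ and $y^{-h_1} S_1$, after application of $\alpha$, against $\{1, a, b\}$: each putative matching forces one of $a = b$, $c_1 = a$, $c_1 = b$, or (using $|y^{-h_2} x| = c_2$) $c_2 = a$, each of which is excluded by construction. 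Combining this with the preceding step gives $r(G) \geqs 2$.

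The main obstacle is arranging the simultaneous existence of $h_1, h_2$ realising the required orders, which is where the full strength of Gow's theorem and the bound of Lemma \ref{l:reg_semi_8} enters: from the at least eight regular semisimple $\Aut(T)$-classes one obtains enough distinct orders avoiding $\{a, b\}$, and Gow's theorem then guarantees, for any two such orders $c_1 \ne c_2$, the existence of conjugates $y^{h_1}, y^{h_2}$ with $|x^{-1} y^{h_j}| = c_j$. The groups in $\mathcal{C}$ are precisely those small Lie-type groups for which this counting argument breaks down, and they are treated separately.
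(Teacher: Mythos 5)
Your construction and your verification that each $S_j = \{1, x, y^{h_j}\}$ has trivial stabiliser are fine (the local checks of conditions (i) and (ii) in Lemma \ref{l:Hol(T,S)=1_cond} are correct, noting that $|y^{-h_j}x| = |x^{-1}y^{h_j}| = c_j$), and the orbit-separation computation would work as described \emph{if} you could arrange $c_1 \ne c_2$ with both orders lying outside $\{a,b\}$. The gap is in that arrangement: Lemma \ref{l:reg_semi_8} guarantees at least eight regular semisimple $\Aut(T)$-\emph{classes}, not at least three distinct regular semisimple element \emph{orders}. Distinct $\Aut(T)$-classes may well share an element order (for example, if $x$ has order $a$, every power $x^m$ with $(m,a)=1$ is regular semisimple of order $a$, and these powers can split into many $\Aut(T)$-classes), so nothing you have cited rules out all eight classes lying in $\{a,b\}$. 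Your closing sentence asserts that one ``obtains enough distinct orders avoiding $\{a,b\}$,'' but this is precisely what is missing.

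The paper's proof sidesteps this by never passing to orders. It picks $z_1, z_2$ in distinct $\Aut(T)$-classes, neither lying in $x^{\Aut(T)} \cup (x^{-1})^{\Aut(T)} \cup y^{\Aut(T)} \cup (y^{-1})^{\Aut(T)}$ (these four account for at most four of the eight classes from Lemma \ref{l:reg_semi_8}, leaving at least four to choose from), writes $z_i = xy^{h_i}$ via Gow's theorem, and then separates the $\Hol(T)$-orbits of $S_1 = \{1,x^{-1},y^{h_1}\}$ and $S_2 = \{1,x^{-1},y^{h_2}\}$ using $\Aut(T)$-\emph{conjugacy} as the invariant rather than element order: in the case $g=1$ it contradicts $z_1^{\Aut(T)} \ne z_2^{\Aut(T)}$, and in the cases $g \in \{x^{-1}, y^{h_1}\}$ it contradicts $z_1, z_1^{-1} \notin x^{\pm\Aut(T)} \cup y^{\pm\Aut(T)}$. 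Since $\Aut(T)$-class is a strictly finer invariant than order, this lets the bound of Lemma \ref{l:reg_semi_8} apply directly. To repair your argument you would need to make the same switch from orders to $\Aut(T)$-classes in both the choice of $c_1, c_2$ and the orbit-separation step, which essentially reproduces the paper's proof.
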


\begin{proof}
	Let $x$ and $y$ be as described in Theorem \ref{t:inv_gen_reg}. Let $z_1$ and $z_2$ be semisimple elements in $T$ lying in distinct $\Aut(T)$-classes and
	\begin{equation*}
	z_1,z_2\notin x^{\Aut(T)}\cup (x^{-1})^{\Aut(T)}\cup y^{\Aut(T)}\cup (y^{-1})^{\Aut(T)}.
	\end{equation*}
	Note that the existence of $z_1$ and $z_2$ follows from Lemma \ref{l:reg_semi_8}. Then by applying \cite[Theorem 2]{G_reg_semi}, which asserts that the product of any two regular semisimple $T$-classes contains all semisimple elements in $T$, there exist $g_i$ and $h_i$ in $T$ such that $z_i = x^{g_i}y^{h_i}$, and without loss of generality we may assume $g_i = 1$, so $z_i = xy^{h_i}$. It is easy to see that $\Hol(T,\{1,x^{-1},y^{h_i}\}) = 1$, and so $b(G) = 2$. By Lemma \ref{l:Sk_eq_Hol_r}, it suffices to show that $S_1 = \{1,x^{-1},y^{h_1}\}$ and $S_2 = \{1,x^{-1},y^{h_2}\}$ are in distinct $\Hol(T)$-orbits. Suppose $S_1^{g\a} = S_2$ for some $g\a\in\Hol(T)$, and note that $g\in S_1$ by Lemma \ref{l:translation}. If $g = 1$ then $(x^{-1})^\a = x^{-1}$ and $(y^{h_1})^\a = y^{h_2}$. However, this implies that
	\begin{equation*}
	z_1^\a = (xy^{h_1})^\a = xy^{h_2} = z_2,
	\end{equation*}
	which is incompatible with our assumption $z_1^{\Aut(T)}\ne z_2^{\Aut(T)}$. If $g = x^{-1}$ then $(y^{h_1})^g = xy^{h_1} = z_1$, which is not $\Aut(T)$-conjugate to any element in $S_2$, a contradiction. Finally, if $g = y^{h_1}$ then we have $(x^{-1})^g = y^{-h_1}x^{-1} = z_1^{-1}$. With the same reason, this is impossible. Therefore, there is no $g\a\in\Hol(T)$ such that $S_1^{g\a} = S_2$, which completes the proof.
\end{proof}

\begin{lem}
	\label{l:k=4_r(G)>1}
	Suppose $k=4$, $P = S_k$ and  $T\notin\mathcal{C}$ is a simple group of Lie type. Then $r(G)\geqs 2$.
\end{lem}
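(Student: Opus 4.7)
The plan is to mirror the construction of Lemma~\ref{l:k=3_r(G)}, enlarged from 3-subsets to 4-subsets by adjoining a fourth element drawn from a third regular semisimple $\Aut(T)$-class. Concretely, I would invoke Theorem~\ref{t:inv_gen_reg} to fix regular semisimple $x, y \in T$ of distinct orders with $\{x, y\}$ invariably generating $T$. Since $T \notin \mathcal{C}$, Lemma~\ref{l:reg_semi_8} supplies at least eight regular semisimple $\Aut(T)$-classes in $T$, so I can choose a regular semisimple element $z$ whose order is distinct from $|x|$ and $|y|$ and that is not $\Aut(T)$-conjugate to $x^{\pm 1}$ or $y^{\pm 1}$, together with two further regular semisimple elements $w_1, w_2$ in distinct $\Aut(T)$-classes whose orders avoid $|x|, |y|, |z|$. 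By Gow's theorem on products of regular semisimple classes \cite{G_reg_semi}, for each $i \in \{1,2\}$ there exist $h_i, k_i \in T$ with $x y^{h_i} = w_i$ and with $x z^{k_i}$ landing in a chosen regular semisimple class of order distinct from $|x|, |y|, |z|, |w_1|, |w_2|$ (again using the abundance guaranteed by Lemma~\ref{l:reg_semi_8}).

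Set $S_i = \{1,\, x^{-1},\, y^{h_i},\, z^{k_i}\}$. To show $\Hol(T, S_i) = 1$, I would apply Lemma~\ref{l:Hol(T,S)=1_cond}. Condition~(i) says $\Aut(T, \{x^{-1}, y^{h_i}, z^{k_i}\}) = 1$: since the three elements have pairwise distinct orders, any stabilising $\alpha \in \Aut(T)$ must fix each of them, hence centralise $\langle x, y^{h_i}\rangle = T$, where the equality uses that $\{x, y\}$ invariably generates $T$. Condition~(ii) amounts to checking, for each non-identity $t \in S_i$, that the multiset $\{|t^{-1}s| : s \in S_i\}$ differs from $\{1, |x|, |y|, |z|\}$; these six inequalities follow directly from the prescribed orders chosen above, since in each case at least one of the two non-trivial products of the form $t^{-1}s$ has order outside $\{|x|, |y|, |z|\}$.

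To separate the two $\Hol(T)$-orbits, assume $S_1^{g\alpha} = S_2$ with $g\alpha \in \Hol(T)$. Lemma~\ref{l:translation} forces $g \in S_1$, giving four subcases $g \in \{1, x^{-1}, y^{h_1}, z^{k_1}\}$. In each subcase, translating by $g^{-1}$ and applying $\alpha$ must send $S_1$ to $S_2$; comparing orders and $\Aut(T)$-classes of elements such as $xy^{h_1}$ versus $xy^{h_2}$ (which by construction lie in distinct classes $w_1^{\Aut(T)}$ and $w_2^{\Aut(T)}$) and of $xz^{k_1}$ versus $xz^{k_2}$ produces a contradiction in every subcase, exactly as in the proof of Lemma~\ref{l:k=3_r(G)}.

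The main obstacle I anticipate is the combinatorial bookkeeping: enlarging from $k = 3$ to $k = 4$ doubles the number of order-avoidance inequalities needed for Lemma~\ref{l:Hol(T,S)=1_cond}(ii), and the orbit-distinction argument acquires a new subcase ($g = z^{k_1}$) whose resolution requires that the $\Aut(T)$-classes containing $xz^{k_1}$ and $xz^{k_2}$ be prescribed independently of those containing $w_1$ and $w_2$. Gow's theorem is flexible enough to prescribe all four products simultaneously, but only because Lemma~\ref{l:reg_semi_8} guarantees enough regular semisimple classes; this is precisely why the exceptional list $\mathcal{C}$ must be removed. The remaining groups in $\mathcal{C}$ will, I expect, be handled by a direct computation in {\sc Magma} along the lines of Remark~\ref{r:magma}.
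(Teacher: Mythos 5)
Your proposal takes essentially the same route as the paper: both start from the invariably generating pair $\{x,y\}$ of regular semisimple elements supplied by Theorem~\ref{t:inv_gen_reg}, use Lemma~\ref{l:reg_semi_8} and Gow's theorem to steer certain products and extra elements into prescribed $\Aut(T)$-classes, deduce $\Hol(T,S_i)=1$ from order and generation conditions, and then split the orbit-separation analysis via Lemma~\ref{l:translation} into cases $g \in S_1$ settled by order/class comparisons. The only cosmetic difference is in parametrization: you write $S_i = \{1, x^{-1}, y^{h_i}, z^{k_i}\}$ with explicit conjugations controlling the products $xy^{h_i}$ and $xz^{k_i}$ and invoke Lemma~\ref{l:Hol(T,S)=1_cond} as a wrapper, whereas the paper fixes $S_1 = \{1,x,y,z\}$, $S_2 = \{1,x,y,w\}$, absorbs a conjugation of $y$ into the assumption on the class of $x^{-1}y$, imposes class-avoidance on $z$, $z^{-1}x$, $w$, $w^{-1}x$, and argues by hand; both routes are sound.
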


\begin{proof}
	Let $x$ and $y$ be as in Theorem \ref{t:inv_gen_reg}. By \cite[Theorem 2]{G_reg_semi}, every semisimple element in $T$ lies in $x^Ty^T$, so we may assume that
	\begin{equation}\label{e:k=4_construction_x-1y}
	x^{-1}y\notin x^{\Aut(T)}\cup (x^{-1})^{\Aut(T)}\cup y^{\Aut(T)}\cup (y^{-1})^{\Aut(T)}.
	\end{equation}
	Additionally, using Lemma \ref{l:reg_semi_8}, let $z_0$ be a regular semisimple element such that
	\begin{equation}\label{e:k=4_construction_z0}
	z_0\notin x^{\Aut(T)}\cup (x^{-1})^{\Aut(T)}\cup y^{\Aut(T)}\cup (y^{-1})^{\Aut(T)}\cup (x^{-1}y)^{\Aut(T)}\cup (y^{-1}x)^{\Aut(T)}.
	\end{equation}
	Again, \cite[Theorem 2]{G_reg_semi} implies that $x^Tz_0^T$ contains all semisimple elements in $T$. Thus, by Lemma \ref{l:reg_semi_8}, there exists $z\in z_0^T$ such that
	\begin{equation}\label{e:k=4_construction_z}
	z^{-1}x\notin x^{\Aut(T)}\cup (x^{-1})^{\Aut(T)}\cup y^{\Aut(T)}\cup (y^{-1})^{\Aut(T)}\cup (x^{-1}y)^{\Aut(T)}\cup (y^{-1}x)^{\Aut(T)}.
	\end{equation}
	Set $S_1 = \{1,x,y,z\}$ and suppose $g\a\in\Hol(T,S_1)$. If $g = 1$ then $\a\in\Aut(T,S_1) = 1$ as $\la x,y\ra = T$ and $x,y,z$ are in distinct $\Aut(T)$-classes. If $g = x$ then $x^{-1}y\in x^{-1}S_1 = S_1^{\a^{-1}}$, which is incompatible with either \eqref{e:k=4_construction_x-1y} or \eqref{e:k=4_construction_z0}. The case where $g = y$ can be eliminated using the same argument. If $g = z$, then $z^{-1}S_1 = S_1^{\a^{-1}}$, and by using \eqref{e:k=4_construction_z0} and \eqref{e:k=4_construction_z}, both $z^{-1}$ and $z^{-1}x$ are $\Aut(T)$-conjugate to $z$, which yields $z^{-1} = z^\a = z^{-1}x$, a contradiction. Thus, we have $b(G) = 2$.
	
	Similarly, Lemma \ref{l:reg_semi_8} implies that there exists a regular semisimple element $w\in T$ such that $w\ne z$,
	\begin{equation*}
	w\notin x^{\Aut(T)}\cup (x^{-1})^{\Aut(T)}\cup y^{\Aut(T)}\cup (y^{-1})^{\Aut(T)}\cup (x^{-1}y)^{\Aut(T)}\cup (y^{-1}x)^{\Aut(T)}
	\end{equation*}
	and 
	\begin{equation*}
	w^{-1}x\notin x^{\Aut(T)}\cup (x^{-1})^{\Aut(T)}\cup y^{\Aut(T)}\cup (y^{-1})^{\Aut(T)}\cup (x^{-1}y)^{\Aut(T)}\cup (y^{-1}x)^{\Aut(T)}.
	\end{equation*}
	Set $S_2 = \{1,x,y,w\}$. By arguing as above, we have $\Hol(T,S_2) = 1$ and it suffices to show that $S_1$ and $S_2$ are in distinct $\Hol(T)$-orbits. Suppose $S_1^{g\a} = S_2$ and note that $g\in S_1$ by Lemma \ref{l:translation}. If $g=1$ then $x^\a = x$ and $y^\a = y$, which implies that $\a = 1$. However, this is incompatible with $z\ne w$. If $g = x$ then
	\begin{equation*}
	1^g = x^{-1},\ y^g = x^{-1}y \ \mbox{and}\ z^g = x^{-1}z.
	\end{equation*}
	Thus, one of the above is $\Aut(T)$-conjugate to $w$, which has to be $z^g = x^{-1}z$ by our assumption. However, this gives a contradiction since $y^g = x^{-1}y$ is not $\Aut(T)$-conjugate to $x$ or $y$ by \eqref{e:k=4_construction_x-1y}. The case where $g = y$ can be eliminated similarly. Finally, if $g = z$ then
	\begin{equation*}
	x^g = z^{-1}x,\ y^g = z^{-1}y \ \mbox{and}\ 1^g = z^{-1}.
	\end{equation*}
	Once again, the only possibility is $x^{g\a} = w$ by \eqref{e:k=4_construction_z}. But this leaves $(z^{-1})^\a = 1^{g\a}\in\{x,y\}$, which is incompatible with \eqref{e:k=4_construction_z0}.
\end{proof}

We can now establish Theorems \ref{thm:b(G)=2} and \ref{thm:r=1} for $k\in\{3,4,|T|-4,|T|-3\}$.

\begin{prop}
	\label{p:k=3,4}
	If $k\in\{3,4,|T|-4,|T|-3\}$ then $r(G)\geqs 1$, with equality if and only if $T = A_5$, $k\in\{3,57\}$ and $G = T^k.(\Out(T)\times S_k)$.
\end{prop}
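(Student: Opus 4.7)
First I would reduce to the case $k\in\{3,4\}$ and $G=W:=T^k.(\Out(T)\times S_k)$. By Lemma~\ref{l:Sk_eq_Hol_r}, the number of regular $\Hol(T)$-orbits on $\mathscr{P}_k$ equals that on $\mathscr{P}_{|T|-k}$, so $r(W)$ is invariant under $k\mapsto |T|-k$, collapsing the four values of $k$ to the two cases $k\in\{3,4\}$. Moreover, every regular $W$-orbit on $\Omega$ decomposes into exactly $[W:G]$ regular $G$-orbits, so $r(G)=[W:G]\cdot r(W)$. In particular $r(G)=1$ forces $G=W$, and it suffices to show that for $k\in\{3,4\}$ we have $r(W)\geqs 1$ always, with equality if and only if $(T,k)=(A_5,3)$.

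Next, I would invoke the earlier lemmas to dispatch the generic cases. Lemma~\ref{l:sporadic_k=3,4} handles all sporadic $T$, and Lemmas~\ref{l:k=3_r(G)} and~\ref{l:k=4_r(G)>1} handle every Lie type $T\notin\mathcal{C}$; in all of these we get $r(W)\geqs 2$. What remains is a finite list of Lie type groups in $\mathcal{C}$ together with the family of alternating groups $T=A_n$ with $n\geqs 5$.

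For $T\in\mathcal{C}$, and for alternating groups $A_n$ with $n$ in a bounded range, the plan is a direct \textsc{Magma} search as in Remark~\ref{r:magma}: one tries to exhibit two $k$-subsets $S_1,S_2\subseteq T$ (possibly inside a centreless maximal subgroup, via Corollary~\ref{c:Hol(T,S)=1_cond}) such that $|\{|t|:t\in S_j\}|=k$, $\langle S_j\rangle$ is large enough to force $\Aut(T,S_j\setminus\{1\})=1$, the translate multiset $\{|x^{-1}t|:t\in S_j\}$ differs from $\{|t|:t\in S_j\}$ for every nontrivial $x\in S_j$, and the corresponding multisets for $S_1$ and $S_2$ are distinct; Lemmas~\ref{l:Hol(T,S)=1_cond} and~\ref{l:r_ge_2_cond} then yield $r(W)\geqs 2$. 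In every case except $(T,k)=(A_5,3)$ such a pair will be found. For $(T,k)=(A_5,3)$ one enumerates the $\Hol(A_5)$-orbits on $\mathscr{P}_3$ directly and verifies that there is exactly one regular orbit, giving $r(W)=1$; by complementation this covers $k=57$ as well.

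For alternating groups $A_n$ with $n$ too large for direct enumeration, I would build explicit witnesses by hand. For $k=3$, fix an involution $x$ of small support in $A_n$ and two elements $y_1,y_2\in A_n$ of distinct, large orders coprime both to each other and to $|x|$, chosen so that $\{1,x,y_i\}$ satisfies the hypotheses of Lemma~\ref{l:Hol(T,S)=1_cond} (so $\Hol(A_n,\{1,x,y_i\})=1$) and so that the multisets of translate orders distinguish $i=1,2$, invoking Lemma~\ref{l:r_ge_2_cond}; a similar quadruple handles $k=4$. This last step is the main obstacle: unlike the Lie type case, there is no analogue of Gow's product-of-classes theorem available for $A_n$, so the verification that the hand-chosen cycle types actually force the translate multisets into the desired shape has to be done by a careful, direct inspection of the permutation arithmetic, and it is here that one must also rule out any further small-$n$ sporadic exceptions beyond $A_5$.
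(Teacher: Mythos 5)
Your decomposition tracks the paper's: reduce from $k\in\{|T|-4,|T|-3\}$ to $k\in\{3,4\}$ via the $\mathscr{P}_k\leftrightarrow\mathscr{P}_{|T|-k}$ symmetry, reduce to $G=W$ by monotonicity of regular suborbits, and then split into sporadic, Lie type (in or outside $\mathcal{C}$), and alternating cases. One small inaccuracy along the way: $r(G)=[W:G]\cdot r(W)$ is not an equality but only the inequality $r(G)\geqs[W:G]\cdot r(W)$, since a non-regular $W_D$-suborbit can still split into regular $G_D$-suborbits when the non-identity $W_D$-stabiliser of a point meets $G$ trivially; the inequality is all your reduction needs, but the claimed equality should be withdrawn. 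You should also record that the groups with $k\in\{|T|-4,|T|-3\}$ and $P\notin\{A_k,S_k\}$ must be eliminated by Proposition~\ref{p:r(G)>1_k>32} (valid since $k\geqs 56>32$ once $|T|\geqs 60$) before invoking the $k\leftrightarrow|T|-k$ correspondence, which is only set up for $W$ with $P=S_k$.

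The genuine gap is the alternating groups with $n$ large, which you flag as unfinished. The paper does not need a Gow-type product theorem for $A_n$: for $k=3$ and $n\geqs 9$ it invokes Miller's 1901 theorem \cite{M_23_alter} that $A_n$ admits a generating pair $(x_1,y_1)$ with $|x_1|=2$, $|y_1|=3$, and observes $|x_1y_1|\geqs 4$ is automatic (otherwise $\la x_1,y_1\ra$ is $S_3$ or $A_4$), so Lemma~\ref{l:Hol(T,S)=1_cond} applies to $\{1,x_1,y_1\}$; a second triple is built from $x_2$ an $n$-cycle or $(2,n-2)$-cycle and $y_2=(1,2,3)x_2^{-1}$, and Lemma~\ref{l:r_ge_2_cond} separates the two $\Hol(T)$-orbits. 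For $k=4$ and $n\geqs 12$ it fixes $x=(1,2)(3,4)$, picks involutions $y_1,y_2$ of supports $8$ and $12$, and then uses the Breuer--Guralnick--Kantor spread theorem \cite[Theorem 1.2]{BGK_spread} to find $z_i$ with $T=\la x,z_i\ra=\la y_i,z_i\ra$, which simultaneously secures $\Aut(T,S_i\setminus\{1\})=1$ and the order-multiset conditions of Lemma~\ref{l:Hol(T,S)=1_cond}. Your suggestion of choosing cycle types with pairwise coprime large orders does not by itself produce generating sets of $A_n$, and without a generation guarantee the hypothesis $\Aut(T,S\setminus\{1\})=1$ in Lemma~\ref{l:Hol(T,S)=1_cond} is unverified; the generation and spread results are the ingredient your plan is missing.
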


\begin{proof}
	By Proposition \ref{p:r(G)>1_k>32}, we may assume $P\in\{A_k,S_k\}$. First assume $k\in\{3,4\}$ and $P = S_k$. The groups where $T$ is sporadic have been treated in Lemma \ref{l:sporadic_k=3,4}. If $T\notin\mathcal{C}$ is Lie type, then by Lemmas \ref{l:k=3_r(G)} and \ref{l:k=4_r(G)>1}, we have $r(G)\geqs 2$ as desired. The cases where $T\in\mathcal{C}$ can be handled by random search (see Remark \ref{r:magma}(i)).
	
	Thus, to complete the proof for $k\in\{3,4\}$ and $P = S_k$ we may assume $T = A_n$ is an alternating group. First assume $k = 3$ and $T = A_5$. One can check using {\sc Magma} that $\Hol(T)$ has a unique regular orbit on $\mathscr{P}_k$, so $r(G) = 1$ if $G = W = A_5^3.(2\times S_3)$. With the aid of {\sc Magma}, one can show that $r(G)\geqs 2$ if $G<W$. Here we obtain the permutation group $G$ in {\sc Magma} by accessing the primitive group database, noting that $|\Omega| = |A_5|^2 = 3600$.
	
	Next, assume $P = S_3$ and $T = A_n$ with $n\geqs 6$. The cases where $n\leqs 8$ can be easily handled using {\sc Magma} (see Remark \ref{r:magma}(i)). Now assume $n\geqs 9$, so by \cite{M_23_alter}, there exist $x_1,y_1\in T$ such that $|x_1| = 2$, $|y_1| = 3$ and $\la x_1,y_1\ra = T$. Note that if $|x_1y_1| = 2$ or $3$, then $\la x_1,y_1\ra = S_3$ or $A_4$ respectively, so we must have $|x_1y_1|\geqs 4$. Hence, $\Hol(T,\{1,x_1,y_1\}) = 1$ by Lemma \ref{l:Hol(T,S)=1_cond}, and thus $b(G) = 2$. Let $x_2 = (1,2,\dots,n)$ if $n$ is odd, while $x_2 = (1,2)(3,\dots,n)$ if $n$ is even, and let $y_2 = (1,2,3)x_2^{-1}$. Then $\la x_2,y_2\ra = T$ and Lemma \ref{l:Hol(T,S)=1_cond} implies that $\Hol(T,\{1,x_2,y_2\}) = 1$, so we have $r(G)\geqs 2$ by Lemma \ref{l:r_ge_2_cond}.
	
	Now assume $P = S_4$ and $T = A_n$. The cases where $n\leqs 11$ can be handled using {\sc Magma} as noted in Remark \ref{r:magma}(i). Assume $n\geqs 12$ and let $x = (1,2)(3,4)$. Let $C_1$ and $C_2$ be the set of involutions moving $8$ and $12$ points in $[n]$, respectively. Note that there exist $y_1\in C_1$ and $y_2\in C_2$ such that $xy_i\ne y_ix$. Moreover, by \cite[Theorem 1.2]{BGK_spread}, there exist $z_1$ and $z_2$ such that
	\begin{equation*}
	T = \la x,z_1\ra = \la y_1,z_1\ra = \la x,z_2\ra = \la y_2,z_2\ra.
	\end{equation*}
	In particular, $2\notin\{|z_i|,|xz_i|,|y_iz_i|\}$. Set $S_1 = \{1,x,y_1,z_1\}$ and $S_2 = \{1,x,y_2,z_2\}$. We first prove that $\Hol(T,S_i) = 1$. Suppose $g\a\in\Hol(T,S_i)$. If $g = 1$ then $\a\in\Aut(T,S) = 1$ since $\la x,z_i\ra = T$ and $x,y_i,z_i$ are in distinct $\Aut(T)$-classes. If $g = x$ then $2\notin\{|y_i^g|,|z_i^g|\} = \{|xy_i|,|xz_i|\}$, which is impossible. The cases where $g \in\{y_i,z_i\}$ can be eliminated similarly. This implies that $b(G) = 2$. By applying Lemma \ref{l:translation}, one can show that $S_1$ and $S_2$ are in distinct $\Hol(T)$-orbits.
	
	Therefore, we have $r(G)\geqs 1$ if $k\in\{3,4\}$, with equality if and only if $G = A_5^3.(2\times S_3)$. By Lemma \ref{l:Sk_eq_Hol_r}, it suffices to consider the case where $T = A_5$ and $k = |A_5|-3 = 57$. Note that $r(G) = 1$ if $G = W = A_5^{57}.(2\times S_{57})$, and $G$ has at least $|W:G|$ regular suborbits if $G<W$.
\end{proof}

\subsection{The groups with $P\in\{A_k,S_k\}$ and $k\in\{|T|-2,|T|-1\}$}

\begin{lem}
	\label{l:Aut_2,3_r}
	Suppose $m\in\{2,3\}$. Then there exist $S_1,S_2\subseteq T^\#$ such that $|S_i| = m$, $\Aut(T,S_i) = 1$ and $S_1^{\Aut(T)}\ne S_2^{\Aut(T)}$.
\end{lem}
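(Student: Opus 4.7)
The plan is to exploit two simple observations. First, if $S = \{t_1, \dots, t_m\} \subseteq T^\#$ has pairwise distinct element orders and some pair $\{t_i, t_j\} \subseteq S$ generates $T$, then $\Aut(T, S) = 1$: any $\alpha$ stabilising $S$ setwise must fix each $t_i$ individually (distinct orders prevent permutation among them), hence centralises $\langle t_i, t_j \rangle = T$, hence is trivial because $Z(T) = 1$. Second, any two subsets of $T^\#$ lying in the same $\Aut(T)$-orbit have the same multiset of element orders, so exhibiting $S_1, S_2$ with different order multisets automatically places them in distinct orbits.

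With this in hand, for $m = 2$ I would pick an element $x \in T^\#$ together with $y_1, y_2 \in T^\#$ such that $\langle x, y_i \rangle = T$ for $i = 1, 2$ and the three orders $|x|, |y_1|, |y_2|$ are pairwise distinct. The existence of such $y_i$ follows from the Guralnick–Kantor $\tfrac{3}{2}$-generation theorem \cite{GK_3/2}, together with the fact that every non-abelian finite simple group has at least three distinct non-trivial element orders. Then $S_1 = \{x, y_1\}$ and $S_2 = \{x, y_2\}$ have trivial $\Aut(T)$-stabiliser by the first observation and lie in distinct $\Aut(T)$-orbits by the second. For $m = 3$, I would run the same argument: pick a generating pair $\{x, y\}$ of $T$ with $|x| \ne |y|$ and two elements $z_1, z_2 \in T^\#$ whose orders, together with $|x|$ and $|y|$, produce two different three-element multisets. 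This works whenever $T$ has at least four distinct non-trivial element orders.

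The main obstacle is the unique non-abelian finite simple group whose non-trivial element orders number fewer than four, namely $T = A_5$ (orders $\{2, 3, 5\}$), for which only one three-element multiset of distinct orders is available. Here I would argue directly in $\Aut(T) = S_5$: take
\begin{equation*}
S_1 = \{(1,2)(3,4),\ (1,2,3),\ (1,2,3,4,5)\}, \quad S_2 = \{(1,2)(3,4),\ (1,3,2),\ (1,2,3,4,5)\},
\end{equation*}
each of which has trivial stabiliser (the involution and $5$-cycle already generate $A_5$, so any stabilising $\alpha$ centralises $A_5$ and is trivial), and any $\alpha \in S_5$ sending $S_1$ to $S_2$ would have to fix the unique involution and the unique $5$-cycle in the set, hence again centralise $A_5$, forcing $\alpha = 1$ and contradicting $(1,2,3)^\alpha = (1,2,3) \ne (1,3,2)$. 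Any remaining ambiguity in invoking $\tfrac{3}{2}$-generation with prescribed element orders for the generic case can be resolved either by appealing to the uniform spread bounds of Burness–Guralnick–Kantor or by a short direct check for the small groups where the counting is tight.
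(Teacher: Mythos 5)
Your approach is correct in outline and genuinely different from the paper's. The paper proves this lemma as an immediate corollary of the machinery built in Section~4: it observes that if $S_i\cup\{1\}$ lie in distinct regular $\Hol(T)$-orbits then the conclusion follows, and then invokes Lemma~\ref{l:Sk_eq_Hol_r} together with Proposition~\ref{p:k=3,4} (which gives $r(W)\geqs 2$ for $k\in\{3,4\}$, with the unique exception $T=A_5$, $k=3$, handled by a {\sc Magma} check). Your proof is instead an elementary, self-contained construction based on multisets of element orders and generation, and it avoids the computational check for $A_5$, $m=2$ entirely; this is a real advantage. The two strategies also differ in strength: the paper produces subsets with trivial stabiliser in the full $\Hol(T)$, whereas you only obtain trivial stabiliser in $\Aut(T)$, which is all the statement asks for.

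There is, however, one genuine imprecision you should repair. The Guralnick--Kantor $\tfrac{3}{2}$-generation theorem \cite{GK_3/2} says that for any $1\ne x\in T$ there exists \emph{some} $y$ with $\langle x,y\rangle=T$; it gives no control whatsoever over $|y|$, so it does not, on its own, yield $y_1,y_2$ of prescribed (distinct) orders completing a fixed $x$. You flag this yourself, but the fix you gesture at also needs a small reformulation: the Breuer--Guralnick--Kantor uniform spread result \cite{BGK_spread} supplies a single conjugacy class $C$, of some fixed order $q$, such that for any $x_1,x_2\in T^\#$ there is $s\in C$ with $\langle x_1,s\rangle=\langle x_2,s\rangle=T$. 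So the clean version of your $m=2$ argument is to fix $s\in C$ and vary $x_1,x_2$ over elements of two distinct non-trivial orders $a,b\notin\{q\}$ (available since $T$ has at least three non-trivial element orders), taking $S_1=\{x_1,s\}$ and $S_2=\{x_2,s\}$; for $m=3$ one similarly adjoins $z_1,z_2$ of two further distinct orders, which requires at least four non-trivial orders and hence excludes only $T=A_5$. You should also justify (or cite) the facts that every non-abelian finite simple group has at least three distinct non-trivial element orders and that $A_5$ is the unique one with exactly three; both are true (the latter follows, for instance, from the classification of groups all of whose non-identity elements have prime order), but they are not entirely trivial. Your explicit treatment of $T=A_5$, $m=3$ is correct: $(1,2)(3,4)$ and $(1,2,3,4,5)$ do generate $A_5$, so the stabiliser and orbit arguments go through as you describe.
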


\begin{proof}
	First observe that if $S_1\cup \{1\}$ and $S_2\cup\{1\}$ are in distinct regular $\Hol(T)$-orbits, then all conditions in the statement of the lemma are satisfied. Hence, the result follows from Lemma \ref{l:Sk_eq_Hol_r} and Proposition \ref{p:k=3,4}, except when $T = A_5$ and $m = 2$. In the latter case, we can verify the lemma using {\sc Magma}.
\end{proof}

\begin{prop}
	\label{p:k=|T|-1,|T|-2}
	Assume $k  = |T|-1$ or $|T|-2$.
	\begin{enumerate}\addtolength{\itemsep}{0.2\baselineskip}
		\item[{\rm (i)}] If $G$ contains $S_k$, then $b(G) = 3$.
		\item[{\rm (ii)}] If $G$ does not contain $S_k$, then $r(G)\geqs 2$.
	\end{enumerate}
\end{prop}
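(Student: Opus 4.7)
The plan is to separate into two directions. For part (i), I will prove both $b(G) \geqs 3$ and $b(G) \leqs 3$. For part (ii), I will construct two bases $\{D, \omega_2\}$ and $\{D, \omega_3\}$ for $G$ that lie in distinct $G_D$-orbits.

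The key steps for part (i) are as follows. For the lower bound, I will assume $\{D, \omega\}$ is a base with $\omega = D(\varphi_{t_1}, \ldots, \varphi_{t_k})$ and derive a contradiction. If two of the $t_i$ agree, the corresponding transposition $(1, \ldots, 1)(i, j)$ lies in $S_k \leqs G$ and so in $G_{D,\omega}$; otherwise the $t_i$ are distinct and $\{t_i\}$ omits a single $a$ or a pair $\{a_1, a_2\}$ from $T$. In the second case I will exploit $T^k \leqs G$ to produce a non-trivial inner-diagonal element $(\varphi_h, \ldots, \varphi_h)\pi \in G_{D, \omega}$: the fixing condition $h^{-1}t_i h = g\, t_{i^\pi}$ forces the affine map $\psi(t) = g^{-1}h^{-1}th$ to preserve the omitted set, which in the singleton case is automatic for any $h \ne 1$ after setting $g = h^{-1}aha^{-1}$, and in the pair case reduces to the commutator constraint $a_2^{-1}a_1 \in C_T(h)$, which is solved by taking $h = a_2^{-1}a_1$. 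For the upper bound, noting $b(G) \leqs b(W)$, it suffices to construct a size-$3$ base for $W$. Using Corollary \ref{cor:Aut} with $m = k-2$ I will pick $R = \{r_1, \ldots, r_{k-2}\} \subseteq T^\#$ with $\Aut(T, R) = 1$ and set $\omega_2 = D(\varphi_1, \varphi_1, \varphi_{r_1}, \ldots, \varphi_{r_{k-2}})$; Lemma \ref{l:l:3.4_diag} together with $\Aut(T,R)=1$ then pins the $W$-stabiliser of $\{D, \omega_2\}$ to $\{1, (1, \ldots, 1)(1,2)\}$, which is easily killed by any $\omega_3$ whose first two entries differ.

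For part (ii), since $G \cap S_k$ is a proper subgroup of $S_k$ and $S_k$ is generated by its transpositions, some transposition $\tau = (i_0, j_0)$ has $(1, \ldots, 1)\tau \notin G$. I will apply Lemma \ref{l:Aut_2,3_r} with $m = |T|-k+1 \in \{2, 3\}$ to obtain $C_1, C_2 \subseteq T^\#$ of size $m$ with $\Aut(T, C_j) = 1$ and $C_1, C_2$ in distinct $\Aut(T)$-orbits, then set $R_j = T^\# \setminus C_j$ and $\omega_j = D(\varphi_{t^{(j)}_1}, \ldots, \varphi_{t^{(j)}_k})$ with $t^{(j)}_{i_0} = t^{(j)}_{j_0} = 1$ and the remaining coordinates enumerating $R_j$. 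Lemma \ref{l:l:3.4_diag} combined with $\Aut(T, R_j) = 1$ will show that the $W$-stabiliser of each $\{D, \omega_j\}$ is $\{1, (1, \ldots, 1)\tau\}$, trivial in $G$ by the choice of $\tau$, so both $\{D, \omega_j\}$ are bases for $G$. To separate the $G_D$-orbits, any $(\a, \ldots, \a)\pi \in G_D$ sending $\omega_2$ to $\omega_3$ would, by matching the two coordinates carrying $1$, force $g = 1$ and $\pi$ to preserve $\{i_0, j_0\}$ setwise, producing $R_1^\a = R_2$ and contradicting the distinct orbits.

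The main obstacle I anticipate is the lower bound in part (i) when $k = |T|-2$: the inner-diagonal element has to come from a concrete $h$ making $\psi$ preserve both omitted elements, which requires the commutator condition $a_2^{-1}a_1 \in C_T(h)$. The choice $h = a_2^{-1}a_1$ is the cleanest solution and ensures $(\varphi_h, \ldots, \varphi_h)\pi$ is a non-identity element of $G_{D, \omega}$, but the verification that this genuinely produces a non-trivial stabiliser element is the most delicate part of the argument.
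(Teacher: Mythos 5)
Your proof is correct and follows essentially the same strategy as the paper: for part (i), a transposition argument forces the $t_i$ to be distinct, and then a non-trivial inner-diagonal element of the stabiliser is produced from the structure of the omitted set $S=T\setminus\{t_1,\dots,t_k\}$; for part (ii), Lemma \ref{l:Aut_2,3_r} produces two "codimension-$m$" subsets $R_1,R_2$ in distinct $\Aut(T)$-orbits, and Lemma \ref{l:l:3.4_diag} reduces the pair-stabiliser to $\langle (1,\dots,1)(i_0,j_0)\rangle\cap G=1$. The differences are minor and cosmetic: (1) for the upper bound $b(G)\leqs 3$ in part (i), you construct an explicit size-3 base for $W$ via Corollary \ref{cor:Aut}, whereas the paper simply quotes $b(G)\in\{2,3\}$ from Theorem \ref{t:F}(iii); (2) for the lower bound you work with a general omitted set $\{a\}$ or $\{a_1,a_2\}$ and solve the constraint $h\in C_T(a_1^{-1}a_2)$ directly, while the paper first translates to assume $1\in S$, which trivialises the constraint ($S=\{1\}$ is fixed by any $\varphi_t$, and $S=\{1,a\}$ by $\varphi_t$ with $t\in C_T(a)$) and then invokes Lemma \ref{l:Sk_eq_Hol}. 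Your more explicit route has the small advantage of making visible that the offending element $(\varphi_h,\dots,\varphi_h)\pi$ lies in $T^k{:}S_k\leqs G$, which the paper's phrasing (concluding "incompatible with Lemma \ref{l:Sk_eq_Hol}", a statement about $W$) leaves implicit.
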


\begin{proof}
	Recall that $b(G)\in\{2,3\}$ by Theorem \ref{t:F}(iii). First assume $G$ contains $S_k$. It suffices to show that $b(G) = 3$ if $G = T^k{:}S_k$. Suppose $\{D,D(\varphi_{t_1},\dots,\varphi_{t_k})\}$ is a base for $G$. If $t_i = t_j$ for some $i\ne j$, then $(i,j)\in G$ stabilises $D$ and $D(\varphi_{t_1},\dots,\varphi_{t_k})$ pointwise. Therefore, the elements $t_1,\dots,t_k$ are distinct. Let $S = T\setminus\{t_1,\dots,t_k\}$, so $|S|\in\{1,2\}$. Without loss of generality, we may also assume $1\in S$. Thus, there exists a non-identity element $t\in T$ such that $S^{\varphi_t} = S$, and hence $\varphi_t\in\Hol(T,T\setminus S)$, which is incompatible with Lemma \ref{l:Sk_eq_Hol}.
	
	Now we turn to the case where $G$ does not contain $S_k$. Recall that $T^k{:}A_k\leqs G$ by Corollary \ref{c:Ak<G}. From Lemma \ref{l:Aut_2,3_r}, there are subsets $S_1,S_2\subseteq T^\#$ of size $|T|-k+1$ lying in distinct regular $\Aut(T)$-orbits. Write $T^\#\setminus S_i = \{t_{i,1},\dots,t_{i,k-2}\}$ and consider $\Delta_i = \{D,D(\varphi_{t_{i,1}},\dots,\varphi_{t_{i,k}})\}$, where $t_{i,k-1} = t_{i,k} = 1$. Suppose $x = (\a,\dots,\a)\pi\in G_{(\Delta_i)}$. By Lemma \ref{l:l:3.4_diag}, $t_{i,j}^\a = t_{i,{j}^\pi}$ for all $j$. It follows that $\a\in\Aut(T,S_i)$ and thus $\a = 1$. Hence, $x = \pi \in \la (k-1,k)\ra$, and so $x = 1$ since $G$ does not contain $S_k$. This shows that $b(G) = 2$. Finally, if $\Delta_1$ and $\Delta_2$ are in the same $G_D$-orbit, then
	\begin{equation*}
	D(\varphi_{t_{1,1}},\dots,\varphi_{t_{1,k}})^{(\a,\dots,\a)\pi} = D(\varphi_{t_{2,1}},\dots,\varphi_{t_{2,k}})
	\end{equation*}
	for some $\a\in\Aut(T)$ and $\pi\in S_k$. This implies that $S_1^\a = S_2$, which is incompatible with our assumption. Therefore, $r(G)\geqs 2$ and the proof is complete.
\end{proof}

\subsection{The groups with $P = S_k$, $5\leqs k\leqs |T|/2$ and $G = W$}

Finally, let us turn to case (c) mentioned in the beginning of this section. Note that if $r(G)\geqs 2$ in every case, then the proofs of Theorems \ref{thm:b(G)=2} and \ref{thm:r=1} are complete by combining Corollary \ref{c:b(G)=2} with Propositions \ref{p:r(G)>1_k>32}, \ref{p:k=3,4} and \ref{p:k=|T|-1,|T|-2}. By Proposition \ref{p:log}, it suffices to consider the cases where $5\leqs k\leqs 4\log |T|$. Recall that $r(G)\geqs 2$ if \eqref{e:prob} holds or $Q_1(G)+Q_2(G) < 1/2$ (see Lemmas \ref{l:prob} and \ref{l:r_prob_F}).

\begin{prop}
	\label{p:sporadic}
	The conclusions to Theorems \ref{thm:b(G)=2} and \ref{thm:r=1} hold when $T$ is a sporadic simple group.
\end{prop}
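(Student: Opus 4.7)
The plan is to combine the probabilistic machinery of Section~\ref{s:prob} with targeted computational checks. By Proposition~\ref{p:r(G)>1_k>32}, Corollary~\ref{c:b(G)=2} and Propositions~\ref{p:k=3,4} and \ref{p:k=|T|-1,|T|-2}, it suffices to show $r(W)\geqs 2$ in case (c); by Proposition~\ref{p:log}, we may further assume $5\leqs k\leqs 4\log|T|$. Since no sporadic group appears in the exceptional list of Theorem~\ref{thm:r=1}, the conclusion we must reach is simply $r(G)\geqs 2$ for every sporadic $T$ and every $k$ in this range.

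First, for each sporadic $T$ I would read off the value of $h(T)$ from Table~\ref{tab:fix(Hol(T))} and $|\Out(T)|\in\{1,2\}$, and then test the bound $Q_1(G)+Q_2(G)<1/2$ from Lemma~\ref{l:r_prob_F}; since $h(T)/|T|$ is always quite small (Corollary~\ref{c:h(T)_le_|T|/10} gives $\leqs 1/10$, and for most sporadics it is much smaller), this bound will clear the upper portion of the range $5\leqs k\leqs 4\log|T|$. For the smaller values of $k$ not yet covered, I would appeal to Lemma~\ref{l:prob} via the more refined sufficient conditions of Lemmas~\ref{l:u=k/2} and \ref{l:u=0}: pick a threshold $k_0$ (typically $k_0=5$ or $6$) and verify the explicit numerical inequalities \eqref{e:prob_u=k/2_weak}--\eqref{e:prob_u=0_weak_cond} using the concrete values of $|T|$ and $h(T)$.

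A handful of residual cases will remain, corresponding either to the very small sporadics (where $h(T)/|T|$ is relatively large and the probabilistic estimates are loose) or to a few small values of $k$ for mid-sized sporadics. For the sporadic groups $T$ admitting a faithful permutation representation of moderate degree (everything outside the Monster-type giants), I would construct $T$ in {\sc Magma} via \texttt{AutomorphismGroupSimpleGroup} and then exhibit two $k$-subsets of $T$ lying in distinct regular $\Hol(T)$-orbits by random search, exactly as in Remark~\ref{r:magma}(i) and the proof of Lemma~\ref{l:sporadic_k=3,4}. For the remaining huge groups $T\in\{\mathrm{Ly},\mathrm{Th},\mathrm{J}_4,\mathbb{B},\mathbb{M}\}$, where $|\Out(T)|=1$, I would instead work inside a centreless maximal subgroup $M\leqs T$ with the pairs $(T,M)$ already introduced in \eqref{e:sporadic_K} (for instance $M=\LL_2(71)\leqs \mathbb{M}$), applying Corollary~\ref{c:Hol(T,S)=1_cond} and Lemma~\ref{l:r_ge_2_cond} as described in Remark~\ref{r:magma}(ii) to produce the two required subsets by random search inside $M$.

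The main obstacle is the overlap region: the probabilistic bounds of Section~\ref{ss:fix_subsets} weaken as $k$ approaches the lower end of our range, while the {\sc Magma} constructions grow expensive as $k$ grows, so some care is needed to make sure every $k\in\{5,\ldots,\lfloor 4\log|T|\rfloor\}$ is covered. I expect that for each $T$ there is a clean cutoff $k^*$ (depending only on $|T|$ and $h(T)$) above which Lemma~\ref{l:r_prob_F} or Lemmas~\ref{l:u=k/2}--\ref{l:u=0} apply uniformly, leaving only finitely many small $k$ to settle by the random-search procedure of Remark~\ref{r:magma}. The giants $\mathbb{M}$ and $\mathbb{B}$ are the most delicate point, but there the large value of $|T|/h(T)$ makes $Q_2(G)$ already negligible for $k\geqs 5$, and the subgroup-based search in $M=\LL_2(71)$ (respectively $\mathrm{Fi}_{23}$) handles any residual small $k$.
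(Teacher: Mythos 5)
Your proposal is correct and follows essentially the same route as the paper: reduce to $5\leqs k\leqs 4\log|T|$, apply the probabilistic bounds from Section~\ref{s:prob} (the paper checks \eqref{e:prob} directly via Lemma~\ref{l:prob}, whereas you begin with $Q_1(G)+Q_2(G)<1/2$ and Lemmas~\ref{l:u=k/2}--\ref{l:u=0}, which are the same machinery), and settle the residual small-$k$ cases by random search as in Remark~\ref{r:magma}(i) or (ii). Two minor clarifications: the paper's direct check of \eqref{e:prob} already covers $\mathrm{Ly}$, $\mathrm{Th}$, $\mathrm{J}_4$ for the entire range, so only $\mathbb{B}$ and $\mathbb{M}$ require the maximal-subgroup trick of Remark~\ref{r:magma}(ii); and the groups surviving the probabilistic filter are $\mathrm{Suz}$, $\mathrm{Co}_1$, $\mathrm{Co}_2$, $\mathrm{Fi}_{22}$, $\mathrm{Fi}_{23}$, $\mathrm{Fi}_{24}'$, $\mathbb{B}$, $\mathbb{M}$, each needing random search only for $k\leqs 8$ (indeed $k\leqs 6$ for $\mathbb{B}$ and $k=5$ for $\mathbb{M}$), so the overlap worry you raise never actually bites.
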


\begin{proof}
	As noted above, we may assume $5\leqs k\leqs 4\log|T|$. With the aid of {\sc Magma}, it is easy to check that \eqref{e:prob} holds for all $k$ in this range unless $T$ is one of the following groups:
	\begin{equation*}
	\mathrm{Suz},\mathrm{Co}_1,\mathrm{Co}_2,\mathrm{Fi}_{22},\mathrm{Fi}_{23},\mathrm{Fi}_{24}',\mathbb{B},\mathbb{M}.
	\end{equation*}

	Assume $T\in\{\mathrm{Suz},\mathrm{Co}_1,\mathrm{Co}_2,\mathrm{Fi}_{22},\mathrm{Fi}_{23},\mathrm{Fi}_{24}'\}$. Here we can construct $T$ as a permutation group in {\sc Magma} using the function \texttt{AutomorphismGroupSimpleGroup}, and we can then check that \eqref{e:prob} holds for $9\leqs k\leqs 4\log|T|$. The cases where $5\leqs k\leqs 8$ can be handled by random search using {\sc Magma} (see Remark \ref{r:magma}(i)).
	
	Finally, if $T \in\{\mathbb{B},\mathbb{M}\}$ then \eqref{e:prob} holds unless $k = 5$ or $(T,k) = (\mathbb{B},6)$. In each case, we can verify that $r(G)\geqs 2$ by random search as described in Remark \ref{r:magma}(ii), with the same centreless maximal subgroup $M$ of $T$ chosen in \eqref{e:sporadic_K}.
\end{proof}

\begin{prop}
	\label{p:alternating}
	The conclusions to Theorems \ref{thm:b(G)=2} and \ref{thm:r=1} hold when $T = A_n$ is an alternating group.
\end{prop}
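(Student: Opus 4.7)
By Proposition~\ref{p:log}, we may assume $5\leqs k\leqs 4\log|T|$ and $G=W$, so the goal is to show $r(G)\geqs 2$ for $T=A_n$ in this range. Theorem~\ref{t:fixity_Hol} gives $h(T)=(n-2)!$; together with $|T|=n!/2$, $|\Hol(T)|\leqs 2(n!)^2$, and $|T|/h(T)=n(n-1)/2$, these explicit formulas make the probabilistic criteria of Section~\ref{s:prob} directly verifiable in closed form.

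My first move is to apply Lemma~\ref{l:r_prob_F} for $n$ sufficiently large and $k$ not too close to the upper bound $4\log|T|$. Since $|T|/h(T)=n(n-1)/2\geqs 10$ for $n\geqs 5$, the quantity $Q_2(G)$ from~\eqref{e:Q2} is $O(n^{-2})$ uniformly in $k\geqs 5$, while the dominant contribution to $Q_1(G)$ from~\eqref{e:Q1} is the term $(k!)^2|T|^{8/3-k/2-\delta_{5,k}/2}$, whose maximum over $k\geqs 5$ occurs near $k=6$ (where the exponent of $|T|$ equals $-1/3$). A direct calculation then shows that $Q_1(G)+Q_2(G)<1/2$ for $n$ above some explicit threshold $n_0$. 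To handle the complementary regime in which $k$ is comparable to $4\log|T|$, where the $k^4/|T|^{1/3}$ term of $Q_1$ obstructs Lemma~\ref{l:r_prob_F}, I would instead invoke Lemma~\ref{l:u=0}: its condition~\eqref{e:prob_u=0_weak_cond} reduces to $((n-2)!)^2 > O((\log(n!))^2\cdot n!)$, which holds for $n$ beyond a second explicit threshold, and~\eqref{e:prob_u=0_weak} then holds with $k_0$ chosen slightly larger than $n$, which lies inside the allowed range $5\leqs k_0\leqs k$.

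The remaining cases form a finite collection of pairs $(n,k)$ with $n<n_0$ together with specific $k$-values not covered by either probabilistic lemma. For these I would argue computationally in {\sc Magma}, constructing $T=A_n$ via \texttt{AutomorphismGroupSimpleGroup} and then either verifying~\eqref{e:prob} of Lemma~\ref{l:prob} directly or performing a random search to exhibit two $k$-subsets of $T$ lying in distinct regular $\Hol(T)$-orbits, as in Remark~\ref{r:magma}(i). I expect the main obstacle to lie in an intermediate band of $n$ (roughly $n$ between about $12$ and $20$) combined with small $k$: here $|A_n|$ is too large for a naive random search and Lemma~\ref{l:u=0} typically fails because its hypothesis forces $k_0$ to exceed a threshold that grows with $n$. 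For these residual pairs, the natural way forward is an explicit construction of two $k$-subsets modelled on the arguments of Lemmas~\ref{l:k=3_r(G)} and~\ref{l:k=4_r(G)>1}: start from an invariable generating pair of $A_n$ of distinct orders (available from \cite{GM_inv_gen,KLS_inv_gen}) and adjoin further elements of carefully chosen cycle types so that the hypotheses of Lemmas~\ref{l:Hol(T,S)=1_cond} and~\ref{l:r_ge_2_cond} are simultaneously satisfied, thereby guaranteeing two distinct regular $\Hol(T)$-orbits and completing the proof.
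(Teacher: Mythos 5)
Your proposal is structurally identical to the paper's proof: reduce to $5\leqs k\leqs 4\log|T|$ via Proposition~\ref{p:log}, use the fixed-point-ratio criterion $Q_1(G)+Q_2(G)<1/2$ of Lemma~\ref{l:r_prob_F} when $k$ is small relative to $n$, switch to Lemma~\ref{l:u=0} with $k_0\approx n$ when $k$ is large, and dispatch the finitely many residual pairs $(n,k)$ by computation. The paper's explicit thresholds are $n\geqs 21$ for the $Q_1+Q_2$ inequality on $5\leqs k<n$, and $n\geqs 30$ for the inequality $(n(n-1)/2e)^n>n(n!)^2/2$ (i.e.\ \eqref{e:prob_u=0_weak} with $k_0=n$) on $n\leqs k\leqs 4\log|T|$.

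Two small corrections to your reasoning. First, when $k$ approaches $4\log|T|$ the dominant obstruction in $Q_1(G)$ is not $k^4/|T|^{1/3}$ (which stays bounded) but rather $k!/|T|^{4/3}$, which blows up superexponentially once $k$ exceeds roughly $n$; this is precisely why the natural place to switch from Lemma~\ref{l:r_prob_F} to Lemma~\ref{l:u=0} is at $k=n$. Second, your worry about an intractable band $12\leqs n\leqs 20$ is unfounded: the inequality \eqref{e:prob} is a purely arithmetic comparison in $n$, $k$, $h(T)=(n-2)!$ and $|\Hol(T)|$, requiring no search over the group, so verifying it (or $Q_1+Q_2<1/2$) for the finitely many remaining pairs is immediate in {\sc Magma}. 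The fallback you propose, constructing subsets from invariable generating pairs à la Lemmas~\ref{l:k=3_r(G)} and~\ref{l:k=4_r(G)>1}, is therefore unnecessary; note also that those lemmas use Gow's theorem on regular semisimple classes and so apply specifically to groups of Lie type, not directly to $A_n$.
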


\begin{proof}
	Once again, we may assume $5\leqs k\leqs 4\log|T|$. The cases where $n\in\{5,6\}$ can be easily handled using {\sc Magma}, so we also assume $n\geqs 7$. First assume $n\leqs k\leqs 4\log|T|$. With the aid of {\sc Magma}, it is easy to check \eqref{e:prob} holds for all $7\leqs n\leqs 29$. Note that $h(T) = (n-2)!$ and thus \eqref{e:prob_u=0_weak_cond} holds. By Lemma \ref{l:u=0}, it suffices to establish the inequality in \eqref{e:prob_u=0_weak} for $k_0 = n$. Thus, we only need to show that
	\begin{equation*}
	\left(\frac{n(n-1)}{2e}\right)^n > \frac{n(n!)^2}{2},
	\end{equation*}
	which holds for all $n\geqs 30$.
	
	Finally, let us assume $5\leqs k<n$ and define $Q_1(G)$ and $Q_2(G)$ as in \eqref{e:Q1} and \eqref{e:Q2}, respectively. Then
	\begin{equation*}
	Q_1(G) = (k!)^2|T|^{\frac{8}{3}-\frac{k}{2}-\frac{1}{2}\delta_{5,k}} + \frac{k!}{|T|^{\frac{4}{3}}} + \frac{k^4}{2|T|^{\frac{1}{3}}} < (6!)^2\left(\frac{2}{n!}\right)^{\frac{1}{3}} + \frac{2^{\frac{4}{3}}}{(n!)^{\frac{1}{3}}} + \frac{2^{\frac{1}{3}}n^4}{2(n!)^{\frac{1}{3}}}
	\end{equation*}
	and
	\begin{equation*}
	Q_2(G) = \left(\frac{|T|}{h(T)}\right)^{4-k} + {k\choose 2}|\Out(T)|\left(\frac{|T|}{h(T)}\right)^{3-k} < \frac{2}{n(n-1)} + 20 \left(\frac{2}{n(n-1)}\right)^2.
	\end{equation*}
	Given these bounds, it is easy to check that $Q_1(G)+Q_2(G) < 1/2$ for all $n\geqs 21$. Finally, for the cases where $7\leqs n\leqs 20$ and $5\leqs k < n$, one can use {\sc Magma} to check that either \eqref{e:prob} holds, or $Q_1(G) + Q_2(G) < 1/2$, or $\Hol(T)$ has at least $2$ regular orbits on $\mathscr{P}_k$ (for the latter, we use the random search approach as in Remark \ref{r:magma}(i)).
\end{proof}

To complete the proofs of Theorems \ref{thm:b(G)=2} and \ref{thm:r=1}, we may assume $T$ is a finite simple group of Lie type. First we consider some low rank groups, where $h(T)$ is small and Lemma \ref{l:u=k/2} can be applied.

\begin{lem}
	\label{l:PSL2}
	Suppose $T = \LL_2(q)$ and $5\leqs k \leqs 4\log|T|$. Then $r(G) \geqs  2$.
\end{lem}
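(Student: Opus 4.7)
The plan is to apply Lemma \ref{l:u=k/2} with $k_0 = 5$ for all but finitely many $q$, and to dispatch the remaining small cases by direct computation in {\sc Magma}. First, I would check condition \eqref{e:prob_u=k/2_weak_cond}, namely $h(T)^2 < 5|T|$. Using the formulas from Table \ref{tab:fix(Hol(T))} --- namely $h(T) = q+1$ when $f$ is odd and $h(T) = q^{1/2}(q-1)$ when $f$ is even --- together with the standard formula $|T| = q(q^2-1)/(2,q-1)$, this inequality reduces to an elementary polynomial bound in $q$ that is easily verified in both parities of $f$ and holds for every $q$ with $T$ simple.

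Next I would turn to \eqref{e:prob_u=k/2_weak}, which demands $|T|^5 > |\Hol(T)|^2 \cdot 5^7 \cdot e^{15}$. Invoking $|\Hol(T)| = |T|\cdot|\Out(T)| \leqs 2f|T|$ and rearranging, it suffices to show that $|T|^3 > 4f^2 \cdot 5^7 e^{15}$. Since $|T| \geqs q(q-1)(q+1)/2 \geqs q^3/4$ and $f \leqs \log_2 q$, this inequality --- together with the additional hypothesis $|T| > 4080$ required by Lemma \ref{l:u=k/2} --- holds for all but a short explicit list of pairs $(p,f)$ (essentially those with $q$ very small, obtained by cubing the threshold $|T| > 10072 \cdot f^{2/3}$). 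Once both hypotheses of Lemma \ref{l:u=k/2} are verified with $k_0 = 5$, the lemma directly yields \eqref{e:prob} for the entire range $5\leqs k\leqs 4\log|T|$, and hence $r(G)\geqs 2$ by Lemma \ref{l:prob}.

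For each of the remaining small values of $q$ --- roughly $q\in\{8,11,13,16,17,19,23,25,27\}$ after removing the isomorphisms listed in \eqref{e:iso_simple} --- the group $T$ can be constructed as a permutation group in {\sc Magma} using \texttt{AutomorphismGroupSimpleGroup} (or via its action on cosets of a Borel), and for each $k$ in the finite range $5 \leqs k \leqs 4\log|T|$ one verifies $r(G) \geqs 2$ directly. This verification is carried out either by checking the inequality \eqref{e:prob} numerically, by checking $Q_1(G) + Q_2(G) < 1/2$ in the notation of \eqref{e:Q1} and \eqref{e:Q2} so that Lemma \ref{l:r_prob_F} applies, or by producing two $k$-subsets of $T$ in distinct regular $\Hol(T)$-orbits via the random search procedure of Remark \ref{r:magma}(i). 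The main obstacle is not any single step of the asymptotic argument, which is routine once the explicit value of $h(T)$ from Table \ref{tab:fix(Hol(T))} is substituted; rather it is ensuring that the three computational tests together cover every pair $(q,k)$ in the finite exceptional list without a gap, which can be confirmed case-by-case.
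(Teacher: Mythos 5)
Your high-level plan --- feed Lemma \ref{l:u=k/2} with a small $k_0$, reduce to an asymptotic inequality in $q$, and dispatch the small $q$ by {\sc Magma} --- is the same strategy the paper uses, and your verification that $h(T)^2 < 5|T|$ always holds is correct. However, there is a concrete computational error that invalidates the bound you derive and, with it, the scope of the claimed computer check.

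You write $|\Hol(T)| = |T|\cdot|\Out(T)|$, but $\Hol(T) = T{:}\Aut(T) = T^2.\Out(T)$, so $|\Hol(T)| = |T|\cdot|\Aut(T)| = |T|^2|\Out(T)|$. Substituting the correct order into \eqref{e:prob_u=k/2_weak} with $k_0 = 5$ gives $|T|^5 > |T|^4|\Out(T)|^2\cdot 5^7 e^{15}$, i.e.\ $|T| > |\Out(T)|^2\cdot 5^7 e^{15}$, not $|T|^3 > |\Out(T)|^2\cdot 5^7 e^{15}$ as you assert. The threshold you therefore need on $|T|$ is roughly $10^{12}f^2$ rather than $10^4 f^{2/3}$, which pushes the cutoff from $q$ on the order of $30$ to $q$ on the order of $10^4$--$10^5$. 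Your ``short explicit list'' $q\in\{8,\ldots,27\}$ is thus off by orders of magnitude; the genuine finite verification under a $k_0 = 5$ strategy would have to sweep every prime power $q$ up to about $10^5$. This is precisely why the paper does not use $k_0 = 5$ uniformly: it takes $k_0 = 6$ for $k\geqs 6$, which improves the asymptotic condition to $|T|^2 > |\Out(T)|^2\cdot 6^8 e^{18}$ (threshold $q > 733$), and then handles $k = 5$ separately via Lemma \ref{l:prob_u_weak} applied directly to $u\in\{0,1,2\}$, which yields the sharper threshold $q^{1/2}(q+1) > 5^4 e^7\log q$, i.e.\ $q > 48449$. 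Even so, a {\sc Magma} sweep up to nearly $5\times 10^4$ is needed for $k = 5$, with a handful of residual cases ($q\in\{16,25,49,81\}$) done by random search. Your route is viable in principle, but you have underestimated the size of the computer check by a large factor, and the bifurcation in $k$ is not a cosmetic choice but the thing that keeps the computation tractable.
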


\begin{proof}
	If $|T|\leqs 4080$ then $q\leqs 13$ and one can check the result using {\sc Magma}. More precisely, we first check \eqref{e:prob}, and if it fails, then we construct the permutation group $\Hol(T)$ on $T$ using the function \texttt{Holomorph} and use random search to find two $k$-subsets $S_1$ and $S_2$ of $T$ lying in distinct regular $\Hol(T)$-orbits (this is a viable approach since $|T|$ is small).
	
	Thus, we may assume $q\geqs 16$. First assume $k\geqs 6$ and set $k_0 = 6$. For $q\leqs 733$, one can check \eqref{e:prob} using {\sc Magma}. Assume $q > 733$ and note that $h(T)\leqs q^{1/2}(q-1)$ by Theorem \ref{t:fixity_Hol}, so \eqref{e:prob_u=k/2_weak_cond} holds. Moreover, as $|\Out(T)|\leqs 2\log q$, we can check that \eqref{e:prob_u=k/2_weak} holds if
	\begin{equation*}
	q^2(q^2-1)^2 > 16(\log q)^26^8e^{18},
	\end{equation*}
	which holds true for all $q > 733$. Now apply Lemma \ref{l:u=k/2}.
	
	To complete the proof, we assume $k = 5$. By Lemma \ref{l:prob_u_weak}, $r(G)\geqs 2$ if \eqref{e:prob_u_weak} holds for every $u\in\{0,1,2\}$. If $u = 2$, then \eqref{e:prob_u_weak} holds if
	\begin{equation*}
	q^{1/2}(q+1) > 5^4e^7\log q,
	\end{equation*}
	which holds for all $q > 48449$. With the same method, one can check that \eqref{e:prob_u_weak} holds for $u\in\{0,1\}$ if $q > 48449$. With the aid of {\sc Magma}, we see that \eqref{e:prob} holds for all $16\leqs q\leqs 48449$, unless $q\in\{16,25,49,81\}$, and the remaining cases can be handled using {\sc Magma} and random search, utilising the method in Remark \ref{r:magma}(i).
\end{proof}

\begin{lem}
	\label{l:PSL3_2B2_2G2}
	Suppose $T\in\{\LL_3^\e(q),{^2}B_2(q),{^2}G_2(q)\}$ and $5\leqs k\leqs 4\log|T|$. Then $r(G)\geqs 2$.
\end{lem}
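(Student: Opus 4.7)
The plan is to follow closely the strategy used for $T = \LL_2(q)$ in Lemma \ref{l:PSL2}: for sufficiently large $q$ apply the general probabilistic machinery (Lemmas \ref{l:u=k/2} and \ref{l:prob_u_weak}) and for the remaining finitely many $(T,q)$ we verify \eqref{e:prob} or construct two regular $\Hol(T)$-orbits on $\mathscr{P}_k$ directly in {\sc Magma}, as outlined in Remark \ref{r:magma}(i). First I would record the relevant data from Theorem \ref{t:fixity_Hol} and Lemma \ref{l:Inndiag}: for $T = {^2}B_2(q)$ we have $h(T) = q^2$ and $|T| \sim q^5$; for $T = {^2}G_2(q)$ we have $h(T) = q^3$ and $|T| \sim q^7$; and for $T = \LL_3^\e(q)$ we have $h(T) \leqs q^3(q-\e)$ (up to the possible field/graph-field contribution when $\e = +$ and $f$ is even, which is still bounded by $|\PGU_3(q^{1/2})|$), and $|T| \sim q^8$. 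In each case $h(T)^2$ is considerably smaller than $|T|$, so the condition \eqref{e:prob_u=k/2_weak_cond} is comfortable for every $k_0 \geqs 5$.

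For $k \geqs 6$ I would set $k_0 = 6$ and apply Lemma \ref{l:u=k/2}. The inequality \eqref{e:prob_u=k/2_weak_cond} then reads $h(T)^2 < 6|T|$, which is easily verified from the bounds above once $q$ is moderately large. The remaining inequality \eqref{e:prob_u=k/2_weak}, namely $|T|^6 > |\Hol(T)|^2\cdot 6^8 e^{18}$, is a routine estimate once one uses $|\Hol(T)| \leqs |T|^{4/3}$ (via Lemma \ref{l:|Out(T)|}); it reduces to showing $|T|^{10/3}$ exceeds an explicit constant, and this holds for all but a bounded list of small $q$. For the case $k = 5$ I would instead invoke Lemma \ref{l:prob_u_weak} directly and verify the three inequalities corresponding to $u \in \{0,1,2\}$; again, each reduces to a polynomial inequality in $q$ whose truth is clear once $q$ exceeds some explicit threshold.

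What remains is the finite list of $(T,q)$ for which the asymptotic inequalities fail, together with the excluded groups $T \in \mathcal{C}$ and the isomorphism issues recorded in \eqref{e:iso_simple}. For each such pair I would simply turn to {\sc Magma}: construct $T$ as a permutation group via \texttt{AutomorphismGroupSimpleGroup} (or as a matrix group of small degree) and first try to verify \eqref{e:prob} computationally by running through the prime-order elements of $\Hol(T)$ and applying Lemma \ref{l:fix}; if \eqref{e:prob} fails for a particular $(T,q,k)$, then I would use random search, as described in Remark \ref{r:magma}(i), to exhibit two $k$-subsets $S_1, S_2$ of $T$ whose orders and distance multisets certify, via Lemmas \ref{l:Hol(T,S)=1_cond} and \ref{l:r_ge_2_cond}, that $S_1$ and $S_2$ lie in distinct regular $\Hol(T)$-orbits.

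The main obstacle will be the computational feasibility of the last step for the largest ``small'' cases, in particular ${^2}B_2(32)$, $\LL_3^\e(4)$, $\UU_3(5)$ and $\UU_3(8)$, where $|T|$ is already in the millions and exhaustive checks over prime-order elements are expensive. In those cases I would work inside a small centreless maximal subgroup $M$ of $T$ (as in Remark \ref{r:magma}(ii)) whenever $\Out(T) = 1$, reducing the search space; when $\Out(T) \ne 1$, the Sylow structure of $\Aut(T)$ is still sufficiently restricted that a targeted random search over $k$-subsets whose element orders are pairwise distinct and whose pairwise differences have incompatible order multisets should succeed quickly.
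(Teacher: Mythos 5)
Your overall plan -- apply Lemma~\ref{l:u=k/2} to handle all but a finite list of $q$, then dispose of the small cases by checking \eqref{e:prob} or by random search -- is exactly the strategy the paper uses. There are two points worth flagging.

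First, you are doing more work than necessary. The paper observes that for every group in this family (including $\LL_3^\e(q)$ with $q$ a square, where $h(T)\sim|\PGL_3(q^{1/2})|$ or $|\PGU_3(q^{1/2})|$) one has $h(T)^2 < 5|T|$, so \eqref{e:prob_u=k/2_weak_cond} already holds with $k_0 = 5$. Consequently Lemma~\ref{l:u=k/2} can be invoked with $k_0 = 5$ and covers the entire range $5 \leqs k \leqs 4\log|T|$ in one pass, with no separate treatment of $k = 5$ via Lemma~\ref{l:prob_u_weak}. Your split into $k\geqs 6$ (with $k_0 = 6$) and $k = 5$ follows the $\LL_2(q)$ template, but for $\LL_2(q)$ the step up to $k_0 = 6$ is there for a genuine reason ($|T|\sim q^3$ is too small for \eqref{e:prob_u=k/2_weak} to bite at $k_0 = 5$), whereas here $|T|$ grows fast enough that it is superfluous.

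Second, and more seriously, the numerical reduction in your asymptotic step is wrong. You claim $|\Hol(T)| \leqs |T|^{4/3}$ by Lemma~\ref{l:|Out(T)|}. But $\Hol(T) = T{:}\Aut(T)$ has order $|T|\cdot|\Aut(T)| = |T|^2|\Out(T)|$, so the correct bound from Lemma~\ref{l:|Out(T)|} is $|\Hol(T)| < |T|^{7/3}$; the estimate $|T|^{4/3}$ applies only to $|\Aut(T)|$. With the correct bound, $|T|^6 > |\Hol(T)|^2\cdot 6^8 e^{18}$ reduces to $|T|^{4/3} > 6^8 e^{18}$ rather than $|T|^{10/3} > 6^8 e^{18}$. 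Since $6^8 e^{18} \approx 1.1\times10^{14}$, the corrected threshold is $|T| \gtrsim 3\times10^{10}$, whereas your version would suggest a threshold around $|T| \gtrsim 1.7\times10^{4}$. The gap between these is not cosmetic: with your exponent you would conclude that essentially no small cases require verification, and you would miss, for instance, ${^2}B_2(32)$, ${^2}B_2(128)$, ${^2}G_2(27)$ and a range of $\LL_3^\e(q)$ for $q$ up to roughly $25$, all of which the paper must (and does) handle separately. This would leave a hole in the proof. The paper actually argues with the precise value $|T| \geqs q^3(q^2-1)(q^3-1)/3$ and $|\Out(T)|\leqs 6\log q$ for $\LL_3^\e(q)$, giving the cleaner threshold $q > 73$, but the essential correction you need is $|\Hol(T)| = |T|^2|\Out(T)|$.

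One smaller remark: you suggest falling back on Remark~\ref{r:magma}(ii) (working in a centreless maximal subgroup) ``whenever $\Out(T) = 1$''. For the families in this lemma, $\Out(T)\ne 1$ throughout -- $\LL_3^\e(q)$ has diagonal, field and graph automorphisms, and ${^2}B_2(q)$ and ${^2}G_2(q)$ have $|\Out(T)| = f$ where $q = p^f$ -- so only Remark~\ref{r:magma}(i) is available. Fortunately the random search of Remark~\ref{r:magma}(i) is quite fast even for $|T|$ in the tens of millions, since each candidate $k$-subset is tested in time roughly linear in $k$ and only a handful of random draws are needed; your worry about feasibility is not borne out in practice, and the paper indeed handles the remaining cases computationally.
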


\begin{proof}
	Note that $|T| > 4080$ and $h(T)^2 < 5|T|$ by Theorem \ref{t:fixity_Hol}. Thus, in view of Lemma \ref{l:u=k/2}, we only need to prove \eqref{e:prob_u=k/2_weak} for $k_0 = 5$. Assume $T = \LL_3^\e(q)$, so $|T|\geqs q^3(q^2-1)(q^3-1)/3$ and $|\Out(T)|\leqs 6\log q$. Thus, \eqref{e:prob_u=k/2_weak} holds if
	\begin{equation*}
	q^3(q^2-1)(q^3-1) > 3(6\log q)^25^7e^{15},
	\end{equation*}
	which is true for all $q > 73$. By applying the precise values of $h(T)$ and $|\Out(T)|$, we see that \eqref{e:prob} holds unless $\e = -$, $k = 5$ and $q\in\{3,5,8\}$, or $\e = +$ and
	\begin{equation*}
	(q,k)\in \{(3,5),(3,6),(4,5),(13,5)\},
	\end{equation*}
	all of which cases can be handled easily by random search as discussed in Remark \ref{r:magma}(i). We can apply the same method to the cases where $T = {^2}B_2(q)$ or ${^2}G_2(q)$, where \eqref{e:prob_u=k/2_weak} holds if $T\ne {^2}G_2(27)$, ${^2}B_2(8)$, ${^2}B_2(32)$ or ${^2}B_2(128)$ (we are excluding the group ${^2}G_2(3)'$ as noted in \eqref{e:iso_simple}). In the remaining four cases, one can check \eqref{e:prob} directly.
\end{proof}

\begin{prop}
	\label{p:exceptional}
	The conclusions to Theorems \ref{thm:b(G)=2} and \ref{thm:r=1} hold when $T$ is an exceptional group of Lie type.
\end{prop}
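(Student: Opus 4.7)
The plan is to apply the probabilistic machinery from Section \ref{s:prob}. Lemma \ref{l:PSL3_2B2_2G2} already handles the Suzuki and Ree families, and Proposition \ref{p:log} covers $k > 4\log|T|$, so the remaining task is to prove $r(G) \geqs 2$ for $T \in \{G_2(q),\, {}^3D_4(q),\, {}^2F_4(q),\, F_4(q),\, E_6^\e(q),\, E_7(q),\, E_8(q)\}$ in the range $5 \leqs k \leqs 4\log|T|$. For each family, the strategy will be to combine the explicit values of $h(T)$ recorded in Theorem \ref{t:fixity_Hol} with whichever of Lemmas \ref{l:u=0}, \ref{l:u=k/2} or \ref{l:r_prob_F} is best suited to the size of $h(T)$ relative to $|T|$.

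A quick inspection of Table \ref{tab:fix(Hol(T))} shows that $h(T)^2/|T|$ grows polynomially in $q$ in each family: slowest for $G_2$ (order $q^2$), then for ${}^2F_4$ (order $q^4$) and ${}^3D_4$ (order $q^8$), and considerably faster for the remaining four families. Consequently, for $T \in \{F_4(q), E_6^\e(q), E_7(q), E_8(q)\}$ the hypothesis \eqref{e:prob_u=0_weak_cond} is comfortably satisfied for every admissible $q$, and the hypothesis \eqref{e:prob_u=0_weak} with $k_0 = 5$ then follows easily by bounding $|\Hol(T)| \leqs |T|^{4/3}$ via Lemma \ref{l:|Out(T)|} and using the polynomial lower bound on $|T|/h(T)$; Lemma \ref{l:u=0} then delivers the result.

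For the three remaining families $G_2(q), {}^3D_4(q), {}^2F_4(q)$, the same approach yields $r(G) \geqs 2$ for all but finitely many values of $q$. The surviving cases all have $|T|$ small enough that $T$ can be constructed as a permutation group in {\sc Magma} using \texttt{AutomorphismGroupSimpleGroup}; there I would first verify \eqref{e:prob} directly, and where that fails for some $k$, exhibit two $k$-subsets lying in distinct regular $\Hol(T)$-orbits by random search as described in Remark \ref{r:magma}(i). A useful alternative for $k = 5$ is Lemma \ref{l:r_prob_F}: since $h(T)/|T|$ is small for every exceptional $T$, the quantity $Q_2(G)$ is negligible and the dominant contribution to $Q_1(G)$ is the summand $k^4/(2|T|^{1/3})$, so the bound $Q_1(G) + Q_2(G) < 1/2$ holds as soon as $|T|$ exceeds a modest threshold.

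The main obstacle I anticipate is the $G_2(q)$ family, since $h(T)^2/|T| \sim q^2$ grows only slowly: Lemma \ref{l:u=k/2} is unavailable once $q \geqs 3$ (an easy direct check of $h(T)^2 < 5|T|$ fails), and Lemma \ref{l:u=0} only applies once $q$ is moderately large relative to $(\log|T|)^2$. The strategy for closing this gap will be to invoke Lemma \ref{l:r_prob_F} above a suitable threshold in $q$ and to dispose of the finite remainder by direct computation in {\sc Magma}, following the same pattern used in Lemmas \ref{l:PSL2} and \ref{l:PSL3_2B2_2G2} for the small parameters excluded by the asymptotic bounds.
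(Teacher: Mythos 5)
The central gap is in your treatment of $T \in \{F_4(q), E_6^\e(q), E_7(q), E_8(q)\}$: the hypothesis \eqref{e:prob_u=0_weak} with $k_0 = 5$ does \emph{not} hold for these families. Rearranged, \eqref{e:prob_u=0_weak} at $k_0 = 5$ asks that $(|T|/h(T))^5 > 4e^5\,|\Hol(T)|$. Two problems: first, the bound you quote, $|\Hol(T)| \leqs |T|^{4/3}$, is actually the bound on $|\Aut(T)|$; since $|\Hol(T)| = |T|\cdot|\Aut(T)| = |T|^2|\Out(T)|$, Lemma \ref{l:|Out(T)|} only gives $|\Hol(T)| < |T|^{7/3}$. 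Second, and decisively, even the incorrect smaller bound would not save the argument: from Table \ref{tab:fix(Hol(T))}, for $E_8(q)$ we have $|T|/h(T) \sim q^{58}$ while $|T| \sim q^{248}$, so $(|T|/h(T))^5 \sim q^{290}$ is dwarfed by $|\Hol(T)| \sim q^{496}\log q$ (and also by $|T|^{4/3} \sim q^{331}$). The same failure occurs for $F_4$, $E_6^\e$, $E_7$ and $^3D_4$. This is why the paper proves \eqref{e:prob_u=0_weak} with $k_0 = 9$ (via the inequality $2^9(|T|/h(T))^9 > |T|^2 q^{22}$) to cover $9 \leqs k \leqs 4\log|T|$, and uses the separate criterion $Q_1(G)+Q_2(G) < 1/2$ of Lemma \ref{l:r_prob_F} for $5 \leqs k \leqs 8$ --- the tool you mention only as an alternative for $k=5$.

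A consequence of this miscalculation is that your intuition about which family is hardest is inverted. You single out $G_2(q)$ because $h(T)^2/|T| \sim q^2$ grows slowly, making \eqref{e:prob_u=0_weak_cond} delicate for small $q$; that observation is correct. But it is precisely $E_6^\e$, $E_7$, $E_8$, $F_4$ and $^3D_4$ for which \eqref{e:prob_u=0_weak} with small $k_0$ fails outright, because $|\Hol(T)| \sim |T|^2$ grows much faster in $q$ than any fixed power of $|T|/h(T)$; among the exceptional families, $G_2(q)$ and $^2F_4(q)$ are actually the ones where $k_0 = 5$ comes closest to working. One smaller point: for $5\leqs k\leqs 8$ the dominant summand in $Q_1(G)$ is $(k!)^2|T|^{8/3-k/2-\frac{1}{2}\delta_{5,k}}$ (which decays like $(6!)^2|T|^{-1/3}$ at $k\in\{5,6\}$), not $k^4/(2|T|^{1/3})$; this does not affect the asymptotic conclusion, but the explicit constants matter in fixing the thresholds for $q$. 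Once you replace $k_0 = 5$ by a larger value such as $9$ and deploy Lemma \ref{l:r_prob_F} uniformly for $5 \leqs k \leqs 8$, your plan matches the paper's proof.
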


\begin{proof}
	Once again, by the previous results, we may assume $5\leqs k\leqs 4\log|T|$. In view of Lemma \ref{l:PSL3_2B2_2G2}, we may also assume $T\ne {^2}B_2(q)$ or ${^2}G_2(q)$. Note that
	\begin{equation*}
	\frac{|T|}{h(T)} > 10|\Out(T)| \geqs 10
	\end{equation*}
	and $|T| > \frac{1}{6}q^d$, where $d$ is as defined in Lemma \ref{l:Inndiag}.
	
	First assume $5\leqs k\leqs 8$. Then
	\begin{equation*}
	Q_2(G) < \frac{h(T)}{|T|} + 10|\Out(T)|\cdot \frac{h(T)^2}{|T|^2} < \frac{1}{10}+\frac{1}{10} =  \frac{1}{5}
	\end{equation*}
	and 
	\begin{equation*}
	Q_1(G) < \frac{(6!)^2}{|T|^{\frac{1}{3}}} + \frac{8!}{|T|^{\frac{4}{3}}} + \frac{8^4}{2|T|^{\frac{1}{3}}} < \frac{6^{\frac{1}{3}}(6!)^2}{q^{\frac{d}{3}}} + \frac{6^{\frac{4}{3}}\cdot 8!}{q^{\frac{4d}{3}}} + \frac{6^{\frac{1}{3}}8^4}{2q^{\frac{d}{3}}} < \frac{3}{10}
	\end{equation*}
	unless $T\in\{{^2}F_4(2)',{^3}D_4(2),{^3}D_4(3),{^3}D_4(4),F_4(2)\}$ or $T = G_2(q)$ for $q\leqs 23$. In this cases, one can check \eqref{e:prob} with the aid of {\sc Magma} unless $T = {^3}D_4(q)$ and $k = 5$, or $T = F_4(2)$ and $k\in\{5,6\}$. In the latter cases, we can do random search using {\sc Magma} as in Remark \ref{r:magma}(i).
	
	To complete the proof, we assume $9\leqs k\leqs 4\log |T|$. The groups with $q = 2$ can be handled by verifying \eqref{e:prob} directly, so we now assume $q\geqs 3$. We first prove \eqref{e:prob_u=0_weak} for $k_0 = 9$. By inspecting Table \ref{tab:fix(Hol(T))}, we have
	\begin{equation}\label{e:t/h_excep}
	2^9\left(\frac{|T|}{h(T)}\right)^9 > |T|^2q^{22}.
	\end{equation}
	For example, if $T = E_8(q)$, then
	\begin{equation*}
	\frac{|T|}{h(T)} = \frac{(q^{30}-1)(q^{24}-1)(q^{20}-1)}{(q^{10}-1)(q^6-1)} > \frac{1}{2}q^{58}
	\end{equation*}
	and $|T| < q^{248}$ by Lemma \ref{l:Inndiag}, which implies \eqref{e:t/h_excep}. Since $|\Out(T)|\leqs 6\log q$, it follows that \eqref{e:prob_u=0_weak} holds for $k_0 = 9$ if
	\begin{equation*}
	q^{22} > 48\log q\cdot (2e)^9
	\end{equation*}
	and one can check that this inequality holds for $q\geqs 3$. By Lemma \ref{l:u=0}, it suffices to prove \eqref{e:prob_u=0_weak_cond}. Here we only give a proof for the case where $T = G_2(q)$, as all the other cases are very similar. First note that $|T| = q^6(q^6-1)(q^2-1) < q^{14}$ and $h(T) = q^6(q^2-1) > \frac{1}{2}q^8$. Then \eqref{e:prob_u=0_weak_cond} holds if
	\begin{equation*}
	q^2 > 56^2(\log q)^2e,
	\end{equation*}
	which holds true for all $q > 907$. One can also check that \eqref{e:prob_u=0_weak_cond} holds for all $601 < q\leqs 907$. If $q\leqs 601$, then we can use the precise values of $|T|$, $h(T)$ and $|\Out(T)|$ to check \eqref{e:prob} for all $9\leqs k\leqs 4\log|T|$. This completes the proof.
\end{proof}

\begin{lem}
	\label{l:PSL4}
	Suppose $T = \LL_4^\e(q)$ and $5\leqs k\leqs 4\log |T|$. Then $r(G)\geqs 2$.
\end{lem}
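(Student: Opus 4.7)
The plan is to mirror the proof of Proposition \ref{p:exceptional}. From Theorem \ref{t:fixity_Hol}, $h(T) = (2,q-\e)(4,q-\e)^{-1}|\mathrm{PGSp}_4(q)|$, so $h(T)$ is of order $q^{10}$, while $|T|$ is of order $q^{15}$ and $|\Out(T)|\leqs 8\log q$. In particular $|T|/h(T)\geqs cq^5$ for an absolute constant $c$, so $h(T)^2$ is of order $q^{20}$, which rules out any application of Lemma \ref{l:u=k/2} with small $k_0$ and points toward Lemma \ref{l:u=0} instead.

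First, I would handle the range $5\leqs k\leqs 8$ by bounding $Q_1(G)+Q_2(G)$ from \eqref{e:Q1} and \eqref{e:Q2} and applying Lemma \ref{l:r_prob_F}. Here $Q_2(G)$ is bounded by a constant times $q^{-5}+\log q\cdot q^{-10}$, and $Q_1(G)$ is bounded by a constant times $(8!)^2|T|^{-1/3}$, which is of order $q^{-5}$. Hence $Q_1(G)+Q_2(G)<1/2$ once $q$ exceeds an explicit threshold $q_0$.

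For $9\leqs k\leqs 4\log|T|$, I would apply Lemma \ref{l:u=0} with $k_0=9$. The side condition \eqref{e:prob_u=0_weak_cond} reduces to $q^5\geqs c(\log q)^2$, which holds once $q$ is moderately large; and the main inequality \eqref{e:prob_u=0_weak} for $k_0=9$, after dividing by $h(T)^9$ and using $|\Hol(T)|=|T|\cdot|\Aut(T)|\leqs 8\log q\cdot|T|^2$, becomes essentially $q^{45}\geqs c\log q\cdot q^{30}e^9$, which is comfortably satisfied.

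Finally, for the finitely many small-$q$ cases (recall that $(q,\e)=(2,+),(2,-)$ are excluded via \eqref{e:iso_simple}, so $q\geqs 3$), I would appeal to {\sc Magma}: for each remaining triple $(q,\e,k)$, verify \eqref{e:prob} directly using the precise values of $|T|$, $h(T)$ and $|\Out(T)|$; if that fails, check $Q_1(G)+Q_2(G)<1/2$; and otherwise find two $k$-subsets of $T$ lying in distinct regular $\Hol(T)$-orbits by random search (see Remark \ref{r:magma}(i)), invoking Lemmas \ref{l:Sk_eq_Hol_r} and \ref{l:Hol(T,S)=1_cond}. The main obstacle is quantitative rather than conceptual: pinning down $q_0$ and then dispatching the (small) residual list of $(q,\e,k)$ triples computationally, since the analytic bounds are tight for the borderline groups $\LL_4^\pm(3)$ and $\LL_4^\pm(4)$.
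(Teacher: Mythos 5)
Your proposal follows the same route as the paper: apply Lemma~\ref{l:u=0} (having correctly observed, from $h(T)^2\sim q^{20}\gg 5|T|$, that Lemma~\ref{l:u=k/2} is unavailable) for larger $k$, bound $Q_1(G)+Q_2(G)$ via Lemma~\ref{l:r_prob_F} for the remaining small $k$, and dispatch the finitely many small-$q$ leftovers computationally. The only differences are parametric: the paper takes $k_0=7$ and handles just $k\in\{5,6\}$ by the $Q_1+Q_2$ bound (Magma for $q\leqs 89$ when $k\geqs 7$ and for $q\leqs 17$ when $k\in\{5,6\}$), whereas you propose $k_0=9$ and push $k\in\{5,\dots,8\}$ through $Q_1+Q_2$. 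One cosmetic inaccuracy to flag: you bound $Q_1(G)$ by a constant times $(8!)^2|T|^{-1/3}$, but the exponent $\tfrac{8}{3}-\tfrac{k}{2}$ in the first term of \eqref{e:Q1} equals $-\tfrac{1}{3}$ only for $k=6$ (it is $-\tfrac{4}{3}$ at $k=8$); the bound you state is still valid as a crude upper bound, but the dominant contribution for $k\in\{7,8\}$ is actually $k^4/(2|T|^{1/3})$, and using $(8!)^2$ rather than the sharper constants inflates $q_0$ (hence the Magma workload) by roughly a factor of $10$ in $q$. With the precise constants from \eqref{e:Q1}--\eqref{e:Q2}, as the paper does, the thresholds are $q\geqs 19$ for $k\in\{5,6\}$ and $q>89$ for $k_0=7$, and the residual cases close by verifying \eqref{e:prob} or random search exactly as you describe.
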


\begin{proof}
	Recall that $h(T) = (2,q-\e)|\mathrm{PGSp}_4(q)|/(4,q-\e)$ by Theorem \ref{t:fixity_Hol}. First assume that $k\geqs 7$ and set $k_0 = 7$. For $q\leqs 89$, one can check \eqref{e:prob} with the aid of {\sc Magma}. Now assume $q > 89$. It is easy to see that
	\begin{equation*}
	q^5 > \max\{48(4e)^7\log q,4e\cdot 60^2(\log q)^2\},
	\end{equation*}
	which implies \eqref{e:prob_u=0_weak} and \eqref{e:prob_u=0_weak_cond}.
	
	Now assume $k\in\{5,6\}$. Note that $|T|/h(T)> 10|\Out(T)| \geqs 10$, so $Q_2(G) < \frac{1}{5}$. Moreover,
	\begin{equation*}
	Q_1(G) < \frac{(6!)^2}{|T|^{\frac{1}{3}}} + \frac{6!}{|T|^{\frac{4}{3}}} + \frac{6^4}{2|T|^{\frac{1}{3}}},
	\end{equation*}
	so we have $Q_1(G) < \frac{3}{10}$ if $q\geqs 19$ and thus $Q_1(G)+Q_2(G)<1/2$. Finally if $q\leqs 17$ then we can use {\sc Magma} (via random search as in Remark \ref{r:magma}(i)) to check that $r(G)\geqs 2$.
\end{proof}

\begin{lem}
	\label{l:PSp4_odd}
	Suppose $T = \PSp_4(q)$ and $5\leqs k\leqs 4\log|T|$. Then $r(G)\geqs  2$.
\end{lem}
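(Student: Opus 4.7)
The plan is to follow the template established in Lemma \ref{l:PSL4}, splitting on the range of $k$ and invoking the probabilistic tools developed in Section \ref{s:prob}. From Theorem \ref{t:fixity_Hol}, we have $h(T) = |\Sp_2(q^2)| = q^2(q^4-1)$ when $q$ is odd and $h(T) = q^3|\Sp_2(q)| = q^4(q^2-1)$ when $q$ is even; in particular $|T|/h(T)$ grows like $q^2$ in both cases, so $|T|/h(T) \geqs 10|\Out(T)| \geqs 10$ for all but a small explicit range of $q$. Recall also that $\PSp_4(3) \cong \UU_4(2)$ is excluded by \eqref{e:iso_simple}, so we may assume $q\geqs 4$.

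For the first range, assume $k\geqs 7$ and set $k_0=7$. I would apply Lemma \ref{l:u=0}. The condition \eqref{e:prob_u=0_weak_cond} is $2h(T)^2 > (4\log|T|)^2 e|T|$; since $|T|\sim q^{10}$ and $h(T)\sim q^{6}$, the ratio $h(T)^2/|T|$ grows like $q^2$, dominating $(\log|T|)^2$ for $q$ above some modest threshold. Similarly \eqref{e:prob_u=0_weak} reduces to $(|T|/h(T))^7 \gg |\Out(T)|\cdot e^7$, again immediate from $|T|/h(T)\sim q^2$ and $|\Out(T)|\leqs 2\gcd(2,q-1)\log q$. Residual small $q$ would be handled by verifying \eqref{e:prob} directly in {\sc Magma}.

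For $k\in\{5,6\}$, I would instead rely on Lemma \ref{l:r_prob_F} and bound $Q_1(G)+Q_2(G)<1/2$ using the definitions in \eqref{e:Q1}, \eqref{e:Q2}. Using $|T|/h(T)\geqs 10|\Out(T)|\geqs 10$, one gets
\[
Q_2(G) \;<\; \frac{h(T)}{|T|} + \binom{6}{2}|\Out(T)|\left(\frac{h(T)}{|T|}\right)^{2} \;<\; \tfrac{1}{5},
\]
and
\[
Q_1(G) \;<\; \frac{(6!)^2}{|T|^{1/3}} + \frac{6!}{|T|^{4/3}} + \frac{6^4}{2|T|^{1/3}} \;<\; \tfrac{3}{10}
\]
provided $q$ is not too small; these thresholds are verifiable by plugging in the orders of $\PSp_4(q)$ and $\Out(T)$. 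The remaining finitely many pairs $(q,k)$ with $q$ small would be dispatched in {\sc Magma}: either by checking \eqref{e:prob} directly, or, when that fails, by the random-search procedure of Remark \ref{r:magma}(i) to exhibit two $k$-subsets of $T$ in distinct regular $\Hol(T)$-orbits.

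The main obstacle I anticipate is simply pinning down the explicit $q$-thresholds in each of the two ranges and confirming the residual low-$q$ cases computationally; no new structural ingredient is needed beyond what is already assembled in Section \ref{s:prob} and Theorem \ref{t:fixity_Hol}.
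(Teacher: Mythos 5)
Your proof follows essentially the same template as the paper's (split on a small-$k$ range handled via $Q_1+Q_2<1/2$, a larger-$k$ range handled via Lemma \ref{l:u=0}, and mop up small $q$ computationally), and the split $k\in\{5,6\}$ versus $k\geqs 7$ with $k_0=7$ would work just as well as the paper's choice $k=5$ versus $k\geqs 6$ with $k_0 = 6$. However, there is a genuine gap at the start.

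You claim that $\PSp_4(3)\cong\UU_4(2)$ is excluded by \eqref{e:iso_simple} and therefore assume $q\geqs 4$. This has the isomorphism right but the exclusion backwards: the list \eqref{e:iso_simple} excludes $\UU_4(2)$ and retains $\PSp_4(3)$ as the canonical form (it appears in the displayed list of ``retained'' groups immediately after \eqref{e:iso_simple}). The only symplectic exclusion is $\Sp_4(2)'$, giving $q\geqs 3$, not $q\geqs 4$. This matters: as the paper's proof notes, the pair $(k,q)=(6,3)$ fails \eqref{e:prob} and must be handled by the random-search procedure of Remark \ref{r:magma}(i); a quick estimate also shows the first term of $Q_1(G)$ at $|T|=|\PSp_4(3)|=25920$ is enormous, so the $Q_1+Q_2$ route does not cover $(6,3)$ either. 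Since you have explicitly ruled $q=3$ out of your ``residual small $q$'' sweep, this case never gets checked and the argument is incomplete.

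A secondary, non-fatal slip: you assert $|T|/h(T)$ grows like $q^2$, but with $|T|\sim q^{10}$ and $h(T)\sim q^6$ (which you also use correctly to estimate $h(T)^2/|T|\sim q^2$), you in fact have $|T|/h(T)\sim q^4$, consistent with the paper's bound $q^4/2 < |T|/h(T) < 2q^4$. The error only weakens your intermediate inequalities, so the approach still goes through once the $q=3$ omission is corrected, but the two asymptotic claims you make about $|T|/h(T)$ and $h(T)^2/|T|$ are mutually inconsistent as written.
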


\begin{proof}
	As noted in \eqref{e:iso_simple}, we assume $q\geqs 3$. First assume $k\geqs 6$. It can be checked using {\sc Magma} that \eqref{e:prob} holds for $q\leqs 607$, unless $(k,q) = (6,3)$, in which case we can verify the result using {\sc Magma} and random search as in Remark \ref{r:magma}(i). Now assume $q > 607$. By applying the bounds $|T| < q^{10}$, $h(T) > q^6/2$ and $q^4/2 < |T|/h(T) < 2q^4$, we see that \eqref{e:prob_u=0_weak} holds for $k_0 = 6$ if
	\begin{equation*}
	q^4 > 6(2e)^6\log q,
	\end{equation*}
	while \eqref{e:prob_u=0_weak_cond} holds if
	\begin{equation*}
	q^2 > 40^2(\log q)^2e.
	\end{equation*}
	Note that both inequalities hold for all $q > 607$.
	
	Finally, assume $k = 5$. Once again, we have $|T|/h(T) > 10|\Out(T)|\geqs 10$ and thus $Q_2(G) < \frac{1}{5}$. Additionally,
	\begin{equation*}
	Q_1(G) = \frac{(5!)^2}{|T|^{\frac{1}{3}}}+\frac{5!}{|T|^{\frac{4}{3}}}+\frac{5^4}{2|T|^{\frac{1}{3}}} < \frac{3}{10}
	\end{equation*}
	for all $q\geqs 27$. The remaining groups with $q\leqs 25$ can be handled with the aid of {\sc Magma} via random search (see Remark \ref{r:magma}(i)).
\end{proof}

\begin{prop}
	\label{p:classical}
	The conclusions to Theorems \ref{thm:b(G)=2} and \ref{thm:r=1} hold when $T$ is a classical group.
\end{prop}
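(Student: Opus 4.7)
The plan is to follow the blueprint already established for $\LL_4^\e(q)$, $\PSp_4(q)$ and the exceptional groups in Lemmas \ref{l:PSL4}, \ref{l:PSp4_odd} and Proposition \ref{p:exceptional}. Combining Propositions \ref{p:r(G)>1_k>32}, \ref{p:k=3,4}, \ref{p:k=|T|-1,|T|-2} and \ref{p:log} with Lemmas \ref{l:PSL2} and \ref{l:PSL3_2B2_2G2}, the only outstanding cases are $G = W$, $5\leqs k\leqs 4\log|T|$, and $T$ in one of the families $\LL_n^\e(q)$ with $n\geqs 5$, $\PSp_n(q)$ with $n\geqs 6$ even, or $\POmega_n^\e(q)$ with $n\geqs 7$. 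I need to show $r(G)\geqs 2$ in each case.

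First I will split the range of $k$. For $5\leqs k\leqs 8$, I will verify the inequality $Q_1(G)+Q_2(G)<1/2$ and invoke Lemma \ref{l:r_prob_F}. The inputs are: Theorem \ref{t:fixity_Hol} giving an explicit lower bound of the form $|T|/h(T)\geqs c\, q^{n-c'}$ with small constants; Lemma \ref{l:Inndiag} giving $|T|\geqs \tfrac{1}{2}q^{\dim Y}$; and $|\Out(T)|\leqs 8\log q$ outside of $\POmega_8^+(q)$ (where the factor $24\log q$ is offset by $\dim Y=28$). Then $Q_2(G) < h(T)/|T| + {k\choose 2}|\Out(T)|(h(T)/|T|)^2$ decays polynomially in $q$, while $Q_1(G)$ is bounded above by a constant times $|T|^{-1/3}$. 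Apart from a short finite list of $(n,q)$, this yields $Q_1(G)+Q_2(G) < 1/2$ comfortably.

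For $9\leqs k\leqs 4\log|T|$ I will apply Lemma \ref{l:u=0} with $k_0 = 9$. The secondary hypothesis \eqref{e:prob_u=0_weak_cond}, namely $2h(T)^2 > (4\log|T|)^2 e|T|$, follows from the fact that $h(T)$ grows like $q^m$ with $m$ close to half of $\dim Y$ in each family (for instance, $h(T) > \tfrac{1}{2}q^{n^2-2n+1}$ for $\LL_n^\e(q)$), so $h(T)^2/|T|$ grows like a positive power of $q$ and eventually dominates $(\log|T|)^2$. The main inequality \eqref{e:prob_u=0_weak} with $k_0 = 9$ reduces, after cancellation, to roughly $(|T|/h(T))^9 \geqs 8|\Out(T)|\cdot e^9$, which is straightforward from the explicit lower bounds on $|T|/h(T)$ supplied by Theorem \ref{t:fixity_Hol}.

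The main obstacle will be the low-rank, low-$q$ residue: groups such as $\LL_n^\e(q)$ for small $n$ and $q$ (e.g.\ $\LL_5^\e(2)$, $\LL_5^\e(3)$, $\LL_6^\e(2)$), $\UU_6(2)$, $\Sp_6(2)$, $\Sp_8(2)$, $\Omega_7(3)$, $\Omega_8^\e(2)$, and $\POmega_8^+(q)$ for small $q$ where the outer automorphism group is enlarged by triality. For each such finite set of exceptions I will construct $T$ in {\sc Magma} via \texttt{AutomorphismGroupSimpleGroup} and first try to verify \eqref{e:prob} numerically using the exact values of $h(T)$ and $|\Out(T)|$; in any remaining cases I will exhibit two distinct regular $\Hol(T)$-orbits on $\mathscr{P}_k$ by random search following Remark \ref{r:magma}(i), or, when $\Hol(T)$ is too large to realise directly, by working inside a centreless maximal subgroup of $T$ as in Remark \ref{r:magma}(ii).
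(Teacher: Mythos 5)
Your overall architecture (reduce to $G = W$, $5 \leqs k \leqs 4\log|T|$, $T$ of unbounded rank; split the range of $k$ and deploy $Q_1+Q_2<1/2$ on the low end and Lemma \ref{l:u=0} on the high end) matches the paper, but your choice of where to split the range of $k$ — at the absolute constant $k=8$ rather than at $k=n+3$ — breaks the argument, and the error traces to a misstatement of inequality \eqref{e:prob_u=0_weak}.

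You write that \eqref{e:prob_u=0_weak} with $k_0=9$ ``reduces, after cancellation, to roughly $(|T|/h(T))^9 \geqs 8|\Out(T)|\cdot e^9$,'' but this drops the crucial factor $|T|^2$ coming from $|\Hol(T)| = |T|^2|\Out(T)|$. The correct reduction is
\begin{equation*}
\left(\frac{|T|}{h(T)}\right)^{k_0} > 2\lfloor k_0/2\rfloor e^{k_0}\,|T|^2\,|\Out(T)|.
\end{equation*}
Now for, say, $T = \LL_n^\e(q)$ with $n\geqs 5$, Theorem \ref{t:fixity_Hol} gives $|T|/h(T)\sim q^{2n-2}$ while $|T|^2\sim q^{2(n^2-1)}$, so with $k_0 = 9$ fixed the left-hand side grows like $q^{18n-18}$ and the right-hand side like $q^{2n^2}$; since $2n^2 > 18n-18$ once $n\geqs 8$, the inequality \emph{fails} for all large-rank classical groups. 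A concrete instance: for $T = \LL_{10}(2)$ one has $|T|\approx 2^{98}$, $h(T)\approx 2^{80}$, so $(|T|/h(T))^9\approx 2^{162}$ while $8|T|^2|\Out(T)|e^9\approx 2^{210}$. The same issue afflicts the symplectic and orthogonal families. Consequently the range $9\leqs k\leqs n+3$ is covered by neither half of your argument: $k > 8$ excludes it from the $Q_1+Q_2$ case, and \eqref{e:prob_u=0_weak} fails with $k_0 = 9$.

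The paper avoids this by letting the cutoff grow with the rank: it treats $5\leqs k\leqs n+3$ with the $Q_1+Q_2<1/2$ argument (where the $(k!)^2|T|^{8/3-k/2}$ term in $Q_1$ still decays because $|T|$ grows like $q^{\Theta(n^2)}$ while $k!\leqs (n+3)!$ grows sub-exponentially in $n^2\log q$), and then applies Lemma \ref{l:u=0} with $k_0 = n+4$ to $k\geqs n+4$, at which point $(|T|/h(T))^{n+4}\sim q^{(n+4)(2n-2)}$ does beat $q^{2n^2}$ in each of the three families. To repair your argument you would need to take $k_0$ depending on $n$ (e.g.\ $k_0 = n+4$) rather than a universal constant, and correspondingly extend the low-$k$ range handled by the $Q_1+Q_2$ estimate to cover $5\leqs k < k_0$.
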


\begin{proof}
	Let $T$ be a classical group over $\mathbb{F}_q$ and let $n$ be the dimension of the natural module. Note that $|T| > \frac{1}{8}q^{n(n-1)/2}$ by Lemma \ref{l:Inndiag}. As explained above, we may assume $5\leqs k\leqs 4\log |T|$. In addition, we may also assume $n\geqs 5$ by Lemmas \ref{l:PSL2}, \ref{l:PSL3_2B2_2G2}, \ref{l:PSL4} and \ref{l:PSp4_odd}. Then
	\begin{equation*}
	\frac{|T|}{h(T)} > 10|\Out(T)|\geqs 10
	\end{equation*}
	by inspecting Table \ref{tab:fix(Hol(T))}, and thus
	\begin{equation*}
	Q_2(G) < \frac{h(T)}{|T|} + 10|\Out(T)|\cdot \frac{h(T)^2}{|T|^2} < \frac{1}{10}+\frac{1}{10} = \frac{1}{5}.
	\end{equation*}
	
	First assume $5\leqs k\leqs n+3$. Then
	\begin{equation*}
	\begin{aligned}
	Q_1(G) &< \frac{(6!)^2}{|T|^{\frac{1}{3}}} + \frac{(n+3)!}{|T|^{\frac{4}{3}}} + \frac{(n+3)^4}{2|T|^{\frac{1}{3}}} \\&< \frac{8^{\frac{1}{3}}(6!)^2}{q^{\frac{n(n-1)}{6}}} + \frac{8^{\frac{4}{3}}(n+3)!}{q^{\frac{2n(n-1)}{3}}} + \frac{8^{\frac{1}{3}}(n+3)^4}{2q^{\frac{n(n-1)}{6}}} =: Q(n,q).
	\end{aligned}
	\end{equation*}
	Evidently, $Q(n,q)$ is a decreasing function of $q$. In addition, if $q$ is fixed, then each summand is a decreasing function of $n$. Thus, $Q(n,q)$ is also decreasing of $n$. Note that $Q(n,q) < \frac{3}{10}$ if
	\begin{equation*}
	(n,q)\in\{(12,2),(10,3),(9,4),(8,7),(7,9),(6,23),(5,97)\} =: \mathcal{B}.
	\end{equation*}
	Hence, we only need to consider the cases where $n < n_0$ or $q<q_0$ for some $(n_0,q_0)\in\mathcal{B}$. For these groups, we can show that $r(G)\geqs 2$ either by checking $Q_1(G) + Q_2(G) < 1/2$ or \eqref{e:prob}, or by random search as explained in Remark \ref{r:magma}(i). This shows that $r(G) \geqs 2$ if $5\leqs k\leqs n+3$.
	
	To complete the proof, assume $n+4\leqs k\leqs 4\log|T|$ and let $k_0 = n+4$. We first consider the case where $T = \LL_n^\e(q)$. Note that $|T| < q^{n^2-1}$ and 
	\begin{equation*}
	\frac{|T|}{h(T)} \geqs \frac{|\PGL_n^\e(q)|}{|\GU_{n-1}(q)|} > \frac{1}{2}q^{2n-2}
	\end{equation*}
	by Lemma \ref{l:Inndiag} and Theorem \ref{t:fixity_Hol}. Hence, \eqref{e:prob_u=0_weak} holds if
	\begin{equation*}
	q^{6n-8} > 2(n+4)(2e)^{n+4}
	\end{equation*}
	since $|\Out(T)|\leqs 2(q+1)\log q < 2q^2$. This inequality holds if $q\geqs 3$ or $n\geqs 7$, while we can check \eqref{e:prob_u=0_weak} directly when $(n,q) = (5,2)$ or $(6,2)$. Thus, we have \eqref{e:prob_u=0_weak} for all $n\geqs 5$ and $q\geqs 2$. By Lemma \ref{l:u=0}, it suffices to prove \eqref{e:prob_u=0_weak_cond}. To do this, first note that
	\begin{equation*}
	h(T) \geqs q^{2n-3} |\PGL_{n-2}^\e(q)| > \frac{1}{2}q^{2n-3}q^{(n-2)^2-1} = \frac{1}{2}q^{n^2-2n}
	\end{equation*}
	by Lemma \ref{l:Inndiag} and Theorem \ref{t:fixity_Hol}, so \eqref{e:prob_u=0_weak_cond} holds if
	\begin{equation*}
	q^{n^2-4n-1} > 32e(n^2-1)^2
	\end{equation*}
	since $\log q < q$. One can easily check that the above inequality holds for all $n\geqs 5$ and $q\geqs 2$, unless $n = 5$ and $q\leqs 13$, or $(n,q) = (6,2)$, in which cases we can verify \eqref{e:prob_u=0_weak_cond} directly. This completes the proof for linear and unitary groups.
	
	Next assume $T = \PSp_n(q)$ with $n\geqs 6$. Here $|T| < q^{n(n+1)/2}$ by Lemma \ref{l:Inndiag} and
	\begin{equation*}
	\frac{|T|}{h(T)} = \frac{q^n-1}{(2,q-1)} > q^{n-1}.
	\end{equation*}
	Since $|\Out(T)|\leqs 2\log q$, we see that \eqref{e:prob_u=0_weak} holds if
	\begin{equation*}
	q^{2n-4} > 2\log q\cdot (n+4)e^{n+4}
	\end{equation*}
	and one checks that this inequality is valid unless $q = 2$ and $n\leqs 28$, $n = 6$ and $q\leqs 5$, or $(n,q) \in \{(8,3),(10,3)\}$. In these remaining cases, one can also check \eqref{e:prob_u=0_weak} by applying the precise values of $|T|$, $h(T)$ and $|\Out(T)|$, so as above, it just remains to verify \eqref{e:prob_u=0_weak_cond}. To do this, first note that
	\begin{equation*}
	h(T) = q^{n-1}|\Sp_{n-2}(q)| > \frac{1}{2}q^{n(n-1)/2},
	\end{equation*}
	so it suffices to show that
	\begin{equation*}
	q^{n(n-3)/2} > 8en^2(n+1)^2(\log q)^2.
	\end{equation*}
	The latter holds unless $(n,q) = (6,2)$ or $(6,3)$, in which cases one can directly verify \eqref{e:prob_u=0_weak_cond}. The result now follows from Lemma \ref{l:u=0}.
	
	Finally, assume $T = \POmega_n^\e(q)$ is an orthogonal group, so $n\geqs 7$, and $q$ is odd if $n$ is odd. In this setting, $|T| < q^{n(n-1)/2}$ and 
	\begin{equation*}
	\frac{|T|}{h(T)} > \frac{1}{2}q^{n-1}
	\end{equation*}
	by Lemma \ref{l:Inndiag} and Theorem \ref{t:fixity_Hol}. In addition, \eqref{e:prob_u=0_weak} holds if
	\begin{equation*}
	q^{4n-4} > 24\log q\cdot (n+4)(2e)^{n+4}
	\end{equation*}
	since $|\Out(T)|\leqs 24\log q$, which is valid unless $q = 2$ and $n\leqs 14$. In the remaining cases, \eqref{e:prob_u=0_weak} can be checked directly. Finally, to prove \eqref{e:prob_u=0_weak_cond}, note that
	\begin{equation*}
	h(T) > \frac{1}{4}q^{(n-1)(n-2)/2}
	\end{equation*}
	by Lemma \ref{l:Inndiag} and Theorem \ref{t:fixity_Hol}, so we only need to show that
	\begin{equation*}
	q^{(n-1)(n-4)/2} > 32en^2(n-1)^2(\log q)^2.
	\end{equation*}
	This holds unless $(n,q) = (7,3)$ or $(8,2)$, and in these special cases we can verify \eqref{e:prob_u=0_weak_cond} directly. We now complete the proof by applying Lemma \ref{l:u=0}.
\end{proof}

We conclude that the proofs of Theorems \ref{thm:b(G)=2} and \ref{thm:r=1} are complete by combining Propositions \ref{p:sporadic}, \ref{p:alternating}, \ref{p:exceptional} and \ref{p:classical}. As noted in the beginning of this section, the proof of Theorem \ref{thm:Hol} is also complete.

\section{Proof of Theorem \ref{thm:main}}

\label{s:proof_main}

In this section, we prove Theorem \ref{thm:main}, which is our main result. By Theorems \ref{thm:b(G)=2}, \ref{t:F}, and Proposition \ref{p:k=|T|-1,|T|-2}, we only need to consider the cases where $k =2$, or $k > |T|$ and $P\in\{A_k,S_k\}$.

\subsection{The groups with $k = 2$}

We first consider the case where $k=2$. As recorded in Theorem \ref{t:F}(ii), we have $b(G) = 3$ if $P = 1$, and $b(G)\in\{3,4\}$ if $P = S_2$.

\begin{lem}
	\label{l:k=2_equiv}
	Suppose $W = T^2.(\Out(T)\times S_2)$ and $s,t\in T$. Then $\{D,D(1,\varphi_s),D(1,\varphi_t)\}$ is a base for $W$ if and only if:
	\begin{enumerate}\addtolength{\itemsep}{0.2\baselineskip}
		\item[\rm (i)] $C_{\Aut(T)}(s)\cap C_{\Aut(T)}(t) = 1$; and
		\item[\rm (ii)] there is no $\alpha\in\Aut(T)$ such that $s^\alpha = s^{-1}$ and $t^\alpha = t^{-1}$.
	\end{enumerate}
\end{lem}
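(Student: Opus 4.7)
The plan is to reduce the base condition to a computation of stabilisers in $D$, then split into the two possibilities for the $S_2$-component of such a stabiliser.

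Since $W_D = D$, the pointwise stabiliser of $\{D, D(1,\varphi_s), D(1,\varphi_t)\}$ in $W$ equals $D_{D(1,\varphi_s)} \cap D_{D(1,\varphi_t)}$, so the set is a base if and only if the only element of $D$ fixing both of $D(1,\varphi_s)$ and $D(1,\varphi_t)$ is the identity. Write an arbitrary element of $D$ as $(\alpha,\alpha)\pi$ with $\alpha \in \Aut(T)$ and $\pi \in S_2$, and recall the coset identification $D(\varphi_{t_1},\varphi_{t_2}) = D(\varphi_{t_1'},\varphi_{t_2'})$ iff there is $g \in T$ with $t_i' = g t_i$ for $i \in \{1,2\}$. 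I will then evaluate the action formula
\begin{equation*}
D(\varphi_{t_1},\varphi_{t_2})^{(\alpha,\alpha)\pi} = D(\varphi_{t_{1^{\pi^{-1}}}^\alpha}, \varphi_{t_{2^{\pi^{-1}}}^\alpha})
\end{equation*}
in the two possible cases for $\pi$.

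First, when $\pi = 1$, applying $(\alpha,\alpha)$ to $D(1,\varphi_s)$ yields $D(1,\varphi_{s^\alpha})$, which coincides with $D(1,\varphi_s)$ if and only if there exists $g \in T$ with $g = 1$ and $gs = s^\alpha$; equivalently, $s^\alpha = s$. The analogous condition for $D(1,\varphi_t)$ gives $t^\alpha = t$. Hence the stabiliser in the $\pi = 1$ layer is exactly $C_{\Aut(T)}(s) \cap C_{\Aut(T)}(t)$, and its triviality is precisely condition (i).

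Next, when $\pi = (1,2)$, the formula gives $D(1,\varphi_s)^{(\alpha,\alpha)(1,2)} = D(\varphi_{s^\alpha},1)$, which equals $D(1,\varphi_s)$ if and only if there is $g \in T$ with $g = s^\alpha$ and $gs = 1$, forcing $s^\alpha = s^{-1}$; similarly the condition for $D(1,\varphi_t)$ becomes $t^\alpha = t^{-1}$. Since every element with $\pi = (1,2)$ is non-trivial in $D$, the base property fails in this layer if and only if some $\alpha \in \Aut(T)$ satisfies $s^\alpha = s^{-1}$ and $t^\alpha = t^{-1}$; that is, condition (ii) is precisely the requirement that no such $\alpha$ exists. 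Combining the two layers yields the equivalence. The only subtlety is a careful bookkeeping of the coset identification and the direction of the action; once these are set up, the argument is a direct verification with no substantive obstacle.
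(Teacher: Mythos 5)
Your proof is correct. You unwind the base condition directly from the action formula and the coset identification $D(\varphi_{t_1},\varphi_{t_2}) = D(\varphi_{t_1'},\varphi_{t_2'})$ iff there is $g\in T$ with $t_i' = gt_i$, and then split into the $\pi = 1$ and $\pi = (1,2)$ layers. The $\pi = 1$ layer yields $\alpha \in C_{\Aut(T)}(s)\cap C_{\Aut(T)}(t)$, and the $\pi = (1,2)$ layer yields $s^\alpha = s^{-1}$, $t^\alpha = t^{-1}$, with the elements in the latter layer automatically non-trivial. Combining gives exactly conditions (i) and (ii), as claimed.

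The paper's proof is not a verification at all: it simply cites \cite[Lemma 3.5]{LMM_IBIS} (Lucchini, Morigi, Moscatiello, ``Primitive permutation IBIS groups''), from which the statement is deduced. Your argument is thus a genuinely different route — an elementary direct computation in place of an appeal to an external result. What your version buys is self-containment and transparency: the only ingredients are the explicit action formula for $W$ on $\Omega$ (already recorded in Section~\ref{s:pre}) and the identification of cosets of $D$, so the reader need not unwind what Lemma~3.5 of~\cite{LMM_IBIS} actually says or how to specialise it. What the paper's citation buys is brevity and a pointer to a more general framework in which such stabiliser computations arise systematically. For a result that is used crucially in the $k=2$ case of Theorem~\ref{thm:main}, having the direct calculation on record is arguably preferable.
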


\begin{proof}
	This can be deduced from \cite[Lemma 3.5]{LMM_IBIS}.
\end{proof}

The following is \cite[Theorem 1.1]{LL_gen}.

\begin{thm}
	\label{t:LL_t:1.1}
	Suppose $T$ is not $A_7$, $\LL_2(q)$ or $\LL_3^\e(q)$ for some prime power $q$. Then there exists a generating pair $(s,t)$ of $T$ such that $|s| = 2$ and there is no $\a\in\Aut(T)$ with $s^\a = s^{-1}$ and $t^\a = t^{-1}$.
\end{thm}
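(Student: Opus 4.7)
The plan is to reduce the statement to a tractable form and then proceed by a combination of probabilistic arguments and case-by-case verification across the families of non-abelian finite simple groups. First I would note that since $|s|=2$ we have $s^{-1}=s$, so $s^\a=s^{-1}$ is equivalent to $\a\in C_{\Aut(T)}(s)$. Hence the conclusion we need is the existence of an involution $s$ and an element $t$ with $|t|>2$ such that $\la s,t\ra=T$ and no non-trivial element of $C_{\Aut(T)}(s)$ inverts $t$. Taking $|t|>2$ is what guarantees that $\a=1$ does not cause a failure.

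For each fixed class of involutions $s\in T$ I would study the ``bad'' set
\[
B(s)=\bigl\{t\in T:\la s,t\ra\ne T\bigr\}\cup\bigcup_{1\ne\a\in C_{\Aut(T)}(s)}\mathcal{I}(s,\a),
\]
where $\mathcal{I}(s,\a)=\{t\in T:t^\a=t^{-1}\}$. The first union is controlled by the usual random-generation estimate $|\{t:\la s,t\ra\ne T\}|\leqs \sum_{M}|M|$, the sum being over maximal subgroups of $T$ containing $s$, which is small by results in the spirit of Guralnick--Kantor \cite{GK_3/2} and Liebeck--Shalev. For the second union, the key observation is that $\mathcal{I}(s,\a)$ can be counted via an ``extension trick'': working in the extension $\la T,\a\ra$ of $T$, the map $t\mapsto \a t$ sets up a bijection between $\mathcal{I}(s,\a)$ and the set of elements in the coset $\a T$ whose square equals $\a^2$, which can be bounded by a Frobenius--Schur style count in terms of $|C_T(\a)|$. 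Combined with $|C_{\Aut(T)}(s)|\leqs |\Out(T)|\cdot h(T)$ and Theorem~\ref{t:fixity_Hol}, this gives $|B(s)|<|T|$ whenever $T$ is of sufficiently large order, producing a suitable $t$.

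To set up the case split, I would then handle the families as follows. For sporadic groups, direct verification via \textsc{Magma} or the \textsf{GAP} character table library is feasible, since the fusion of classes under $\Aut(T)$ and the structure of involution centralisers is completely known. For $A_n$ with $n\geqs 5$, $n\ne 7$, an explicit construction works: take $s$ a transposition (resp.\ double transposition) and $t$ an $n$-cycle (resp.\ product of a long cycle and a short cycle) chosen so that $\la s,t\ra=A_n$ and the element structure of $C_{S_n}(s)=S_2\times S_{n-2}$ (or analogue) forbids inversion of $t$ by reading off cycle types. For Lie type groups not in the excluded families, one picks $s$ to be an involution with ``small'' $\Aut(T)$-centraliser relative to $T$—for example, in most classical cases a graph-type or a non-central involution of type $t_1$/$\gamma_1$—and takes $t$ to be a regular semisimple element of order coprime to $2|\Out(T)|$, which makes inversion by a non-trivial $\a\in C_{\Aut(T)}(s)$ highly restrictive; the probabilistic argument above then resolves all but finitely many residual cases, which are checked computationally.

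The main obstacle is obtaining a sharp enough uniform bound on $|\mathcal{I}(s,\a)|$ to make the probabilistic inequality $|B(s)|<|T|$ work for the small-rank classical groups, which is exactly the regime in which the excluded families $\LL_2(q)$ and $\LL_3^\e(q)$ genuinely fail: there, every involution centraliser $C_{\Aut(T)}(s)$ is so large relative to $|T|$ that every generating partner $t$ is inverted by some element of $C_{\Aut(T)}(s)$. Verifying the failure in these exceptional cases (together with $A_7$) is a finite check, and it explains why they must be excluded from the hypothesis. Once the bounds are tight enough for all other $T$, the probabilistic construction yields the required pair $(s,t)$ and completes the proof.
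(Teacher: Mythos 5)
The paper does not prove this statement itself; it is quoted directly from Leemans and Liebeck as \cite[Theorem 1.1]{LL_gen}, and the sharpness of the exclusions $A_7$, $\LL_2(q)$, $\LL_3^\e(q)$ is cited separately from \cite{J_pair}. So there is no internal proof to compare against.

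That said, the blind attempt has substantive gaps. Your opening reduction is correct and useful: since $|s|=2$, the condition $s^\a=s^{-1}$ is just $\a\in C_{\Aut(T)}(s)$, and requiring $|t|>2$ disposes of $\a=1$. But the $A_n$ construction is broken as written: a transposition is an odd permutation and does not lie in $T=A_n$, so $s$ cannot be a transposition and $C_{S_n}(s)\cong S_2\times S_{n-2}$ does not apply; you would need $s$ to be a double transposition (or another even involution), and the centraliser structure is then different. For Lie-type groups you propose to choose $s$ with ``small $\Aut(T)$-centraliser\ldots\ of type $t_1$/$\gamma_1$'', but in this paper's notation $t_1$ and $\gamma_1$ are precisely the involutions that realise the \emph{maximum} centraliser $h(T)$ in Table~\ref{tab:fix(Hol(T))}, and $\gamma_1$ is moreover a graph automorphism lying outside $T$, so it cannot be the first element of a generating pair of $T$. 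Finally, the inequality $|B(s)|<|T|$ is not justified by the ingredients you cite: a Frobenius--Schur-style count yields $|\mathcal{I}(s,\a)|$ of order up to $|T|^{1/2}$, while $|C_{\Aut(T)}(s)|$ is also at least of order $|T|^{1/2}$ even for the smallest involution classes in classical groups, so the naive union bound $|C_{\Aut(T)}(s)|\cdot\max_\a|\mathcal{I}(s,\a)|$ is of size comparable to $|T|$ and does not close. Making this work would require exploiting the heavy overlap among the sets $\mathcal{I}(s,\a)$ (the automorphisms inverting a fixed $t$ form a coset of $C_{\Aut(T)}(t)$) or restricting attention to involutory $\a$, neither of which is indicated, and the probabilistic route would still need explicit constructions for the small-rank families.
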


It has been proved recently that each of the excluded groups $A_7$, $\LL_2(q)$ and $\LL_3^\e(q)$ does not have a generating pair described as in Theorem \ref{t:LL_t:1.1} (see \cite[Theorem 1.3]{J_pair}).

\begin{prop}
	\label{p:k=2}
	The conclusion to Theorem \ref{thm:main} holds for $k = 2$.
\end{prop}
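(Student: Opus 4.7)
The plan is to combine Lemma \ref{l:k=2_equiv} with Theorem \ref{t:LL_t:1.1} for the bulk of the argument, and handle the finitely many exceptions computationally. If $P = 1$ then $b(G) = 3$ by Theorem \ref{t:F}(ii), so I may assume $P = S_2$ and hence $T^2 \leqs G \leqs W := T^2.(\Out(T) \times S_2)$. Since every base of $W$ is automatically a base of $G$, one has $b(G) \leqs b(W)$; combined with the lower bound $b(G) \geqs 3$ from Theorem \ref{t:F}(ii), establishing $b(W) = 3$ for a given $T$ will suffice to force $b(G) = 3$ for every admissible $G$.

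For the generic case $T \notin \{A_5, A_6, A_7, \LL_2(q), \LL_3^\e(q)\}$, Theorem \ref{t:LL_t:1.1} supplies a generating pair $(s, t)$ of $T$ with $|s| = 2$ such that no $\alpha \in \Aut(T)$ sends both $s$ to $s^{-1}$ and $t$ to $t^{-1}$. Because $\la s, t\ra = T$, any $\alpha$ fixing both $s$ and $t$ must fix every element of $T$ and so is trivial, so $C_{\Aut(T)}(s) \cap C_{\Aut(T)}(t) = 1$. Both conditions of Lemma \ref{l:k=2_equiv} are therefore met, making $\{D, D(1, \varphi_s), D(1, \varphi_t)\}$ a base for $W$ of size $3$, and giving $b(W) = 3$.

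Next I would deal with the exceptions $T \in \{A_7, \LL_2(q), \LL_3^\e(q)\}$ where Theorem \ref{t:LL_t:1.1} is known to fail by \cite[Theorem 1.3]{J_pair}. The key observation is that Lemma \ref{l:k=2_equiv} does not require $\la s, t\ra = T$; it only asks for trivial simultaneous fixation and no common inverter. For $T = A_7$ this is a direct finite verification in {\sc Magma}. For the two infinite families, the plan is to take $s$ an involution and $t$ to generate a suitable maximal torus (of order roughly $(q \pm 1)/(2, q-1)$ for $\LL_2(q)$, or $(q^2 + \e q + 1)/(3, q-\e)$ for $\LL_3^\e(q)$), so that $C_{\Aut(T)}(t)$ is contained in a small normaliser whose intersection with $C_{\Aut(T)}(s)$ is visibly trivial, and the set of common inverters is then eliminated by an order/parity argument inside the normaliser of $\la t\ra$; small $q$ will be handled in {\sc Magma}.

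Finally, for the two genuine exceptions $T \in \{A_5, A_6\}$: when $G = W$, a brute-force enumeration of pairs $(s, t) \in T \times T$ using {\sc Magma} shows that one of the conditions of Lemma \ref{l:k=2_equiv} always fails, so $b(W) \geqs 4$, which together with Theorem \ref{t:F}(ii) yields $b(W) = 4$; when $G$ is a proper subgroup of $W$ containing $T^2$, the finitely many such $G$ (parameterised by subgroups of $\Out(T) \times S_2$) are checked in {\sc Magma} to admit a base of size $3$, confirming $b(G) = 3$ in those cases. The main obstacle I anticipate is the infinite-family step for $\LL_2(q)$ and $\LL_3^\e(q)$, since the inverters in $\Aut(T)$ include field and graph automorphisms, and ensuring that no accidental common inverter arises as $q$ varies will require a careful uniform choice of the pair $(s, t)$.
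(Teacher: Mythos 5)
Your overall structure matches the paper's: reduce to $P = S_2$ via $b(G) \leq b(W)$ and the lower bound from Theorem \ref{t:F}(ii), combine Lemma \ref{l:k=2_equiv} with Theorem \ref{t:LL_t:1.1} for the generic case, and use {\sc Magma} for $A_7$, $A_5$ and $A_6$. You correctly spot the essential point that Lemma \ref{l:k=2_equiv} does not require $\langle s,t\rangle = T$, which is what makes the $\LL_2(q)$ and $\LL_3^\e(q)$ cases tractable despite \cite[Theorem 1.3]{J_pair}.

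Where you diverge is in the concrete choice of pair for $\LL_2(q)$ and $\LL_3^\e(q)$, and this is precisely the part you flag as the anticipated obstacle. You propose $s$ an involution and $t$ a torus generator, hoping that $C_{\Aut(T)}(s)\cap N_{\Aut(T)}(\langle t\rangle)=1$. This is not obviously achievable: for $T = \LL_2(q)$ with $q$ odd, $C_{\PGL_2(q)}(s)$ for an involution $s$ is dihedral of order $2(q-\epsilon)$, and $N_{\PGL_2(q)}(\langle t\rangle)$ is dihedral of order $2(q+\epsilon)$; although their cyclic parts are essentially coprime, both are rich in involutions and can share one, so the intersection need not vanish without a delicate choice of $t$. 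Moreover, the Frobenius may centralize a well-chosen involution, so $C_{\PGammaL_2(q)}(s)$ can be strictly larger than $C_{\PGL_2(q)}(s)$, compounding the problem. The paper avoids all of this with a cleaner device: take $s$ to be a torus element of order $(q-1)/(2,q-1)$, observe that $C_{\PGammaL_2(q)}(s)=C_{\PGL_2(q)}(s)$ (so no field automorphisms survive) and that any inverter of $s$ also lies in $\PGL_2(q)$, then invoke a base-two result for $\PGL_2(q)$ acting on cosets of $N_{\PGL_2(q)}(\langle s\rangle)$ to produce $g$ with $N_{\PGL_2(q)}(\langle s\rangle)\cap N_{\PGL_2(q)}(\langle s^g\rangle)=1$, and set $t=s^g$. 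The analogous step for $\LL_3^\e(q)$ uses a maximal torus of type $\GL_1^\e(q^3)$ whose normaliser is maximal in $\Aut(T)$, again appealing to a base-two result from \cite{B_sol}. So your proposal is structurally correct but leaves a genuine gap at the step you identify: the verification that your involution-plus-torus pair satisfies the two conditions of Lemma \ref{l:k=2_equiv} uniformly in $q$, and the paper's route via conjugate torus elements and base-two results is the missing idea that closes it.
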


\begin{proof}
	Recall that $b(G) = 3$ if $P = 1$ by Theorem \ref{t:F}(ii). Thus, we may assume $P = S_2$. By Lemma \ref{l:k=2_equiv} and Theorem \ref{t:LL_t:1.1}, we have $b(G) = 3$ if $T\notin\{A_7,\LL_2(q),\LL_3^\e(q)\}$. The case where $T = A_7$ can be easily handled using {\sc Magma} and we deduce that $b(W) = 3$.
	
	Assume $T = \LL_2(q)$, so $\Aut(T) = \PGammaL_2(q)$. If $q\in\{4,5,9\}$ then $T$ is isomorphic to $A_5$ or $A_6$ and we can prove the proposition with the aid of {\sc Magma}, noting that $b(W) = 4$ and $b(G) = 3$ if $G<W$. Now we consider the cases where $q\notin\{4,5,9\}$. Let $s$ be an element in $T$ of order $(q-1)/(2,q-1)$. Then we have $N_{\PGL_2(q)}(\la s\ra)\cong D_{2(q-1)}$ and
	\begin{equation*}
	C_{\PGammaL_2(q)}(s) = C_{\PGL_2(q)}(s) \cong C_{q-1}.
	\end{equation*}
	One can show that $\PGL_2(q)$ is base-two on $[\PGL_2(q):N_{\PGL_2(q)}(\la s\ra)]$ (see for example \cite[Lemma 4.7]{B_sol}), which implies that there exists $g\in \PGL_2(q)$ such that
	\begin{equation*}
	N_{\PGL_2(q)}(\la s\ra)\cap N_{\PGL_2(q)}(\la s^g\ra) = 1.
	\end{equation*}
	We claim that the pair $(s,s^g)$ satisfies the conditions (i) and (ii) in Lemma \ref{l:k=2_equiv}. Indeed, (i) is clear since $C_{\PGammaL_2(q)}(s) = C_{\PGL_2(q)}(s)$ and so it suffices to check (ii). To do this, first note that there exists an element $\beta\in\PGL_2(q)$ such that $s^\beta = s^{-1}$. Therefore, if $\alpha\in\PGammaL_2(q)$ and $s^\alpha = s^{-1}$, then $\alpha$ is contained in the coset $C_{\PGammaL_2(q)}(s)\beta$. In particular, $\alpha\in\PGL_2(q)$ as $C_{\PGammaL_2(q)}(s) \leqs \PGL_2(q)$. It follows that $\alpha\in N_{\PGL_2(q)}(\la s\ra)$. Similarly, if $(s^g)^\alpha = (s^g)^{-1}$ then $\alpha\in N_{\PGL_2(q)}(\la s^g\ra)$, which yields $\alpha = 1$. This leads to a contradiction as $s$ is not an involution. Thus, $b(G) = 3$ by Lemma \ref{l:k=2_equiv}.
	
	Finally, let us turn to the case where $T = \LL_3^\e(q)$. One can easily check the proposition for $q = 3$ using {\sc Magma}, and we will assume $q\ne 2$ as $\LL_3(2)\cong \LL_2(7)$ has been handled above, and $\UU_3(2)$ is not simple. Let $N$ be a subgroup of $\Aut(T)$ of type $\GL_1^\e(q^3)$. Then $N$ is a maximal subgroup of $\Aut(T)$, and $N\cap T\cong \langle s\rangle{:}C_3$, where $|s| = (q^3-\e)/d(q-\e)$ and $d = (3,q-\e)$ (see \cite[Proposition 4.3.6]{KL_classical}). Note that $N=N_{\Aut(T)}(\la s \ra)$. By \cite[Lemma 6.4]{B_sol}, $\Aut(T)$ is base-two on $[\Aut(T):N]$, so there exists $g\in \Aut(T)$ such that $N_{\Aut(T)}(\la s\ra)\cap N_{\Aut(T)}(\la s^g\ra) = 1$. By repeating the above argument, we deduce that the conditions (i) and (ii) in Lemma \ref{l:k=2_equiv} are satisfied if we take $t = s^g$, which completes the proof.
\end{proof}

The following corollary will be useful in Section \ref{ss:|T|^ell-2..|T|^ell}.

\begin{cor}
	\label{c:pair}
	Suppose $T\notin\{A_5,A_6\}$. Then there exist $x,y\in T$ such that $C_{\Aut(T)}(x)\cap C_{\Aut(T)}(y) = 1$ and there is no $\a\in\Aut(T)$ with $(x,y)^\a = (x^{-1},y^{-1})$.
\end{cor}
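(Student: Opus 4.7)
The plan is to derive this corollary directly from Proposition \ref{p:k=2} together with Lemma \ref{l:k=2_equiv}. Since $T \notin \{A_5, A_6\}$, Proposition \ref{p:k=2} (applied with $k = 2$ and $P = S_2$, so that $G = W = T^2.(\Out(T) \times S_2)$) yields $b(W) = 3$. In particular, there exists a base for $W$ on $\Omega$ of size exactly $3$.

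The first step is to bring such a base into a canonical form so that Lemma \ref{l:k=2_equiv} can be invoked. By the transitivity of $W$ on $\Omega$, we may assume that one of the three base points is the distinguished coset $D \in \Omega$. Using the structure $W = T^2.(\Out(T) \times S_2)$, every coset $Dw$ admits a representative of the form $(\varphi_{t_1}, \varphi_{t_2})$; then left-multiplying by $(\varphi_{t_1^{-1}}, \varphi_{t_1^{-1}}) \in D$ rewrites that coset as $D(1, \varphi_{t_1^{-1} t_2})$. Hence the remaining two base points can be taken to have the form $D(1, \varphi_x)$ and $D(1, \varphi_y)$ for suitable $x, y \in T$ (and they are automatically non-trivial since the three base points are distinct).

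Finally, I would apply Lemma \ref{l:k=2_equiv} to the base $\{D, D(1, \varphi_x), D(1, \varphi_y)\}$: because this triple is a base for $W$, conditions (i) and (ii) of that lemma hold for the pair $(x, y)$, and these are precisely the two conclusions asserted by the corollary. There is no genuine obstacle to overcome here, since the hard work of producing the pair has already been carried out inside the proof of Proposition \ref{p:k=2} (via Theorem \ref{t:LL_t:1.1} for generic $T$, and by ad hoc constructions for the small families $A_7$, $\LL_2(q)$, and $\LL_3^\e(q)$). The corollary simply repackages that output intrinsically in $\Aut(T)$, eliminating the language of $\Omega$ and $W$.
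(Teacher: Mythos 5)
Your proposal is correct and matches the paper's proof exactly: it invokes Proposition \ref{p:k=2} to obtain $b(W) = 3$ and then applies Lemma \ref{l:k=2_equiv}. The only difference is that you spell out the routine step (using transitivity and the coset identity $D(\varphi_{t_1},\varphi_{t_2}) = D(1,\varphi_{t_1^{-1}t_2})$) of bringing a size-$3$ base into the canonical form required by Lemma \ref{l:k=2_equiv}, which the paper's two-line proof leaves implicit.
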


\begin{proof}
	Proposition \ref{p:k=2} implies that the group $W = T^2.(\Out(T)\times S_2)$ has a base of size $3$. Now apply Lemma \ref{l:k=2_equiv}.
\end{proof}

\subsection{The groups with $|T|^{\ell-1}<k\leqs |T|^{\ell}-3$}

\label{ss:|T|^ell-3}

Next, we assume $P\in\{A_k,S_k\}$ and $|T|^{\ell-1}<k\leqs |T|^{\ell}-3$ for some integer $\ell\geqs 1$. The groups with $\ell = 1$ have been handled in Theorem \ref{thm:b(G)=2} and Proposition \ref{p:k=2}, so we may assume $\ell\geqs 2$. In this setting, Theorem \ref{t:F}(iii) implies that $b(G)\in \{\ell+1,\ell+2\}$, and we will show that $b(G) = \ell+1$ by constructing a base for $G$ of size $\ell+1$. We may assume $G = T^k.(\Out(T)\times S_k)$ throughout.

For any partition $\mathcal{P}$ of $[k]$ into $|T|$ parts, where some parts are allowed to be empty, we may write $\mathcal{P} = \{\mathcal{P}_t:t\in T\}$. Recall that $\Hol(T,S)$ is the setwise stabiliser of $S\subseteq T$ in $\Hol(T)$.

\begin{lem}\label{l:cal_P_exists}
	If $\ell\geqs 2$ and $|T|^{\ell-1}<k\leqs |T|^{\ell}-3$, then there exists a partition $\mathcal{P} = \{\mathcal{P}_t:t\in T\}$ of $[k]$ satisfying the following properties:
	\begin{enumerate}\addtolength{\itemsep}{0.2\baselineskip}
		\item[{\rm (P1)}] $|\mathcal{P}_t|\leqs |T|^{\ell-1}$ for all $t\in T$.
		\item[{\rm (P2)}] $|\mathcal{P}_1|\ne 0$ and $\Hol(T,S) = 1$, where
		\begin{equation*}
		S = \{t\in T :|\mathcal{P}_t| = |\mathcal{P}_1|\}.
		\end{equation*}
		\item[{\rm (P3)}] There exists $x\in T^\#$ such that $|\mathcal{P}_x|\in\{1,|T|^{\ell-1}-1\}$.
	\end{enumerate}
\end{lem}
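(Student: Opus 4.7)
\emph{Plan of proof.} Set $n=|T|$ and $m=|T|^{\ell-1}$, so $m\geqs n\geqs 60$ (since $\ell\geqs 2$) and $m<k\leqs nm-3$. The plan is to exploit Theorem~\ref{thm:Hol} together with the transitivity of $\Hol(T)$ on $T$: for any $s\in[3,n-3]$ we can choose $S\subseteq T$ with $1\in S$, $|S|=s$ and $\Hol(T,S)=1$. Fix such an $S$ and a ``common value'' $a\in\{1,m\}$; we set $|\mathcal{P}_t|=a$ for every $t\in S$ and distribute the remaining $k-as$ among the $n-s$ parts indexed by $T\setminus S$ using sizes drawn from $\{0,1,\dots,m\}\setminus\{a\}$. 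With this setup, the level set $\{t\in T:|\mathcal{P}_t|=|\mathcal{P}_1|\}$ equals $S$, so together with $\Hol(T,S)=1$ and $|\mathcal{P}_1|=a\geqs 1$ this gives (P2), while (P1) is automatic from the allowed range of part-sizes.

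I would split into three cases according to the size of $k$. When $m<k\leqs m(n-3)+3$, take $a=1$ and $s=3$; then $k-3\in\{0,2,3,\dots,(n-3)m\}$ (the value $1$ is excluded since $k>m\geqs 60$), and it admits a decomposition into $n-3$ summands from $\{0,2,3,\dots,m\}$ by writing $k-3=qm+r$ and, in the problematic residue case $r=1$, replacing one copy of $m$ by the pair $(m-1)+2$. Property (P3) holds automatically because each of the two elements of $S\setminus\{1\}$ has part-size $1$. When $m(n-3)+3<k\leqs mn-n+3$, take $a=m$ and $s=3$; here $1\leqs k-3m\leqs (n-3)(m-1)$, and I would decompose $k-3m$ into $n-3$ summands from $\{0,1,\dots,m-1\}$ while forcing at least one summand to be in $\{1,m-1\}$ via a short case split on the residue of $k-3m$ modulo $m-1$, which realises (P3). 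Finally, when $mn-n+3<k\leqs mn-3$, take $a=m$ and $s=n+k-mn$, which lies in $[4,n-3]$, and assign $|\mathcal{P}_t|=m-1$ for every $t\notin S$; the total is $ms+(m-1)(n-s)=s+(m-1)n=k$ by the choice of $s$, and (P3) is witnessed by any element outside $S$.

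The main obstacle will be the middle range, where $a=1$ is ruled out (the complement of $S$ cannot absorb $k-3$ without exceeding its capacity $(n-3)m$), so $a=m$ is forced and one must thread the needle to produce a summand in $\{1,m-1\}$ while respecting both the capacity $(n-3)(m-1)$ and the forbidden size $m$; handling this cleanly requires distinguishing roughly three sub-regimes for $k-3m$ (small, intermediate, near-maximal), with the near-maximal one covered by using a summand $m-1$ instead of $1$. Minor technicalities include the residue-$1$ swap in Case~1 and verifying that the three intervals cover all of $(m,nm-3]$: this amounts to $m(n-3)+3\leqs mn-n+3$, i.e.\ $n\leqs 3m$, which is immediate from $m\geqs n$. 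The verifications of (P1)--(P3) in each case are then routine substitutions into the construction.
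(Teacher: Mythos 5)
Your proof is correct, and it organises the construction differently from the paper. Writing $n=|T|$, $m=|T|^{\ell-1}$, the paper splits $(m,nm-3]$ at $3m$ and $nm-2m$: for $k\leqs 3m$ it takes $|S|=3$ and gives the three parts of $S$ size $1$; for $3m<k\leqs nm-2m$ it lets $|S|$ track $k$ (choosing $|S|=m'$ with $m'm<k\leqs (m'+1)m$, giving each part of $S$ size $m$, one part of size $1$, one remainder part, rest empty); and for $nm-2m<k\leqs nm-3$ it takes $|S|=n-3$. You instead hold $|S|=3$ over a much larger stretch, first with common value $a=1$ (covering $k\leqs(n-3)m+3$) and then with $a=m$ (covering up to $k\leqs nm-n+3$), and only at the very top let $|S|$ grow, with every part outside $S$ equal to $m-1$. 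Both approaches hinge on the same two ideas — invoking Theorem \ref{thm:Hol} (plus transitivity of $\Hol(T)$ to insert the identity into $S$) and forcing the level set $\{t:|\mathcal{P}_t|=|\mathcal{P}_1|\}$ to equal $S$ by banning the value $a$ outside $S$ — so the difference is purely in the bookkeeping of the case boundaries. One small remark on your middle case: since $k-3m>m(n-6)+3>m-1$ throughout that range, the greedy decomposition $k-3m=q(m-1)+r$ automatically has $q\geqs 1$, so a part of size $m-1$ is always present and the residue case split you anticipate needing for (P3) is not actually necessary.
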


\begin{proof}
	First assume $|T|^\ell-2|T|^{\ell-1}<k\leqs |T|^\ell-3$. In view of Theorem \ref{thm:Hol}, let $S$ be a subset of $T$ containing $1$ with $|S| = |T|-3$ and $\Hol(T,S) = 1$, and let $\{x_1,x_2,x_3\} = T\setminus S$. Now define $\mathcal{P} = \{\mathcal{P}_t:t\in T\}$, where $|\mathcal{P}_t| = |T|^{\ell-1}$ if $t\in S$, and $|\mathcal{P}_{x_i}| \leqs |T|^{\ell-1} - 1$ with $|\mathcal{P}_{x_1}| = |T|^{\ell-1}-1$ and
	\begin{equation*}
	|\mathcal{P}_{x_2}|+|\mathcal{P}_{x_3}| = k-(|T|-2)|T|^{\ell-1}+1.
	\end{equation*}
	Note that such a partition exists since
	\begin{equation*}
	2\leqs k-(|T|-2)|T|^{\ell-1}+1\leqs 2|T|^{\ell-1}-2.
	\end{equation*}
	It is then easy to check that $\mathcal{P}$ satisfies the conditions (P1)--(P3).
	
	Now assume $3|T|^{\ell-1} < k\leqs |T|^\ell - 2|T|^{\ell-1}$. Then there exists an integer $m$ such that $3\leqs m\leqs |T|-3$ and $m|T|^{\ell-1}< k \leqs  (m+1)|T|^{\ell-1}$. By Theorem \ref{thm:Hol}, there exists a subset $S\subseteq T$ containing $1$ with $|S| = m$ and $\Hol(T,S)=1$. Let $x_1,x_2\in T\setminus S$ and define $\mathcal{P} = \{\mathcal{P}_t:t\in T\}$, where $|\mathcal{P}_t| = |T|^{\ell-1}$ if $t\in S$, $|\mathcal{P}_{x_1}| = 1$ and $|\mathcal{P}_{x_2}| = k-m|T|^{\ell-1}-1$, noting that $0\leqs k-m|T|^{\ell-1}-1< |T|^{\ell-1}$. One can check (P1)--(P3) easily.
	
	To complete the proof, we assume $|T|^{\ell -1}<k\leqs 3|T|^{\ell -1}$ and let $S = \{t_1,t_2,t_3\}\subseteq T$ be such that $t_1 = 1$ and $\Hol(T,S) = 1$. In this setting, let $x_1,x_2,x_3\in T\setminus S$ and define $\mathcal{P} = \{\mathcal{P}_t:t\in T\}$, where $|\mathcal{P}_{t_i}| = 1$, and $|\mathcal{P}_{x_i}|\leqs |T|^{\ell-1}$ with  $|\mathcal{P}_{x_i}|\ne 1$ and $|\mathcal{P}_{x_1}|+|\mathcal{P}_{x_2}|+|\mathcal{P}_{x_3}| = k-3$. We conclude the proof by noting that $\mathcal{P}$ satisfies the conditions (P1)--(P3).
\end{proof}

For the remainder of this subsection, $\mathcal{P} = \{\mathcal{P}_t:t\in T\}$ is a partition of $[k]$ satisfying the conditions in Lemma \ref{l:cal_P_exists}, where $S\subseteq T$ and $x\in T^\#$ are as described in (P2) and (P3), respectively. Define $\mathbf{a}_0 = (\varphi_{t_{0,1}},\dots,\varphi_{t_{0,k}}) \in \Inn(T)^k$ by $t_{0,j} = t$ if $j\in \mathcal{P}_t$.

\begin{lem}
	\label{l:a0}
	Suppose $(\alpha,\dots,\alpha)\pi\in G_{D\mathbf{a}_0}$. Then $\alpha = 1$ and $\pi\in P_{(\mathcal{P})}$.
\end{lem}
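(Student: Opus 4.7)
The plan is to use Lemma \ref{l:l:3.4_diag_ext}(i) to control the permutation part, and then translate the remaining information about $\alpha$ and the coset representative into an element of $\Hol(T)$ which must stabilise the distinguished set $S$ from (P2), so that (P2) collapses it to the identity.

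First, I would apply Lemma \ref{l:l:3.4_diag_ext}(i) with $\mathbf{x} = \mathbf{a}_0$ and $\mathcal{P}_{\mathbf{x}} = \mathcal{P}$ to conclude that $\pi \in P_{\{\mathcal{P}\}}$, i.e.\ $\pi$ permutes the parts of $\mathcal{P}$, and that there exists $g \in T$ with
\begin{equation*}
t_{0,i}^{\alpha} = g\, t_{0,i^{\pi}} \qquad \text{for all } i \in [k].
\end{equation*}
I cannot invoke part (ii) of that lemma directly, because the condition $|\mathcal{P}_1| \ne |\mathcal{P}_t|$ for every $t \ne 1$ is not in our hypotheses; this is the main obstacle, and (P2) is precisely what is designed to replace it.

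Next, I would unpack the displayed equation combinatorially. Fix $t \in T$ with $\mathcal{P}_t \ne \emptyset$; for any $i \in \mathcal{P}_t$ we have $t_{0,i} = t$, so $t_{0,i^{\pi}} = g^{-1} t^{\alpha}$, meaning $i^{\pi} \in \mathcal{P}_{g^{-1}t^{\alpha}}$. Since $\pi$ is a bijection that permutes the parts, it follows that $\pi(\mathcal{P}_t) = \mathcal{P}_{g^{-1}t^{\alpha}}$, and in particular $|\mathcal{P}_t| = |\mathcal{P}_{g^{-1}t^{\alpha}}|$ for every $t \in T$. Consequently the bijection $\sigma\colon T \to T$, $t \mapsto g^{-1}t^{\alpha}$, preserves every level set of the size function $t \mapsto |\mathcal{P}_t|$; taking the level set corresponding to $|\mathcal{P}_1|$ gives $\sigma(S) = S$.

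The final step is to identify $\sigma$ with an element of $\Hol(T)$. Setting $h = g^{\alpha^{-1}} \in T$ and using the convention $t^{h\alpha} = (h^{-1}t)^{\alpha}$ one checks at once that $t^{h\alpha} = (h^{-1})^{\alpha} t^{\alpha} = g^{-1}t^{\alpha} = \sigma(t)$, so $h\alpha \in \Hol(T,S)$. By (P2) this forces $h\alpha = 1$, hence $\alpha = 1$ and $g = h^{\alpha^{-1}} = 1$. Substituting $\alpha = 1$ and $g = 1$ into the relation from Step~1 gives $t_{0,i} = t_{0,i^{\pi}}$ for every $i$, which is exactly the statement that $\pi$ fixes each $\mathcal{P}_t$ setwise; that is, $\pi \in P_{(\mathcal{P})}$, as required. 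Note that conditions (P1) and (P3), though part of the hypotheses, play no role in this particular lemma; only (P2) is needed here.
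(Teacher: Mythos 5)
Your proposal is correct and follows essentially the same route as the paper: invoke Lemma \ref{l:l:3.4_diag_ext}(i) to get $\pi\in P_{\{\mathcal{P}\}}$ and the relation $t_{0,j}^\alpha=gt_{0,j^\pi}$, observe that $\pi$ preserves part sizes so that the induced map $t\mapsto g^{-1}t^\alpha$ stabilises $S$, identify this map with the element $g^{\alpha^{-1}}\alpha\in\Hol(T,S)=1$ to force $g=\alpha=1$, and conclude $\pi\in P_{(\mathcal{P})}$. Your explicit introduction of $\sigma$ and the remark that only (P2) is used are harmless elaborations of the same argument.
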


\begin{proof}
	First note that there exists a unique $g\in T$ such that $t_{0,j}^\a = gt_{0,j^\pi}$ for all $j\in [k]$, and we have $\pi\in P_{\{\mathcal{P}\}}$ by Lemma \ref{l:l:3.4_diag_ext}(i). This implies that $\pi$ fixes the set $\{\mathcal{P}_t:t\in S\}$, and thus $g^{-1}t^\a\in S$ if $t\in S$, whence $g^{\a^{-1}}\a\in\Hol(T,S) = 1$. It follows that $g = 1$ and $\a = 1$, so $t_{0,j} = t_{0,j^\pi}$ for all $j\in [k]$, which concludes the proof.
\end{proof}

Write $T^{\ell-1} = \{\mathbf{b}_1,\dots,\mathbf{b}_{|T|^{\ell-1}}\}$, where $\mathbf{b}_h = (a_{1,h},\dots,a_{\ell-1,h})$. If $|\mathcal{P}_x| = 1$, then we may assume $\mathbf{b}_1 = (1,\dots,1)$, and if $|\mathcal{P}_x| = |T|^{\ell-1}-1$, we assume $\mathbf{b}_{|T|^{\ell-1}} = (1,\dots,1)$. Let $1\leqs i\leqs \ell-1$ and define $\mathbf{a}_i = (\varphi_{t_{i,1}}, \dots,\varphi_{t_{i,k}})\in\Inn(T)^k$, where $t_{i,j} = a_{i,h}$ if $j$ is the $h$-th smallest number in $\mathcal{P}_t$.
Define $X_{i,t}:=\{j\in\mathcal{P}_x:t_{i,j} = t\}$.

\begin{lem}
	\label{l:g}
	For any $t\in T^\#$ and $i\in\{1,\dots,\ell-1\}$, we have $|X_{i,t}|\ne |X_{i,1}|$.
\end{lem}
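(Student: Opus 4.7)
The plan is to verify the inequality by direct counting in the two subcases permitted by property (P3), namely $|\mathcal{P}_x|\in\{1,|T|^{\ell-1}-1\}$. The key observation is that, by the definition of $\mathbf{a}_i$, the quantity $|X_{i,t}|$ simply counts the number of indices $h\in\{1,\dots,|\mathcal{P}_x|\}$ with $a_{i,h}=t$, so the question reduces to examining the $i$-th coordinates of the vectors $\mathbf{b}_1,\dots,\mathbf{b}_{|\mathcal{P}_x|}$. Both enumerations of $T^{\ell-1}$ set up before the lemma place the all-ones vector $(1,\dots,1)$ in a distinguished position (first when $|\mathcal{P}_x|=1$, last when $|\mathcal{P}_x|=|T|^{\ell-1}-1$), and it is exactly this asymmetry that forces $|X_{i,1}|\neq|X_{i,t}|$ for every $t\in T^{\#}$.

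If $|\mathcal{P}_x|=1$, then the chosen enumeration has $\mathbf{b}_1=(1,\dots,1)$, so $a_{i,1}=1$ for every $i$. Hence $|X_{i,1}|=1$ and $|X_{i,t}|=0$ for every $t\in T^{\#}$, which gives the inequality immediately.

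If $|\mathcal{P}_x|=|T|^{\ell-1}-1$, then the chosen enumeration has $\mathbf{b}_{|T|^{\ell-1}}=(1,\dots,1)$, so the vectors indexed by $h\in\{1,\dots,|\mathcal{P}_x|\}$ are precisely those of $T^{\ell-1}$ with the all-ones vector removed. Since each element of $T$ occurs in the $i$-th coordinate of exactly $|T|^{\ell-2}$ vectors of $T^{\ell-1}$, deleting the single all-ones vector subtracts one from this count only at $t=1$. Therefore $|X_{i,1}|=|T|^{\ell-2}-1$ while $|X_{i,t}|=|T|^{\ell-2}$ for every $t\in T^{\#}$, and the two quantities differ.

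There is no real obstacle here: the lemma is a bookkeeping check verifying that the distinguished placement of $(1,\dots,1)$ in the enumeration of $T^{\ell-1}$ produces the intended imbalance. The only mild subtlety is to keep track of which convention applies in which subcase and to use the uniform distribution of the $i$-th coordinate across $T^{\ell-1}$ — both of which are elementary.
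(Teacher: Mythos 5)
Your proof is correct and follows essentially the same approach as the paper: split on the two values of $|\mathcal{P}_x|$ allowed by (P3) and count how often each element of $T$ appears in the $i$-th coordinate of the relevant $\mathbf{b}_h$. In fact your count in the second case ($|X_{i,1}| = |T|^{\ell-2}-1$ and $|X_{i,t}| = |T|^{\ell-2}$ for $t \in T^\#$) corrects what appears to be a typographical slip in the paper's proof, which misstates both the value of $|\mathcal{P}_x|$ and the resulting counts as $|T|^{\ell-1}-1$ and $|T|^{\ell-1}$; since $|X_{i,t}| \le |\mathcal{P}_x| = |T|^{\ell-1}-1$, your figures are the ones that are actually right.
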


\begin{proof}
	If $|\mathcal{P}_x| = 1$, then $\mathbf{b}_1 = (1,\dots,1)$, so $|X_{i,1}| = 1$ and $|X_{i,t}| = 0$ for all $t\in T^\#$. And if $|\mathcal{P}_x| = |T|^{\ell-1}$, then $\mathbf{b}_{|T|^{\ell-1}} = (1,\dots,1)$, which implies that $|X_{i,1}| = |T|^{\ell-1}-1$ and $|X_{i,t}| = |T|^{\ell-1}$ for all $t\in T^\#$.
\end{proof}

\begin{prop}
	\label{p:|T|^ell-3}
	If $\ell\geqs 2$, $P\in\{A_k,S_k\}$ and $|T|^{\ell-1}<k\leqs |T|^\ell-3$, then $b(G) = \ell+1$.
\end{prop}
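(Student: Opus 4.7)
The goal, since Theorem \ref{t:F}(iii) already supplies the lower bound $b(G)\geqs \ell+1$, is to produce an explicit base of size $\ell+1$; as usual, we may assume $G = T^k.(\Out(T)\times S_k)$. My proposed base is
\[
\Delta = \{D,\, D\mathbf{a}_0,\, D\mathbf{a}_1,\, \dots,\, D\mathbf{a}_{\ell-1}\},
\]
a set of at most $\ell+1$ points in $\Omega$. The entire argument reduces to showing that its pointwise stabiliser $G_{(\Delta)}$ is trivial, which automatically forces $|\Delta|=\ell+1$ and completes the proof.

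The verification would split into three natural steps, given an element $(\alpha,\dots,\alpha)\pi \in G_{(\Delta)}$. First, since $(\alpha,\dots,\alpha)\pi$ fixes $D\mathbf{a}_0$, Lemma \ref{l:a0} immediately gives $\alpha = 1$ and $\pi \in P_{(\mathcal{P})}$, so $\pi$ preserves every part $\mathcal{P}_t$ setwise. Second, for each $1 \leqs i \leqs \ell-1$, fixing $D\mathbf{a}_i$ together with $\alpha = 1$ yields some $g_i \in T$ with $t_{i,j^\pi} = g_i t_{i,j}$ for all $j \in [k]$; restricting this identity to $j \in \mathcal{P}_x$ shows that $\pi$ maps $X_{i,t}$ bijectively onto $X_{i,g_i t}$, so $|X_{i, g_i^{-1}}| = |X_{i,1}|$, and Lemma \ref{l:g} then forces $g_i = 1$. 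Third, we now have $t_{i,j^\pi} = t_{i,j}$ for all $i \in \{1,\dots,\ell-1\}$ and $j\in[k]$; for any $j$, both $j$ and $j^\pi$ lie in the same part $\mathcal{P}_{t_{0,j}}$ and, by the construction defining $\mathbf{a}_1,\dots,\mathbf{a}_{\ell-1}$, are labelled by the same vector $\mathbf{b}_h \in T^{\ell-1}$. Since the enumeration of $T^{\ell-1}$ is injective, $j^\pi = j$, hence $\pi = 1$ and $G_{(\Delta)}=1$.

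The main obstacle is not this final deduction, which is a direct verification once the ingredients are in place, but rather the preparatory work already carried out in Lemma \ref{l:cal_P_exists}. That lemma uses Theorem \ref{thm:Hol} to simultaneously arrange two incompatible-looking demands on the partition $\mathcal{P}$: the parts of size $|\mathcal{P}_1|$ must form a subset $S$ of $T$ with $\Hol(T,S)=1$ (so Step 1 forces $\pi$ to preserve every single part, not merely the set of parts of each size), while the single part $\mathcal{P}_x$ must have an extremal cardinality $1$ or $|T|^{\ell-1}-1$ (so that Lemma \ref{l:g} can annihilate the translation ambiguity in Step 2). Once these two properties coexist, the remaining choice of $\mathbf{a}_1,\dots,\mathbf{a}_{\ell-1}$ via an enumeration of $T^{\ell-1}$ is natural, and the injectivity of that enumeration converts ``$g_i = 1$ for all $i$'' into ``$\pi = 1$'' with no further work.
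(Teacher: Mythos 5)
Your proposal is correct and follows the paper's own proof almost verbatim: the same base $\Delta=\{D,D\mathbf{a}_0,\dots,D\mathbf{a}_{\ell-1}\}$, the same three-step deduction ($\alpha=1$ and $\pi\in P_{(\mathcal{P})}$ from Lemma \ref{l:a0}; $g_i=1$ from Lemma \ref{l:g}; and $\pi=1$ from the injectivity of the $T^{\ell-1}$-labelling within each part of $\mathcal{P}$). The only notational difference is that your $g_i$ is the paper's $g_i^{-1}$, which is immaterial.
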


\begin{proof}
	As noted above, it suffices to show that $\Delta=\{D,D\mathbf{a}_0,D\mathbf{a}_1\dots,D\mathbf{a}_{\ell-1}\}$ is a base for $G$. Suppose $(\a,\dots,\a)\pi\in G_{(\Delta)}$. By Lemma \ref{l:a0}, we have $\a = 1$ and $\pi\in P_{(\mathcal{P})}$. Note that for any $i\in\{1,\dots,\ell-1\}$, there exists a unique $g_i\in T$ such that $t_{i,j} = g_it_{i,j^\pi}$ for any $j\in[k]$. Now $j\in X_{i,1}$ if and only if $j^\pi\in X_{i,g_i^{-1}}$. This implies that $g_i = 1$ by Lemma \ref{l:g}, and hence $t_{i,j} = t_{i,j^\pi}$ for all $i\in\{1,\dots,\ell-1\}$ and $j\in [k]$.
	
	From the definition of $\mathbf{a}_i$, we see that if $j,j'\in\mathcal{P}_t$ and $j\ne j'$, then there exists $i\in\{1,\dots,\ell-1\}$ such that $t_{i,j}\ne t_{i,j'}$. This yields $j^\pi\ne j'$, so $j^\pi = j$ since $\pi\in P_{\{\mathcal{P}_t\}}$. That is, $\pi\in P_{(\mathcal{P}_t)}$ for all $t\in T$, whence we have $\pi = 1$.
\end{proof}

\subsection{The groups with $|T|^{\ell}-2\leqs k\leqs |T|^\ell$}

\label{ss:|T|^ell-2..|T|^ell}

To complete the proof of Theorem \ref{thm:main}, we turn to the cases where $k\in\{|T|^\ell-2,|T|^\ell-1, |T|^\ell\}$ and $P\in\{A_k,S_k\}$ for some integer $\ell\geqs 1$. The groups with $\ell = 1$ have been treated previously, and we record the result as follows.

\begin{prop}
	\label{p:l=1}
	If $k\in\{|T|-2,|T|-1,|T|\}$ and $P\in\{A_k,S_k\}$, then
	\begin{equation*}
	b(G) = 
	\begin{cases}
	2 & \mbox{if $k\in\{|T|-2,|T|-1\}$ and $S_k\not\leqs G$;}\\
	3 & \mbox{otherwise.}
	\end{cases}
	\end{equation*}
\end{prop}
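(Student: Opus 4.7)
The plan is to dispose of the three values $k = |T|-2$, $|T|-1$, $|T|$ separately, in each case invoking a result that has already been established earlier in the paper; the proposition is essentially a consolidation of those prior results.

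For $k = |T|$, I would observe that $|T|^0 = 1 < k \leqs |T| = |T|^1$, so $\ell = 1$ in the notation of Theorem \ref{t:F}(iii). The ``moreover'' clause of that theorem covers exactly the case $k = |T|$ and yields $b(G) = \ell + 2 = 3$, with no hypothesis on whether $S_k \leqs G$. This is precisely the value asserted by the ``otherwise'' branch of the proposition. Note also that by Corollary \ref{c:Ak<G} we automatically have $A_k \leqs G$, so $G$ is well-defined in the sense of the hypothesis $P \in \{A_k,S_k\}$.

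For $k \in \{|T|-1, |T|-2\}$, Proposition \ref{p:k=|T|-1,|T|-2} already supplies the desired dichotomy: part (i) gives $b(G) = 3$ when $S_k \leqs G$, while part (ii) gives $r(G) \geqs 2$ when $S_k \not\leqs G$, which by definition of $r(G)$ forces $b(G) \leqs 2$ and hence $b(G) = 2$ (the lower bound $b(G) \geqs 2$ is trivial since $G$ acts faithfully on a set of size $|T|^{k-1} > 1$). Combining these cases completes the proof.

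There is no real obstacle here, since all the genuine work lies inside the cited results. The only substantive point worth double-checking is that, for $k \in \{|T|-2, |T|-1\}$, Theorem \ref{t:F}(iii) on its own only pins $b(G)$ down to the two-element set $\{\ell+1, \ell+2\} = \{2, 3\}$; but this residual gap is precisely what Proposition \ref{p:k=|T|-1,|T|-2} was designed to close, so the assembly goes through cleanly.
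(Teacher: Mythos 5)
Your proof is correct and takes the same approach as the paper, which disposes of the proposition simply by combining Theorem \ref{t:F}(iii) with Proposition \ref{p:k=|T|-1,|T|-2}. (One small quibble: the parenthetical justification that $b(G)\geqs 2$ because $G$ acts faithfully on a set of size $>1$ is not valid reasoning — a regular action is faithful and has base size $1$ — but the bound $b(G)\geqs 2$ is already supplied by Theorem \ref{t:F}(iii), which you cite, or alternatively by noting that $|T^k|>|\Omega|$ so no point stabiliser can be trivial.)
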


\begin{proof}
	Combine Theorem \ref{t:F}(iii) and Proposition \ref{p:k=|T|-1,|T|-2}.
\end{proof}

From now on, we assume $\ell\geqs 2$. We start with the groups with $S_k\not\leqs G$.

\begin{lem}
	\label{l:Ak_notle_G}
	Suppose $k \in  \{|T|^\ell-2,|T|^\ell-1,|T|^\ell\}$ with $\ell\geqs 2$, $P\in\{A_k,S_k\}$ and $S_k\not\leqs G$. Then $b(G) = \ell+1$.
\end{lem}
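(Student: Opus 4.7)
By Theorem~\ref{t:F}(iii) we already have $b(G)\in\{\ell+1,\ell+2\}$, so the task is to exhibit a base of size $\ell+1$. Since $k\geqs|T|^\ell-2\geqs|T|-3$ (using $\ell\geqs 2$), Corollary~\ref{c:Ak<G} gives $A_k\leqs G$, and combined with $S_k\not\leqs G$ this forces the image of $G$ in $\Out(T)\times S_k$ to be the kernel of a surjective homomorphism of the form $\chi(\bar\alpha,\pi)=\psi(\bar\alpha)\cdot\mathrm{sgn}(\pi)$, for some (possibly trivial) $\psi\colon\Out(T)\to C_2$; any other structure of $\chi$ would either lose $A_k\leqs G$ or embed $S_k$ into $G$. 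Thus $(\alpha,\ldots,\alpha)\pi\in G$ iff $\mathrm{sgn}(\pi)=\psi(\bar\alpha)$, and this single parity constraint is the one extra lever we have over $W$.

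I would follow the template of Section~\ref{ss:|T|^ell-3}: choose a partition $\mathcal{P}=\{\mathcal{P}_t:t\in T\}$ of $[k]$, set $\mathbf{a}_0\in\Inn(T)^k$ with $t_{0,j}=t$ on $\mathcal{P}_t$, and build $\mathbf{a}_1,\ldots,\mathbf{a}_{\ell-1}$ from a fixed enumeration $\mathbf{b}_1,\ldots,\mathbf{b}_{|T|^{\ell-1}}$ of $T^{\ell-1}$ (with $\mathbf{b}_1=(1,\ldots,1)$), declaring $t_{i,j}$ to be the $i$th coordinate of the tuple attached to $j$. The block shapes are: all blocks of size $|T|^{\ell-1}$ for $k=|T|^\ell$; one block $\mathcal{P}_1$ of size $|T|^{\ell-1}-1$ and the rest full for $k=|T|^\ell-1$; and for $k=|T|^\ell-2$, either two blocks $\mathcal{P}_1,\mathcal{P}_y$ of size $|T|^{\ell-1}-1$ with $y\in T^\#$ from Corollary~\ref{c:pair} (when $T\notin\{A_5,A_6\}$), or a single block of size $|T|^{\ell-1}-2$ whose two missing tuples have first-coordinate pair $(u,v)$ with $\Aut(T,\{u,v\})=1$ coming from Corollary~\ref{cor:Aut}. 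Condition (c) of Theorem~\ref{thm:main} is ruled out by $S_k\not\leqs G$, so Corollary~\ref{c:pair} applies whenever $T\notin\{A_5,A_6\}$.

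Given $(\alpha,\ldots,\alpha)\pi\in G_{(\Delta)}$, Lemma~\ref{l:l:3.4_diag_ext}(i) gives $\pi\in P_{\{\mathcal{P}\}}$, and running the per-refinement matching of Proposition~\ref{p:|T|^ell-3} shows that the action of $\pi$ on the set of parts is by $g_0^{\alpha^{-1}}\alpha\in\Hol(T)$ while within each full-sized block $\pi$ acts coordinate-wise via $(g_1\alpha,\ldots,g_{\ell-1}\alpha)\in\Hol(T)^{\ell-1}$. When $k\in\{|T|^\ell-2,|T|^\ell-1\}$, the small block(s) are stabilised by $\pi$, which forces $g_0=1$ (handling possible $1\leftrightarrow y$ swaps via Corollary~\ref{c:pair}) and forces the $\Hol(T)^{\ell-1}$-tuple to fix the distinguished missing tuple(s). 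With the enumeration chosen as above, Corollary~\ref{cor:Aut} then pins $g_1=\cdots=g_{\ell-1}=1$ in terms of $\alpha$ and leaves at most a one-parameter family indexed by $\alpha\in\Aut(T)$; the parity constraint $\psi(\bar\alpha)=\mathrm{sgn}(\pi)$ selects only $\alpha=1$, because the sign of $\pi$ on $[k]$ admits an explicit cycle-by-cycle formula that matches $\psi(\bar\alpha)$ only at the identity.

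The hard case is $k=|T|^\ell$: the partition is completely $\Hol(T)$-symmetric, the refinement matching leaves a full $\Hol(T)^\ell$-family of candidate stabilisers, and because $|T|$ is even a naively symmetric refinement gives $\mathrm{sgn}(\pi)=+1$ for every non-identity parameter, so parity alone cannot kill the family. My plan is to twist the enumeration inside block $\mathcal{P}_t$ by a $t$-dependent permutation of $T^{\ell-1}$ (for example, precompose the listing with coordinate-wise left translation by a function of $t$ supported on a size-two $\Hol(T)$-orbit) chosen so that per-refinement matching still confines the candidate stabilisers to a coordinate-wise $\Hol(T)^\ell$-family but now the sign of $\pi$ becomes a non-trivial function of $(\alpha,g_0,\ldots,g_{\ell-1})$ that equals $\psi(\bar\alpha)$ only at the identity. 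Calibrating the twist so that (a) the twisted fibre-by-fibre matching survives and (b) the cycle count on $T^\ell$ yields exactly the required sign dependence is the delicate step where the hypothesis $S_k\not\leqs G$ is consumed.
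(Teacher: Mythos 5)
Your outline is genuinely different from the paper's proof, and it has a real gap that you yourself flag: the case $k=|T|^\ell$ is not resolved. You write that a symmetric block-wise refinement gives $\mathrm{sgn}(\pi)=+1$ for every non-identity residual stabiliser, and propose to "twist the enumeration inside block $\mathcal{P}_t$" so that the sign becomes non-trivial, but no concrete twist is given and no argument is made that such a twist exists and works. Moreover, even for $k\in\{|T|^\ell-2,|T|^\ell-1\}$, the crucial assertion that "the sign of $\pi$ on $[k]$ admits an explicit cycle-by-cycle formula that matches $\psi(\bar\alpha)$ only at the identity" is left entirely unsubstantiated, and it is not obvious: with undersized blocks and a standard enumeration, the residual one-parameter family of stabilisers need not have odd sign at every non-identity parameter. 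Finally, your special handling of $T\in\{A_5,A_6\}$ via Corollary~\ref{c:pair} is unnecessary for the $k=|T|^\ell-2$ case, since Corollary~\ref{cor:Aut} already supplies $x,y\in T^\#$ with $\Aut(T,\{x,y\})=1$ for every simple $T$, and the claim that the image of $G$ in $\Out(T)\times S_k$ is literally a kernel of a homomorphism $\chi$ as you describe is imprecise (the $\Out(T)$-projection of $G$ may be a proper subgroup).

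The idea you are missing is to make one block \emph{oversized} rather than undersized, which gives a uniform argument for all three values of $k$ with no sign bookkeeping at all. Take $|\mathcal{P}_1|=|T|^{\ell-1}+1$, $|\mathcal{P}_x|=|T|^{\ell-1}-1$, $|\mathcal{P}_t|=|T|^{\ell-1}$ for $t\notin\{1,x,y\}$, and let $\mathcal{P}_y$ absorb the defect $m\in\{0,1,2\}$; assign the extra index of $\mathcal{P}_1$ the all-ones tuple. By pigeonhole there is then a \emph{unique} pair $\{j_1,j_2\}\subset\mathcal{P}_1$ whose full tuples $(t_{0,j},\dots,t_{\ell-1,j})$ coincide. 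Since $|\mathcal{P}_1|$ is the strictly largest part, Lemma~\ref{l:l:3.4_diag_ext}(ii) applies; the small parts, and in the $k=|T|^\ell$ case the multiplicity pattern in each $\mathbf{a}_i$ ($1$ appearing $|T|^{\ell-1}+1$ times, $y$ appearing $|T|^{\ell-1}-1$ times), force $\alpha\in\Aut(T,\{x,y\})=1$; and then the translation coefficients $g_i$ are forced to be trivial because $\pi$ stabilises $\mathcal{P}_1$ and the collision pair. Once $\alpha=1$ and all $g_i=1$, every index outside $\{j_1,j_2\}$ is fixed, so $\pi\in\langle(j_1,j_2)\rangle$. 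Now the hypothesis $S_k\not\leqs G$ is used in its simplest form: since $A_k\leqs G$, if $G$ contained a transposition of $S_k$ it would contain all of $S_k$; hence $\pi=1$.
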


\begin{proof}
	In view of Theorem \ref{t:F}(iii), it suffices to construct a base for $G$ of size $\ell+1$. Note that $A_k\leqs G$ by Corollary \ref{c:Ak<G}, so $G$ does not contain any transposition in $S_k$.
	
	By Corollary \ref{cor:Aut}, there exist $x,y\in T^\#$ such that $\Aut(T,\{x,y\}) = 1$. Let $\mathcal{P} = \{\mathcal{P}_t:t\in T\}$ be a partition of $[k]$ with $|\mathcal{P}_1| = |T|^{\ell-1}+1$, $|\mathcal{P}_x| = |T|^{\ell-1}-1$ and $|\mathcal{P}_t| = |T|^{\ell-1}$ if $t\notin\{1,x,y\}$. Thus, $|\mathcal{P}_y| = |T|^{\ell-1}-m$ if $k = |T|^\ell-m$, where $m\in\{0,1,2\}$. Now define $\mathbf{a}_0 = (\varphi_{t_{0,1}},\dots,\varphi_{t_{0,k}})\in\Inn(T)^k$ by setting $t_{0,j} = t$ if $j\in\mathcal{P}_t$. We also write $T^{\ell-1} = \{\mathbf{b}_1,\dots,\mathbf{b}_{|T|^{\ell-1}}\}$, where $\mathbf{b}_h = (a_{1,h},\dots,a_{\ell-1,h})$, and we may assume $\mathbf{b}_{|T|^{\ell-1}} = (y,\dots,y)$. Define $\mathbf{a}_i = (\varphi_{t_{i,1}},\dots,\varphi_{t_{i,k}})\in\Inn(T)^k$ for $i\in\{1,\dots,\ell-1\}$, where
	\begin{equation*}
	t_{i,j} = 
	\begin{cases}
	a_{i,h} & \mbox{if $j$ is the $h$-th smallest number in $\mathcal{P}_t$;}\\
	1 & \mbox{if $j$ is the largest number in $\mathcal{P}_1$.}
	\end{cases}
	\end{equation*}
	We claim that $\Delta=\{D,D\mathbf{a}_0,D\mathbf{a}_1,\dots,D\mathbf{a}_{\ell-1}\}$ is a base for $G$.
	
	Suppose $(\a,\dots,\a)\pi\in G_{(\Delta)}$. By Lemma \ref{l:l:3.4_diag_ext}, we have $\pi\in P_{\{\mathcal{P}\}}$ and $t_{0,j}^\a = t_{0,j^\pi}$ for all $j\in [k]$. We first prove that $\a = 1$. To see this, note that if $k\in\{|T|^\ell-2,|T|^\ell-1\}$, then $\pi\in P_{\{\mathcal{P}_x\cup\mathcal{P}_y\}}$, which implies that $\a\in\Aut(T,\{x,y\})$, and thus $\a = 1$ since $\Aut(T,\{x,y\}) = 1$. Now assume $k = |T|^\ell$. Then $\pi\in P_{\{\mathcal{P}_x\}}$ and thus $\a\in C_{\Aut(T)}(x)$. Note that for each $i\in\{1,\dots,\ell-1\}$, $1$ appears exactly $|T|^{\ell-1}+1$ times in the entries of $\mathbf{a}_i$, while $\varphi_y$ appears exactly $|T|^{\ell-1}-1$ times and every other element appears exactly $|T|^{\ell-1}$ times. By arguing as above, we have $t_{i,j}^\a = t_{i,j^\pi}$ for all $i \in \{1,\dots,\ell-1\}$, which implies that $\a\in C_{\Aut(T)}(y)$, and so $\a = 1$ since $\Aut(T,\{x,y\}) = 1$.
	
	Finally, observe that there exists a unique pair $\{j_1,j_2\}$ of elements in $[k]$ such that $j_1\ne j_2$ and $t_{i,j_1} = t_{i,j_2}$ for all $i\in\{0,\dots,\ell-1\}$, where we have $t_{i,j_1} = t_{i,j_2} = 1$. For each $i$, there exists a unique element $g_i\in T$ such that $t_{i,j} = g_it_{i,j^\pi}$ for all $j\in[k]$, so $t_{i,j_1^\pi} = t_{i,j_2^\pi} = g_i^{-1}$. Since $\pi\in P_{\{\mathcal{P}_1\}}$, it follows that $g_i = 1$ and so $t_{i,j} = t_{i,j^\pi}$ for all $j\in[k]$. It is then easy to see that $\pi\in\la (j_1,j_2)\ra$, and thus $\pi = 1$ as $G$ does not contain any transposition in $S_k$.
\end{proof}

\begin{prop}
	\label{p:|T|^ell-1_|T|^ell}
	If $\ell\geqs 2$, $P\in\{A_k,S_k\}$ and $k\in\{|T|^\ell-1,|T|^\ell\}$, then
	\begin{equation*}
	b(G) = 
	\begin{cases}
	\ell+1 & \mbox{if $S_k\not\leqs G$;}\\
	\ell+2 & \mbox{if $S_k\leqs G$.}
	\end{cases}
	\end{equation*}
\end{prop}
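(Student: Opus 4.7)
My plan is to establish the remaining half of the proposition, namely when $S_k\leqs G$; the case $S_k\not\leqs G$ is covered by Lemma \ref{l:Ak_notle_G}. Theorem \ref{t:F}(iii) already supplies the upper bound $b(G)\leqs\ell+2$, so only the lower bound $b(G)\geqs\ell+2$ remains. Since $b(H)\leqs b(G)$ whenever $H\leqs G$, it suffices to prove $b(T^k{:}S_k)\geqs\ell+2$, so I would work with $G=T^k{:}S_k$ throughout. Using the transitivity of the socle $T^k$ on $\Omega$, any putative base of size $\ell+1$ may be translated to the form $\Delta=\{D,D\mathbf{a}_0,\dots,D\mathbf{a}_{\ell-1}\}$ with $\mathbf{a}_m=(\varphi_{t_{m,1}},\dots,\varphi_{t_{m,k}})$, and my goal is to exhibit a non-identity element of $G_{(\Delta)}$.

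The central combinatorial object is the map $\Phi\colon[k]\to T^\ell$ defined by $\Phi(j)=(t_{0,j},\dots,t_{\ell-1,j})$. Replacing the coset representative of each $D\mathbf{a}_m$ alters the $m$-th coordinate of $\Phi$ by a common left translation, so injectivity of $\Phi$ and the image of $\Phi$ up to componentwise left translation are intrinsic. First, if $\Phi$ is not injective then some $i\ne j$ satisfy $t_{m,i}=t_{m,j}$ for all $m$, and the transposition $(i,j)\in S_k\leqs G$ lies in $G_{(\Delta)}$ by Lemma \ref{l:l:3.4_diag_ext} (with $\alpha=1$ and $g_m=1$), contradicting the base hypothesis. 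I may therefore assume $\Phi$ is injective.

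If $k=|T|^\ell$ then $\Phi$ is a bijection. For any non-identity tuple $(g_0,\dots,g_{\ell-1})\in T^\ell$, the map $L\colon T^\ell\to T^\ell$, $(u_0,\dots,u_{\ell-1})\mapsto(g_0^{-1}u_0,\dots,g_{\ell-1}^{-1}u_{\ell-1})$, transports via $\Phi$ to a non-identity permutation $\pi\in S_k$, and the relation $t_{m,l}=g_m t_{m,l^\pi}$ built into this construction shows that $(1,\dots,1)\pi\in G_{(\Delta)}$. If instead $k=|T|^\ell-1$, then $\Phi$ misses exactly one point of $T^\ell$, which (after a permissible common left translation in each coordinate) I may take to be $(1,\dots,1)$. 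For any $t\in T^\#$, componentwise conjugation $C_t$ on $T^\ell$ fixes $(1,\dots,1)$ and hence preserves $\Phi([k])$, so gives a permutation $\pi=\Phi^{-1}\circ C_t\circ\Phi\in S_k$. Noting that $(\varphi_t,\dots,\varphi_t)\in T^k\leqs G$, the element $(\varphi_t,\dots,\varphi_t)\pi\in G$ lies in $G_{(\Delta)}$ (by Lemma \ref{l:l:3.4_diag_ext} with all $g_m=1$) and is non-trivial since $Z(T)=1$.

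The main obstacle is precisely the case $k=|T|^\ell-1$. With $\alpha=1$, the requirement that $L$ fix the missing point forces all $g_m=1$ and hence $\pi=1$, so one is compelled to invoke a non-trivial automorphism. Inner automorphisms are the natural candidates, because they fix $(1,\dots,1)\in T^\ell$ automatically and deliver elements of $T^k$, which lie in $G=T^k{:}S_k$ with no use of the outer structure; this is what makes the argument go through in the minimal group.
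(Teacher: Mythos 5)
Your proof is correct, and on the $S_k \leqs G$ side it takes a genuinely different (more self-contained) route than the paper. The paper's proof of this proposition is a two-line citation: the $S_k\not\leqs G$ case comes from Lemma~\ref{l:Ak_notle_G} (you use this too), and the $S_k\leqs G$ case is handed off entirely to Theorem~\ref{t:F}(iii), i.e.\ to Fawcett's result from \cite{F_diag}, whose proof is not reproduced. You instead re-derive the lower bound $b(G)\geqs\ell+2$ directly: passing to the subgroup $T^k{:}S_k$, encoding a putative $(\ell+1)$-point base as a map $\Phi\colon[k]\to T^\ell$, and exhibiting a non-trivial stabiliser element either from a coincidence of $\Phi$ (yielding a transposition), from a componentwise left translation (when $k=|T|^\ell$), or from componentwise conjugation fixing the missing tuple (when $k=|T|^\ell-1$). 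This is almost certainly the idea underlying Fawcett's argument, but your write-up makes it explicit, which is a genuine gain in transparency; the cost is length relative to the paper's citation-only proof. One small inaccuracy: when you dispose of the case that $\Phi$ is non-injective you attribute the claim ``if $t_{m,i}=t_{m,j}$ for all $m$ then $(i,j)\in G_{(\Delta)}$'' to Lemma~\ref{l:l:3.4_diag_ext}, but that lemma runs in the converse direction (it is an ``only if''); what you actually need is the elementary computation (implicit in the first line of that lemma's proof) that $(\a,\dots,\a)\pi$ fixes $D(\varphi_{s_1},\dots,\varphi_{s_k})$ whenever $s_j^\a=g\,s_{j^\pi}$ for some fixed $g$, applied with $\a=1$, $g=1$, $\pi=(i,j)$. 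This is a citation slip rather than a gap.
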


\begin{proof}
	See Theorem \ref{t:F}(iii) for the groups with $S_k\leqs G$ and Lemma \ref{l:Ak_notle_G} for $S_k\not\leqs G$.
\end{proof}

To conclude this section, we turn to the groups with $k = |T|^\ell-2$ and $S_k\leqs G$. The case where $\ell = 2$ requires special attention.

\begin{lem}
	\label{l:|T|2-2,A5A6}
	Suppose $k = |T|^2-2$, $T \in \{A_5,A_6\}$ and $G = T^k.(\Out(T)\times S_k)$. Then $b(G) = 4$.
\end{lem}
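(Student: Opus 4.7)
Plan. The upper bound $b(G)\leqs 4$ is immediate from Theorem \ref{t:F}(iii) applied with $\ell=2$, so the task is to rule out $b(G)=3$. I will argue by contradiction: suppose $\{D,D\mathbf{a}_1,D\mathbf{a}_2\}$ is a base for $G$, with $\mathbf{a}_i=(\varphi_{t_{i,1}},\dots,\varphi_{t_{i,k}})$, and extract from it a $2$-subset of $T^2$ with trivial stabiliser under a holomorph-like action, contradicting Proposition \ref{p:k=2}.

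First I would encode the pair $(\mathbf{a}_1,\mathbf{a}_2)$ by the function $F\colon[k]\to T^2$ defined by $F(j)=(t_{1,j},t_{2,j})$. Since $P=S_k$, every transposition of $[k]$ lies in $G$, so any coincidence $F(j)=F(j')$ with $j\ne j'$ would put $(j,j')$ in $G_{D\mathbf{a}_1}\cap G_{D\mathbf{a}_2}$, contradicting the base assumption. Hence $F$ is injective, $S:=F([k])$ is a $k$-subset of $T^2$, and its complement $C:=T^2\setminus S$ has size $|T|^2-k=2$. Now let $H:=T^2\rtimes\Aut(T)$, where $\Aut(T)$ acts on $T^2$ diagonally, and let $H$ act on the set $T^2$ via $s\mapsto s^\alpha\mathbf{g}^{-1}$ for $(\alpha,\mathbf{g})\in H$. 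Unwinding the formula for the $G$-action on $\Omega$, an element $(\alpha,\dots,\alpha)\pi\in G$ fixes both $D\mathbf{a}_1$ and $D\mathbf{a}_2$ precisely when there exists $\mathbf{g}\in T^2$ with $F(j^\pi)=F(j)^\alpha\mathbf{g}^{-1}$ for all $j\in[k]$. The injectivity of $F$ then forces $\pi$ to be uniquely determined by $(\alpha,\mathbf{g})$, which must therefore preserve $S$ (equivalently, $C$) setwise under the $H$-action. Consequently, the base assumption is equivalent to the $H$-setwise stabiliser of $C\subseteq T^2$ being trivial.

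Finally, since $T^2$ acts regularly on $T^2$ by right translation, I may translate to assume $(1,1)\in C$, so that $C=\{(1,1),(x,y)\}$ for some $(x,y)\in T^2\setminus\{(1,1)\}$. A direct calculation, splitting into the cases where $(\alpha,\mathbf{g})$ fixes $C$ pointwise (forcing $\mathbf{g}=(1,1)$ and $\alpha\in C_{\Aut(T)}(x)\cap C_{\Aut(T)}(y)$) or swaps its two points (forcing $\mathbf{g}=(x^{-1},y^{-1})$ and $\alpha\in\Aut(T)$ with $x^\alpha=x^{-1}$ and $y^\alpha=y^{-1}$), shows that triviality of the stabiliser is equivalent to the conjunction of
\begin{enumerate}
\item[\rm (a)] $C_{\Aut(T)}(x)\cap C_{\Aut(T)}(y)=1$; and
\item[\rm (b)] there is no $\alpha\in\Aut(T)$ with $x^\alpha=x^{-1}$ and $y^\alpha=y^{-1}$.
\end{enumerate}
But (a) and (b) are exactly the two conditions of Lemma \ref{l:k=2_equiv} characterising when $\{D',D'(1,\varphi_x),D'(1,\varphi_y)\}$ is a base for $W':=T^2.(\Out(T)\times S_2)$, and Proposition \ref{p:k=2} gives $b(W')=4$ for $T\in\{A_5,A_6\}$, so no such pair $(x,y)$ exists (the degenerate cases $x=1$, $y=1$, $x=y$ or $y=x^{-1}$ all cause (a) to fail since $\langle x\rangle\leqs C_{\Aut(T)}(x)$ is non-trivial for $x\ne 1$ in both $A_5$ and $A_6$). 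This contradiction gives $b(G)\geqs 4$, hence $b(G)=4$.

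The heart of the argument is the reduction from a base problem at the large value $k=|T|^2-2$ to a base problem at $k=2$, via taking the complement of an injective $F$. There is no hard estimate or new combinatorial construction required; the main conceptual point is that the stabiliser computation for a $2$-set in $T^2$ under $H$ reproduces verbatim the conditions of Lemma \ref{l:k=2_equiv}, so all the work is already contained in Proposition \ref{p:k=2}.
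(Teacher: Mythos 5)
Your argument is correct and takes a cleaner route than the paper's. Both proofs begin from the same observation: the assignment $j\mapsto(t_{0,j},t_{1,j})$ must be injective (otherwise $G_{(\Delta)}$ contains a transposition), so its image has a $2$-element complement $C\subseteq T^2$, and a non-trivial element of $G_{(\Delta)}$ corresponds to a non-trivial element of the diagonal holomorph $H=T^2{:}\Aut(T)$ stabilising $C$. The paper, however, then does a case analysis on the column multiplicities ($|S_0|,|S_1|\in\{|T|-2,|T|-1\}$), narrowing to complements of the shapes $\{(1,1),(1,x)\}$, $\{(1,1),(x,y)\}$ or $\{(1,y),(x,1)\}$, and disposes of the last two with a fresh {\sc Magma} search over all pairs $(x,y)$. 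You instead translate $C$ to $\{(1,1),(x,y)\}$ at the outset, observe that $H_{\{C\}}=1$ is exactly conditions (i) and (ii) of Lemma \ref{l:k=2_equiv} for the pair $(x,y)$, and then quote Proposition \ref{p:k=2} (which gives $b(T^2.(\Out(T)\times S_2))=4$ for $T\in\{A_5,A_6\}$) to conclude no such pair exists. This reuses the verification already done in Proposition \ref{p:k=2} instead of duplicating it, and it makes the conceptual content explicit: the obstruction at $k=|T|^2-2$ is, via complementation, exactly the obstruction at $k=2$. One small slip in your write-up: from the paper's action formula one obtains $t_{j^\pi}=g^{-1}t_j^\alpha$, so the relation should read $F(j^\pi)=\mathbf{g}^{-1}F(j)^\alpha$ rather than $F(j)^\alpha\mathbf{g}^{-1}$; since you are internally consistent and $(1,1)$ is a two-sided identity, the resulting conditions on the stabiliser of $\{(1,1),(x,y)\}$ come out the same under either convention, so this does not affect the argument.
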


\begin{proof}
	First note by Theorem \ref{t:F}(iii) that we have $b(G)\in\{3,4\}$, so it suffices to show that there is no base for $G$ of size $3$.
	
	We argue by contradiction and suppose $\Delta = \{D,D\mathbf{a}_0,D\mathbf{a}_1\}$ is a base for $G$, where $\mathbf{a}_i = (\varphi_{t_{i,1}},\dots,\varphi_{t_{i,k}})\in\Inn(T)^k$. If $\varphi_t$ appears at least $|T|+1$ times in the entries of $\mathbf{a}_0$ for some $t$, then there exist $j,j'\in[k]$ such that $j\ne j'$, $t_{0,j} = t_{0,j'} = t$ and $t_{1,j} = t_{1,j'}$, which implies that $G_{(\Delta)}$ contains the transposition $(j,j')$. Thus, we may assume that each $\varphi_t$ appears at most $|T|$ times in the entries of $\mathbf{a}_0$. The same argument holds for $\mathbf{a}_1$. It follows that the set
	\begin{equation*}
	S_i = \{t\in T:\varphi_t\mbox{ appears exactly $|T|$ times in the entries of $\mathbf{a}_i$}\}
	\end{equation*}
	has size at least $|T|-2$, so $|S_i|\in\{|T|-2,|T|-1\}$.
	
	First, assume either $|S_0|$ or $|S_1|$ is equal to $|T|-1$, say $|S_0| = |T|-1$ and $1\notin S_0$. For the same reason as above, for any $j,j'$ such that $j\ne j'$ and $t_{0,j} = t_{0,j'}$, we have $t_{1,j}\ne t_{1,j'}$, otherwise $(j,j')\in G_{(\Delta)}$. This implies that $|S_1| = |T|-2$, and we may assume $T\setminus S_1 = \{1,x\}$ for some $x\ne 1$. Write $\mathbf{c}_j = (t_{0,j},t_{1,j})$ for $j\in [k]$, noting that
	\begin{equation*}
	\{\mathbf{c}_j:j\in [k]\} = T^2\setminus\{(1,1),(1,x)\}.
	\end{equation*}
	That is, $\{\mathbf{c}_j:j\in [k]\}$ is fixed by $\varphi_x$ setwise, with the componentwise action. This induces a permutation $\pi\in S_k$, where
	\begin{equation*}
	\mbox{$j^\pi = m$ if $\mathbf{c}_j^{\varphi_x} = \mathbf{c}_m$.}
	\end{equation*}
	In particular, $t_{i,j}^{\varphi_x} = t_{i,j^\pi}$ for each $i\in\{0,1\}$. Then
	\begin{equation*}
	D\mathbf{a}_i^{(\varphi_x,\dots,\varphi_x)\pi} = D(\varphi_{t_{i,1^{\pi^{-1}}}^{\varphi_x}},\dots,\varphi_{t_{i,k^{\pi^{-1}}}^{\varphi_x}}) = D(\varphi_{t_{i,1}},\dots,\varphi_{t_{i,k}}) = D\mathbf{a}_i
	\end{equation*}
	for each $i\in\{0,1\}$, and so $(\varphi_x,\dots,\varphi_x)\pi\in G_{(\Delta)}$.
	
	To complete the proof, we may assume $|S_0| = |S_1| = |T|-2$, say $T\setminus S_0 = \{1,x\}$ and $T\setminus S_1 = \{1,y\}$. Write $\mathbf{c}_j = (t_{0,j},t_{1,j})$ for $j\in [k]$ as above, and observe that
	\begin{equation*}
	T^2\setminus\{\mathbf{c}_j:j\in [k]\} = \{(1,1),(x,y)\}\text{ or }\{(1,y),(x,1)\}.
	\end{equation*}
	It is easy to check with the aid of {\sc Magma} that there exists an automorphism $\alpha\in\Aut(T)$ such that $1\ne\alpha\in C_{\Aut(T)}(x)\cap C_{\Aut(T)}(y)$, or $(x,y)^\a = (x^{-1},y^{-1})$.
	
	Assume $\a\ne 1$ and $(x,y)^\alpha = (x,y)$. Then $\{\mathbf{c}_j:j\in [k]\}$ is fixed by $\alpha$ setwise, with the componentwise action. Once again, $\alpha$ induces a permutation $\pi\in S_k$, where
	\begin{equation*}
	j^\pi = m\text{ if }\mathbf{c}_j^\alpha = \mathbf{c}_m.
	\end{equation*}
	Then by arguing as above, we deduce that $(\a,\dots,\a)\pi\in G_{(\Delta)}$.
	
	Finally, assume $(x,y)^\a = (x^{-1},y^{-1})$ and note that
	\begin{equation*}
	\{\mathbf{c}_j:j\in [k]\}^\alpha = \{(x^{-1},y^{-1})\mathbf{c}_j:j\in [k]\}.
	\end{equation*}
	Here $\alpha$ also induces a permutation $\pi\in S_k$, where
	\begin{equation*}
	j^\pi = m\text{ if } \mathbf{c}_j^\alpha = (x^{-1},y^{-1})\mathbf{c}_m,
	\end{equation*}
	and thus $t_{0,j}^\a = x^{-1}t_{0,j^\pi}$ and $t_{1,j}^\a = y^{-1}t_{1,j^\pi}$ for all $j\in[k]$, noting that $\pi\ne 1$ if $\a = 1$. Now we have
	\begin{equation*}
	D\mathbf{a}_0^{(\a,\dots,\a)\pi} = D(\varphi_{t_{i,1^{\pi^{-1}}}^\a},\dots,\varphi_{t_{i,k^{\pi^{-1}}}^\a}) = D(\varphi_{x^{-1}}\varphi_{t_{i,1}},\dots,\varphi_{x^{-1}}\varphi_{t_{i,k}}) = D\mathbf{a}_0
	\end{equation*}
	and similarly, $D\mathbf{a}_1^{(\a,\dots,\a)\pi} = D\mathbf{a}_1$. This completes the proof.
\end{proof}

\begin{prop}
	\label{p:|T|^2-2}
	If $P\in\{A_k,S_k\}$ and $k = |T|^2-2$, then
	\begin{equation*}
	b(G) = 
	\begin{cases}
	4 & \mbox{if $T\in\{A_5,A_6\}$ and $G = T^k.(\Out(T)\times S_k)$;}\\
	3 & \mbox{otherwise.}
	\end{cases}
	\end{equation*}
\end{prop}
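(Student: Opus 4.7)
Theorem \ref{t:F}(iii) gives $b(G) \in \{3, 4\}$, and the case $T \in \{A_5, A_6\}$ with $G = T^k.(\Out(T) \times S_k)$ yields $b(G) = 4$ by Lemma \ref{l:|T|2-2,A5A6}. The task is therefore to exhibit a base of size $3$ in every other case. If $S_k \not\leqs G$ this is immediate from Lemma \ref{l:Ak_notle_G}, so we may assume $S_k \leqs G$, meaning $G/T^k = H \times S_k$ for some subgroup $H \leqs \Out(T)$, with $H \ne \Out(T)$ when $T \in \{A_5, A_6\}$.

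For such $G$, fix a pair $(x, y) \in T^\# \times T^\#$ (to be chosen below) and take $\mathbf{a}_0, \mathbf{a}_1 \in \Inn(T)^k$ whose $k = |T|^2 - 2$ column pairs $(t_{0,j}, t_{1,j})$ enumerate $T^2 \setminus \{(1, 1), (x, y)\}$ without repetition. Set $\Delta = \{D, D\mathbf{a}_0, D\mathbf{a}_1\}$. For any $(\alpha, \dots, \alpha)\pi \in G_{(\Delta)}$, Lemma \ref{l:l:3.4_diag_ext} supplies $g_0, g_1 \in T$ with $(t_{0, j}, t_{1, j})^\alpha = (g_0 t_{0, j^\pi}, g_1 t_{1, j^\pi})$; componentwise this says $\{(1, 1)^\alpha, (x, y)^\alpha\} = (g_0, g_1)\{(1, 1), (x, y)\}$. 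Matching the two missing pairs yields either (A) $(g_0, g_1) = (1, 1)$ and $\alpha \in C_{\Aut(T)}(x) \cap C_{\Aut(T)}(y)$, or (B) $(g_0, g_1) = (x^{-1}, y^{-1})$ and $(x, y)^\alpha = (x^{-1}, y^{-1})$. Since the column pairs are all distinct, $\pi$ is uniquely determined by $(\alpha, g_0, g_1)$, and the resulting element of $W$ lies in $G$ precisely when the image of $\alpha$ in $\Out(T)$ belongs to $H$.

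It therefore suffices to choose $(x, y)$ so that every non-identity $\alpha$ appearing in (A) or (B) has $\Out(T)$-image outside $H$. For $T \notin \{A_5, A_6\}$, Corollary \ref{c:pair} directly furnishes $(x, y)$ making both sets in (A) and (B) trivial, handling every $H$. For $T = A_5$ (where $H = 1$ is the only remaining possibility), the pair $x = (1,2,3)$, $y = (1,2,4) \in A_5$ works: one checks that $\langle x, y \rangle = A_4$ has $C_{S_5}(A_4) = 1$, and that the unique element of $S_5$ inverting both $x$ and $y$ is the transposition $(1, 2)$, which is odd and hence outside $\Inn(T)$. For $T = A_6$ one handles each of the four proper subgroups $H < K_4 = \Out(A_6)$ in turn: for each $H$, a generating pair $(x, y) \in A_6^2$ must be exhibited (conveniently verified via {\sc Magma}) whose common inverters in $\Aut(A_6) = \PGammaL_2(9)$ all project to $\Out(T) \setminus H$. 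The principal obstacle lies here, since Corollary \ref{c:pair} is unavailable for $A_5, A_6$ and (B) cannot be eliminated outright; one must track the $\Out(T)$-coordinate of each common inverter against the prescribed complement of $H$, on a case-by-case basis for the five subgroups $H \leqs \Out(A_6)$ arising for $T = A_6$.
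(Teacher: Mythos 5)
Your proof is essentially the paper's proof: the column pairs of $(\mathbf{a}_0,\mathbf{a}_1)$ enumerate $T^2\setminus\{(1,1),(x,y)\}$, which is exactly what the paper's partition-based definition produces, and your cases (A)/(B) recast the paper's dichotomy $\pi\in P_{\{\mathcal{P}_1\}}\cap P_{\{\mathcal{P}_x\}}$ versus $\mathcal{P}_1^\pi=\mathcal{P}_x$ in a tidier form, with the same final requirements on $(x,y)$ (namely $C_K(x)\cap C_K(y)=1$ and no $\alpha\in K$ with $(x,y)^\alpha=(x^{-1},y^{-1})$, where $K=\Inn(T).H$) supplied by Corollary \ref{c:pair} and {\sc Magma}. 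Two cosmetic points: in case (B) you must also exclude $\alpha=1$, which is automatic once one of $x,y$ is not an involution (guaranteed by your $A_5$ choice and by Corollary \ref{c:pair}), and for $T=A_6$ only the four proper subgroups of $\Out(A_6)\cong C_2\times C_2$ require a {\sc Magma} check, as $H=\Out(T)$ is already covered by Lemma \ref{l:|T|2-2,A5A6}.
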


\begin{proof}
	By Lemmas \ref{l:Ak_notle_G} and \ref{l:|T|2-2,A5A6}, we may assume that $S_k\leqs G$, and $G$ is not $T^k.(\Out(T)\times S_k)$ if $T\in\{A_5,A_6\}$. That is, $G = T^k.(O\times S_k)$ for some $O\leqs \Out(T)$, with $O\ne \Out(T)$ if $T\in\{A_5,A_6\}$. We will prove that $b(G) = 3$ by constructing a base of size $3$.
	
	Write $K = \Inn(T).O\leqs \Aut(T)$. Note that there exist $x,y\in T$ such that $C_K(x)\cap C_K(y) = 1$ and there is no $\a\in K$ with $(x,y)^\a = (x^{-1},y^{-1})$. This can be obtained by Corollary \ref{c:pair} when $T\notin\{A_5,A_6\}$, and the cases where $T\in\{A_5,A_6\}$ can be checked using {\sc Magma} (note that $K < \Aut(T)$ if $T\in\{A_5,A_6\}$). Now let $\mathcal{P} = \{\mathcal{P}_t:t\in T\}$ be a partition of $[k]$ with $|\mathcal{P}_1| = |\mathcal{P}_x| = |T|-1$, and $|\mathcal{P}_t| = |T|$ if $t\notin \{1,x\}$. And we label the elements in $T$ by $T = \{g_1,\dots,g_{|T|}\}$, where $g_1 = 1$ and $g_{|T|} = y$. Define $\mathbf{a}_0 = (\varphi_{t_{0,1}},\dots,\varphi_{t_{0,k}})\in\Inn(T)^k$, where $t_{0,j} = t$ if $j\in \mathcal{P}_t$, and define $\mathbf{a}_1 = (\varphi_{t_{1,1}},\dots,\varphi_{t_{1,k}})\in\Inn(T)^k$ by setting
	\begin{equation*}
	t_{1,j} = 
	\begin{cases}
	g_h & \mbox{if $t\ne 1$ and $j$ is the $h$-th smallest number in $\mathcal{P}_t$;}\\
	g_{h+1} & \mbox{if $j$ is the $h$-th smallest number in $\mathcal{P}_1$.}
	\end{cases}
	\end{equation*}
	Now we claim that $\Delta=\{D,D\mathbf{a}_0,D\mathbf{a}_1\}$ is a base for $G$.
	
	Suppose $(\a,\dots,\a)\pi\in G_{(\Delta)}$, noting that $\a\in K$. By Lemma \ref{l:l:3.4_diag_ext}(i), we have $\pi\in P_{\{\mathcal{P}\}}$, so either $\pi\in P_{\{\mathcal{P}_1\}}\cap P_{\{\mathcal{P}_x\}}$, or $\mathcal{P}_1^\pi = \mathcal{P}_x$, hence there are two cases to consider.
	
	First assume that $\mathcal{P}_1^\pi = \mathcal{P}_x$. There exists a unique $g\in T$ such that $t_{0,j}^\a = gt_{0,j^\pi}$ for all $j\in[k]$, and by taking $j\in\mathcal{P}_1$ we have $g = x^{-1}$. This implies that $x^\a = x^{-1}$ by taking $j\in\mathcal{P}_x$. Let $\mathcal{Q} = \{\mathcal{Q}_t:t\in T\}$ be the partition of $[k]$ defined by setting $j\in\mathcal{Q}_t$ if $t_{1,j} = t$. Then $|\mathcal{Q}_1|=|\mathcal{Q}_y| = |T|-1$, and $|\mathcal{Q}_t| = |T|$ if $t\notin \{1,y\}$. By arguing as above, either $\pi\in P_{\{\mathcal{Q}_1\}}\cap P_{\{\mathcal{Q}_y\}}$ or $\mathcal{Q}_1^\pi = \mathcal{Q}_y$. If the former holds, then
	\begin{equation*}
	(\mathcal{P}_1\cap \mathcal{Q}_1)^\pi = \mathcal{P}_x\cap \mathcal{Q}_1.
	\end{equation*}
	However, as can be seen from the definitions of $\mathbf{a}_0$ and $\mathbf{a}_1$, we have $|\mathcal{P}_1\cap \mathcal{Q}_1| = 0$, while $|\mathcal{P}_x\cap \mathcal{Q}_1| = 1$. This implies that $\mathcal{Q}_1^\pi = \mathcal{Q}_y$, so $y^\a = y^{-1}$ as above. By our assumptions on $x$ and $y$, there is no $\a\in K$ with $(x,y)^\a = (x^{-1},y^{-1})$, which gives a contradiction.
	
	Finally, suppose that $\pi\in  P_{\{\mathcal{P}_1\}}\cap P_{\{\mathcal{P}_x\}}$. First note that $t_{0,j}^\a = t_{0,j^\pi}$ for all $j\in [k]$, so $x^\a = x$. Similarly, we have $\pi\in P_{\{\mathcal{Q}_1\}}\cap P_{\{\mathcal{Q}_y\}}$ and $y^\a = y$. This implies that $\a\in C_K(x)\cap C_K(y) = 1$, and thus $t_{i,j} = t_{i,j^\pi}$ for all $i\in\{0,1\}$ and $j\in [k]$, which yields $\pi = 1$ and completes the proof.
\end{proof}

\begin{prop}
	\label{p:|T|^ell-2}
	If $\ell\geqs 3$, $k = |T|^\ell-2$ and $P\in\{A_k,S_k\}$, then $b(G) = \ell+1$.
\end{prop}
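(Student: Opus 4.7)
The plan is to construct, for every such $G$, an explicit base of size $\ell + 1$; together with the lower bound $b(G) \geqs \ell + 1$ from Theorem \ref{t:F}(iii), this gives $b(G) = \ell + 1$. By Lemma \ref{l:Ak_notle_G} I may assume $S_k \leqs G$, and since any base for the larger group $W = T^k.(\Out(T) \times S_k)$ is automatically a base for $G$, it suffices to treat $G = W$.

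The construction will mirror that of Proposition \ref{p:|T|^ell-3}, adapted to $k = |T|^\ell - 2$. Take the partition $\mathcal{P} = \{\mathcal{P}_t : t \in T\}$ of $[k]$ with $|\mathcal{P}_1| = |T|^{\ell-1} - 2$ and $|\mathcal{P}_t| = |T|^{\ell-1}$ for $t \in T^\#$; since $\ell \geqs 3$ implies $|T|^{\ell-1} \geqs |T|^2 > 2$, the part $\mathcal{P}_1$ is the unique part of its size. Enumerate $T^{\ell-1} = \{\mathbf{b}_1, \ldots, \mathbf{b}_{|T|^{\ell-1}}\}$ with $\mathbf{b}_1 = (1, \ldots, 1)$ and $\mathbf{b}_{|T|^{\ell-1}} = (y_1, \ldots, y_{\ell-1})$, where the tuple $(y_1, \ldots, y_{\ell-1}) \in T^{\ell-1}$ will be chosen so that $\langle y_1, y_2 \rangle = T$ and no $\alpha \in \Aut(T)$ satisfies $y_j^\alpha = y_j^{-1}$ for every $j$. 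Define $\mathbf{a}_0$ by $t_{0,j} = t$ for $j \in \mathcal{P}_t$; for $1 \leqs i \leqs \ell - 1$ define $\mathbf{a}_i$ by labelling the $h$-th smallest position of $\mathcal{P}_t$ with the vector $\mathbf{b}_h$ when $t \neq 1$, and the $h$-th smallest position of $\mathcal{P}_1$ with the vector $\mathbf{b}_{h+1}$, then setting $t_{i,j}$ to the $i$-th component of the vector labelling position $j$. In this way the labels omitted from $\mathcal{P}_1$ are precisely $\mathbf{b}_1$ and $\mathbf{b}_{|T|^{\ell-1}}$.

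To verify that $\Delta = \{D, D\mathbf{a}_0, D\mathbf{a}_1, \ldots, D\mathbf{a}_{\ell-1}\}$ is a base, I would suppose $(\alpha, \ldots, \alpha)\pi \in G_{(\Delta)}$ and apply Lemma \ref{l:l:3.4_diag_ext} to $\mathbf{a}_0$: uniqueness of $|\mathcal{P}_1|$ forces $\pi$ to fix $\mathcal{P}_1$ setwise and the translation constant for $\mathbf{a}_0$ to be trivial, so $\alpha$ fixes $1$ and its action on $T^\#$ matches the permutation that $\pi$ induces on the other parts. For each $i \in \{1, \ldots, \ell - 1\}$, the relation $t_{i,j}^\alpha = g_i t_{i,j^\pi}$ translates through the labelling into a single map $\rho : T^{\ell-1} \to T^{\ell-1}$, $\rho(\mathbf{b}) = \mathbf{g}^{-1} \mathbf{b}^\alpha$ (componentwise with $\mathbf{g} = (g_1, \ldots, g_{\ell-1})$), which governs $\pi$'s action on labels within every part, including $\mathcal{P}_1$. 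Consequently $\rho$ preserves the set of missing labels $\{\mathbf{b}_1, \mathbf{b}_{|T|^{\ell-1}}\}$. If $\rho$ fixes both, evaluation at $\mathbf{b}_1$ forces $\mathbf{g} = \mathbf{1}$, and then evaluation at $\mathbf{b}_{|T|^{\ell-1}}$ gives $y_j^\alpha = y_j$ for all $j$, so $\alpha = 1$ from $\langle y_1, y_2 \rangle = T$; if $\rho$ swaps them, one extracts $y_j^\alpha = y_j^{-1}$ for all $j$, contradicting the choice of the $y_j$. In either case $\alpha = 1$, $\mathbf{g} = \mathbf{1}$, and $\rho$ is the identity, so $\pi$ fixes each position within each part, yielding $\pi = 1$.

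The hardest step will be securing the tuple $(y_1, \ldots, y_{\ell-1})$ with the generation and no-common-inversion properties. For $T \notin \{A_7, \LL_2(q), \LL_3^\e(q)\}$ and any $\ell \geqs 3$, Theorem \ref{t:LL_t:1.1} supplies a valid pair $(y_1, y_2)$ (with $y_1$ an involution so that the condition $y_1^\alpha = y_1^{-1}$ collapses to a centraliser condition), and any elements can be used for $y_j$ with $j \geqs 3$. For the excluded simple groups in the borderline case $\ell = 3$ I expect a direct construction of $(y_1, y_2)$ will be required, analogous to the case-by-case treatment of $\LL_2(q)$ and $\LL_3^\e(q)$ in the proof of Proposition \ref{p:k=2}, accompanied where necessary by a {\sc Magma} check for the finite list of remaining $q$.
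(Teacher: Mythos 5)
Your construction — a single deficient part $\mathcal{P}_1$ of size $|T|^{\ell-1}-2$, with the two omitted labels $\mathbf{b}_1 = (1,\dots,1)$ and $\mathbf{b}_{|T|^{\ell-1}}=(y_1,\dots,y_{\ell-1})$ — is genuinely different from the paper's, which uses two deficient parts $\mathcal{P}_1$ and $\mathcal{P}_x$, each of size $|T|^{\ell-1}-1$, and then distinguishes cases according to whether $\pi$ fixes or swaps them. Your verification that the set is a base is correct as written: because $|\mathcal{P}_1|$ is unique, Lemma~\ref{l:l:3.4_diag_ext}(ii) applies to $\mathbf{a}_0$ directly, $\rho$ is a bijection of $T^{\ell-1}$ stabilising the $2$-element set of omitted labels, and the two resulting subcases yield exactly the centraliser and inversion conditions you state. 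Two remarks on the supporting lemma you invoke: you do not need $y_1$ to be an involution, nor the generation condition $\langle y_1,y_2\rangle = T$ — the argument only needs $C_{\Aut(T)}(y_1)\cap C_{\Aut(T)}(y_2)=1$ and no $\alpha$ inverting both, which is precisely Corollary~\ref{cor:Aut}'s companion Corollary~\ref{c:pair}, available for all $T\notin\{A_5,A_6\}$. Leaning on Theorem~\ref{t:LL_t:1.1} introduces an unnecessary exclusion list and an unnecessary involution hypothesis.

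However, there is a genuine gap that cannot be patched within your framework. For $T\in\{A_5,A_6\}$ and $\ell=3$, your missing-label vector $\mathbf{b}_{|T|^{\ell-1}}$ has only $\ell-1=2$ components, so the conditions you need to impose are precisely: $C_{\Aut(T)}(y_1)\cap C_{\Aut(T)}(y_2)=1$ and no $\alpha\in\Aut(T)$ with $(y_1,y_2)^\alpha = (y_1^{-1},y_2^{-1})$. By Lemma~\ref{l:k=2_equiv} this is equivalent to $\{D,D(1,\varphi_{y_1}),D(1,\varphi_{y_2})\}$ being a base for $W(2,T)$, and Proposition~\ref{p:k=2} shows $b(W(2,T))=4$ for $T\in\{A_5,A_6\}$, so no such pair exists. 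Your hope of a ``direct construction of $(y_1,y_2)$'' for the excluded groups is therefore unrealisable for exactly these two groups at $\ell=3$. (Choosing a different $\mathbf{b}_1=(u_1,u_2)$ does not help: the derived conditions reduce to the same pair-conditions on $u_j^{-1}y_j$.) The paper's two-deficient-part design is what injects a third constraint: the element $x$ that labels the second deficient part $\mathcal{P}_x$ joins $y$ and $z$ from the tuple $\mathbf{b}_{|T|^{\ell-1}}=(y,z,\dots,z)$ to give triple conditions $C_{\Aut(T)}(x)\cap C_{\Aut(T)}(y)\cap C_{\Aut(T)}(z)=1$ and no $\alpha$ with $(x,y,z)^\alpha=(x^{-1},y^{-1},z^{-1})$, which the paper verifies are satisfiable for $A_5$ and $A_6$ by {\sc Magma}. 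To repair your argument you would need to enlarge the pool of constrained elements — for instance by adopting a second deficient part as the paper does, or (for $\ell\geqs 4$) by choosing $y_3$ with care rather than arbitrarily — but as it stands the proposal does not establish $b(G)=4$ when $T\in\{A_5,A_6\}$ and $k=|T|^3-2$.
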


\begin{proof}
	In view of Theorem \ref{t:F}(iii), it suffices to construct a base for $G$ of size $\ell+1$. First note that there exist $x,y,z\in T$ such that
	\begin{equation*}
	C_{\Aut(T)}(x)\cap C_{\Aut(T)}(y)\cap C_{\Aut(T)}(z) = 1
	\end{equation*}
	and there is no $\a\in\Aut(T)$ with
	\begin{equation*}
	(x,y,z)^\a = (x^{-1},y^{-1},z^{-1}).
	\end{equation*}
	To see this, if $T\notin\{A_5,A_6\}$ then we apply Corollary \ref{c:pair}, and if $T\in\{A_5,A_6\}$ then it can be checked using {\sc Magma}. Let $\mathcal{P} = \{\mathcal{P}_t:t\in T\}$ be a partition of $[k]$ with $|\mathcal{P}_1| = |\mathcal{P}_x| = |T|^{\ell-1}-1$ and $|\mathcal{P}_t| = |T|^{\ell-1}$ if $t\notin\{1,x\}$. Write $T^{\ell-1} = \{\mathbf{b}_1,\dots,\mathbf{b}_{|T|^{\ell-1}}\}$, where $\mathbf{b}_h = (a_{1,h},\dots,a_{\ell-1,h})$, and we may assume $\mathbf{b}_1 = (1,\dots,1)$ and $\mathbf{b}_{|T|^{\ell-1}} = (y,z,\dots,z)$. Now define $\mathbf{a}_i = (\varphi_{t_{i,1}},\dots,\varphi_{t_{i,k}})$ for $i\in\{0,\dots,\ell-1\}$, where $t_{0,j} = t$ if $j\in\mathcal{P}_t$, and if $i\geqs 1$,
	\begin{equation*}
	t_{i,j} = 
	\begin{cases}
	a_{i,h} & \mbox{if $t\ne 1$ and $j$ is the $h$-th smallest number in $\mathcal{P}_t$;}\\
	a_{i,h+1} & \mbox{if $j$ is the $h$-th smallest number in $\mathcal{P}_1$.}
	\end{cases}
	\end{equation*}
	We claim that $\Delta = \{D,D\mathbf{a}_0,D\mathbf{a}_1,\dots,D\mathbf{a}_{\ell-1}\}$ is a base for $G$.
	
	We argue as in the proof of Proposition \ref{p:|T|^2-2}. Suppose $(\a,\dots,\a)\pi\in G_{(\Delta)}$, noting that $\pi\in P_{\{\mathcal{P}\}}$ by Lemma \ref{l:l:3.4_diag_ext}(i). It follows that either $\pi\in P_{\{\mathcal{P}_1\}}\cap P_{\{\mathcal{P}_x\}}$ or $\mathcal{P}_1^\pi = \mathcal{P}_x$.
	
	First assume that $\mathcal{P}_1^\pi = \mathcal{P}_x$. Note that there exists a unique $g\in T$ such that $t_{0,j}^\a = gt_{0,j^\pi}$ for all $j\in[k]$. Now $g = x^{-1}$ by taking $j\in\mathcal{P}_1$, and thus $x^\a = x^{-1}$ by taking $j\in\mathcal{P}_x$. Let $\mathcal{Q} = \{\mathcal{Q}_t:t\in T\}$ be the partition of $[k]$ defined by setting $j\in\mathcal{Q}_t$ if $t_{1,j} = t$. Then $|\mathcal{Q}_1| = |\mathcal{Q}_y| = |T|^{\ell-1}-1$, and $|\mathcal{Q}_t| = |T|^{\ell-1}$ if $t\notin\{1,y\}$. By applying Lemma \ref{l:l:3.4_diag_ext}(i) again, we have either $\pi\in P_{\{\mathcal{Q}_1\}}\cap P_{\{\mathcal{Q}_y\}}$ or $\mathcal{Q}_1^\pi = \mathcal{Q}_y$. If $\pi\in P_{\{\mathcal{Q}_1\}}\cap P_{\{\mathcal{Q}_y\}}$, then
	\begin{equation*}
	(\mathcal{P}_1\cap \mathcal{Q}_1)^\pi = \mathcal{P}_x\cap \mathcal{Q}_1,
	\end{equation*}
	which is impossible since $|\mathcal{P}_1\cap \mathcal{Q}_1| = |T|^{\ell-2}-1$, while $|\mathcal{P}_x\cap \mathcal{Q}_1| = |T|^{\ell-2}$. Hence, we have $\mathcal{Q}_1^\pi = \mathcal{Q}_y$, and thus $y^\a = y^{-1}$ with the same argument as above. Now suppose $i\geqs 2$ and let $\mathcal{R} = \{\mathcal{R}_t:t\in T\}$ be the partition of $[k]$ defined by setting $j\in\mathcal{R}_t$ if $t_{i,j} =  t$. Then $|\mathcal{R}_1| = |\mathcal{R}_z| = |T|^{\ell-1}-1$, and $|\mathcal{R}_t| = |T|^{\ell-1}$ if $t\notin\{1,z\}$. By arguing as above, we have $z^\a = z^{-1}$. However, by our assumptions on $x$, $y$ and $z$, there is no automorphism of $T$ simultaneously inverting all three elements, which gives a contradiction.
	
	It follows that $\pi\in P_{\{\mathcal{P}_1\}}\cap P_{\{\mathcal{P}_x\}}$, and with the same reason, we have $\pi\in P_{\{\mathcal{Q}_1\}}$ and $\pi\in P_{\{\mathcal{R}_1\}}$. Hence, $t_{i,j}^\a = t_{i,j^\pi}$ for all $i\in\{0,\dots,\ell-1\}$ and $j\in[k]$. This implies that
	\begin{equation*}
	\a\in C_{\Aut(T)}(x)\cap C_{\Aut(T)}(y)\cap C_{\Aut(T)}(z),
	\end{equation*}
	so $\a = 1$. Moreover, note that if $j,j'\in\mathcal{P}_t$ for some $t\in T$ and $j\ne j'$, then there exists $i\in\{1,\dots,\ell-1\}$ such that $t_{i,j}\ne t_{i,j'}$. Hence, $\pi = 1$ and so $\Delta$ is a base for $G$.
\end{proof}

We conclude that the proof of Theorem \ref{thm:main} is complete by combining Theorem \ref{thm:b(G)=2} with Propositions \ref{p:k=2}, \ref{p:|T|^ell-3}, \ref{p:l=1}, \ref{p:|T|^ell-1_|T|^ell},  \ref{p:|T|^2-2} and \ref{p:|T|^ell-2}.

\section{Proofs of Theorems \ref{thm:Aut(T,S)_to_1} and \ref{thm:prob_Aut}}

\label{s:GRR}

In this final section, we will prove Theorems \ref{thm:Aut(T,S)_to_1} and \ref{thm:prob_Aut}. As introduced in Section \ref{s:intro}, let $\mathbb{Q}_k(T)$ be the probability that a random $k$-element subset of $T^\#$ has a non-trivial setwise stabiliser in $\Aut(T)$. That is,
\begin{equation*}
\mathbb{Q}_k(T) := \frac{|\{R\in\mathscr{S}_k(T):\Aut(T,R)\ne 1\}|}{|\mathscr{S}_k(T)|},
\end{equation*}
where $\mathscr{S}_k(T)$ is the set of $k$-subsets of $T^\#$ (we will simply write $\mathscr{S}_k$ if $T$ is clear from the context). Consider the diagonal type group $G = T^k.(\Out(T)\times S_k)\leqs\mathrm{Sym}(\Omega)$ and recall that
\begin{equation*}
\mathbb{P}_{k}(T):= \frac{|\{(t_1,\dots,t_{k-1})\in T^{k-1}:\{D,D(\varphi_{t_1},\dots,\varphi_{t_{k-1}},1)\}\mbox{ is a base for $G$}\}|}{|T|^{k-1}},
\end{equation*}
which is the probability that a random element in $\Omega$ is in a regular orbit of $G_D = D$.

The following is \cite[Theorem 1.5]{F_diag}.

\begin{thm}
	\label{t:F_t:1.5}
	Let $k\geqs 5$ and let $(T_n)$ be a sequence of non-abelian finite simple groups such that $|T_n|\to\infty$ as $n\to\infty$. Then $\mathbb{P}_k(T_n)\to 1$ as $n\to \infty$.
\end{thm}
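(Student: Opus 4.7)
The plan is to adapt the probabilistic framework of Section~\ref{ss:fix_subsets}, sharpening the estimates to yield the explicit bound $\mathbb{Q}_k(T)<1/|T|$. Since $T^\#$ is $\Aut(T)$-invariant, we have $\Aut(T,R)=\Aut(T,T^\#\setminus R)$, so $\mathbb{Q}_k(T)=\mathbb{Q}_{|T|-1-k}(T)$ and we may assume $5\log_2|T|<k\leqs(|T|-1)/2$. The union bound then gives
\[
\mathbb{Q}_k(T)\binom{|T|-1}{k}\leqs\sum_{\sigma\in\mathcal{R}}|\fix(\sigma,\mathscr{S}_k)|,
\]
where $\mathcal{R}$ denotes the set of prime-order elements of $\Aut(T)$, with $|\mathcal{R}|\leqs|\Aut(T)|<|T|^{4/3}$ by Lemma~\ref{l:|Out(T)|}. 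For $\sigma\in\mathcal{R}$ of order $r$, setting $f=|C_T(\sigma)|-1$ and $m=(|T|-1-f)/r$, we have $f<h(T)\leqs|T|/10$ by Corollary~\ref{c:h(T)_le_|T|/10}, and the fact that fixed $k$-subsets are unions of $\sigma$-orbits gives the identity $|\fix(\sigma,\mathscr{S}_k)|=\sum_u\binom{m}{u}\binom{f}{k-ru}$.

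The main step, for $5\log_2|T|<k\leqs\lfloor|T|/4\rfloor$, is to reuse the Vandermonde-style chain from the proof of Proposition~\ref{p:log}:
\[
|\fix(\sigma,\mathscr{S}_k)|\leqs\sum_u\binom{|T|/2}{u}\binom{h(T)}{k-ru}\leqs\sum_u\binom{|T|/2}{ru}\binom{h(T)}{k-ru}\leqs\binom{\lfloor|T|/2\rfloor+h(T)}{k},
\]
the middle inequality being valid since $ru\leqs k\leqs|T|/4$ keeps both $u$ and $ru$ in the increasing range of $\binom{|T|/2}{\cdot}$. Using $h(T)\leqs|T|/10$, the factor-by-factor estimate
\[
\frac{\binom{\lfloor|T|/2\rfloor+h(T)}{k}}{\binom{|T|-1}{k}}=\prod_{i=0}^{k-1}\frac{\lfloor|T|/2\rfloor+h(T)-i}{|T|-1-i}\leqs\left(\frac{3|T|/5}{|T|-1}\right)^k\leqs\left(\frac{2}{3}\right)^k
\]
holds for $|T|\geqs 60$, the minimum order of a non-abelian finite simple group. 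Summing gives
\[
\mathbb{Q}_k(T)<|T|^{4/3}(2/3)^k<|T|^{4/3-5\log_2(3/2)}<|T|^{-1},
\]
where the last inequality uses $5\log_2(3/2)\approx 2.925>7/3$.

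The complementary range $|T|/4<k\leqs(|T|-1)/2$ is handled by the auxiliary bound $|\fix(\sigma,\mathscr{S}_k)|\leqs 2^{\lfloor|T|/2\rfloor}\binom{h(T)}{\lfloor h(T)/2\rfloor}\leqs 2^{3|T|/5}$ from the same proof, which, compared with the nearly central binomial $\binom{|T|-1}{k}\geqs 2^{H(1/4)(|T|-1)}/\sqrt{|T|}$ (using $H(1/4)\approx 0.811>4/5$), yields an exponential-in-$|T|$ margin over $|T|^{-1}$. The main obstacle is justifying the Vandermonde chain uniformly in $r$ and ensuring a clean transition at $k\approx|T|/4$; however, since the final inequality $5\log_2(3/2)>7/3$ carries substantial slack, there is ample room to absorb constant factors and, if necessary, dispatch a finite list of small-$|T|$ groups by direct computation in the style of Section~\ref{ss:fix_subsets}.
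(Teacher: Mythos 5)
Your argument proves the wrong statement. The theorem you are asked to prove is Theorem~\ref{t:F_t:1.5}: for \emph{fixed} $k\geqs 5$ and $|T_n|\to\infty$, the quantity $\mathbb{P}_k(T_n)$ (the probability that a random pair in $\Omega$ is a base for $G = T_n^k.(\Out(T_n)\times S_k)$) tends to $1$. What you establish instead is the explicit bound $\mathbb{Q}_k(T)<|T|^{-1}$ for $5\log_2|T|<k<|T|-5\log_2|T|$ --- this is exactly Theorem~\ref{thm:prob_Aut}. The two results are genuinely different, for three independent reasons. First, $\mathbb{Q}_k$ concerns $\Aut(T)$-stabilisers of $k$-subsets of $T^\#$, while $\mathbb{P}_k$ concerns $\Hol(T)$-stabilisers (equivalently, $2$-element bases for $G$); the stabiliser conditions are not equivalent. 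Second, and more importantly, the relation between them in Lemma~\ref{l:PQ} reads $\mathbb{Q}_k(T)\leqs 1-\mathbb{P}_{k+1}(T)$, i.e.\ $\mathbb{P}_{k+1}(T)\leqs 1-\mathbb{Q}_k(T)$; this is an \emph{upper} bound on $\mathbb{P}_{k+1}$, so showing $\mathbb{Q}_k$ is small gives no lower bound on $\mathbb{P}_{k+1}$ at all (the paper uses this lemma in the opposite direction, deducing Theorem~\ref{thm:Aut(T,S)_to_1} \emph{from} Theorem~\ref{t:F_t:1.5}). Third, your range of $k$ requires $k>5\log_2|T|$, which excludes every fixed $k$ once $|T|$ is large, so even a valid estimate in your range would say nothing about the constant-$k$ regime of the theorem.

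The obstruction is not just cosmetic: the naive union bound over prime-order elements of $\Hol(T)$ acting on $\mathscr{P}_k$ genuinely fails when $k$ is fixed. For $T=A_n$ the largest summand in Corollary~\ref{c:fix} is roughly $\binom{h(T)}{k}\sim h(T)^k/k!$ with $h(T)=(n-2)!$, and multiplying by $|\Hol(T)|\asymp|T|^{7/3}$ yields something that is \emph{not} $o(\binom{|T|}{k})\sim|T|^k/k!$, because $(n-2)!$ is not $O(|T|^{1-7/(3k)})$ for $k=5$. This is precisely why the range $k>c\log|T|$ is needed in Section~\ref{ss:fix_subsets}, and it is why Fawcett's proof of Theorem~\ref{t:F_t:1.5} (which this paper cites rather than reproves) uses the fixed-point-ratio machinery of Section~\ref{ss:fpr} instead: one bounds $1-\mathbb{P}_k(T)\leqs r_1(G)+r_2(G)+r_3(G)$ using Lemmas~\ref{l:r1}--\ref{l:r3}, and for fixed $k\geqs 5$ each $r_i(G)\to 0$ as $|T|\to\infty$ (e.g.\ $r_1(G)<(k!)^2|T|^{8/3-\lceil k/2\rceil}\to 0$ since $\lceil k/2\rceil\geqs 3>8/3$, $r_2(G)<(|T|/h(T))^{4-k}\to 0$ since $|T|/h(T)\to\infty$, and similarly for $r_3$). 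To prove the stated theorem you should replace the subset-counting union bound with this fixed-point-ratio argument.
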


\begin{lem}
	\label{l:PQ}
	For any $k\geqs 4$, we have $\mathbb{Q}_{k}(T)\leqs 1-\mathbb{P}_{k+1}(T)$.
\end{lem}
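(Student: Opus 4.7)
Write $N = |T|$ and $M = \binom{N-1}{k}$, and set
\begin{equation*}
\mathcal{A} = \{R\in\mathscr{S}_k:\Aut(T,R)\ne 1\},\qquad a = |\mathcal{A}|,
\end{equation*}
so that $\mathbb{Q}_k(T) = a/M$. Let $G = T^{k+1}.(\Out(T)\times S_{k+1})$ and write
\begin{equation*}
\mathcal{B} = \{(t_1,\dots,t_k)\in T^k:\{D,D(\varphi_{t_1},\dots,\varphi_{t_k},1)\}\text{ is not a base for }G\},
\end{equation*}
so that $1-\mathbb{P}_{k+1}(T) = |\mathcal{B}|/N^k$. The plan is to lower bound $|\mathcal{B}|$ by counting two disjoint sources of non-base tuples and then to reduce the claim to an elementary inequality.

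First, I would invoke Lemma \ref{l:Sk_eq_Hol} (applied with $k$ replaced by $k+1$ and $t_{k+1}=1$): $(t_1,\dots,t_k)\notin\mathcal{B}$ if and only if the elements $t_1,\dots,t_k,1$ are pairwise distinct and $\Hol(T,\{t_1,\dots,t_k,1\}) = 1$. Consequently, $\mathcal{B}$ contains every tuple in which some $t_i = t_j$ (with $i\ne j$) or some $t_i = 1$; the number of such ``degenerate'' tuples is exactly $N^k - k!M$. Separately, for each $R\in\mathcal{A}$ any non-trivial $\alpha\in\Aut(T,R)$ satisfies $1^\alpha = 1$ and hence lies in $\Hol(T,R\cup\{1\})$; thus all $k!$ orderings of $R$ lie in $\mathcal{B}$. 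Since these tuples have all entries distinct and non-trivial, they are disjoint from the degenerate contribution, yielding
\begin{equation*}
|\mathcal{B}| \geqs k!\cdot a + \bigl(N^k - k!M\bigr).
\end{equation*}

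Dividing by $N^k$ and rearranging, the desired inequality $a/M \leqs |\mathcal{B}|/N^k$ reduces to
\begin{equation*}
\frac{M-a}{M} \geqs \frac{k!(M-a)}{N^k}.
\end{equation*}
Since $a\leqs M$, this follows from $N^k \geqs k!M = (N-1)(N-2)\cdots(N-k)$, which is obvious.

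There is no real obstacle here; the only subtlety is making sure that $\Aut(T,R)$ genuinely embeds into $\Hol(T,R\cup\{1\})$ under the convention $t^{g\alpha} = (g^{-1}t)^\alpha$ of Section \ref{s:pre}, which holds because elements of $\Aut(T)$ fix the identity. The rest is a clean two-source counting argument, and the hypothesis $k\geqs 4$ plays no explicit role beyond ensuring $M\geqs 1$ and $\mathbb{P}_{k+1}(T)$ being well-defined.
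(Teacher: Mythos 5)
Your proof is correct and is essentially the paper's argument, just phrased complementarily: you lower bound the number of non-base tuples by $N^k - k!(M-a)$, while the paper upper bounds the number of base tuples by $k!(M-a)$ (using that $\Hol(T,\{t_1,\dots,t_k,1\})=1$ forces $\Aut(T,\{t_1,\dots,t_k\})=1$), which is the same count. Both reduce to the elementary inequality $|T|^k \geqs k!\binom{|T|-1}{k}$.
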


\begin{proof}
	First, by Lemma \ref{l:Sk_eq_Hol}, we have $\{D,D(\varphi_{t_1},\dots,\varphi_{t_k},1)\}$ is a base for $G$ if and only if $t_1,\dots,t_k\in T^\#$ are distinct and $\Hol(T,\{t_1,\dots,t_k,1\})=1$. The latter condition implies that $\Aut(T,\{t_1,\dots,t_k\}) = 1$, so
	\begin{equation*}
	\mathbb{P}_{k+1}(T)\leqs \frac{|\{(t_1,\dots,t_{k})\in (T^\#)^{k}:t_1,\dots,t_{k}\mbox{ are distinct and }\Aut(T,\{t_1,\dots,t_{k}\}) = 1\}|}{|T|^{k}}.
	\end{equation*}
	Note that the numerator of the expression on the right-hand side is
	\begin{equation*}
	k!\cdot |\{R\in\mathscr{S}_k:\Aut(T,R) = 1\}|.
	\end{equation*}
	Thus, we have
	\begin{equation*}
	\mathbb{P}_{k+1}(T)\leqs \frac{k!\cdot |\{R\in\mathscr{S}_k:\Aut(T,R) = 1\}|}{|T|^k}
	\end{equation*}
	and it suffices to show that
	\begin{equation*}
	|T|^k\geqs k!\cdot |\mathscr{S}_k|.
	\end{equation*}
	This is clear, as $|\mathscr{S}_k| = {|T|-1\choose k}$.
\end{proof}

Theorem \ref{thm:Aut(T,S)_to_1} now follows by combining Theorem \ref{t:F_t:1.5} and Lemma \ref{l:PQ}. Finally, we establish Theorem \ref{thm:prob_Aut}. Recall that $\mathscr{P}_k$ is the set of $k$-subsets of $T$, and
\begin{equation*}
\fix(\sigma,\mathscr{P}_k) = \{S\in\mathscr{P}_k:\sigma\in\Hol(T,S)\}
\end{equation*}
is the set of fixed points of $\sigma\in\Hol(T)$ on $\mathscr{P}_k$.

\begin{prop}
	\label{p:prob_Aut_strong}
	Let $m>0$ be a real number. Then $\mathbb{Q}_k(T) < 1/m$ if
	\begin{equation}
	\label{e:prob_ori_strong}
	{|T|\choose k} > m\sum_{\sigma\in\mathcal{R}}|\fix(\sigma,\mathscr{P}_k)|,
	\end{equation}
	where $\mathcal{R}$ is the set of elements of prime order in $\Hol(T)$.
\end{prop}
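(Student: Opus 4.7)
The strategy is to reduce the bound on $\mathbb{Q}_k(T)$ to the corresponding count on all of $\mathscr{P}_k$, by exploiting the transitivity of the left-translation action of $T$ on itself. I would start from the inclusion
\begin{equation*}
\{R\in\mathscr{S}_k:\Aut(T,R)\ne 1\}\subseteq\{R\in\mathscr{P}_k:1\notin R,\ \Hol(T,R)\ne 1\},
\end{equation*}
which holds because $\Aut(T)\leqs\Hol(T)$, and introduce $\mathcal{F}=\{R\in\mathscr{P}_k:\Hol(T,R)\ne 1\}$. Arguing exactly as in the derivation of Lemma \ref{l:prob_ori}, every $R\in\mathcal{F}$ is fixed by some prime-order $\sigma\in\Hol(T)$, so $|\mathcal{F}|\leqs\sum_{\sigma\in\mathcal{R}}|\fix(\sigma,\mathscr{P}_k)|$.

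The key step would be a simple counting lemma: if $\mathcal{E}\subseteq\mathscr{P}_k$ is invariant under the left-translation action of $T$, then exactly the fraction $(|T|-k)/|T|$ of the members of $\mathcal{E}$ avoid $1$. Note first that $\mathcal{F}$ is $T$-invariant, since conjugation by $g\in T\leqs\Hol(T)$ sends $\Hol(T,R)$ to $\Hol(T,gR)$, so one stabiliser is trivial iff the other is. To prove the counting lemma I would decompose $\mathcal{E}$ into $T$-orbits and, for each orbit $\mathcal{O}$, double-count pairs $(R,t)$ with $R\in\mathcal{O}$ and $t\in R$: the total number of such pairs is $k|\mathcal{O}|$, and by the transitivity of $T$ acting on itself, the number of $R\in\mathcal{O}$ containing any prescribed element $t\in T$ does not depend on $t$, and hence equals $k|\mathcal{O}|/|T|$. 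Specialising $t=1$ then gives the claimed ratio.

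Combining these inputs,
\begin{equation*}
|\{R\in\mathscr{S}_k:\Aut(T,R)\ne 1\}|\leqs\frac{|T|-k}{|T|}|\mathcal{F}|\leqs\frac{|T|-k}{|T|}\sum_{\sigma\in\mathcal{R}}|\fix(\sigma,\mathscr{P}_k)|,
\end{equation*}
while $|\mathscr{S}_k|=\binom{|T|-1}{k}=\binom{|T|}{k}\cdot(|T|-k)/|T|$. Dividing, the factors $(|T|-k)/|T|$ cancel, and the hypothesis $\binom{|T|}{k}>m\sum_{\sigma\in\mathcal{R}}|\fix(\sigma,\mathscr{P}_k)|$ delivers $\mathbb{Q}_k(T)<1/m$. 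I do not anticipate a serious obstacle here; the only delicate point is the counting lemma producing the factor $(|T|-k)/|T|$, which is precisely what justifies using the binomial coefficient $\binom{|T|}{k}$ (rather than the smaller $\binom{|T|-1}{k}$) on the left-hand side of the hypothesis — without this averaging one would only obtain the weaker conclusion $\mathbb{Q}_k(T)<|T|/(m(|T|-k))$.
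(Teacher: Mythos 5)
Your proof is correct and reaches the same conclusion as the paper's, but the two are organised as dual arguments. The paper tracks the \emph{good} subsets: it first deduces a lower bound on the number $r$ of regular $\Hol(T)$-orbits on $\mathscr{P}_k$, then shows that each regular orbit contributes exactly $(|T|-k)|\Aut(T)|$ subsets to $\mathscr{S}_k$ with trivial $\Hol(T)$-stabiliser (hence trivial $\Aut(T)$-stabiliser), and finally divides by $|\mathscr{S}_k|=\binom{|T|-1}{k}$. You instead track the \emph{bad} set $\mathcal{F}=\{R\in\mathscr{P}_k:\Hol(T,R)\ne 1\}$: the inclusion $\{R\in\mathscr{S}_k:\Aut(T,R)\ne 1\}\subseteq\{R\in\mathcal{F}:1\notin R\}$ together with an averaging lemma for $T$-invariant families of $k$-subsets gives the bound without ever mentioning the orbit count $r$. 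Both proofs rest on the same transitivity-based counting that produces the $(|T|-k)/|T|$ factor; yours just applies it to $\mathcal{F}$ rather than to the regular orbits, isolates it as an explicit lemma, and needs only $T$-invariance rather than $\Hol(T)$-invariance, so it is a modest simplification that short-circuits the paper's intermediate step of estimating $r$.
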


\begin{proof}
	As noted in Section \ref{ss:fix_subsets}, we have
	\begin{equation*}
	|\{S\in\mathscr{P}_k:\Hol(T,S) \ne 1\}|\leqs \sum_{\sigma\in\mathcal{R}}|\fix(\sigma,\mathscr{P}_k)|,
	\end{equation*}
	which implies that $\Hol(T)$ has
	\begin{equation*}
	r> \frac{m-1}{m|\Hol(T)|}{|T|\choose k}
	\end{equation*}
	regular orbits on $\mathscr{P}_k$. Then
	\begin{equation*}
	|\{R\in\mathscr{S}_k : \Hol(T,R) = 1\}| = r(|T|-k)|\Aut(T)|> \frac{(m-1)(|T|-k)}{m|T|}{|T|\choose k}
	\end{equation*}
	and thus
	\begin{equation*}
	\mathbb{Q}_k(T) = \frac{|\{R\in\mathscr{S}_k:\Aut(T,R) \ne 1\}|}{|\mathscr{S}_k|}< 1-\frac{(m-1)(|T|-k)}{m|T|}\cdot\frac{{|T|\choose k}}{{|T|-1\choose k}} = \frac{1}{m},
	\end{equation*}
	as desired.
\end{proof}

\begin{proof}[Proof of Theorem \ref{thm:prob_Aut}.]
	Note that if $T = A_5$, then $5\log|T| < k < |T|-5\log|T|$ implies that $k = 30$, in which case we can check the theorem using {\sc Magma}. Now assume $|T|\geqs 168$, so $5\log|T| < |T|/4$. It suffices to show that \eqref{e:prob_ori_strong} holds for $m = |T|$ and $5\log|T| < k \leqs |T|/2$, and we can do this by arguing as in the proof of Proposition \ref{p:log}. More precisely, if $|T|/4\leqs k\leqs |T|/2$ then \eqref{e:prob_ori_strong} holds for $m = |T|$ if
	\begin{equation*}
	2t_0^{|T|} > \sqrt{30}e^{\frac{1}{8}}|T|^{\frac{10}{3}},
	\end{equation*}
	where
	\begin{equation*}
	t_0 = 4\cdot 3^{-\frac{3}{4}}\cdot 2^{-\frac{1}{2}-\frac{1}{10}} = 1.1577....
	\end{equation*}
	This inequality is valid for all $|T|\geqs 168$. And if $k<|T|/4$ then \eqref{e:prob_ori_strong} holds for $m = |T|$ if $(5/3)^k > |T|^{10/3}$, which holds true for all $k> 5\log|T|$.
\end{proof}

\begin{rem}
	\label{r:t:GRR}
	By Proposition \ref{p:prob_Aut_strong}, we have $\mathbb{Q}_k(T) < 1/2$ if \eqref{e:prob_ori} holds. We refer the reader to the proofs in Section \ref{s:proof_b(G)=2} for a wider range of $k$ satisfying \eqref{e:prob_ori} for each class of simple groups. For example, the proof of Proposition \ref{p:alternating} shows that if $T = A_n$ and $n\geqs 7$ then \eqref{e:prob_ori} holds for all $n\leqs k\leqs 4\log|T|$, which implies that $\mathbb{Q}_k(T)<1/2$ for all $n\leqs k\leqs |T|-n$.
\end{rem}

\end{document}